\documentclass{amsart}
\usepackage{amssymb}
\usepackage{amsfonts}
\usepackage{hyperref}
\usepackage{mathrsfs}
\usepackage{tikz-cd}

\newcommand{\bk}{\Bbbk}
\newcommand{\F}{\mathbb{F}}
\newcommand{\Z}{\mathbb{Z}}
\newcommand{\R}{\mathbb{R}}
\newcommand{\Q}{\mathbb{Q}}

\newcommand{\Gm}{{\mathbb{G}_{\mathrm{m}}}}
\newcommand{\Ga}{\mathbb{G}_{\mathrm{a}}}

\newcommand{\Gv}{\check G}
\newcommand{\Tv}{\check T}
\newcommand{\Bv}{\check B}
\newcommand{\Uv}{\check U}

\newcommand{\Lie}{\mathrm{Lie}}
\newcommand{\bX}{\mathbf{X}}
\newcommand{\bXv}{\check\bX}
\newcommand{\fR}{\mathfrak{R}}
\newcommand{\fRv}{\check\fR}
\newcommand{\fRs}{\fR_{\mathrm{s}}}
\newcommand{\fRvs}{\check\fR_{\mathrm{s}}}
\newcommand{\Waff}{W_{\mathrm{aff}}}
\newcommand{\Wext}{W_{\mathrm{ext}}}
\newcommand{\oWext}{{}^{\bo}\Wext}

\newcommand{\spWext}{{}^{\mathit{sp}}\Wext}
\newcommand{\oWaff}{{}^{\bo}\Waff}
\newcommand{\spWaff}{{}^{\mathit{sp}}\Waff}

\newcommand{\ba}{\mathbf{a}}
\newcommand{\bo}{\mathbf{o}}
\newcommand{\bba}{\overline{\ba}}
\newcommand{\bbf}{\mathbf{f}}
\newcommand{\para}{\mathrm{P}}
\newcommand{\Iw}{\mathrm{Iw}}
\newcommand{\Iwu}{\Iw_{\mathrm{u}}}
\newcommand{\Iwul}{\Iw_{p,\mathrm{u}}}
\newcommand{\Iwup}{\Iw_{\mathrm{u}}^+}
\newcommand{\Iwupl}{\Iw_{p,\mathrm{u}}^+}
\newcommand{\ev}{\mathrm{ev}}
\newcommand{\IW}{{\mathcal{IW}}}
\newcommand{\sph}{{\mathrm{sph}}}
\newcommand{\AS}{\mathrm{AS}}
\newcommand{\St}{\mathrm{St}}
\DeclareMathOperator{\ch}{ch}
\newcommand{\coh}{\mathsf{H}}

\newcommand{\cA}{\mathcal{A}}
\newcommand{\cE}{\mathcal{E}}
\newcommand{\cF}{\mathcal{F}}
\newcommand{\cG}{\mathcal{G}}
\newcommand{\cH}{\mathcal{H}}
\newcommand{\cL}{\mathcal{L}}
\newcommand{\Db}{D^{\mathrm{b}}}
\newcommand{\Dbc}{D^{\mathrm{b}}_{\mathrm{c}}}
\newcommand{\Sh}{\mathrm{Sh}}
\newcommand{\Sm}{\mathrm{Sm}}
\newcommand{\perf}{{\mathrm{perf}}}
\newcommand{\Psm}{\mathbf{Psm}}
\newcommand{\cRHom}{R\mathcal{H}\mathit{om}}
\newcommand{\cX}{\mathcal{X}}
\newcommand{\rQ}{\mathsf{Q}}

\newcommand{\dotl}{\mathbin{\bullet_p}}
\newcommand{\boxl}{\mathbin{\square_p}}

\newcommand{\Loop}{\mathrm{L}}
\newcommand{\Gr}{\mathrm{Gr}}
\newcommand{\Fl}{\mathrm{Fl}}
\newcommand{\pt}{\mathrm{pt}}
\newcommand{\id}{\mathrm{id}}

\newcommand{\bi}{\mathbf{i}}
\newcommand{\bit}{\tilde{\mathbf{\imath}}}
\newcommand{\thin}{{\mathrm{thin}}}
\newcommand{\bz}{\mathbf{z}}
\newcommand{\rL}{\mathrm{L}}
\newcommand{\rS}{\mathrm{S}}

\newcommand{\aff}{{\mathrm{aff}}}
\newcommand{\ext}{{\mathrm{ext}}}

\newcommand{\uH}{\underline{H}}
\newcommand{\cM}{\mathcal{M}}
\newcommand{\cN}{\mathcal{N}}

\newcommand{\uN}{\underline{N}}
\newcommand{\Ko}{\mathrm{K}_\oplus}
\newcommand{\triv}{\mathrm{triv}}
\newcommand{\sgn}{\mathrm{sgn}}
\newcommand{\length}{\ell}

\newcommand{\Parity}{\mathrm{Parity}}
\newcommand{\SmParity}{\mathrm{SmParity}}
\newcommand{\D}{\mathbb{D}}
\newcommand{\For}{\mathrm{For}}
\newcommand{\Av}{\mathrm{Av}}
\newcommand{\Perv}{\mathrm{Perv}}
\newcommand{\Rep}{\mathrm{Rep}}

\DeclareMathOperator{\Spec}{Spec}
\newcommand{\Hom}{\mathrm{Hom}}
\newcommand{\bHom}{\mathbb{H}\mathrm{om}}
\newcommand{\bEnd}{\mathbb{E}\mathrm{nd}}
\newcommand{\bHomev}{\mathbb{H}\mathrm{om}^{\mathrm{ev}}}
\newcommand{\Ext}{\mathrm{Ext}}
\newcommand{\End}{\mathrm{End}}
\DeclareMathOperator{\cok}{cok}
\DeclareMathOperator{\rank}{rank}

\newcommand{\la}{\langle}
\newcommand{\ra}{\rangle}
\newcommand{\simto}{\overset{\sim}{\to}}
\newcommand{\can}{\mathrm{can}}

\newcommand{\tboxtimes}{\mathbin{\widetilde{\boxtimes}}}

\numberwithin{equation}{section}
\numberwithin{figure}{section}
\newtheorem{thm}{Theorem}[section]
\newtheorem{lem}[thm]{Lemma}
\newtheorem{prop}[thm]{Proposition}
\newtheorem{cor}[thm]{Corollary}

\theoremstyle{definition}
\newtheorem{defn}[thm]{Definition}

\theoremstyle{remark}
\newtheorem{rmk}[thm]{Remark}

\title{Steinberg quotients and Smith--Treumann localization}

\author{Pramod N. Achar}
\address{Department of Mathematics\\
  Louisiana State University\\
  Baton Rouge, LA 70803\\
  U.S.A.}
\email{pramod.achar@math.lsu.edu}
\thanks{P.A. was supported by NSF Grant No.~DMS-2202012.}

\author{Paul Sobaje}
\address{Department of Mathematics\\
  Georgia Southern University\\
  Statesboro, GA 30460\\
  U.S.A.}
\email{psobaje@georgiasouthern.edu}

\begin{document}

\begin{abstract}
Smith--Treumann localization for sheaves on the affine Grassmannian of a reductive group has previously been studied by Leslie--Lonergan (for spherical sheaves) and by Riche--Williamson (for Iwahori--Whittaker sheaves).  In this paper, we show that the two versions are related by a commutative diagram that involves ``convolution with the Steinberg module.''

As an application, we ``categorify'' certain formal characters of a reductive group called \emph{Steinberg quotients}, previously introduced and studied by the second author.  Specifically, we show that Steinberg quotients describe the stalks of spherical parity sheaves on the $\Z/p\Z$-fixed-locus of the affine Grassmannian.
\end{abstract}

\maketitle

\section{Introduction}

\subsection{Overview: Steinberg quotients}
\label{ss:over-stquot}

Let $\bk$ denote an algebraically closed field of characteristic $p > 0$, and let $G$ be a connected reductive group over $\bk$.  Let $\bX^+$ be the set of dominant weights, and consider the character ring $\Z[\bX]^W$ (see Section~\ref{sec:weyl} for details on notation).  In~\cite{sob1,sob2}, the second author introduced and studied certain elements $t(\lambda) \in \Z[\bX]^W$, called \emph{Steinberg quotients}.  For $\lambda \in \bX^+$, $t(\lambda)$ is defined by
\[
t(\lambda) = \frac{\ch T(\lambda + (p-1)\rho)}{\ch \St},
\]
where $T(\lambda + (p-1)\rho)$ is the indecomposable tilting module of highest weight $\lambda + (p-1)\rho$, and $\St = T((p-1)\rho)$ is the Steinberg module.\footnote{For the claim that $\ch T(\lambda + (p-1)\rho)$ is actually divisible by $\ch \St$ in $\Z[\bX]^W$, see~\cite[\S1]{sob2}.  A related divisibility result was established by Lusztig~\cite{lus:dpm} in the setting of finite Chevalley groups.}
These elements enjoy remarkable combinatorial properties, including positivity and monotonicity phenomona.  However, in general, $t(\lambda)$ is not the character of a $G$-module---\emph{so what is it the character of}?

\subsection{Overview: Smith--Treumann localization}

Let $\Gr = \Loop \Gv /\Loop^+\Gv$ denote the affine Grassmannian for the Langlands dual group to $G$ over an algebraically closed field $\F$ of characteristic $\ell \ne p$, and let $\varpi \subset \F^\times$ be the group of $p$-th roots of unity.  This group acts on $\Gr$ via ``loop rotation,'' and one can consider the fixed-point locus $\Gr^\varpi$.  The operation of \emph{Smith--Treumann localization}, introduced by Treumann in~\cite{treu:smgha} as a sheaf-theoretic counterpart of Smith theory~\cite{smi:fp}, sends $\varpi$-equivariant sheaves on $\Gr$ to sheaves (or at least sheaf-like objects) on $\Gr^\varpi$.  Treumann predicted in~\cite{treu:smgha} that this operation, denoted by $\Psm$, should have applications in characteristic-$p$ representation theory, via the geometric Satake equivalence.

Treumann's prediction has been realized in two ways.  First, Leslie--Lonergan~\cite{ll:psst} showed that when applied to $\Loop^+\Gv$-equivariant sheaves on $\Gr$ (i.e., in the usual setting for the geometric Satake equivalence from~\cite{mv:gld}), Smith--Treumann localization gives a geometric counterpart of the ``Frobenius contraction functor'' of~\cite{gk}.

Second, Riche--Williamson~\cite{rw:st} showed that in the setting of Iwahori--Whit\-ta\-ker sheaves on $\Gr$ (which were shown to give another incarnation of the Satake category in~\cite{bgmrr}), Smith--Treumann localization leads to very slick proofs of both the linkage principle and the tilting character formula, thanks to the combinatorics of the connected components of $\Gr^\varpi$.

These works left open the following question: \emph{how are the two versions of Smith--Treumann localization on $\Gr$ related to one another?}

\subsection{Results of this paper}
\label{ss:results}

In this paper, we obtain a geometric interpretation of the Steinberg quotients $t(\lambda)$, as a consequence of a result relating the two versions of Smith--Treumann localization.  The following is (a slightly imprecise version of) the main geometric theorem of this paper.  

\begin{thm}\label{thm:main-intro}
Assume that $p$ is good for $G$.  The following diagram commutes up to natural isomorphism:
\[
\begin{tikzcd}[column sep=large]
\Parity_{\Loop^+\Gv}(\Gr,\bk) \ar[d, "\cE^\sph_{(p-1)\rho} \star ({-})"'] 
  \ar[rrr, "\Psm", "\text{\normalfont as in Leslie--Lonergan~\cite{ll:psst}}"']
  &&&
  \SmParity_{\sph}(\Gr^\varpi,\bk) \ar[dd, "\Av_\IW"] \\
\Parity_{\Loop^+\Gv}(\Gr,\bk) \ar[d, "\Av_\IW"'] \\
 \Parity_{\IW}(\Gr,\bk) \ar[rrr, "\Psm", "\text{\normalfont as in Riche--Williamson~\cite{rw:st}}"'] &&&
  \SmParity_{\IW}(\Gr^\varpi,\bk)
\end{tikzcd}
\]
\end{thm}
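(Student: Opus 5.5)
Both composites send $\Parity_{\Loop^+\Gv}(\Gr,\bk)$ to $\SmParity_{\IW}(\Gr^\varpi,\bk)$. The plan is to identify each of them with one explicitly described functor, built from Smith--Treumann localization composed with pullback/pushforward along a correspondence relating $\Gr^\varpi$ to the component of $\Gr^\varpi$ that plays the role of $(p-1)\rho$. The organizing principle is that $\Psm$ commutes with the six operations, whenever these are defined on $\varpi$-equivariant objects. In particular:
\begin{itemize}
\item $\Psm$ commutes with $*$-pushforward along proper $\varpi$-equivariant maps and with $!$-pullback along smooth ones (Treumann);
\item it is monoidal for external tensor products;
\item it intertwines averaging functors for group actions commuting with loop rotation.
\end{itemize}

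First I write $\Av_\IW$ as a push--pull along the action map of $\Iwu$ twisted by the Artin--Schreier sheaf. Because $\varpi$ acts on the loop variable and the Iwahori--Whittaker character is built from the coefficients of $t^{-1}$, which are scaled by a $p$-th root of unity, one has to replace $\Iwu$ with the subgroup that Riche--Williamson use (the $\Iwul$-type group), on which the character is $\varpi$-invariant. With that choice, compatibility of $\Psm$ with push--pull gives $\Psm\circ\Av_\IW \cong \Av_\IW\circ\Psm$, up to precomposing with an averaging from a larger unipotent group onto the fixed locus. So the left-bottom composite becomes $\Av_\IW^{\varpi}\circ\Psm(\cE^\sph_{(p-1)\rho}\star(-))$.

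Second I compute $\Psm$ of a convolution. Convolution is push--pull along $\Gr\tboxtimes\Gr\to\Gr$, so $\Psm(\cE\star\cF)$ is convolution, on the fixed locus of the convolution variety, of $\Psm\cE$ and $\Psm\cF$. The key input is then
\[
\Av_\IW\bigl(\Psm\,\cE^\sph_{(p-1)\rho}\bigr)\cong \text{the skyscraper-type IW parity object on the orbit } p\rho-\rho.
\]
This is the geometric analogue of $\St$ being simultaneously simple, projective and tilting. To prove it, I would use that $\Gr^{(p-1)\rho}\cap\Gr^\varpi$ contains a component on which the Iwahori--Whittaker orbit of $t^{(p-1)\rho}$ is open and closed among relevant orbits (the Steinberg weight lies in the interior of the fundamental $p$-alcove region). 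For this one needs $p$ good, so that parity sheaves behave and the Mirković--Vilonen-type stalk computations hold. Once this holds, averaging against a single orbit corresponds, via Riche--Williamson's identification of IW sheaves on $\Gr^\varpi$, to the functor relating $\Gr^\varpi$ to the "$\rho$-shifted" component. Unwinding then gives $\Av_\IW\circ\Psm$ applied to $\cF$ after a translation, which is the top-right composite.

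Finally, both composites send parity objects to parity objects and are additive, and parity objects are determined by their stalks/costalks. So it suffices to construct the natural transformation above (from base change isomorphisms) and check it on indecomposables $\cE^\sph_\lambda$. The main obstacle is the second step: proving the isomorphism for $\Av_\IW\Psm(\cE^\sph_{(p-1)\rho})$, and making the base change compatible with the averaging subgroups. I would attack it by computing the stalks of $\cE^\sph_{(p-1)\rho}$ along $\varpi$-fixed points via the Tate-cohomology description of $\Psm$, and comparing parity vanishing orbitwise.
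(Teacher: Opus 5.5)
There are two genuine gaps, and your first step puts the difficulty in the wrong place.

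\textbf{The right-hand arrow is never constructed, and the obstruction is misidentified.} The Iwahori--Whittaker character $\chi$ factors through $\ev_0$, so it is invariant under all of loop rotation. $\Iwupl$ appears on $\Gr^\varpi$ only because it equals $(\Iwup)^\varpi$.

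The real issue is this. The functor called $\Av_\IW$ is $j^+_{\varsigma!}\cL_\AS\star({-})$, which amounts to translating by $\bz^\varsigma$ and then averaging. (Plain push--pull along the $\Iwup$-action kills the skyscraper at $\rL_0$, so it is not the right functor.) By \eqref{eqn:tz-conj}, translation by $\bz^\mu$ intertwines $\mu$-twisted loop rotation with untwisted loop rotation. On $\Gr$ this is absorbed by $\Loop^+\Gv$-equivariance. On $\Gr^\varpi$ it cannot be, because the workable Smith categories carry only $\Gm\ltimes\Iwul$-equivariance.

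That is why the paper puts $\SmParity_{\sph|p\varsigma}(\Gr^\varpi,\bk)$ in the top-right corner, with loop rotation twisted by $p\varsigma$. It then defines the right-hand arrow using three ingredients:
\begin{enumerate}
\item the Hom formula $\coh^\bullet_\Gm(\pt,\bk)_{\rS(p\varsigma)}\otimes_{\coh^\bullet_{\Gm\times\Gv}(\pt,\bk)}\bHom({-},{-})$;
\item Lemma~\ref{lem:jnu-coh}, which shows that convolution with $j^+_{\nu!}\cL^\nu_\AS$ kills $\ker\rS(\hat\nu)$;
\item reduction modulo $\can^2-1$.
\end{enumerate}

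Your key input is also misplaced. What is needed lives on $\Gr$, before any localization: $j^+_{\varsigma!}\cL_\AS\star\cE^\sph_{(p-1)\varsigma}\cong j^+_{p\varsigma!}\cL_\AS\cong j^+_{p\varsigma*}\cL_\AS$ up to shift. Note the orbit is labelled $p\varsigma$, not $(p-1)\varsigma$. This follows from \cite{bgmrr}, geometric Satake, $\Delta((p-1)\varsigma)\cong\nabla((p-1)\varsigma)$, and perversity of $\cE^\sph_{(p-1)\varsigma}$. The perversity comes from Mautner--Riche, and this is where goodness of $p$ enters.

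No Smith-theoretic stalk computation is needed for this. Also, the Steinberg weight is maximally singular ($p\varsigma=t_\varsigma\boxl 0$, attached to the vertex $\bo$), not in an alcove interior. Proper base change then gives $\bi_\thin^*j^+_{p\varsigma!}\cL_\AS\cong j^{p+}_{p\varsigma!}\cL^p_\AS$.

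\textbf{The closing step does not prove that any map is an isomorphism.} Your step two treats $\Psm(\cE\star\cF)$ as a convolution of $\Psm\cE$ and $\Psm\cF$. That twisted product needs $\Loop^+_p\Gv$-equivariance, which Smith objects lack. Moreover, $m^{-1}(\Gr^\varpi)$ is larger than $\Loop_p\Gv\times^{\Loop^+_p\Gv}\Gr^\varpi$. Your final step also fails: knowing that parity objects are determined up to isomorphism by their characters says nothing about whether a particular natural map is an isomorphism.

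In the paper, base change along the convolution diagram (Lemmas~\ref{lem:twistprod-pull}--\ref{lem:conv-psm-compat}) gives only that $j^{p+}_{p\varsigma!}\cL^p_\AS\star\Psm_{p\varsigma}(\cF)$ is a natural direct summand of $\Psm_\IW(j^+_{p\varsigma!}\cL_\AS\star\cF)$. The last of these lemmas needs the lifting result, Proposition~\ref{prop:smeven-lift}, because $\bi^!\cF$ is only Smith-parity.

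The complement is then killed by a character count in the split Grothendieck group specialized at $v=1$, using Lemma~\ref{lem:groth-smith-v1}. The count uses three facts:
\begin{enumerate}
\item $\Psm$ preserves characters;
\item the right-hand arrow multiplies characters by the Weyl character $\chi((p-1)\varsigma)$, by Brauer's formula and the Riche--Williamson map (Lemma~\ref{lem:theta-compare}, Proposition~\ref{prop:groth-varpi-total});
\item the left column multiplies by $\ch\St$, by Mautner--Riche, Satake and \cite{bgmrr}.
\end{enumerate}

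Your sketch has neither the split inclusion nor any character computation. To avoid characters, you would instead have to prove that the part of $m^{-1}(\Gr^\varpi)$ outside $\Loop_p\Gv\times^{\Loop^+_p\Gv}\Gr^\varpi$ contributes only $\varpi$-perfect objects. Off the fixed locus $\varpi$ acts freely, and over $\Gr^\varpi\smallsetminus\Gr_p$ the clean object has vanishing $*$- and $!$-restrictions. That argument is absent from your proposal.
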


Here, both horizontal arrows are instances of Smith--Treumann localization, and both arrows labelled ``$\Av_\IW$'' are Iwahori--Whittaker averaging functors.  The categories denoted ``$\SmParity$'' consist of \emph{Smith-parity sheaves}.  (For further details, see Section~\ref{sec:convolution}.) The arrow labelled ``$\cE^\sph_{(p-1)\rho} \star ({-})$'' indicates convolution with the parity sheaf corresponding to the Steinberg module under the geometric Satake equivalence.

The fixed-point locus $\Gr^\varpi$ is acted on by the ``$p$-scaled arc group'' $\Loop^+_p\Gv$ (see Section~\ref{ss:scaled}), and the orbits of this group action are naturally in bijection with $\bX^+$.  For each $\lambda \in \bX^+$, let $\Gr^{\varpi}_\lambda$ be the corresponding orbit, and let
\[
\cE^{\varpi,\sph}_\lambda \in \Parity_{\Loop^+_p\Gv}(\Gr^\varpi,\bk)
\]
be the (suitably normalized) indecomposable parity sheaf supported on $\overline{\Gr^{\varpi}_\lambda}$.  In the following theorem, which describes the stalks of $\cE^{\varpi,\sph}_\lambda$, we use the notation $s(\mu) = \sum_{\nu \in W\mu} e^\nu  \in \Z[\bX]^W$.

\begin{thm}\label{thm:stquot-intro}
Assume that $p$ is good for $G$.  We have
\[
t(\lambda) = \sum_{\substack{\mu \in \bX^+ \\ i \in \Z}} \rank \coh^i(\cE^{\varpi,\sph}_\lambda|_{\Gr^{\varpi}_\mu}) s(\mu).
\]
\end{thm}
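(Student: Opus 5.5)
Theorem~\ref{thm:stquot-intro} will follow from Theorem~\ref{thm:main-intro} once we compute, at the level of characters, what each functor in the diagram does. The plan is to start with the spherical parity sheaf $\cE^\sph_\lambda$ attached (via geometric Satake and the parity/tilting dictionary of Juteau--Mautner--Williamson) to the tilting module $T(\lambda)$. We then go around the diagram in both directions and compare the resulting classes in the split Grothendieck group of $\SmParity_{\IW}(\Gr^\varpi,\bk)$. The first step is to identify $\Psm$ of Leslie--Lonergan on indecomposable spherical parity sheaves: we expect $\Psm(\cE^\sph_{\lambda'})$ to be (the Smith image of) $\cE^{\varpi,\sph}_\lambda$ for the appropriate index. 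This is why the normalization of $\cE^{\varpi,\sph}_\lambda$ matters, and we would fix it so that $\Psm(\cE^\sph_{\lambda+(p-1)\rho}) \cong \cE^{\varpi,\sph}_\lambda$, or more precisely so that $\cE^{\varpi,\sph}_\lambda$ is the parity sheaf whose Smith image corresponds to $T(\lambda+(p-1)\rho)$ after one Steinberg twist.

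\textbf{Bottom-left path.} Here we use the known character theory. Convolution $\cE^\sph_{(p-1)\rho}\star\cE^\sph_\lambda$ corresponds to $\St\otimes T(\lambda)$, which has character $\ch\St\cdot\ch T(\lambda)$. In Riche--Williamson, $\Av_\IW$ followed by $\Psm$ sends the class of a tilting module to a class in the Grothendieck group of $\SmParity_\IW(\Gr^\varpi,\bk)$. That group has a basis indexed by $\Wext$-orbits of alcoves, i.e.\ by the components of $\Gr^\varpi$. The pairing of these classes with standard objects recovers multiplicities of Weyl characters under the $p$-dilated linkage action. So in characters, the bottom path records $\ch\St\cdot\ch T(\cdot)$ expressed in the basis of stalks along the $\IW$-orbits in $\Gr^\varpi$.

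\textbf{Top-right path.} Here we compute $\Av_\IW$ on $\Loop^+_p\Gv$-constructible objects of $\Gr^\varpi$, stratum by stratum. Take a sheaf on $\Gr^\varpi$ whose restriction to $\Gr^\varpi_\mu$ has total rank $m_\mu$. We claim that the Euler-characteristic class of its $\IW$-average equals $\sum_\mu m_\mu\, s(\mu)\cdot\ch\St$, written in the Riche--Williamson basis. The point is that averaging the constant sheaf on one $p$-scaled orbit $\Gr^\varpi_\mu$ produces, on the component of $\Gr^\varpi$ through $p\mu$-translates, the Smith image of the Steinberg-type $\IW$ object. Each $W$-translate $\nu\in W\mu$ contributes one copy, so the class is $s(\mu)$ times the class of the "Steinberg" object, i.e.\ $s(\mu)\ch\St$. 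This uses parity vanishing so that stalk ranks add with no cancellation; that is why one takes $\sum_i\rank\coh^i$. We would prove the claim by computing $\Av_\IW$ of $!$-extensions of constant sheaves on $\Gr^\varpi_\mu$ via an $\IW$-orbit decomposition of $\Gr^\varpi_\mu$, in the spirit of the computations in Riche--Williamson and Leslie--Lonergan.

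\textbf{Conclusion and main obstacle.} Comparing the two paths via Theorem~\ref{thm:main-intro} gives $\ch T(\lambda+(p-1)\rho) = \ch\St\cdot\sum_\mu m_\mu(\lambda)\,s(\mu)$. Since $\Z[\bX]^W$ is a domain, dividing by $\ch\St$ gives exactly the formula for $t(\lambda)$. The main obstacle is the top-right computation. It requires controlling $\Av_\IW$ on the Smith category and showing that its effect on classes is multiplication by $\ch\St$ applied to $s(\mu)$. It also requires matching the labels, namely the $p$-dilation and the $(p-1)\rho$ shift, between $\Loop^+_p\Gv$-orbits and $\IW$-components correctly. To handle this, I would first check the case $\lambda=0$, where $t(0)=1$ and $\cE^{\varpi,\sph}_0$ is a point sheaf. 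This pins down the normalization, and the general case then follows by additivity over strata.
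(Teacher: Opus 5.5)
There is a genuine gap at the step that is supposed to link $\cE^{\varpi,\sph}_\lambda$ to $T(\lambda+(p-1)\rho)$.

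First, the identification $\Psm(\cE^\sph_{\lambda+(p-1)\rho})\cong\cE^{\varpi,\sph}_\lambda$ is false. Its normalization is also not yours to choose. $\cE^{\varpi,\sph}_\lambda$ is the indecomposable parity sheaf supported on $\overline{\Gr^\varpi_\lambda}$ with normalized restriction to $\Gr^\varpi_\lambda$, and the theorem is about that object. Smith--Treumann localization preserves characters, because $\Gr^\varpi_\mu=\Gr_\mu\cap\Gr^\varpi$ (this is the top face of Figure~\ref{fig:main-groth}). So $\Psm(\cE^\sph_{\lambda'})$ always has the module character $\ch T(\lambda')$. For $\lambda=0$ your claim would force $\ch\St=\ch([\cE^{\varpi,\sph}_0])=1$. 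In fact $\Psm(\cE^\sph_{\lambda'})$ is usually decomposable, with multiplicities read off from $T(\lambda')\otimes\St$ (Theorem~\ref{thm:dmu}).

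Second, as a consequence, going around the square of Theorem~\ref{thm:main-intro} gives nothing new on characters. Starting from $\cE^\sph_\lambda$, both paths give $\ch\St\cdot\ch T(\lambda)$. In general no object in the top-left corner has character $t(\lambda)$, so commutativity never isolates $\cE^{\varpi,\sph}_\lambda$. Your ``more precise'' reformulation, that the $\IW$-average of $\cE^{\varpi,\sph}_\lambda$ corresponds to $T(\lambda+(p-1)\rho)$, is true. But it is the heart of the theorem, not a normalization. Checking $\lambda=0$ and using additivity over strata only gives the character-level behaviour of the right-hand arrow; it cannot tell you which indecomposable the average is.

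The missing idea is an indecomposability statement on the fixed-point side. With it, the top-left corner and the Leslie--Lonergan functor can be dropped entirely. Write $\varsigma$ in place of $\rho$.
\begin{enumerate}
\item On $\Gr^\varpi$, $j^{p+}_{p\varsigma!}\cL^p_\AS\star({-})$ multiplies characters by $\chi((p-1)\varsigma)=\ch\St$ (Proposition~\ref{prop:groth-varpi-total}). The proof uses the Riche--Williamson Hecke-module map $\phi$ on each component of $\Gr^\varpi$, which is a partial affine flag variety $\Fl^\circ_{p,\bbf}$, followed by Brauer's formula as in Lemma~\ref{lem:theta-compare}. This is what your heuristic summing over $W\mu$ is reaching for.
\item This functor sends $\cE^{\varpi,\sph}_\lambda$ to the indecomposable $\cE^{\varpi,\IW,\bo}_{\lambda+p\varsigma}$, up to shift (Corollary~\ref{cor:gv-indecomp}). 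That corollary is deduced from \cite[Proposition~4.6]{rw:scf} by pulling back along $\pi_\bbf$.
\item By \cite[Theorem~1.2]{rw:st}, $\Psm_\IW(\cE^{\IW,\bo}_{\lambda+p\varsigma})\cong\cE^{\varpi,\IW,\bo}_{\Sm,\lambda+p\varsigma}$. Combined with Lemma~\ref{lem:groth-gr}, the character of $\cE^{\varpi,\IW,\bo}_{\lambda+p\varsigma}$ is therefore $\ch T(\lambda+(p-1)\varsigma)$.
\end{enumerate}
Together these give $\ch\St\cdot\ch([\cE^{\varpi,\sph}_\lambda])=\ch T(\lambda+(p-1)\varsigma)$. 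Dividing in the domain $\Z[\bX]^W$ yields the theorem.
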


This theorem is essentially a corollary of Theorem~\ref{thm:main-intro}.  Here is a brief outline of the argument.  First, one checks that both horizontal arrows in Theorem~\ref{thm:main-intro} preserve characters.  In the left-hand column, $\Av_\IW$ preserves characters by~\cite{bgmrr}.  It follows that in the right-hand column, $\Av_\IW$ multiplies characters by $\ch \St$.  Finally, a result from~\cite{rw:scf} implies that both instances of $\Av_\IW$ preserve indecomposability.  Combining these observations, we see that the right-hand instance of $\Av_\IW$ sends $\cE^{\varpi,\sph}_\lambda$ to an indecomposable object corresponding to the tilting $G$-module of highest weight $\lambda + (p-1)\rho$.  Theorem~\ref{thm:stquot-intro} follows.

In addition to Theorem~\ref{thm:stquot-intro}, we record in Section~\ref{ss:applications} two more consequences of Theorem~\ref{thm:main-intro}.  First, we obtain a new proof (when $p$ is good) of the monotonicity property of $t(\lambda)$ that was previously proved by the second author~\cite{sob1,sob2}: see Theorem~\ref{thm:mono}.  Second, we give a representation-theoretic interpretation of decomposition multiplicities for the Leslie--Lonergan version of Smith--Treumann localization: see Theorem~\ref{thm:dmu}.

\subsection{Further comments on the main theorems and their proofs}
\label{ss:further}

The statement of Theorem~\ref{thm:main-intro} above is somewhat imprecise; the actual result we prove can be found in Theorem~\ref{thm:main}.  Here are most salient points:
\begin{itemize}
\item In the main body of the paper, we do \emph{not} assume that $\rho$ lies in $\bX$; instead, we assume that $\bX$ contains an element $\varsigma$ such that $\varsigma - \rho$ is orthogonal to all coroots.  This means that our results apply, for instance, to $\mathrm{GL}_n$.

\item Throughout the paper, we use the formalism for Smith--Treumann localization developed in~\cite{rw:st}, rather than Treumann's original version from~\cite{treu:smgha}.  This version takes as input $\Gm$-equivariant sheaves (e.g., for the loop rotation action of $\Gm$), so all categories in the left-hand column should impose $\Gm$-equivariance.

\item The usual definition of Iwahori--Whittaker averaging doesn't carry over to the Smith category if the latter is defined using the loop rotation $\Gm$-action.  To solve this problem, the category in the upper-right corner of Theorem~\ref{thm:main-intro} must be modified: the loop rotation $\Gm$-action needs to be twisted by the cocharacter $p\varsigma$.

\item As a consequence of the preceding two points, the top arrow in Theorem~\ref{thm:main-intro} does not literally coincide with the functor studied in~\cite{ll:psst}.  Nevertheless, at a combinatorial level (e.g., dimensions of stalks or of $\Hom$-groups), the categories in the top row of Theorem~\ref{thm:main-intro} have identical behavior to those in~\cite{ll:psst}.
\end{itemize}

Let us also comment on the restrictions on $p$.  In the first few sections of the paper, $p$ is arbitrary.  Starting in Section~\ref{ss:bhom}, we require $p$ to not be a torsion prime for $\Gv$.  This condition is needed for some equivariant cohomology calculations, and for some aspects of the theory of parity sheaves.

Finally, the main results of the paper require $p$ to be good for $G$, mainly so that we can invoke a theorem of Mautner--Riche~\cite[Corollary~1.6]{mr:etsps} which says that all (suitably normalized) indecomposable spherical parity sheaves on $\Gr$ are perverse.

\subsection{Organization of the paper}

We begin in Section~\ref{sec:weyl} with preliminaries on the affine Weyl group and the extended affine Weyl group associated to a reductive group, including the ``dot'' and ``box'' actions, alcoves, and coset representatives.  Section~\ref{sec:hecke} introduces the (extended) affine Hecke algebra as well as spherical and antispherical modules.  Sections~\ref{sec:weyl} and~\ref{sec:hecke} do not impose any restriction on $p$.

In Section~\ref{sec:affinegr}, we introduce the affine Grassmannian and affine flag variety; we record parametrizations of orbits; and we introduce various categories of sheaves on these spaces, including parity sheaves and the Smith category.  Starting in Section~\ref{ss:bhom}, we assume that $p$ is not a torsion prime for $\Gv$.

Section~\ref{sec:convolution} is devoted to the study of convolution on $\Gr$ and on $\Gr^\varpi$.  In particular, this section contains technical results on making sense of convolution in the Smith category.  Section~\ref{sec:groth} studies the (split) Grothendieck groups of all categories appearing in Theorem~\ref{thm:main-intro}.

The main results are proved in Section~\ref{sec:geom}: this includes Theorems~\ref{thm:main-intro} and~\ref{thm:stquot-intro}, as well as the two other results mentioned at the end of Section~\ref{ss:results}.  Finally, Appendix~\ref{app:st} contains general background on Smith--Treumann theory in the version developed in~\cite{rw:st}.

\subsection{Acknowledgements}

We are grateful to the University of Georgia for inviting us both to speak in the Algebra Seminar on April 14, 2025, where we began discussing this project.  We would also like to thank George Lusztig for drawing our attention to the paper~\cite{lus:dpm}.

\section{Weyl group combinatorics}
\label{sec:weyl}

\subsection{Preliminaries}

Let $\bk$ be an algebraically closed field of characteristic $p > 0$.  Let $G$ be a connected, reductive group over $\bk$.  Fix a a Borel subgroup $B \subset G$ and a maximal torus $T \subset B$.  Let $\bX$, resp.~$\bXv$, be the character lattice, resp.~cocharacter lattice, of $B$.  We also let $B^+$ denote the opposite Borel subgroup to $B$ with respect to $T$.  Let $\fR \subset \bX$, resp.~$\fRv \subset \bXv$, be the set of roots, resp.~coroots. 

As the notation suggests, we intend $B$ to be the ``negative'' Borel subgroup: we define the set of positive roots $\fR^+ \subset \fR$ to be the roots occuring in the $T$-action on $\Lie(B^+)$.  This choice determines a set $\fRv^+ \subset \fRv$ of positive coroots.  We denote by $\fRs^+$, resp.~$\fRvs^+$, the set of simple roots, resp.~simple coroots.  Let $\bX^+ \subset \bX$ be the set of dominant weights determined by $\fR^+$, i.e.,
\[
\bX^+ = \{ \lambda \in \bX \mid \text{$\la \lambda, \check \alpha \ra \ge 0$ for all $\check \alpha \in \fRv^+$} \}.
\]
As usual, we let
\[
\rho = \frac{1}{2}\sum_{\alpha \in \fR^+} \alpha.
\]
This is an element of $\Q \otimes_\Z \bX$; it may or may not lie in $\bX$.

We assume throughout this paper that $G$ satisfies the following condition:
\begin{equation}\label{eqn:varsig-exist}
\text{There exists a weight $\varsigma \in \bX^+$ such that $\la \varsigma, \check\alpha \ra = 1$ for all $\check\alpha \in \fRvs^+$.}
\end{equation}
For example, if $G$ is semisimple and simply connected, then $\varsigma = \rho$ is the unique weight that satisfies this condition.  However, it is possible for~\eqref{eqn:varsig-exist} to be satisfied even when $\rho$ does not lie in $\bX^+$: for instance, the group $\mathrm{GL}_n$ satisfies~\eqref{eqn:varsig-exist} for all $n$, but $\rho$ lies in $\bX$ for $\mathrm{GL}_n$ only when $n$ is odd.

Let $W$ be the Weyl group of $(G,T)$, and let
\[
\Waff = W \ltimes \Z\fR
\qquad\text{and}\qquad
\Wext = W \ltimes \bX
\]
be the affine Weyl group and the extended affine Weyl group, respectively.  For $\lambda \in \bX$, we let $t_\lambda$ denote the corresponding element of $\Wext$.  There are bijections
\begin{equation}\label{eqn:x-wext}
\begin{aligned}
\bX &\overset{\sim}{\longleftrightarrow} \Wext/W, \\
\bX^+ &\overset{\sim}{\longleftrightarrow} W \backslash\Wext/W
\end{aligned}
\end{equation}
given by $\lambda \mapsto t_\lambda W$ and $\lambda \mapsto W t_\lambda W$, respectively.

Let $S$ be the set of simple reflections of $W$, and let $S_\aff$ be the set of simple reflections of $\Waff$.

We will consider the following two $p$-dilated actions of $\Wext$ on $\bX$: for $w = vt_\lambda$ with $v \in W$ and $\lambda \in \Z\fR$, we set
\[
w \dotl \mu = v(\mu + p\lambda + \rho) - \rho
\qquad\text{and}\qquad
w \boxl \mu = v(\mu + p\lambda).
\]
Of course, these two actions are very closely related.  The $\dotl$ action occurs more commonly in representation theory, but the $\boxl$ action is better suited to the combinatorics that arises in this paper.

Let $\Z[\bX]$ denote the group ring of $\bX$.  For $\lambda \in \bX$, let $e^\lambda$ denote the corresponding element of $\Z[\bX]$.  The group $W$ acts on $\Z[\bX]$ by ring automorphisms, and we may consider the subring $\Z[\bX]^W$ of $W$-invariants.  For $\lambda \in \bX^+$, let
\[
s(\lambda) = \sum_{\mu \in W\lambda} e^\mu
\qquad\text{and}\qquad
\chi(\lambda) = \frac{\sum_{w \in W} (-1)^{\length(w)} e^{w(\lambda + \rho)}}{\sum_{w \in W} (-1)^{\length(w)} e^{w\rho}}.
\]
Both
\[
\{ s(\lambda) : \lambda \in \bX^+\}
\qquad\text{and}\qquad
\{ \chi(\lambda) : \lambda \in \bX^+\}
\]
are $\Z$-bases for $\Z[\bX]^W$.

\subsection{Facets and cosets}
\label{ss:facets}

Consider the real vector space $\bX_\R = \R \otimes_\Z \bX$.  Let
\[
\ba = \{ \lambda \in \bX_\R \mid \text{$-1 < \la \lambda, \check \alpha \ra < 0$ for all $\check \alpha \in \fRv^+$} \}.
\]
We also consider its closure
\[
\bba = \{ \lambda \in \bX_\R \mid \text{$-1 \le \la \lambda, \check \alpha \ra \le 0$ for all $\check \alpha \in \fRv^+$} \}.
\]
Suppose we choose two disjoint subsets $P_{-1}, P_0 \subset \fRv^+$, and then set
\[
\bbf = \left\{ \lambda \in \bX_\R \,\Big|\,
\begin{array}{c}
\text{$\la \lambda, \check \alpha \ra = -1$ for all $\alpha \in P_{-1}$, $\la \lambda, \check \alpha  \ra= 0$ for all $\alpha \in P_0$,} \\
\text{and $-1 < \la \lambda, \check \alpha \ra < 0 $ for all $\alpha \in \fRv^+ \smallsetminus (P_{-1} \cup P_0)$}
\end{array}
\right\}.
\]
If $\bbf$ is nonempty, it is called a \emph{facet} of $\bba$.  Each facet is a locally closed convex subset of $\bba$, and $\bba$ is the union of its facets.  We will denote by $\bo$ the facet $\{0\}$.

It is well known that $p\bba$ is a fundamental domain for the action of $\Waff$ on $\bX_\R$ via $\boxl$.  (This can be deduced from the corresponding statement for $\dotl$, which can be found, for instance, in~\cite[\S II.6.2.(4)]{jan:rag}.)  For each facet $\bbf$, let
\[
W_\bbf = \{ w \in \Waff \mid \text{for all $x \in \bbf$, $w \boxl (-p x) = -p x$} \}
\]
Note that $W_\bbf$ is independent of $p$.  By~\cite[\S II.6.3 and \S II.6.11]{jan:rag}, for any fixed element $x_0 \in \bbf$, we have
\begin{equation}\label{eqn:wf-altdefn}
W_\bbf = \{ w \in \Waff \mid w \boxl (-p x_0) = -p x_0 \}
\end{equation}

By~\cite[\S II.6.3]{jan:rag} again, $W_\bbf$ is a finite Coxeter group, and a parabolic subgroup of $\Waff$. Its set of simple reflections $S_\bbf$ is given by
\[
S_\bbf = W_\bbf \cap S_\aff.
\]
Let $w_\bbf$ be the longest element of $W_\bbf$.  
In particular, $W_\bo = W$ is the (finite) Weyl group of $G$, and $w_\bo$ is its longest element. Let
\[
\Wext^\bbf = \{ w \in \Wext \mid \text{$w$ has maximal length in  $w W_\bbf$} \}.
\]
We also let
\[
\oWext^\bbf = \{ w \in \Wext \mid \text{$w$ has maximal length in $W w W_\bbf$} \}.
\]

In the special case where $\bbf = \ba$, we have $\Wext^\ba = \Wext$.  For $\oWext^\ba$, we often omit the ``$\ba$'' from the notation and simply write
\[
\oWext = \oWext^\ba =  \{ w \in \Wext \mid \text{$w$ has maximal length in $W w$} \}.
\]
The sets $\Waff^\bbf$, $\oWaff^\bbf$, and $\oWaff$ are defined similarly, but replacing ``$\Wext$'' in the definitions above with ``$\Waff$.''

There are of course bijections
\begin{equation}\label{eqn:min-wext}
\begin{aligned}
\Wext^\bbf &\overset{\sim}{\longleftrightarrow} \Wext/W_\bbf, \\
\oWext^\bbf & \overset{\sim}{\longleftrightarrow} W \backslash \Wext/W_\bbf
\end{aligned}
\end{equation}

Let $\bbf \subset \bba$ be a facet.  We say that an element $w \in \Wext$ is:
\begin{itemize}
\item \emph{regular} with respect to $\bbf$ if for all $v_1, v_2 \in W$, we have
\[
v_1 w W_\bbf = v_2 w W_\bbf \qquad\text{if and only if} \qquad
v_1 = v_2.
\]
\item \emph{minimal-regular} with respect to $\bbf$ if it is regular, and minimal in $Ww$.
\end{itemize}
The set of minimal-regular elements that are also \emph{maximal} in their left $W_\bbf$-coset is denoted by
\[
\spWext^\bbf = 
\{ w \in \Wext^\bbf \mid \text{$w$ is minimal-regular with respect to $\bbf$} \}.
\]
When $\bbf = \ba$, the ``regular'' condition is vacuous.  In this case, we often omit the ``$\ba$'' and write
\[
\spWext = \spWext^\ba = \{ w \in \Wext \mid \text{$w$ has minimal length in $W w$} \}.
\]
It is obvious from the definition that
\begin{equation}\label{eqn:spwext-contain}
\spWext^\bbf \subset \spWext \cap \Wext^\bbf.
\end{equation}
See Corollary~\ref{cor:spwext-contain} for a refinement of this.  According to~\cite[Lemma~2.4]{mr:etspc},
\begin{equation}\label{eqn:dom-minimal}
\lambda \in \bX^+
\qquad\text{implies}\qquad
t_\lambda \in \spWext.
\end{equation}

There is a bijection
\[
\spWext \simto \oWext
\qquad\text{given by}\qquad
w \mapsto w_\bo w.
\]

\begin{lem}\label{lem:regular-coset}
Let $\bbf \subset \bba$ be a facet.  The following conditions on an element $w \in \Wext$ are equivalent:
\begin{enumerate}
\item Some element in the double coset $W w W_\bbf$ is regular with respect to $\bbf$.
\item Every element in the double coset $W w W_\bbf$ is regular with respect to $\bbf$.
\item We have $|W w W_\bbf| = |W||W_\bbf|$.
\end{enumerate}
\end{lem}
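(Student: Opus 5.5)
The plan is to rephrase regularity as a freeness condition for a group action and then count orbits. Consider the action of the product group $W \times W_\bbf$ on $\Wext$ given by $(v,u)\cdot w = v w u^{-1}$. The double coset $W w W_\bbf$ is exactly the orbit of $w$, so by orbit--stabilizer
\[
|W w W_\bbf| = \frac{|W||W_\bbf|}{|\mathrm{Stab}(w)|},
\]
where $\mathrm{Stab}(w) = \{(v,u) \mid v w u^{-1} = w\}$. Thus (3) holds if and only if $\mathrm{Stab}(w)$ is trivial. (Both $W$ and $W_\bbf$ are finite, the latter by the facts cited from~\cite[\S II.6.3]{jan:rag}.)

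Next I would show that $w$ is regular with respect to $\bbf$ if and only if $\mathrm{Stab}(w)$ is trivial. Regularity says that $v_1 w W_\bbf = v_2 w W_\bbf$ forces $v_1 = v_2$. Equivalently, setting $v = v_2^{-1}v_1$, the condition $v w \in w W_\bbf$ forces $v = 1$. Now $vw \in wW_\bbf$ means $vw = wu$ for some $u \in W_\bbf$, that is, $(v,u^{-1}) \in \mathrm{Stab}(w)$; so regularity amounts to every stabilizer element having first coordinate trivial. If the first coordinate is trivial, then $w = wu^{-1}$, so $u = 1$. Hence regularity is exactly triviality of the stabilizer, and therefore $w$ is regular if and only if (3) holds.

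Since condition (3) depends only on the double coset, it follows immediately that (1), (2) and (3) are equivalent: (2)$\Rightarrow$(1) is trivial, (1)$\Rightarrow$(3) applies the above to the regular element, and (3)$\Rightarrow$(2) applies it to each element of the double coset. Alternatively, stabilizers of points in a single orbit are conjugate, and the conclusion is the same. There is no real obstacle. The only point needing care is the translation between the coset equality in the definition of regularity and the stabilizer, in particular checking that the second coordinate of a stabilizer element is forced to be trivial once the first is.
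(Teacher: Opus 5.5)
Your argument is correct and is essentially the paper's: both show that a representative $y$ of the double coset is regular if and only if $|WwW_\bbf| = |W||W_\bbf|$, and then use that this condition depends only on the double coset. The paper gets this by writing $WwW_\bbf = \bigcup_{v\in W} vyW_\bbf$ and observing that the cardinality is maximal exactly when these left cosets are distinct, whereas you get it from orbit--stabilizer for $W\times W_\bbf$. One harmless slip: with your action, $vw = wu$ puts $(v,u)$ rather than $(v,u^{-1})$ in $\mathrm{Stab}(w)$, which changes nothing since only the first coordinate matters.
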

\begin{proof}
Choose an element $y \in W w W_\bbf$.  Then the double coset $W w W_\bbf = W y W_\bbf$ can be written as
\begin{equation}\label{eqn:regco-eqn}
W w W_\bbf = \bigcup_{v \in W} vy W_\bbf.
\end{equation}
By definition, $y$ is regular with respect to $\bbf$ if and only if the terms on the right-hand side of~\eqref{eqn:regco-eqn} are distinct.  On the other hand, we also have $|W w W_\bbf| \le |W| |W_\bbf|$, with equality if and only if the terms on the right-hand side of~\eqref{eqn:regco-eqn} are distinct.  To summarize, $|W w W_\bbf| = |W||W_\bbf|$ if and only if $y$ is regular with respect to $\bbf$.  As this holds for every $y \in W w W_\bbf$, the lemma follows.
\end{proof}

\begin{lem}\label{lem:regular-left}
Let $\bbf \subset \bba$ be a facet, and let $w \in \Wext$.  If $w$ is minimal-regular with respect to $\bbf$, then all elements of the coset $wW_\bbf$ are minimal-regular with respect to~$\bbf$.
\end{lem}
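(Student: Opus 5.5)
The plan is to reduce everything to the standard structure theory of double cosets with respect to two standard parabolic subgroups of a Coxeter group. Fix $x \in W_\bbf$; we must show that $wx$ is regular with respect to $\bbf$ and minimal in $Wwx$. Regularity is immediate from Lemma~\ref{lem:regular-coset}: $wx$ lies in the double coset $WwW_\bbf$, which contains the regular element $w$, so every element of it, in particular $wx$, is regular. The real content is minimality.

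First I will reduce from $\Wext$ to the Coxeter group $\Waff$. Write $\Wext = \Waff \rtimes \Omega$, where $\Omega$ is the stabilizer of the fundamental alcove. Its elements have length zero and normalize $S_\aff$ by conjugation. Write $w = w'\omega$ with $w' \in \Waff$ and $\omega \in \Omega$. Then $\omega W_\bbf \omega^{-1} = W_{\bbf'}$ is again a standard parabolic subgroup of $\Waff$, and $wW_\bbf = w'W_{\bbf'}\omega$. Since right multiplication by $\omega$ preserves lengths and the left $W$-action, all three properties (regularity, minimality in the left $W$-coset, and the double-coset cardinality of Lemma~\ref{lem:regular-coset}) transfer between $w$ and $w'$. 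We may therefore assume $w \in \Waff$ and that $W_\bbf$ is a standard finite parabolic subgroup.

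Now let $d$ be the unique minimal-length element of $WwW_\bbf$. The standard fact (Kilmoyer / Bourbaki) is that every element of $WdW_\bbf$ can be written as $vdy$ with $v \in W$, $y \in W_\bbf$ and $\length(vdy) = \length(v) + \length(d) + \length(y)$. This decomposition is unique precisely when $W \cap dW_\bbf d^{-1} = 1$, and that condition is equivalent to $|WdW_\bbf| = |W||W_\bbf|$. By Lemma~\ref{lem:regular-coset}(3), regularity of $w$ gives exactly this equality, so the map $(v,y) \mapsto vdy$ is a bijection $W \times W_\bbf \to WwW_\bbf$ with additive lengths.

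Write $w = vdy$ in this form. The left coset $Ww$ consists of the elements $v'dy$ for $v' \in W$, which have lengths $\length(v') + \length(d) + \length(y)$. Minimality of $w$ in $Ww$ therefore forces $v = 1$, so $w = dy$. Consequently $wx = d(yx)$ with $yx \in W_\bbf$. The coset $Wwx$ consists of the elements $v'd(yx)$, of lengths $\length(v') + \length(d) + \length(yx)$, and this is minimized exactly at $v' = 1$. Hence $wx$ is minimal in $Wwx$.

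The main obstacle is justifying the length-additive unique factorization in the regular case. If I cannot simply cite it, I will prove it from the facts that $d$ is minimal in both $Wd$ and $dW_\bbf$, together with Deodhar's lemma or an exchange-condition argument.
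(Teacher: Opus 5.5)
Your proof is correct, but it takes a different route from the paper's.

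\textbf{The paper's argument.} It inducts on the length of the element of $W_\bbf$, which reduces to a single simple reflection $t \in S_\bbf$. If $wt$ were not minimal in $Wwt$, there would be $s \in S$ with $swt < wt$. The lifting property of the Bruhat order (Humphreys' Proposition~5.9), together with $sw > w$ and a length count, then forces $sw = wt$. Hence $swW_\bbf = wW_\bbf$, contradicting regularity.

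\textbf{Your argument.} You first reduce from $\Wext$ to $\Waff$ via the length-zero subgroup, and you handle this correctly; the paper applies a Coxeter-group result to $\Wext$ without comment, implicitly relying on the same reduction. You then use the structure theorem for double cosets $WdW_\bbf$ of two standard parabolic subgroups. Regularity is the condition $W \cap dW_\bbf d^{-1} = 1$, so $(v,y) \mapsto vdy$ is a bijection $W \times W_\bbf \to WwW_\bbf$ with additive lengths. Injectivity is immediate from the trivial intersection, so only the existence of length-additive factorizations actually needs citing. Minimality of $w$ and of every $wx$ can then be read off directly.

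\textbf{Comparison.} Your route is non-inductive and gives a sharper picture: in a regular double coset, the elements minimal in their left $W$-coset are exactly those of $dW_\bbf$. The same factorization would also give quick proofs of the regular-case statements in Lemma~\ref{lem:regular-right} and Corollary~\ref{cor:spwext-contain}. The price is the heavier Kilmoyer/Bourbaki input and the explicit reduction to $\Waff$, whereas the paper needs only an elementary Bruhat-order property.
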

\begin{proof}
Lemma~\ref{lem:regular-coset} already tells us that all elements of $w W_\bbf$ are at least regular with respect to $\bbf$.  It remains to show that for all $v \in W_\bbf$, the element $wv$ is minimal in $Wwv$.  By induction on the length of $v$, it suffices to consider the case where $v$ is a simple reflection $t \in S_\bbf$.

Suppose instead that $wt$ is \emph{not} minimal in $Wwt$. Then there is some simple reflection $s \in S$ such that $swt < wt$.  Since $w$ is minimal in $Ww$, we have
\begin{equation}\label{eqn:special-test}
sw > w.
\end{equation}
According to~\cite[Proposition~5.9]{hum}, the fact that $swt < wt$ implies that either $sw \le wt$ or $sw \le w$.  The latter contradicts~\eqref{eqn:special-test}, so we must have $sw \le wt$.  Next,~\eqref{eqn:special-test} implies that $\length(sw) = \length(w) + 1$.  On the other hand, $\length(wt) = \length(w) \pm 1$.  Since $sw \le wt$, we must have $\length(sw) = \length(wt)$, and we deduce that $sw = wt$.  We therefore have $sw \in w W_\bbf$, and hence $sw W_\bbf = w W_\bbf$.  But this contradicts the assumption that $w$ is regular with respect to $\bbf$.
\end{proof}

\begin{lem}\label{lem:regular-right}
Let $\bbf \subset \bba$ be a facet, and let $w \in \Wext^\bbf$.  The following are equivalent:
\begin{enumerate}
\item The element $w$ is regular with respect to $\bbf$.\label{it:rr-reg}
\item We have $Ww \subset \Wext^\bbf$.\label{it:rr-contain}
\item We have $Ww = Ww W_\bbf \cap \Wext^\bbf$.\label{it:rr-cap}
\end{enumerate}
\end{lem}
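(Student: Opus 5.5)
We will prove the cycle of implications (1)$\Rightarrow$(2)$\Rightarrow$(3)$\Rightarrow$(1). Throughout, $w \in \Wext^\bbf$ is fixed, and we use the bijection $\Wext^\bbf \leftrightarrow \Wext/W_\bbf$ from~\eqref{eqn:min-wext}: each left coset $yW_\bbf$ contains exactly one element of $\Wext^\bbf$, namely its longest element.

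For (1)$\Rightarrow$(2), the key tool is an analogue of Lemma~\ref{lem:regular-left}, now for maximality in $W_\bbf$-cosets rather than minimality in $W$-cosets. We argue by induction on $\length(v)$ that $vw \in \Wext^\bbf$ for all $v \in W$, so it suffices to treat $sy$ with $s \in S$ and $y \in Ww \cap \Wext^\bbf$; regularity passes from $w$ to $y$ by Lemma~\ref{lem:regular-coset}. Suppose $sy \notin \Wext^\bbf$. Then there is $t \in S_\bbf$ with $syt > sy$. Since $y$ is maximal in $yW_\bbf$, we have $yt < y$. We then compare lengths using the exchange/lifting property~\cite[Proposition~5.9]{hum}, exactly as in the proof of Lemma~\ref{lem:regular-left}. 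Starting from $syt > sy$ and $yt<y$, the lifting property forces $\length(sy)=\length(y)\pm1$ to be compatible only if $sy = yt$. (Alternatively, Deodhar's lemma gives directly that $syW_\bbf \ne yW_\bbf$ implies $sy$ is maximal in its coset whenever $y$ is.) In that case $syW_\bbf = yW_\bbf$ with $s \ne 1$, contradicting regularity. The main obstacle is getting the case analysis of this step right; the cleanest route is to invoke Deodhar's lemma: for $y \in \Wext^\bbf$ and $s\in S$, either $sy \in \Wext^\bbf$ or $sy = yt$ for some $t\in S_\bbf$.

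For (2)$\Rightarrow$(3), we use $Ww \subset WwW_\bbf$. By (2) we also have $Ww\subset \Wext^\bbf$, so $Ww \subset WwW_\bbf\cap\Wext^\bbf$. Conversely, let $x \in WwW_\bbf\cap \Wext^\bbf$ and write $x = vwu$ with $v\in W$ and $u\in W_\bbf$. Then $x$ and $vw$ lie in the same left $W_\bbf$-coset and both lie in $\Wext^\bbf$, so $x = vw \in Ww$ by uniqueness of the maximal coset representative.

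For (3)$\Rightarrow$(1), suppose $v_1wW_\bbf = v_2wW_\bbf$. By (3), both $v_1w$ and $v_2w$ lie in $\Wext^\bbf$, so uniqueness of the maximal representative gives $v_1w=v_2w$, hence $v_1=v_2$. This is exactly regularity of $w$, which closes the cycle.
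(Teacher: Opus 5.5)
Your proof is correct and follows the paper's argument. The key step (1)$\Rightarrow$(2) is the same induction on $\length(v)$, with the lifting property \cite[Proposition~5.9]{hum} forcing $sy = yt$; your Deodhar-lemma phrasing is that same dichotomy. Your (3)$\Rightarrow$(1) is the paper's (2)$\Rightarrow$(1). The only real difference is (2)$\Rightarrow$(3): you get it directly from uniqueness of the maximal representative of $vwW_\bbf$, whereas the paper uses (1) together with a cardinality count via Lemma~\ref{lem:regular-coset}, so your version is slightly more economical.
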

\begin{proof}
\eqref{it:rr-contain}${}\Longrightarrow{}$\eqref{it:rr-reg}.
Suppose that $Ww \subset \Wext^\bbf$.  If $v_1, v_2 \in W$ and $v_1 \ne v_2$, then $v_1w \ne v_2w$.  Since each left $W_\bbf$-coset contains a unique maximal element, and since $v_1w$ and $v_2w$ are both maximal in their respective left $W_\bbf$-cosets, we conclude that $v_1w W_\bbf \ne v_2 w W_\bbf$.  Thus, $w$ is regular with respect to $\bbf$.

\eqref{it:rr-reg}${}\Longrightarrow{}$\eqref{it:rr-contain}.
Suppose that $w$ is regular with respect to $\bbf$.  We must show that for all $v \in W$, we have $vw \in \Wext^\bbf$.  By Lemma~\ref{lem:regular-coset}, all elements $vw$ are again regular with respect to $\bbf$.  By induction on the length of $v$, it suffices to treat the case where $v$ is a simple reflection $s \in S$.  If $sw \notin \Wext^\bbf$, there is a simple reflection $t \in S_\bbf$ such that $sw < swt$.  Since $w \in \Wext^\bbf$, we have
\begin{equation}\label{eqn:regular-test}
wt < w.
\end{equation}
By~\cite[Proposition~5.9]{hum}, $sw < swt$ implies that either $w \le swt$ or $w \le wt$.  The latter contradicts~\eqref{eqn:regular-test}, so we must have $w \le swt$.  By~\eqref{eqn:regular-test}, $\length(wt) = \length(w) - 1$, so $\length(swt) = \length(w) -1 \pm 1$.  Since $w \le swt$, we must have $\length(w) = \length(swt)$, and we deduce that $w = swt$.  We therefore have $sw \in w W_\bbf$, and hence $sw W_\bbf = w W_\bbf$.  But this contradicts the assumption that $w$ is regular with respect to $\bbf$.

\eqref{it:rr-cap}${}\Longrightarrow{}$\eqref{it:rr-contain}.
Obvious.

\eqref{it:rr-reg}, \eqref{it:rr-contain}${}\Longrightarrow{}$\eqref{it:rr-cap}.
We have already shown that conditions~\eqref{it:rr-reg} and~\eqref{it:rr-contain} are equivalent; now assume that both of these hold.  The set $Ww W_\bbf \cap \Wext^\bbf$ contains one representative from each left $W_\bbf$-coset contained in $W w W_\bbf$, so by Lemma~\ref{lem:regular-coset}, $|W w W_\bbf \cap \Wext^\bbf| = |W|$.  Since $Ww \subset Ww W_\bbf \cap \Wext^\bbf$, and since the two sets have the same cardinality, they coincide.
\end{proof}

\subsection{More on the box action}

Recall from Section~\ref{ss:facets} that $p\bba$ (and hence also $-p\bba$) is a fundamental domain for the $\boxl$-action of $\Waff$ on $\bX_\R$.  It follows that $-p\bba \cap \bX^+$ is a fundamental domain for the $\boxl$-action of $\Waff$ on $\bX$.  That is,
\[
\bX = \bigsqcup_{\lambda \in -p\bba \cap \bX^+} \Waff \boxl \lambda.
\]

For $\lambda \in -p\bba \cap \bX^+$, let $\bbf_\lambda$ be the facet of $\ba$ containing $-\frac{1}{p}\lambda$.  By~\eqref{eqn:wf-altdefn}, we have
\[
W_{\bbf_\lambda} = \{ w \in \Waff \mid w \boxl \lambda = \lambda \}
\]
so
\begin{equation}\label{eqn:bx-waff-bij}
\bX \cong \bigsqcup_{\lambda \in -p\bba \cap \bX^+} \Waff/W_{\bbf_\lambda} \cong \bigsqcup_{\lambda \in -p\bba \cap \bX^+} \Waff^{\bbf_\lambda}.
\end{equation}
In concrete terms, for $\lambda \in -p\bba \cap \bX^+$, the bijection above sends $w \in \Waff^{\bbf_\lambda}$ to $w \boxl \lambda$.  By Lemma~\ref{lem:boxl-wext} below,~\eqref{eqn:bx-waff-bij} restricts to a bijection
\begin{equation}\label{eqn:xpp-spwaff-bij}
\bX^{++} \cong \bigsqcup_{\lambda \in -p\bba \cap \bX^+} \spWaff^{\bbf_\lambda}.
\end{equation}

Lemma~\ref{lem:boxl-wext} also says that~\eqref{eqn:bx-waff-bij} yields a bijection $-\bX^+ \cong \bigsqcup_{\lambda \in -p\bba \cap \bX^+} \oWaff^{\bbf_\lambda}$.  We can modify this to obtain a bijection
\begin{equation}\label{eqn:xp-owaff-bij}
\bX^+ \cong \bigsqcup_{\lambda \in -p\bba \cap \bX^+} W \backslash \Waff/W_{\bbf_\lambda} \cong \bigsqcup_{\lambda \in -p\bba \cap \bX^+} \oWaff^{\bbf_\lambda}
\end{equation}
given by sending $w \in \oWaff^{\bbf_\lambda}$ to $w_\bo(w \boxl \lambda)$.

\begin{lem}\label{lem:boxl-regular}
Let $\lambda \in -p\bba \cap \bX^+$, and let $w \in \Waff^{\bbf_\lambda}$.  We have that $w$ is regular with respect to $\bbf_\lambda$ if and only if $\la w \boxl \lambda, \check \beta \ra \ne 0$ for all coroots $\check\beta$.
\end{lem}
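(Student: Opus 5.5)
Set $\mu = w \boxl \lambda$. The plan is to reformulate regularity as a statement about stabilizers in $W$ of the point $\mu$ under the $\boxl$-action, and then use that the stabilizer in $W$ of a weight is generated by reflections in roots orthogonal to it.

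First, by definition, $w$ is regular with respect to $\bbf_\lambda$ exactly when $v_1 w W_{\bbf_\lambda} = v_2 w W_{\bbf_\lambda}$ forces $v_1 = v_2$ for $v_1,v_2\in W$. Equivalently, whenever $v \in W$ satisfies $v w \in w W_{\bbf_\lambda}$, we must have $v = 1$. Now use the description $W_{\bbf_\lambda} = \{x \in \Waff \mid x \boxl \lambda = \lambda\}$ recorded just before~\eqref{eqn:bx-waff-bij}, which follows from~\eqref{eqn:wf-altdefn}. With it, $vw \in wW_{\bbf_\lambda}$ iff $w^{-1}vw \boxl \lambda = \lambda$, iff $v \boxl \mu = \mu$. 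Since $v \in W$ has no translation part, $v\boxl\mu = v(\mu)$ is the ordinary linear action. Hence
\[
\text{$w$ is regular} \iff \mathrm{Stab}_W(\mu) = \{1\}.
\]

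Second, I will show that $\mathrm{Stab}_W(\mu)$ is trivial iff $\la \mu, \check\beta\ra \ne 0$ for all coroots $\check\beta$.
\begin{itemize}
\item If $\la\mu,\check\beta\ra = 0$ for some coroot, then the reflection $s_\beta$ fixes $\mu$, so the stabilizer is nontrivial.
\item Conversely, suppose $\mu$ pairs nonzero with all coroots. Choose $u\in W$ with $u\mu$ dominant; then $u\mu$ lies in the open dominant chamber. The stabilizer of a point in the open chamber is trivial, since $W$ acts simply transitively on chambers; alternatively, cite the standard fact that stabilizers are generated by the reflections fixing the point (e.g.~\cite[\S II.6.3]{jan:rag} or \cite{hum}). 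Conjugating by $u$ then shows $\mathrm{Stab}_W(\mu)$ is trivial.
\end{itemize}
Combining the two steps gives the lemma.

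The only point needing care is the first step: checking that the stabilizer description of $W_{\bbf_\lambda}$ applies. It requires $-\frac1p\lambda\in\bbf_\lambda$, which holds by the definition of $\bbf_\lambda$ because $\lambda \in -p\bba$. The hypothesis $w\in\Waff^{\bbf_\lambda}$ plays no essential role beyond fixing the coset representative. The second step is standard Weyl group theory, so I expect no real obstacle.
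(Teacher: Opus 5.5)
Your proof is correct and follows essentially the same route as the paper: both reduce regularity of $w$ to triviality of the stabilizer in $W$ of $w \boxl \lambda$, and then use that this stabilizer is generated by the reflections fixing $w \boxl \lambda$. The only difference is minor: you obtain the first reduction directly from $W_{\bbf_\lambda} = \{x \in \Waff \mid x \boxl \lambda = \lambda\}$ by conjugating, whereas the paper goes through Lemma~\ref{lem:regular-coset} and the injectivity of the map $v \mapsto vw \boxl \lambda$ obtained from~\eqref{eqn:bx-waff-bij}.
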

\begin{proof}
For $v \in W$, consider the coset $vwW_{\bbf_\lambda}$.  Let $m(v)$ be the unique element of maximal length in this coset, i.e., $m(v) \in vwW_{\bbf_\lambda} \cap \Waff^{\bbf_\lambda}$.  The image of the map $m: W \to \Waff^{\bbf_\lambda}$ consists of representatives for the left $W_{\bbf_\lambda}$-cosets contained in $W w W_{\bbf_\lambda}$, and hence
\[
|W w W_{\bbf_\lambda}| = |(\text{image of $m$})| |W_{\bbf_\lambda}|.
\]
By Lemma~\ref{lem:regular-coset}, we see that $w$ is regular if and only $m$ is injective.

On the other hand, if we compose $m$ with the bijection~\eqref{eqn:bx-waff-bij}, we get a map
\[
m': W \to \bX
\qquad\text{given by}\qquad
m'(v) = m(v) \boxl \lambda = vw \boxl \lambda.
\]
The map $m$ is injective if and only if $m'$ is injective, and $m'$ is injective if and only if the stabilizer in $W$ of the weight $w \boxl \lambda$ is trivial.

The stabilizer in $W$ of $w \boxl \lambda$ is a parabolic subgroup of $W$.  In particular, it is generated by the reflections that stabilize $w \boxl \lambda$.  Thus, the stabilizer is trivial if and only if no reflection stabilizes $w \boxl \lambda$.  Of course, a reflection corresponding to a root $\beta$ stabilizes $w \boxl \lambda$ if and only if $\la w \boxl \lambda, \check \beta \ra = 0$.

Combining the observations above, we find that $w$ is regular if and only if $\la w \boxl \lambda, \check \beta \ra \ne 0$ for all coroots $\check\beta$.
\end{proof}

\begin{lem}\label{lem:boxl-wext}
Let $\lambda \in -p\bba \cap \bX^+$, and let $w \in \Waff^{\bbf_\lambda}$.
\begin{enumerate}
\item We have $w \in \oWaff^{\bbf_\lambda}$ if and only if $w \boxl \lambda \in -\bX^+$.\label{it:boxl-max}
\item We have $w \in \spWaff^{\bbf_\lambda}$ if and only if $w \boxl\lambda \in \bX^{++}$.\label{it:boxl-spec}
\end{enumerate}
\end{lem}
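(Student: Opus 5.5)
The plan is to translate length conditions on $w$ into geometric conditions on the alcove $w \boxl C$, where $C = -p\ba$. Here $C$ is the fundamental alcove for the $\boxl$-action, and it lies in the dominant chamber. The standard fact I will invoke (from the alcove geometry in~\cite[\S II.6]{jan:rag}, transported from $\dotl$ to $\boxl$) is the following. For $x \in \Waff$ and $s \in S$ the simple reflection for a simple root $\alpha$, we have $sx > x$ if and only if $x \boxl C$ lies on the positive side of the hyperplane $H_\alpha = \{\la \mu, \check\alpha\ra = 0\}$. Consequently, $x$ is minimal in $Wx$ if and only if $x \boxl C$ lies in the dominant chamber, and $x$ is maximal in $Wx$ if and only if $x \boxl C$ lies in the antidominant chamber. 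Note also that $\lambda \in \overline{C}$, so $w \boxl \lambda$ always lies in the closure of the alcove $w \boxl C$. Recall too that the stabilizer of $\lambda$ under $\boxl$ is $W_{\bbf_\lambda}$.

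For~\eqref{it:boxl-max}, I first reduce to the one-sided condition. An element of $\Waff^{\bbf_\lambda}$ lies in $\oWaff^{\bbf_\lambda}$ if and only if it is also maximal in $Ww$. This holds because the double coset $WwW_{\bbf_\lambda}$ has a unique maximal element, and an element that is maximal in both its left $W$-coset and its right $W_{\bbf_\lambda}$-coset is that maximal element (the standard parabolic double coset fact).

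\emph{Forward direction.} If $w$ is maximal in $Ww$, then $w \boxl C$ is antidominant, so its closure point $w \boxl \lambda$ lies in $-\bX^+$.

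\emph{Converse.} Suppose $w \boxl \lambda \in -\bX^+$ but $w$ is not maximal in $Ww$. Then some $s = s_\alpha \in S$ has $sw > w$, so $w \boxl C$ lies on the positive side of $H_\alpha$. The point $w \boxl \lambda$ is in the closure of that alcove, so $\la w\boxl\lambda, \check\alpha\ra \ge 0$. It is also antidominant, so $\la w\boxl\lambda, \check\alpha\ra \le 0$. Hence $\la w\boxl\lambda, \check\alpha\ra = 0$, and therefore $s$ fixes $w \boxl \lambda$. This gives $w^{-1}sw \in W_{\bbf_\lambda}$, i.e.\ $sw \in wW_{\bbf_\lambda}$. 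Together with $sw > w$, this contradicts $w \in \Waff^{\bbf_\lambda}$.

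For~\eqref{it:boxl-spec}, I combine Lemma~\ref{lem:boxl-regular} with the chamber criterion for minimality.

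\emph{Forward direction.} If $w$ is minimal-regular, then minimality in $Ww$ puts $w \boxl C$ in the dominant chamber, so $w \boxl \lambda$ is dominant. Regularity, via Lemma~\ref{lem:boxl-regular}, says no coroot pairs to zero with $w\boxl\lambda$. So $w \boxl \lambda \in \bX^{++}$.

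\emph{Converse.} If $w \boxl \lambda \in \bX^{++}$, then regularity is immediate from Lemma~\ref{lem:boxl-regular}. Moreover, $w \boxl \lambda$ lies in the open dominant chamber and also in the closure of the alcove $w \boxl C$. An alcove is contained in a single open chamber, since it meets no root hyperplane. Any alcove whose closure meets the open dominant chamber must therefore lie in the dominant chamber. Hence $w$ is minimal in $Ww$, so $w \in \spWaff^{\bbf_\lambda}$.

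The main obstacle is pinning down the correct sign conventions in the alcove/length fact for the $\boxl$-action. The paper uses the negative Borel, $\ba$ sits in the antidominant region, and the fundamental domain is written as $-p\bba$. I would verify the convention directly on $x = e$ and $x = s$: the element $e$ is minimal in $We$, and $C = -p\ba$ is dominant. I would then derive the general statement from the corresponding $\dotl$ statement in~\cite{jan:rag} by conjugating with translation by $\rho$ and applying $\mu \mapsto -\mu$ where needed.
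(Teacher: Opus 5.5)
Your proof is correct. It uses a different tool from the paper for the key step.

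\textbf{How the two proofs differ.} The paper writes $w = vt_\mu$ and uses the Iwahori--Matsumoto length formula to show, for each simple root $\beta$, that $s_\beta w < w$ if and only if $\la w \boxl \lambda, \check\beta\ra \le 0$. This needs three things:
\begin{enumerate}
\item a case split on whether $s_\beta v > v$ or $s_\beta v < v$;
\item the bounds $0 \le \la \lambda, \check\gamma\ra \le p$ for positive coroots $\check\gamma$;
\item in the second case, the divisibility of $\la v(p\mu), \check\beta\ra$ by $p$.
\end{enumerate}
You replace all of this with two facts: left descents are detected by which side of the simple walls the alcove $w \boxl C$ lies on (with $C = -p\ba$), and $w \boxl \lambda \in \overline{w \boxl C}$. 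The integrality that the paper uses explicitly is hidden in the fact that an alcove cannot straddle $H_\beta$.

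The other ingredients are the same in both proofs. Your coset argument for the boundary case $\la w \boxl \lambda, \check\alpha\ra = 0$ is the paper's Step~1. Both proofs reduce membership in $\oWaff^{\bbf_\lambda}$ to one-sided maximality. Both obtain regularity from Lemma~\ref{lem:boxl-regular}. Your version is shorter and more conceptual; the paper's is a self-contained computation in which every inequality is visible.

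\textbf{A caution about your plan for the sign conventions.} Checking $x = e$ and $x = s$ cannot settle them. Both Coxeter structures on $\Waff$ that contain $S$ (fundamental alcove $-p\ba$ or $p\ba$) pass that test. What actually decides it is that $S_\aff$ must be the set of reflections in the walls of $-p\ba$. This is forced because the paper needs $W_\bbf$, the stabilizer of a point of $-p\bba$, to be generated by $W_\bbf \cap S_\aff$ for every facet.

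The choice is not cosmetic. Take $\mathrm{SL}_2$ with the alcove $p\ba$. The reflection in $\la \mu, \check\alpha\ra = -p$ would then be simple, hence minimal in its $W$-coset. Yet it sends $-p\ba$ to an antidominant alcove, so part~(2) would fail.

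Also, no sign change $\mu \mapsto -\mu$ is needed when transporting the result from \cite{jan:rag}. The shift $\mu \mapsto \mu + \rho$ alone carries Jantzen's fundamental $\dotl$-alcove exactly onto $-p\ba$.
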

\begin{proof}
Write $w = vt_\mu$ with $v \in W$ and $\mu \in \bX$.  Let $\beta$ be a simple root, and let $s_\beta$ be the corresponding simple reflection.  We then have
\begin{equation}\label{eqn:boxl-expand}
\la w \boxl \lambda, \check \beta\ra 
= \la v(\lambda + p\mu), \check\beta \ra
= \underbrace{\la \lambda, v^{-1}(\check\beta)\ra}_{\text{(a)}} + \underbrace{p \la \mu, v^{-1}(\check\beta)\ra}_{\text{(b)}}
\end{equation}
We will study the terms (a) and (b) in more detail below.

\textit{Step 1. We have $\la w \boxl \lambda, \check \beta \ra = 0$ if and only if $s_\beta w W_{\bbf_\lambda} = wW_{\bbf_\lambda}$.  Moreover, if these conditions hold, then $s_\beta w < w$.} We have
\[
\la w \boxl \lambda, \check \beta \ra = 0
\qquad\text{if and only if}\qquad
s_\beta(w \boxl \lambda) = w \boxl \lambda.
\]
Since $s_\beta(w \boxl \lambda) = (s_\beta w) \boxl \lambda$, the conditions above are equivalent to all of the following:
\[
(s_\beta w) \boxl \lambda = w \boxl \lambda,
\qquad
w^{-1}s_\beta w \in W^{{\bbf_\lambda}},
\qquad
s_\beta w W_{\bbf_\lambda} = w W_{\bbf_\lambda}.
\]
By assumption, $w$ lies in $\Waff^{\bbf_\lambda}$, i.e., it is the largest element in the coset $wW_{\bbf_\lambda}$, so if the conditions above hold, we must have $s_\beta w < w$.

\textit{Step 2. Assume $s_\beta v > v$. We have $s_\beta w < w$ if and only if $\la w \boxl \lambda, \check \beta \ra \le 0$.}  Our assumption implies that $v^{-1}(\beta) \in \fR^+$.  From the formula for length in $\Waff$ (see~\cite[Proposition~1.23]{im:sbd}), we have
\[
\length(s_\beta w) = \length(w) - |\la \mu, v^{-1}\check\beta \ra| + |1 + \la \mu, v^{-1}\check\beta\ra|.
\]
Now, $s_\beta w < w$ if and only if $\length(s_\beta w) < \length(w)$.  From the formula above, we obtain
\begin{equation}\label{eqn:svv-cond1}
s_\beta w < w
\qquad\text{if and only if}\qquad
\la \mu, v^{-1}\check\beta \ra < 0.
\end{equation}

If $s_\beta w < w$, then~\eqref{eqn:svv-cond1} says that $\la \mu, v^{-1}\check\beta \ra \le -1$, so~\eqref{eqn:boxl-expand}(b) is${}\le -p$.  On the other hand, since $\lambda \in -p\bba$ and $v^{-1}(\check\beta) \in \fRv^+$, we see that~\eqref{eqn:boxl-expand}(a) is${}\le p$.  We conclude that $\la w \boxl \lambda, \check\beta \ra \le 0$.

Conversely, suppose $\la w \boxl \lambda, \check\beta \ra \le 0$.  If in fact $\la w \boxl \lambda, \check\beta \ra = 0$, then $s_\beta w < w$ by Step~1.  If $\la w \boxl \lambda, \check\beta \ra < 0$, then 
\[
\la v(p \mu), \check \beta\ra = \la w \boxl \lambda, \check\beta \ra - \la \lambda, v^{-1}(\check\beta) \ra < 0.
\]
It follows that $\la \mu, v^{-1}(\check\beta)\ra < 0$, and by~\eqref{eqn:svv-cond1}, $s_\beta w < w$.

\textit{Step 3. Assume $s_\beta v < v$. We have $s_\beta w < w$ if and only if $\la w \boxl \lambda, \check \beta \ra \le 0$.}  This time, we have $-v^{-1}(\beta) \in \fR^+$, and the length formula yields
\[
\length(s_\beta w) = \length(w) - |1 + \la \mu, -v^{-1}\check\beta \ra| + |\la \mu, -v^{-1}\check\beta\ra|.
\]
We see that
\begin{equation}\label{eqn:svv-cond2}
s_\beta w < w
\qquad\text{if and only if}\qquad
\la \mu, -v^{-1}(\check\beta) \ra \ge 0.
\end{equation}

If $s_\beta w < w$, then~\eqref{eqn:svv-cond2} implies that~\eqref{eqn:boxl-expand}(b) is${}\le 0$.  On the other hand, since $v^{-1}(\check\beta) \in - \fRv^+$, we see that~\eqref{eqn:boxl-expand}(a) is also${}\le 0$.  We conclude that $\la w \boxl \lambda, \check\beta \ra \le 0$.

Conversely, suppose $\la w \boxl \lambda, \check\beta \ra \le 0$.  As in Step~2, it is enough to treat the case where $\la w \boxl \lambda, \check\beta \ra < 0$.
Since $\la \lambda, -v^{-1}(\check\beta)\ra \le p$, we have
\[
\la v(p \mu), \check \beta\ra = \la w \boxl \lambda, \check\beta \ra + \la \lambda, -v^{-1}(\check\beta) \ra < p.
\]
But $\la v(p \mu), \check \beta\ra$ is an integer that is divisble by $p$, so if it is${}<p$, it must in fact be${}\le 0$.  By~\eqref{eqn:svv-cond2}, we conclude that $s_\beta w < w$.

\textit{Step 4. Proof of part~\eqref{it:boxl-max}.}  The element $w \in \Waff^{\bbf_\lambda}$ lies in $\oWaff^{\bbf_\lambda}$ if and only if $s_\beta w < w$ for all simple roots $\beta$.  Similarly, $w \boxl \lambda$ lies in $-\bX^+$ if and only if $\la w \boxl \lambda, \check\beta \ra \le 0$ for all simple roots $\beta$.  The claim thus follows from Steps~2 and~3.

\textit{Step 5. We have $w \in \spWaff \cap \Waff^{\bbf_\lambda}$ if and only if $w \boxl\lambda \in \bX^{++}$.}  This is essentially identical to Step 4: just observe that Steps~2 and~3 say that $s_\beta w > w$ if and only if $\la w \boxl \lambda, \check\beta \ra > 0$.

\textit{Step 6. Proof of part~\eqref{it:boxl-spec}.} In view of Step~5 and~\eqref{eqn:spwext-contain}, what remains to be shown is that if $w \in \spWaff \cap \Waff^{\bbf_\lambda}$, then $w$ is regular with respect to $\bbf_\lambda$.  Since $w \boxl \lambda \in \bX^{++}$, we see that $\la w \boxl \lambda, \check \beta\ra \ne 0$ for all coroots $\check\beta$, so $w$ is regular by Lemma~\ref{lem:boxl-regular}.
\end{proof}

\begin{cor}\label{cor:spwext-contain}
Let $\bbf \subset \bba$ be a facet.
\begin{enumerate}
\item We have $\spWaff^\bbf = \spWaff \cap \Waff^\bbf$.
\item If $w \in \Waff$ is regular, the double coset $W w W_\bbf$ contains a unique element of $\spWaff^\bbf$.  Thus, there is a bijection
\[
\left\{
\begin{array}{c}
\text{double cosets $Ww W_\bbf$ where} \\
\text{$w \in \Waff$ is regular with respect to $\bbf$}
\end{array}
\right\}
\overset{\sim}{\leftrightarrow}
\spWaff^\bbf.
\]
\end{enumerate}
\end{cor}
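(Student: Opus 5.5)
The plan is to reduce both statements to Lemmas~\ref{lem:boxl-regular} and~\ref{lem:boxl-wext} by realizing the facet $\bbf$ as $\bbf_\lambda$ for a suitable weight $\lambda \in -p\bba \cap \bX^+$. The first step is to produce such a $\lambda$. This is the step I expect to be the main obstacle: $\bbf$ is an arbitrary facet, and for a fixed $p$ the dilate $-p\bbf$ need not contain a point of $\bX$.

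To get around this, I would observe that $W_\bbf$, $S_\bbf$, and all the sets in the statement are independent of $p$. Moreover, the proofs of Lemmas~\ref{lem:boxl-regular} and~\ref{lem:boxl-wext} use $p$ only as a positive integer. The one place where divisibility is used is Step~3, where an integer multiple of $p$ that is ${}<p$ must be ${}\le 0$, and this holds for any positive integer. So I may replace $p$ by a large positive integer $N$. Since $\bbf$ is a nonempty relatively open convex set cut out by rational conditions, I can pick a rational point $x_0 \in \bbf$. Then for $N$ sufficiently divisible, $\lambda := -Nx_0$ lies in $\bX$; here I would use that $\Z\fR$ has finite index in the relevant lattice, so rational points of $\Q\otimes\bX$ scale into $\bX$. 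Since $x_0 \in \bba$, we get $\lambda \in -N\bba \cap \bX^+$. By~\eqref{eqn:wf-altdefn}, $\bbf_\lambda = \bbf$.

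For part~(1), the inclusion $\spWaff^\bbf \subset \spWaff \cap \Waff^\bbf$ is~\eqref{eqn:spwext-contain}. Conversely, suppose $w \in \spWaff \cap \Waff^\bbf$. Step~5 of the proof of Lemma~\ref{lem:boxl-wext} gives $w \boxl \lambda \in \bX^{++}$. Hence $\la w\boxl\lambda,\check\beta\ra \ne 0$ for all coroots $\check\beta$, so $w$ is regular by Lemma~\ref{lem:boxl-regular}. Since $w$ is also minimal in $Ww$, it is minimal-regular, and therefore $w \in \spWaff^\bbf$. (This is just Step~6 of that proof, now available for every facet.)

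For part~(2), let $w$ be regular. By Lemma~\ref{lem:regular-coset}, every element of $WwW_\bbf$ is regular. Replacing $w$ by the maximal element of its $W_\bbf$-coset, I may assume $w \in \Waff^\bbf$. Lemma~\ref{lem:regular-right} then gives $WwW_\bbf \cap \Waff^\bbf = Ww$. Under the bijection~\eqref{eqn:bx-waff-bij}, this set corresponds to the orbit $W(w\boxl\lambda)$, using $(vw)\boxl\lambda = v(w\boxl\lambda)$. By Lemma~\ref{lem:boxl-regular}, $w\boxl\lambda$ is regular, so its $W$-orbit contains exactly one element of $\bX^{++}$. By Lemma~\ref{lem:boxl-wext}\eqref{it:boxl-spec}, the elements of $Ww$ lying in $\spWaff^\bbf$ are exactly those $vw$ with $vw\boxl\lambda \in \bX^{++}$, so there is exactly one of them. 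Any element of $\spWaff^\bbf \cap WwW_\bbf$ lies in $\Waff^\bbf$, hence in $Ww$, so uniqueness holds in the whole double coset. Finally, every element of $\spWaff^\bbf$ is regular by definition, so it determines a regular double coset. This gives the stated bijection.
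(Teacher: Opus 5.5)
Your proposal is correct and follows the paper's proof essentially step for step. Part~(1) is Steps~5--6 of the proof of Lemma~\ref{lem:boxl-wext}. Part~(2) reduces to $w \in \Waff^\bbf$, uses Lemma~\ref{lem:regular-right} to get $WwW_\bbf \cap \Waff^\bbf = Ww$, and then picks out the unique element of $W(w \boxl \lambda)$ lying in $\bX^{++}$ via Lemmas~\ref{lem:boxl-regular} and~\ref{lem:boxl-wext}. Your remark that the auxiliary ``$p$'' may be any positive integer $N$ spells out what the paper's ``choose $p$ and $\lambda$'' leaves implicit, and it is genuinely needed: for some facets, e.g.\ vertices of the type $E_8$ alcove with highest-coroot coefficient $4$ or $6$, no prime would do. Your appeal to $\Z\fR$ having finite index is unnecessary, and false for non-semisimple $G$, since a point of $\Q \otimes_\Z \bX$ lands in $\bX$ after clearing denominators.
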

\begin{proof}
The first statement was established in Steps~5 and~6 of the preceding proof.  For the second statement, the double coset $W w W_\bbf$ contains some element of $\Waff^\bbf$, so we may assume without loss of generality that $w \in \Waff^\bbf$.  By Lemma~\ref{lem:regular-right}, we have $Ww = Ww W_\bbf \cap \Waff^\bbf$, so we are reduced to showing that $Ww$ contains a unique element of $\spWaff^\bbf$.  

Choose $p$ and $\lambda \in \bX$ such that $-\frac{1}{p}\lambda \in \bbf$.  By Lemma~\ref{lem:boxl-regular} (and its proof), the map $W \to \bX$ given by $v \mapsto vw \boxl \lambda$ is injective.  The image of this map contains a unique dominant weight (which necessarily lies in $\bX^{++}$), say $\mu = vw \boxl \lambda$.  By Lemma~\ref{lem:boxl-wext}, $vw$ is the unique element of $Ww \cap \spWaff^\bbf$.
\end{proof}

\section{Hecke algebras and modules}
\label{sec:hecke}

\subsection{The extended affine Hecke algebra}

Let $v$ be an indeterminate, and consider the ring of Laurent polynomials $\Z[v,v^{-1}]$.  Let $\cH_\ext$ be the extended affine Hecke algebra associated to $\Wext$ over $\Z[v,v^{-1}]$, defined as in, say,~\cite[\S4.1]{rw:tmpcb} or~\cite[\S3.1]{rw:scf}.  This algebra comes with two $\Z[v,v^{-1}]$-bases, denoted by
\[
\{ H_w : w \in \Wext \}
\qquad\text{and}\qquad
\{ \uH_w : w \in \Wext \},
\]
and called the \emph{standard} and \emph{Kazhdan--Lusztig} bases, respectively.  More generally, for each prime number $p$, there is a basis
\[
\{ {}^p\! \uH_w : w \in \Wext \},
\]
called the $p$-canonical basis.  

We can also consider the group ring $\Z[\Wext]$.  For $w \in \Wext$, let $h_w$ denote the corresponding element of $\Z[\Wext]$.  There is a canonical ring isomorphism
\[
\cH_\ext|_{v=1} \simto \Z[\Wext]
\qquad\text{given by}\qquad
H_w \mapsto h_w.
\]

There is also the (nonextended) affine Hecke algebra $\cH_\aff$, defined by
\[
\cH_\aff = \mathrm{span}_{\Z[v,v^{-1}]}\  \{ H_w : w \in \Waff \}.
\]

\subsection{Hecke algebras associated to facets}

For each facet $\bbf \subset \bba$, we can consider the Hecke algebra $\cH_\bbf$ of the finite Coxeter group $W_\bbf$. This is identified with a subalgebra of $\cH_\ext$ (indeed, of $\cH_\aff$):
\[
\cH_\bbf = \mathrm{span}_{\Z[v,v^{-1}]}\ \{ H_w \mid w \in W_\bbf \}.
\]
For $\bbf = \bo$, the ring $\cH_\bo$ is the Hecke algebra of the Weyl group $W$.  On the other hand, $\cH_\ba$ is the just the ring $\Z[v,v^{-1}]$ itself.

Let $w_\bbf$ be the longest element of $W_\bbf$.  It is well known that
\begin{equation}\label{eqn:uh-long-formula}
{}^p\!\uH_{w_\bbf} = \sum_{w \in W_\bbf} v^{\length(w_\bbf) - \length(w)} H_w = \sum_{w \in W_\bbf} v^{\length(w)} H_{ww_\bbf}
\end{equation}
In particular, this element is independent of $p$.  Furthermore, for any $w \in W_\bbf$, we have
\begin{equation}\label{eqn:uh-long}
H_w \cdot \uH_{w_\bbf} = \uH_{w_\bbf} \cdot H_w = v^{-\length(w)} \uH_{w_\bbf}.
\end{equation}

\subsection{Spherical modules}

The right $\cH_\ext$-module $\cM$ given by
\[
\cM = \uH_{w_\bo} \cH_\ext
\]
is called the \emph{spherical module}.  It is easily deduced from~\eqref{eqn:uh-long} that an element $\sum a_w H_w \in \cH_\ext$ lies in $\cM$ if and only if we have
\[
a_{yw} = v^{\length(y)} a_w
\qquad\text{for all $y \in W$ and $w \in \oWext$.}
\]
For $w \in \oWext$, set
\[
M_w = \uH_{w_\bo} H_{w_\bo w} = \sum_{y \in W} v^{\length(y)} H_{y w} = \sum_{u \in Ww} v^{\length(w) - \length(u)} H_u.
\]
Then $\{ M_w : w \in \oWext \}$ is a $\Z[v,v^{-1}]$-basis for $\cM$.

\begin{rmk}
In some sources (e.g., \cite{rw:scf}), the standard basis for the spherical module is parametrized by \emph{minimal} coset representatives for $W \backslash \Wext$, rather than maximal coset representatives as we use here.
\end{rmk}

\begin{rmk}\label{rmk:m-alt-defn}
Let $\Z[v,v^{-1}]_\triv$ denote the ``trivial'' $\cH_\bo$-module: that is, the module whose underlying set is $\Z[v,v^{-1}]$, on which $H_w \in \cH_\bo$ acts by $v^{-\length(w)}$.  Then, by~\eqref{eqn:uh-long}, there is a right $\cH_\ext$-module homomorphism
\[
\Z[v,v^{-1}]_\triv \otimes_{\cH_\bo} \cH_\ext \to \cM
\qquad\text{given by}\qquad
1 \otimes h \mapsto \uH_{w_\bo} h.
\]
In fact, this map is an isomorphism.  In some sources, the spherical module is defined to be $\Z[v,v^{-1}]_\triv \otimes_{\cH_\bo} \cH_\ext$.
\end{rmk}

We generalize this construction as follows: given a facet $\bbf \subset \bba$, let
\[
\cM^\bbf = \uH_{w_\bo}\cH_\ext \cap \cH_\ext\uH_{w_\bbf}.
\]
(In general, this is no longer an $\cH_\ext$-module.)  For $w \in \oWext^\bbf$, set
\begin{equation}\label{eqn:mfw-defn}
M_w^\bbf = \sum_{u \in WwW_\bbf} v^{\length(w) - \length(u)} H_u = \sum_{u \in \oWext \cap W w W_\bbf} v^{\length(w) - \length(u)} M_u
\end{equation}
Then $\{ M_w^\bbf : w \in \oWext^\bbf \}$ is a $\Z[v,v^{-1}]$-basis for $\cM^\bbf$.  We also let $\cM^\bbf_\aff$ be the submodule of $\cM^\bbf$ spanned by $\{ M_w^\bbf : w \in \oWaff^\bbf \}$.

\begin{lem}\label{lem:mfw-formula}
Let $\bbf \subset \bba$ be a facet, and let $w \in \oWext^\bbf$.  We have
\[
M_w^\bbf = \Big( \sum_{y \in \Wext^\bbf \cap W w W_\bbf} v^{\length(w) - \length(y)} H_{yw_\bbf} \Big) \uH_{w_\bbf}.
\]
\end{lem}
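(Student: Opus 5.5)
The plan is to expand the right-hand side using \eqref{eqn:uh-long-formula} and compare it term by term with the definition \eqref{eqn:mfw-defn}. By \eqref{eqn:uh-long-formula}, $\uH_{w_\bbf} = \sum_{x \in W_\bbf} v^{\length(w_\bbf)-\length(x)} H_x$. Fix $y \in \Wext^\bbf \cap WwW_\bbf$. Since $y$ is maximal in $yW_\bbf$, the element $yw_\bbf$ is minimal in $yW_\bbf$. Hence for every $x \in W_\bbf$ we have $\length(yw_\bbf x) = \length(yw_\bbf) + \length(x)$, and so $H_{yw_\bbf} H_x = H_{yw_\bbf x}$. Therefore
\[
H_{yw_\bbf}\uH_{w_\bbf} = \sum_{x \in W_\bbf} v^{\length(w_\bbf)-\length(x)} H_{yw_\bbf x}.
\]

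Next I reindex by $u = yw_\bbf x$, which runs over the coset $yW_\bbf$. From $\length(u) = \length(y) - \length(w_\bbf) + \length(x)$ we get $\length(w_\bbf) - \length(x) = \length(y) - \length(u)$. Substituting,
\[
H_{yw_\bbf}\uH_{w_\bbf} = \sum_{u \in yW_\bbf} v^{\length(y)-\length(u)} H_u .
\]
Multiplying by $v^{\length(w)-\length(y)}$ gives $\sum_{u \in yW_\bbf} v^{\length(w)-\length(u)} H_u$.

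Finally I sum over $y$. Each left $W_\bbf$-coset contains exactly one element of $\Wext^\bbf$, so as $y$ ranges over $\Wext^\bbf \cap WwW_\bbf$ the cosets $yW_\bbf$ partition $WwW_\bbf$. The total is therefore $\sum_{u \in WwW_\bbf} v^{\length(w)-\length(u)}H_u$, which is $M_w^\bbf$ by \eqref{eqn:mfw-defn}.

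The only point needing care is the length additivity $\length(yw_\bbf x)=\length(yw_\bbf)+\length(x)$. This is the standard property of minimal coset representatives for the parabolic subgroup $W_\bbf$ of the Coxeter group $\Waff$. In $\Wext$ it still holds, since $\Wext = \Waff \rtimes \Omega$ with $\Omega$ the length-zero elements normalizing $S_\aff$, and $\Omega$ acts compatibly on the left. One should also note that in the extended affine Hecke algebra the relation $H_aH_b=H_{ab}$ holds whenever lengths add, which covers this case.
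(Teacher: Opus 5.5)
Your proof is correct and follows the same route as the paper's. You expand $\uH_{w_\bbf}$ via \eqref{eqn:uh-long-formula}, write each $u \in WwW_\bbf$ uniquely as $u = yw_\bbf x$ with $y \in \Wext^\bbf$ and $x \in W_\bbf$, use $\length(u) = \length(y) - \length(w_\bbf) + \length(x)$ together with $H_{yw_\bbf}H_x = H_u$, and compare with \eqref{eqn:mfw-defn}; your remark justifying length additivity in $\Wext$ via the length-zero elements is a useful detail the paper leaves implicit.
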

\begin{proof}
Using~\eqref{eqn:uh-long-formula}, the statement of the lemma can be rewritten as the claim that
\[
M_w^\bbf = \sum_{\substack{ y \in \Wext^\bbf \cap W w W_\bbf \\ v \in W_\bbf}} v^{\length(w) - \length(y) + \length(w_\bbf) - \length(v)} H_{yw_\bbf} H_v
\]
Comparing this with the definition of $M^\bbf_w$, we see that the lemma follows from the following three observations:
\begin{enumerate}
\item Each $u \in W w W_\bbf$ can be written uniquely as $u = yw_\bbf v$, where $y \in \Wext^\bbf$ and $v \in W_\bbf$.
\item For $u = yw_\bbf v$ as above, we have $\length(u) = \length(y) - \length(w_\bbf) + \length(v) = \length(y w_\bbf) + \length(v)$.
\item For $u = yw_\bbf v$ as above, we have $H_{y w_\bbf} H_v = H_u$.\qedhere
\end{enumerate}
\end{proof}

\begin{lem}\label{lem:theta-sph}
There is a surjective map of abelian groups
\[
\theta^\sph: \bigoplus_{\lambda \in -p\bba \cap \bX^+} \cM^{\bbf_\lambda}|_{v = 1}  \to \Z[X]^W
\qquad\text{given by}\qquad
M_w^{\bbf_\lambda} \mapsto s(w_\bo(w \boxl \lambda)).
\]
This map restricts to an isomorphism of abelian groups
\[
\theta^\sph_\aff: \bigoplus_{\lambda \in -p\bba \cap \bX^+} \cM_\aff^{\bbf_\lambda}|_{v = 1}  \to \Z[X]^W
\]
\end{lem}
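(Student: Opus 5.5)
Since $\{M_w^{\bbf_\lambda} : w \in \oWext^{\bbf_\lambda}\}$ is a $\Z[v,v^{-1}]$-basis of $\cM^{\bbf_\lambda}$, its specialization at $v=1$ is a $\Z$-basis of $\cM^{\bbf_\lambda}|_{v=1}$. So the formula in the statement defines a homomorphism of abelian groups $\theta^\sph$, and everything reduces to understanding where these basis elements go. Two small points of well-definedness need to be checked. First, $w \boxl \lambda$ only makes sense for $w \in \Waff$ (the box action was defined for $\lambda \in \Z\fR$ translation parts), while the basis of $\cM^{\bbf_\lambda}$ is indexed by $\oWext^{\bbf_\lambda}$. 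I would read the formula via the obvious extension $vt_\mu \boxl \lambda = v(\lambda + p\mu)$ for $\mu \in \bX$, which is still an action of $\Wext$ on $\bX$. Second, $w_\bo(w \boxl \lambda)$ must be dominant so that $s(\cdot)$ is defined. For $w \in \oWaff^{\bbf_\lambda}$ this is Lemma~\ref{lem:boxl-wext}\eqref{it:boxl-max}: $w \boxl \lambda \in -\bX^+$, hence $w_\bo(w\boxl\lambda) \in \bX^+$. (For the extended case one may instead simply take the dominant $W$-conjugate, since $s$ only depends on the $W$-orbit.)

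The main step is the isomorphism $\theta^\sph_\aff$. Its source has $\Z$-basis $\{M_w^{\bbf_\lambda} : \lambda \in -p\bba\cap\bX^+,\ w \in \oWaff^{\bbf_\lambda}\}$. Its target has $\Z$-basis $\{s(\mu) : \mu \in \bX^+\}$, as recorded in Section~\ref{sec:weyl}. The bijection~\eqref{eqn:xp-owaff-bij} is exactly the map
\[
\textstyle\bigsqcup_\lambda \oWaff^{\bbf_\lambda} \to \bX^+, \qquad w \mapsto w_\bo(w\boxl\lambda).
\]
Thus $\theta^\sph_\aff$ sends a basis bijectively onto a basis, and so it is an isomorphism. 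To be safe I would re-derive~\eqref{eqn:xp-owaff-bij} from~\eqref{eqn:bx-waff-bij} and Lemma~\ref{lem:boxl-wext}\eqref{it:boxl-max}. Injectivity holds because~\eqref{eqn:bx-waff-bij} is a bijection onto $\bX$ and $w_\bo$ is injective. For surjectivity, given $\mu\in\bX^+$, the weight $w_\bo\mu \in -\bX^+$ equals $w\boxl\lambda$ for a unique $\lambda$ and a unique $w \in \Waff^{\bbf_\lambda}$, and this $w$ lies in $\oWaff^{\bbf_\lambda}$ by the lemma.

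Surjectivity of $\theta^\sph$ then follows immediately, since $\theta^\sph$ restricts to $\theta^\sph_\aff$ and the latter is already onto. The only real obstacle is the bookkeeping about extended versus affine elements. If the intended reading restricts $\boxl$ to $\Waff$, then the extended basis elements require the extended box action described above. Surjectivity does not depend on how they are mapped, as long as the formula yields elements of $\Z[\bX]^W$, which it does once we pass to the dominant $W$-conjugate.
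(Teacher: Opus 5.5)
Your proposal is correct and follows essentially the same route as the paper: both observe that, via the bijection \eqref{eqn:xp-owaff-bij}, $\theta^\sph_\aff$ sends the basis $\{M_w^{\bbf_\lambda} : w \in \oWaff^{\bbf_\lambda}\}$ bijectively onto the basis $\{s(\mu) : \mu \in \bX^+\}$, and surjectivity of $\theta^\sph$ then follows because its restriction $\theta^\sph_\aff$ is already onto. Your extra care about the extended elements is harmless and in fact not needed: writing $w = w'\omega$ with $\length(\omega) = 0$ reduces to Lemma~\ref{lem:boxl-wext} applied to $w'$ and $\omega \boxl \lambda$, so $w_\bo(w \boxl \lambda)$ is already dominant for every $w \in \oWext^{\bbf_\lambda}$.
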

\begin{proof}
From the bijection~\eqref{eqn:xp-owaff-bij}, we see that $\theta^\sph_\aff$ sends the basis $\{ M_w^{\bbf_\lambda} : \lambda \in -p\bba \cap \bX^+,\ w \in \oWaff^\bbf \}$ of its domain to the basis $\{ s(\mu) : \mu \in \bX^+ \}$ of its codomain.
\end{proof}

\subsection{Antispherical modules}

Let $\Z[v,v^{-1}]_\sgn$ denote the ``sign'' representation of $\cH_\bo$: that is, the module whose underlying set is $\Z[v,v^{-1}]$, and on which $H_w \in \cH_\bo$ acts by $(-v)^{\length(w)}$.  The \emph{antispherical module} is the right $\cH_\ext$-module $\cN$ given by
\[
\cN = \Z[v,v^{-1}]_\sgn \otimes_{\cH_\bo} \cH_\ext.
\]
Let
\[
\pi_\cN: \cH_\ext \to \cN
\]
be the right $\cH_\ext$-module map given by $\pi_\cN(h) = 1 \otimes h$.  For $w \in \spWext$, we set
\begin{equation}\label{eqn:nw-defn}
N_w = \pi_\cN(H_w)
\qquad\text{and}\qquad
\uN_w = \pi_\cN(\uH_w).
\end{equation}
The sets $\{ N_w : w \in \spWext\}$ and $\{ \uN_w : w \in \spWext\}$ are each $\Z[v,v^{-1}]$-bases for $\cN$.  By~\eqref{eqn:dom-minimal}, $t_\varsigma$ lies in $\spWext$, and so the elements $N_{t_\varsigma}$ and $\uN_{t_\varsigma}$ are defined.

\begin{lem}\label{lem:un-sigma}
We have
\[
\uN_{t_\varsigma} = N_{t_\varsigma w_\bo} \uH_{w_\bo}.
\]
\end{lem}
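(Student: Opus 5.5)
The plan is to use the standard characterization of the Kazhdan--Lusztig basis of the antispherical module: $\uN_x$ is the unique element of $\cN$ that is invariant under the Kazhdan--Lusztig involution and lies in $N_x + \sum_{y} v\Z[v] N_y$. So it suffices to show two things about $X := N_{t_\varsigma w_\bo}\uH_{w_\bo}$: that it is self-dual, and that in the basis $\{N_y\}$ it equals $N_{t_\varsigma}$ plus terms with coefficients in $v\Z[v]$.

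\emph{Step 1: the stabilizer of $\varsigma$ is trivial.} By~\eqref{eqn:varsig-exist}, $\la \varsigma, \check\alpha \ra \ge 1$ for every positive coroot, so $\varsigma$ is strictly dominant. Arguing as in the proof of Lemma~\ref{lem:boxl-regular}, this gives $|W t_\varsigma W| = |W|^2$, i.e.\ $t_\varsigma$ is regular with respect to $\bo$ (Lemma~\ref{lem:regular-coset}). By~\eqref{eqn:dom-minimal}, $t_\varsigma$ is minimal in $Wt_\varsigma$, so it is minimal-regular. Lemma~\ref{lem:regular-left} (with $\bbf=\bo$) then shows that every element $t_\varsigma u$ ($u \in W$) is minimal in its left $W$-coset, i.e.\ $t_\varsigma W \subset \spWext$.

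\emph{Step 2: expansion and triangularity.} We check that $t_\varsigma$ is maximal in $t_\varsigma W$; this follows from the length formula of~\cite[Proposition~1.23]{im:sbd}, using strict dominance. Hence $\length(t_\varsigma w_\bo u) = \length(t_\varsigma w_\bo) + \length(u)$ for $u \in W$, and $H_{t_\varsigma w_\bo}H_u = H_{t_\varsigma w_\bo u}$. Using~\eqref{eqn:uh-long-formula} for $\bbf=\bo$ we get
\[
X = \sum_{u \in W} v^{\length(w_\bo)-\length(u)} N_{t_\varsigma w_\bo u}.
\]
By Step~1, all indices $t_\varsigma w_\bo u$ lie in $\spWext$, so this is already an expansion in the standard basis. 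The term $u = w_\bo$ gives $N_{t_\varsigma}$ with coefficient $1$, and all other coefficients lie in $v\Z[v]$.

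\emph{Step 3: self-duality.} Since $\uH_{w_\bo}$ is self-dual and the involution on $\cN$ is semilinear with respect to right multiplication by $\cH_\ext$, it suffices to show $\overline{N_{t_\varsigma w_\bo}} = N_{t_\varsigma w_\bo}$, i.e.\ $N_{t_\varsigma w_\bo} = \uN_{t_\varsigma w_\bo}$. I would prove this by showing that no $y \in \spWext$ satisfies $y < t_\varsigma w_\bo$. Concretely, $\overline{H_{t_\varsigma w_\bo}} - H_{t_\varsigma w_\bo}$ is a combination of $H_y$ with $y < t_\varsigma w_\bo$. For each such $y$, $\pi_\cN(H_y)$ is a combination of $N_z$ with $z \in \spWext$ and $z \le y$, so all these terms vanish.

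To see that $t_\varsigma w_\bo$ is Bruhat-minimal in $\spWext$, I would translate to alcoves via the $\boxl$-action. The alcove attached to $t_\varsigma w_\bo$ is the $p$-alcove containing $p\varsigma + w_\bo(\text{the alcove near }0)$. The condition $\la\varsigma,\check\alpha\ra = 1$ on simple coroots should force this to be the lowest alcove in the dominant chamber within its component of $\Wext$ (one uses that $\varsigma-\rho$ is orthogonal to the coroots). Any smaller element would then correspond to a non-dominant alcove, i.e.\ lie outside $\spWext$, as in Lemma~\ref{lem:boxl-wext}\eqref{it:boxl-spec}.

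This minimality claim in Step~3 is the main obstacle. If the alcove argument proves awkward, my fallback is a direct check by induction on the length of the $W$-part. Write $t_\varsigma w_\bo = \omega\, x$ with $\length(\omega)=0$, and verify that every reduced expression of $x$ ends in a way that forces every proper subexpression to have a left descent in $S$.
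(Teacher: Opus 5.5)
There is a genuine gap in Step~3. Steps~1 and~2 are correct; Step~2 is exactly the length computation in the paper's proof. Characterizing $\uN_{t_\varsigma}$ as the unique self-dual element of $N_{t_\varsigma} + \sum_y v\Z[v] N_y$ is also a legitimate substitute for the paper's appeal to \cite[Lemma~3.4]{rw:scf}. However, the claim that no $y \in \spWext$ lies below $t_\varsigma w_\bo$ is false, and $N_{t_\varsigma w_\bo}$ is in general not self-dual.

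Write $t_\varsigma w_\bo = \omega x$ with $\length(\omega) = 0$ and $x \in \Waff$. Then $\omega \in \spWext$, since it has length $0$, and $\omega \le t_\varsigma w_\bo$. The inequality is strict whenever $\length(t_\varsigma w_\bo) = \length(t_\varsigma) - \length(w_\bo) = \sum_{\alpha \in \fR^+} (\la \varsigma, \check\alpha \ra - 1)$ is positive, i.e.\ whenever some positive coroot is not simple.

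Concretely, take $G = \mathrm{SL}_3$. Then $\varsigma = \rho = \alpha_1 + \alpha_2 \in \Z\fR$ and $\length(t_\rho w_\bo) = 4 - 3 = 1$, so $t_\rho w_\bo = s_0$. Hence $\overline{N_{s_0}} = N_{s_0} + (v - v^{-1}) N_e \neq N_{s_0}$.

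Your alcove heuristic is also off. $t_\varsigma w_\bo$ corresponds to $p\varsigma + p\ba$, the alcove with top vertex $p\varsigma$. The lowest dominant alcove corresponds to $\omega$. Your fallback fails for the same reason: the empty subexpression of $x$ gives $\omega$, which has no left descent in $S$.

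The missing idea is that you do not need $N_{t_\varsigma w_\bo}$ to be self-dual. You only need its defect to be killed by $\uH_{w_\bo}$. Your own reasoning gives $\overline{N_{t_\varsigma w_\bo}} - N_{t_\varsigma w_\bo} = \sum_y c_y N_y$, summed over $y \in \spWext$ with $y < t_\varsigma w_\bo$. Hence $\overline{X} - X = \sum_y c_y N_y \uH_{w_\bo}$. By Lemma~\ref{lem:nw-mult} with $\bbf = \bo$, each term vanishes unless $y$ is regular with respect to $\bo$.

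Now suppose $y = t_\nu u$, with $\nu \in \bX$ and $u \in W$, is regular. Then $\nu$ has trivial stabilizer in $W$. Its dominant conjugate $\mu$ therefore satisfies $\la \mu, \check\alpha \ra \ge 1$ for all simple coroots, so $\mu - \varsigma$ is dominant. This gives $\length(y) \ge \length(t_\nu) - \length(u) \ge \la \mu, 2\check\rho \ra - \length(w_\bo) \ge \la \varsigma, 2\check\rho \ra - \length(w_\bo) = \length(t_\varsigma w_\bo)$.

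So no regular $y$ lies strictly below $t_\varsigma w_\bo$, every defect term vanishes, and $X$ is self-dual. In the $\mathrm{SL}_3$ example this is just $N_e \uH_{w_\bo} = 0$. With Step~3 replaced by this argument, your proof becomes a self-contained derivation of $\uN_{t_\varsigma} = \sum_{z \in W} v^{\length(w_\bo) - \length(z)} N_{t_\varsigma w_\bo z}$, the formula the paper imports from Riche--Williamson.
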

\begin{proof}
According to~\cite[Lemmas~2.2 and 2.4]{mr:etspc}, for all $z \in W$, the element $t_\varsigma z = z t_{z^{-1}(\varsigma)}$ also lies in $\spWext$, and we have
\[
\length(t_\varsigma z) = \length(t_\varsigma) - \length(z).
\]
It follows that
\[
\length(t_\varsigma w_\bo z) = \length(t_\varsigma w_\bo) + \length(z),
\]
and hence that $H_{t_\varsigma w_\bo z} = H_{t_\varsigma w_\bo} H_z$.  Applying $\pi_\cN$, we obtain $N_{t_\varsigma w_\bo z} = N_{t_\varsigma w_\bo} H_z$.  Using~\cite[Lemma~3.4]{rw:scf} and~\eqref{eqn:uh-long-formula} we find that
\[
\uN_{t_\varsigma} = \sum_{z \in W} v^{\length(w_\bo) - \length(z)} N_{t_\varsigma w_\bo z} 
= \sum_{z \in W} v^{\length(w_\bo) - \length(z)} N_{t_\varsigma w_\bo} H_z = N_{t_\varsigma w_\bo} \uH_{w_\bo}. \qedhere
\]
\end{proof}

\begin{lem}\label{lem:nw-irreg}
Let $\bbf \subset \bba$ be a facet, and let $w \in \Wext$.  If $w$ is not regular with respect to $\bbf$, then
\[
\pi_\cN(H_w) \uH_{w_\bbf} = 0.
\]
\end{lem}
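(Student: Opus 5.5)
Since $w$ is not regular with respect to $\bbf$, there exist $v_1 \neq v_2$ in $W$ with $v_1 w W_\bbf = v_2 w W_\bbf$. Equivalently, $x := w^{-1}v_1^{-1}v_2 w$ is a nontrivial element of $W_\bbf$, and $w x w^{-1} = v_1^{-1}v_2 \in W$. The natural strategy is to find a reflection in $W$ that conjugates by $w$ into $W_\bbf$, and exploit the opposite ways $H_s$ acts on the two sides: on $\cN$ from the left, $H_s$ acts on $1\otimes -$ by $-v$ (sign representation), while on $\uH_{w_\bbf}$ from the right, $H_t$ acts by $v^{-1}$ (by~\eqref{eqn:uh-long}). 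Comparing these two scalars should force vanishing.

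\textbf{Step 1 (reduce to a reflection).} Both $W$ and $W_\bbf$ are reflection subgroups of $\Waff$. Using the alcove-geometric description of $W_\bbf$ as the stabilizer of a point, the intersection $W \cap wW_\bbf w^{-1}$ is a stabilizer subgroup of $W$ (the stabilizer of the point $w\boxl(-px_0)$, for $x_0 \in \bbf$, in the $\boxl$-action; possibly after reducing to $\Waff$ by writing $w = w'\omega$ with $\omega$ of length zero, which just replaces $\bbf$ by another facet). Hence it is a parabolic subgroup generated by reflections, in the same way as in the proof of Lemma~\ref{lem:boxl-regular}. Being nontrivial, it contains a reflection $r \in W$ with $t := w^{-1} r w \in W_\bbf$ a reflection.

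\textbf{Step 2 (reduce to a convenient coset representative).} Replacing $w$ by $yw$ with $y\in W$ changes $\pi_\cN(H_w)$ only by a unit: if $sy>y$ then $\pi_\cN(H_{sw})=\pi_\cN(H_s H_w)$ or $\pi_\cN(H_s^{-1}H_w)$ according to lengths, and either one is $(-v)^{\pm1}\pi_\cN(H_w)$. The same holds for right multiplication by $W_\bbf$, since $H_x\uH_{w_\bbf}=v^{-\length(x)}\uH_{w_\bbf}$. So it suffices to prove the vanishing for any one element $u$ of $WwW_\bbf$. I would choose $u$ minimal in $WwW_\bbf$. Non-regularity of $u$ then gives a simple reflection $s\in S$ and a simple reflection $t \in S_\bbf$ with $su=ut$; this is Kilmoyer's/Deodhar's theorem on minimal double coset representatives, namely that $W\cap uW_\bbf u^{-1}$ is the standard parabolic generated by $S\cap uS_\bbf u^{-1}$.

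\textbf{Step 3 (computation).} With $su=ut$ and $su>u$ (by minimality), we have $H_sH_u=H_{su}=H_{ut}=H_uH_t$. Applying $\pi_\cN$ and multiplying on the right by $\uH_{w_\bbf}$ gives
\[
(-v)\,\pi_\cN(H_u)\uH_{w_\bbf} = \pi_\cN(H_u)H_t\uH_{w_\bbf} = v^{-1}\pi_\cN(H_u)\uH_{w_\bbf}.
\]
Hence $(v+v^{-1})\,\pi_\cN(H_u)\uH_{w_\bbf}=0$. Since $\cN$ is free over $\Z[v,v^{-1}]$ and $v+v^{-1}$ is a nonzero divisor, the element vanishes.

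The main obstacle is Step 2's structural claim for the extended group $\Wext$ rather than a Coxeter group. I would handle it by factoring off the length-zero part $\Omega$: conjugation by $\omega \in \Omega$ permutes $S_\aff$ and sends $W_\bbf$ to $W_{\bbf'}$. This reduces the claim to the standard Coxeter-theoretic fact in $\Waff$ that $W\cap uW_\bbf u^{-1}=W_{S\cap uS_\bbf u^{-1}}$ for $u$ minimal in its double coset.
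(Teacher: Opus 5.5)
Your proof is correct. Its endgame is the same as the paper's, but you obtain the key relation $su=ut$ by a different route.

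\textbf{How the paper gets the relation.} The paper works with the \emph{maximal} element $w\in\oWaff^\bbf$ of the double coset. It realizes $\bbf=\bbf_\lambda$ for suitable $p$ and $\lambda$, and then uses the box-action combinatorics of Section~\ref{sec:weyl}:
\begin{itemize}
\item $w\boxl\lambda$ is antidominant, by Lemma~\ref{lem:boxl-wext};
\item non-regularity gives a coroot orthogonal to it, by Lemma~\ref{lem:boxl-regular}, and antidominance lets one take that coroot simple;
\item Step~1 of the proof of Lemma~\ref{lem:boxl-wext}, together with maximality, then gives $sw=wy$ with $s\in S$, $y\in S_\bbf$, and $\length(sw)=\length(w)-1$.
\end{itemize}

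\textbf{How you get it.} You take the \emph{minimal} representative $u$ and read off $su=ut$ from Kilmoyer's theorem $W\cap uW_\bbf u^{-1}=W_{S\cap uS_\bbf u^{-1}}$. This is a purely Coxeter-theoretic input. It needs no choice of $p$ or $\lambda$ and no alcove geometry. The cost is citing an external theorem instead of reusing lemmas the paper needs elsewhere anyway.

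\textbf{Where the two arguments agree.} After the relation is in hand, the arguments match up to the max/min symmetry. Both compare the sign eigenvalue $-v$ with the trivial eigenvalue $v^{-1}$ and conclude from $(v+v^{-1})$-torsion-freeness of $\cN$. The paper routes this through $\pi_\cN(H_w)\uH_y=0$ and $\uH_y\uH_{w_\bbf}=(v+v^{-1})\uH_{w_\bbf}$; you compare the two scalars directly. Both pass from $\Waff$ to $\Wext$ by splitting off a length-zero $\omega$.

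\textbf{Two minor points.}
\begin{itemize}
\item Your Step~1 is superfluous. Kilmoyer's theorem already yields the simple reflection once $W\cap uW_\bbf u^{-1}\neq 1$. That nontriviality holds because this group is conjugate, by an element of $W$, to $W\cap wW_\bbf w^{-1}$, which contains $v_1^{-1}v_2\neq 1$ (cf.\ Lemma~\ref{lem:regular-coset}).
\item In Step~2, do the right-hand reduction one simple reflection $t\in S_\bbf$ at a time, using $H_wH_t=H_{wt}$ or $H_{wt}H_t=H_w$ according to lengths. This is needed because $H_{wx}\neq H_wH_x$ for general $x\in W_\bbf$, so the formula $H_x\uH_{w_\bbf}=v^{-\length(x)}\uH_{w_\bbf}$ alone does not justify that step.
\end{itemize}
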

\begin{proof}
Choose a prime $p$ and a weight $\lambda \in \bX$ such that $-\frac{1}{p}\lambda \in \bbf$.  In other words, $\bbf = \bbf_\lambda$.  To prove the lemma, we will first consider various special cases.

Let us first treat the special case where $w \in \oWaff^\bbf$.  Let $\mu = w \boxl \lambda$.  By Lemma~\ref{lem:boxl-wext}, $\mu \in -\bX^+$, and by Lemma~\ref{lem:boxl-regular}, there exists a coroot $\check\beta$ such that $\la \mu, \check\beta\ra = 0$.  Since $\mu$ is antidominant, we may assume that $\check\beta$ is a \emph{simple} coroot.  Let $s$ be the corresponding simple reflection.  By Step~1 in the proof of Lemma~\ref{lem:boxl-wext}, $sw W_\bbf = w W_\bbf$.  That is, there exists a $y \in W_\bbf$ such that $sw = wy$.  We have $\length(sw) = \length(w) \pm 1$, but since $w$ is maximal in $wW_\bbf$, we must actually have $\length(sw) = \length(wy) = \length(w) - 1$.  This also shows that $y$ must be a simple reflection in $W_\bbf$. It follows that $H_s H_{sw} = H_w = H_{wy} H_y$.  Applying $\pi_\cN$, we obtain $-v \pi_\cN(H_{wy}) = \pi_\cN(H_w) = \pi_\cN(H_{wy})H_y$.  Since $\uH_y = H_y + v$, we deduce that $\pi_\cN(H_{wy}) \uH_y = 0$, and hence that
\[
\pi_\cN(H_w) \uH_y = 0.
\]
Next, recalling that $\uH_y \uH_{w_\bbf} = (v+v^{-1}) \uH_{w_\bbf}$, we obtain
\[
(v+v^{-1}) \pi_\cN(H_w) \uH_{w_\bbf} = 0.
\]
Since $\cN$ is free over the integral domain $\Z[v,v^{-1}]$, it follows that $\pi_\cN(H_w) \uH_{w_\bbf} = 0$.

Now let $w$ be an arbitrary element of $\Waff$.  Let $w'$ be the unique element of maximal length in the double coset $W w W_\bbf$.  Then $w' \in \oWaff^\bbf$, and $w'$ is also not regular with respect to $\bbf$, by Lemma~\ref{lem:regular-coset}.  Write
\[
w' = u w y
\]
where $u \in W$, $y \in W_\bbf$, and $\length(w') = \length(u) + \length(w) + \length(y)$.  Then $H_{w'} = H_u H_w H_y$, so
\[
\pi_\cN(H_{w'}) = (-v)^{\length(u)} \pi_\cN(H_w) H_y.
\]
Using~\eqref{eqn:uh-long}, we have
\[
\pi_\cN(H_{w'})\uH_{w_\bbf} = (-v)^{\length(u)} v^{-\length(y)} \pi_\cN(H_w) \uH_{w_\bbf}.
\]
The left-hand side vanishes by the previous paragraph, and we conclude that $\pi_\cN(H_w) \uH_{w_\bbf} = 0$.

Finally, let $w$ be an arbitrary element of $\Wext$.  We can write $w = w'\omega$ where $w' \in \Waff$, and $\length(\omega) = 0$.  Under the $\boxl$-action, $\omega$ preserves $\bba$, but it may send $\bbf$ to another facet $\bbf'$.  In this case, we have $\omega W_\bbf \omega^{-1} = W_{\bbf'}$ and $\omega w_\bbf \omega^{-1} = w_{\bbf'}$, as well as $H_\omega \uH_{w_\bbf} H_{\omega}^{-1} = \uH_{w_{\bbf'}}$.  We deduce that
\[
\pi_\cN(H_w) \uH_{w_\bbf} = \pi_\cN(H_{w'}) \uH_{w_{\bbf'}} H_\omega.
\]
A short calculation shows that if $w$ is not regular with respect to $\bbf$, then $w'$ is not regular with respect to $\bbf'$, so by the previous paragraph, the right-hand side above vanishes, and we conclude that $\pi_\cN(H_w) \uH_{w_\bbf} = 0$.
\end{proof}

\begin{lem}\label{lem:nw-mult}
Let $\bbf \subset \bba$ be a facet.  Let $w \in \spWext$, and write
\[
w = w'u
\qquad\text{with}\qquad
\text{$w'$ minimal in $w W_\bbf$,\quad $u \in W_\bbf$.}
\]
If $w$ is not regular with respect to $\bbf$, then
\[
N_w \uH_{w_\bbf} = 0.
\]
If $w$ is regular with respect to $\bbf$, then for all $y \in W_\bbf$, $w'y$ lies in $\spWext$, and
\[
N_w \uH_{w_\bbf} = v^{-\length(u)}\sum_{y \in W_\bbf} v^{\length(w_\bbf) - \length(y)} N_{w'y}.
\]
\end{lem}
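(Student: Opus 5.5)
The plan is to reduce everything to the element $w'$, which is minimal in $wW_\bbf$, together with the Hecke algebra identity~\eqref{eqn:uh-long}. Since $w = w'u$ with $u \in W_\bbf$ and $w'$ minimal in its coset, we have $\length(w) = \length(w') + \length(u)$, hence $H_w = H_{w'}H_u$. Then~\eqref{eqn:uh-long} gives
\[
N_w \uH_{w_\bbf} = \pi_\cN(H_{w'}) H_u \uH_{w_\bbf} = v^{-\length(u)} \pi_\cN(H_{w'}) \uH_{w_\bbf}.
\]
Everything therefore comes down to computing $\pi_\cN(H_{w'})\uH_{w_\bbf}$.

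\textbf{Irregular case.} Regularity is a property of the double coset $WwW_\bbf$ (Lemma~\ref{lem:regular-coset}). So if $w$ is not regular, neither is $w'$, and Lemma~\ref{lem:nw-irreg} gives $\pi_\cN(H_{w'})\uH_{w_\bbf} = 0$. This yields $N_w\uH_{w_\bbf}=0$.

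\textbf{Regular case.} First I need $w'$ to be minimal-regular.
\begin{itemize}
\item $w$ is regular and, since $w \in \spWext$, minimal in $Ww$; so $w$ is minimal-regular.
\item Lemma~\ref{lem:regular-left} then says every element of $wW_\bbf$ is minimal-regular. In particular $w'y \in \spWext$ for all $y \in W_\bbf$, which is the first assertion.
\end{itemize}
Next I expand using~\eqref{eqn:uh-long-formula}:
\[
\uH_{w_\bbf} = \sum_{y\in W_\bbf} v^{\length(w_\bbf)-\length(y)}H_y .
\]
Because $w'$ is minimal in $w'W_\bbf$, we get $H_{w'}H_y = H_{w'y}$ for every $y \in W_\bbf$. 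Applying $\pi_\cN$ gives $\pi_\cN(H_{w'y}) = N_{w'y}$, which is legitimate since $w'y\in\spWext$. Therefore
\[
\pi_\cN(H_{w'})\uH_{w_\bbf} = \sum_{y \in W_\bbf} v^{\length(w_\bbf)-\length(y)} N_{w'y}.
\]
Multiplying by $v^{-\length(u)}$ gives the claimed formula.

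\textbf{Expected obstacle.} The only delicate point is the use of Lemma~\ref{lem:nw-irreg} and Lemma~\ref{lem:regular-left} for $w'$ rather than $w$. This requires that (ir)regularity and minimality in $W$-cosets pass along the left coset $wW_\bbf$, and both facts are supplied by the earlier lemmas. The rest is the standard length additivity for minimal coset representatives.
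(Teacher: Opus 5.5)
Your proof is correct and follows essentially the same route as the paper: you reduce to $w'$ via $H_w = H_{w'}H_u$ and~\eqref{eqn:uh-long}, then expand $\uH_{w_\bbf}$ using~\eqref{eqn:uh-long-formula} and length additivity, with Lemma~\ref{lem:regular-left} supplying $w'y \in \spWext$. The only small difference is in the irregular case: the paper applies Lemma~\ref{lem:nw-irreg} directly to $w$ itself (that lemma holds for arbitrary elements of $\Wext$, and $N_w = \pi_\cN(H_w)$), so your detour through $w'$ and Lemma~\ref{lem:regular-coset} is valid but unnecessary.
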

\begin{proof}
If $w$ is not regular, this is immediate from Lemma~\ref{lem:nw-irreg}.  We assume for the rest of the proof that $w$ is regular (and thus minimal-regular) with respect to $\bbf$.  Lemma~\ref{lem:regular-left} says that all elements of $wW_\bbf = w'W_\bbf$ are minimal-regular with respect to $\bbf$, and hence (a fortiori) that $w'y \in \spWext$ for all $y \in W_\bbf$. 

We have $\length(w) = \length(w') + \length(u)$ and $H_w = H_{w'} H_u$.  Recall from~\eqref{eqn:uh-long} that $H_u \uH_{w_\bbf} = v^{-\length(u)}\uH_{w_\bbf}$, so $H_w \uH_{w_\bbf} = v^{-\length(u)}H_{w'}  \uH_{w_\bbf}$, and hence
\begin{equation}\label{eqn:nwm-reduce}
N_w \uH_{w_\bbf} = v^{-\length(u)} N_{w'}\uH_{w_\bbf}.
\end{equation}
Next, since $w'$ is minimal in $w'W_\bbf$, we have $H_{w'y} = H_{w'}H_y$ for all $y \in W_\bbf$, so
\begin{equation}\label{eqn:nwm-reduce2}
H_{w'}\uH_{w_\bbf} = \sum_{y \in W_\bbf} v^{\length(w_\bbf) - \length(y)} H_{w'y}.
\end{equation}
Applying $\pi_\cN$ to~\eqref{eqn:nwm-reduce2} and combining with~\eqref{eqn:nwm-reduce}, we obtain the desired formula for $N_w \uH_{w_\bbf}$.
\end{proof}

Next, given a facet $\bbf \subset \bba$, let
\[
\cN^\bbf = \cN \uH_{w_\bbf} = \Z[v,v^{-1}]_\sgn \otimes_{\cH_\bo} (\cH_\ext\uH_{w_\bbf}).
\]
For $w \in \spWext^\bbf$, define
\begin{equation}\label{eqn:nfw-defn}
N^\bbf_w = \sum_{u \in wW_\bbf} v^{\length(w) - \length(u)} N_u = N_{ww_\bbf} \uH_{w_\bbf}.
\end{equation}
Lemma~\ref{lem:nw-mult} implies that $\{ N^\bbf_w : w \in \spWext^\bbf \}$ is a $\Z[v,v^{-1}]$-basis for $\cN^\bbf$.  We define $\cN^\bbf_\aff$ to be the submodule of $\cN^\bbf$ spanned by $\{ N_w^\bbf : w \in \spWaff^\bbf \}$.

More generally, suppose $w \in \spWext$ is minimal-regular with respect to $\bbf$, and write it as $w = w'u$ as in Lemma~\ref{lem:nw-mult}.  Then $w'w_\bbf$ is the maximal element of the coset $w W_\bbf$, i.e., $w' w_\bbf \in \spWext^\bbf$.  Lemma~\ref{lem:nw-mult} implies that
\begin{equation}\label{eqn:nfw-general}
N^\bbf_{w'w_\bbf} = v^{\length(u)} N_w \uH_{w_\bbf}.
\end{equation}

\begin{lem}\label{lem:theta-iw}
There is a surjective map of abelian groups
\[
\theta^\IW: \bigoplus_{\lambda \in -p\bba \cap \bX^+} \cN^{\bbf_\lambda}|_{v = 1}  \to \Z[X]^W
\qquad\text{given by}\qquad
N_w^{\bbf_\lambda} \mapsto \chi(w \boxl \lambda - \varsigma).
\]
This map restricts to an isomorphism of abelian groups
\[
\theta^\IW_\aff: \bigoplus_{\lambda \in -p\bba \cap \bX^+} \cN_\aff^{\bbf_\lambda}|_{v = 1}  \to \Z[X]^W
\]

In addition, given $\lambda \in -p\bba \cap \bX^+$, for arbitrary $w \in \Wext$, regard $\pi_\cN(H_w)\uH_{w_{\bbf_\lambda}}$ as an element of $\cN^{\bbf_\lambda}$.  We have
\[
\theta^\IW((\pi_\cN(H_w)\uH_{w_{\bbf_\lambda}})|_{v=1}) =
\begin{cases}
\chi(w \boxl \lambda - \varsigma) & \text{if $w$ is  regular with respect to $\bbf_\lambda$,} \\
0 & \text{otherwise.}
\end{cases}
\]
\end{lem}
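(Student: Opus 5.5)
The plan mirrors the proof of Lemma~\ref{lem:theta-sph}, with the extra feature that $\chi$ has a nontrivial straightening law, which is why the second formula needs separate care.

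\textbf{The isomorphism $\theta^\IW_\aff$.} By the bijection~\eqref{eqn:xpp-spwaff-bij}, the map $(\lambda, w) \mapsto w \boxl \lambda$ gives a bijection from the set of pairs with $\lambda \in -p\bba \cap \bX^+$ and $w \in \spWaff^{\bbf_\lambda}$ onto $\bX^{++}$. Condition~\eqref{eqn:varsig-exist} says that $\la \varsigma, \check\alpha\ra = 1$ for every simple coroot. Hence $\mu \mapsto \mu - \varsigma$ is a bijection $\bX^{++} \simto \bX^+$: a weight is strictly dominant exactly when all its simple-coroot pairings are $\ge 1$. So $\theta^\IW_\aff$ sends the basis $\{N^{\bbf_\lambda}_w\}$ of its domain bijectively onto the basis $\{\chi(\nu) : \nu \in \bX^+\}$ of $\Z[\bX]^W$, and it is therefore an isomorphism. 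Surjectivity of $\theta^\IW$ follows at once, since its domain contains the affine summands.

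\textbf{Meaning of $\chi$ off the dominant cone.} Before the second claim I need to say what $\chi(\nu)$ means for $\nu$ not dominant. I would define it by the Weyl character formula quotient, and use the standard fact $\chi(x \cdot \nu) = (-1)^{\length(x)}\chi(\nu)$ for $x \in W$ acting by the usual dot action. One subtlety: the formula involves $e^{w(\nu+\rho)}$, and $\rho$ may not lie in $\bX$. I would handle this by multiplying numerator and denominator by $e^{\varsigma - \rho}$, which is $W$-invariant because $\varsigma - \rho$ is orthogonal to all coroots. Then, for regular $w$, we have $\chi(w \boxl \lambda - \varsigma) = (-1)^{\length(x)}\chi(xw \boxl \lambda - \varsigma)$, where $x \in W$ is chosen so that $xw \boxl \lambda$ is dominant (hence in $\bX^{++}$). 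This holds because $\boxl$-shifting by $\varsigma$ turns the dot action into the linear action.

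\textbf{The formula for $\pi_\cN(H_w)\uH_{w_{\bbf_\lambda}}$.} If $w$ is not regular, the element is $0$ by Lemma~\ref{lem:nw-irreg}. Its image is then $0$, as claimed. In that case $w\boxl\lambda$ is fixed by a reflection by Lemma~\ref{lem:boxl-regular}, so $\chi(w \boxl \lambda - \varsigma) = 0$ as well.

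Now let $w$ be regular, and write $f = \bbf_\lambda$. The steps are as follows.
\begin{enumerate}
\item Replace $w$ by its maximal representative in $wW_f$ (equivalently, work with an arbitrary element of $wW_f$). Multiplying $H_w$ on the right by $H_y$, $y \in W_f$, changes $\pi_\cN(H_w)\uH_{w_f}$ only by a power of $v$, by~\eqref{eqn:uh-long}. Also $w \boxl \lambda$ depends only on $wW_f$.
\item Use Corollary~\ref{cor:spwext-contain} and Lemma~\ref{lem:regular-right} to write $w = xz$ with $x \in W$, $z \in \spWext^{f}$, and $\length(w) = \length(x) + \length(z)$. This uses that $W$-minimal representatives multiply with lengths adding; for $\Wext$ one reduces to $\Waff$ via a length-zero $\omega$, exactly as in the last step of Lemma~\ref{lem:nw-irreg}.
\item Then $\pi_\cN(H_w) = (-v)^{\length(x)}\pi_\cN(H_z)$. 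Combined with~\eqref{eqn:nfw-general}, specializing at $v=1$ gives $(-1)^{\length(x)} N^{f}_{z}|_{v=1}$ up to adjusting the coset representative.
\item Apply $\theta^\IW$ to get $(-1)^{\length(x)}\chi(z \boxl \lambda - \varsigma)$. Since $z \boxl \lambda = x^{-1}(w\boxl\lambda)$, this equals $\chi(w\boxl\lambda - \varsigma)$ by the dot-action sign rule above.
\end{enumerate}

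\textbf{Main obstacle.} The hardest part is the bookkeeping in steps 2 and 3: matching the coset representatives and the powers of $v$ consistently, including the length-zero reduction for $\Wext$. At $v=1$ the powers of $v$ disappear, so only the sign $(-1)^{\length(x)}$ must be tracked carefully.
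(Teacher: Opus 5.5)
Your proposal is correct and follows essentially the paper's argument. You match bases via \eqref{eqn:xpp-spwaff-bij} and $\mu\mapsto\mu-\varsigma$, and you dispose of the nonregular case with Lemma~\ref{lem:nw-irreg}; in the regular case you peel off a $W$-factor on the left (giving the sign $(-1)^{\length(x)}$), handle the $W_{\bbf_\lambda}$-coset with \eqref{eqn:nfw-general}, and finish with $\chi(y\dotl\mu)=(-1)^{\length(y)}\chi(\mu)$ together with the $W$-invariance of $\rho-\varsigma$. The only difference is the order of reductions: the paper first writes $w=yw'$ with $w'$ minimal in $Ww$ and applies \eqref{eqn:nfw-general} directly to $w'$, which avoids the appeal to Lemma~\ref{lem:regular-right}, Corollary~\ref{cor:spwext-contain}, and the length-zero reduction in your step 2.
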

\begin{proof}
The proof that $\theta^\IW_\aff$ is an isomorphism (and hence that $\theta^\IW$ is surjective) is similar to that of Lemma~\ref{lem:theta-sph}, using the bijection~\eqref{eqn:xpp-spwaff-bij} combined with the bijection
\[
\bX^{++} \simto \bX^+
\qquad\text{given by}\qquad
\mu \mapsto \mu - \varsigma.
\]

Let us now compute $\theta^\IW(\pi_\cN(H_w)\uH_{w_\bbf})$.  Write $w = yw'$ where $w'$ is minimal in $Ww$, and $y \in W$.  Then $H_w = H_y H_{w'}$, and $\pi_\cN(H_w) = (-v)^{\length(y)} N_{w'}$.  By Lemma~\ref{lem:regular-coset}, $w'$ is regular with respect to $\bbf_\lambda$ if and only if $w$ is.  By Lemma~\ref{lem:nw-mult}, if $w$ is \emph{not} regular, we have
\[
\pi_\cN(H_w)\uH_{w_{\bbf_\lambda}} = (-v)^{\length(y)} N_{w'} \uH_{w_{\bbf_\lambda}} = 0.
\]
On the other hand, if $w$ is regular, then $w'$ is minimal-regular with respect to $\bbf_\lambda$.  Write $w' = w'' u$ with $w''$ minimal in $w'W_\bbf$ and $u \in W_\bbf$.  We have $w''w_{\bbf_\lambda} \in \spWext^{\bbf_\lambda}$, and by~\eqref{eqn:nfw-general},  $N^{\bbf_\lambda}_{w''w_\bbf} = v^{\length(u)} N_{w'} \uH_{w_{\bbf_\lambda}}$, and hence
\[
\pi_\cN(H_w)\uH_{w_{\bbf_\lambda}} = (-v)^{\length(y)} v^{-\length(u)} N^{\bbf_\lambda}_{w''w_\bbf}.
\]
Specializing at $v = 1$ and then applying $\theta^\IW$, we obtain
\[
(-1)^{\length(y)} \chi( w''w_\bbf \boxl \lambda - \varsigma).
\]
Observe that $w'' w_\bbf \boxl \lambda = w' \boxl \lambda$.  Recall also that for any $y \in W$ and any $\mu \in \bX^+$, we have $\chi(y \dotl \mu) = (-1)^{\length(y)} \chi(\mu)$: see~\cite[\S II.5.9(1)]{jan:rag}.  With these observations, we rewrite the above as
\[
\chi(y \dotl (w' \boxl \lambda - \varsigma)) = \chi((yw') \boxl \lambda - \varsigma) = \chi(w \boxl \lambda - \varsigma). \qedhere
\]
\end{proof}

\subsection{The Riche--Williamson map}

Let $\phi: \cM \to \cN$ be the map given by
\[
\phi(M_w) = \uN_{t_\varsigma} H_{w_\bo w} \qquad\text{for $w \in \oWext$.}
\]
We can extend this to a map $\cH_\ext \to \cN$ in two ways, by regarding $\cM$ as either a quotient (see Remark~\ref{rmk:m-alt-defn}) or a subset of $\cH_\ext$.  The former perspective is the one taken in~\cite{rw:scf}: in that paper, they define a map of right $\cH_\ext$-modules
\[
\tilde\phi: \cH_\ext \to \cN
\qquad\text{by}\qquad
\tilde\phi(h) = \uN_{t_\varsigma} \cdot h.
\]
According to~\cite[\S3.3]{rw:scf}, $\tilde\phi$ is equal to the composition
\[
\cH_\ext \twoheadrightarrow \Z[v,v^{-1}]_\triv \otimes_{\cH_\bo} \cH_\ext \xrightarrow[\sim]{\text{Remark~\ref{rmk:m-alt-defn}}} \cM \xrightarrow{\phi} \cN.
\]

On the other hand, we can also define
\[
\hat\phi: \cH_\ext \to \cN
\qquad\text{by}\qquad
\hat\phi(h) = N_{t_\varsigma w_\bo} \cdot h.
\]
Lemma~\ref{lem:un-sigma} implies that $\phi$ is equal to the composition
\[
\cM = \uH_{w_\bo} \cH_\ext \hookrightarrow \cH_\ext \xrightarrow{\hat\phi} \cN.
\]

Now let $\bbf \subset \bba$ be a facet, and let $w \in \oWext^\bbf$.  The formula for $M_w^\bbf$ in Lemma~\ref{lem:mfw-formula} shows that $\phi(M_w^\bbf) = N_{t\varsigma w_\bo} M_w^\bbf$ lies in $\cN \uH_{w_\bbf} = \cN^\bbf$, so $\phi$ restricts to a map
\[
\phi: \cM^\bbf \to \cN^\bbf.
\]

\begin{lem}\label{lem:theta-compare}
The following diagram commutes:
\[
\begin{tikzcd}
\displaystyle\bigoplus_{\lambda \in -p\bba \cap \bX^+} \cM^{\bbf_\lambda}|_{v = 1} \ar[r, "\theta^\sph"] \ar[d, "\phi|_{v=1}"'] &
\Z[\bX]^W \ar[d, "\text{\normalfont multiply by $\chi((p-1)\varsigma)$}"] \\
\displaystyle\bigoplus_{\lambda \in -p\bba \cap \bX^+} \cN^{\bbf_\lambda}|_{v = 1} \ar[r, "\theta^\IW"] &
\Z[\bX]^W
\end{tikzcd}
\]
\end{lem}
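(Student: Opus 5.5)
The plan is to check commutativity on the basis elements $M_w^{\bbf_\lambda}$, for $\lambda \in -p\bba \cap \bX^+$ and $w \in \oWext^{\bbf_\lambda}$, writing $\bbf = \bbf_\lambda$. The goal is to reduce $\theta^\IW(\phi(M^\bbf_w)|_{v=1})$ to a sum of Weyl characters to which Brauer's formula applies. Let $\mu = w_\bo(w \boxl \lambda) \in \bX^+$. The top-right path gives $\chi((p-1)\varsigma)\, s(\mu)$, so I must show that the left-bottom path gives the same.

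First I rewrite $\phi(M_w^\bbf)$ using $\phi(M^\bbf_w) = N_{t_\varsigma w_\bo} M^\bbf_w$ and Lemma~\ref{lem:mfw-formula}:
\[
\phi(M_w^\bbf) = \sum_{y \in \Wext^\bbf \cap W w W_\bbf} v^{\length(w)-\length(y)}\, \pi_\cN(H_{t_\varsigma w_\bo})\, H_{y w_\bbf}\, \uH_{w_\bbf}.
\]
After specializing at $v=1$, the Hecke algebra becomes the group ring $\Z[\Wext]$. Hence $\pi_\cN(H_{t_\varsigma w_\bo}) H_{yw_\bbf}$ specializes to $\pi_\cN(h_{t_\varsigma w_\bo y w_\bbf})$. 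Since $\pi_\cN$ and right multiplication by $\uH_{w_\bbf}$ commute with specialization, each summand becomes $(\pi_\cN(H_x)\uH_{w_\bbf})|_{v=1}$ with $x = t_\varsigma w_\bo y w_\bbf$. The second part of Lemma~\ref{lem:theta-iw} then applies to each term. It gives $\chi(x \boxl \lambda - \varsigma)$ if $x$ is regular with respect to $\bbf$, and $0$ otherwise.

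Next I compute $x \boxl \lambda$. Since $w_\bbf$ fixes $\lambda$ and $t_\varsigma \boxl \nu = \nu + p\varsigma$, we get $x \boxl \lambda = w_\bo(y \boxl \lambda) + p\varsigma$. As $y$ runs over $\Wext^\bbf \cap WwW_\bbf$, it runs over representatives of the left $W_\bbf$-cosets in $WwW_\bbf$. So $y \boxl \lambda$ runs over the orbit $W(w\boxl\lambda) = W\mu$, each point exactly once, and the same holds for $\nu := w_\bo(y\boxl\lambda)$. For the regularity condition, the argument in the proof of Lemma~\ref{lem:boxl-regular} works verbatim for any element of $\Wext$ (the proof only uses cosets and stabilizers). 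It shows that $x$ is regular with respect to $\bbf$ if and only if $x \boxl \lambda = \nu + p\varsigma$ has trivial stabilizer in $W$.

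Extend $\chi$ to all of $\bX$ by the Weyl character formula, so that $\chi(y\dotl\kappa) = (-1)^{\length(y)}\chi(\kappa)$ and $\chi(\kappa)=0$ when $\kappa+\rho$ is singular. The case formula of Lemma~\ref{lem:theta-iw} is consistent with this convention. Since $\varsigma - \rho$ is orthogonal to all coroots, $\nu + (p-1)\varsigma + \rho$ is singular exactly when $\nu + p\varsigma$ is. So the non-regular terms are exactly those where the extended $\chi(\nu+(p-1)\varsigma)$ vanishes, and the total is
\[
\sum_{\nu \in W\mu} \chi(\nu + (p-1)\varsigma).
\]
Brauer's formula (\cite[\S II.5.8]{jan:rag}), $s(\mu)\chi(\kappa) = \sum_{\nu\in W\mu}\chi(\kappa+\nu)$ for $\kappa\in\bX^+$, applied with $\kappa=(p-1)\varsigma$, identifies this with $\chi((p-1)\varsigma)\,s(\mu)$, as required.

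The main obstacle is the bookkeeping in the last two steps. I need to make sure that Lemma~\ref{lem:theta-iw} is correctly read with the extended $\chi$ for nondominant arguments, and that the orbit bijection $y \mapsto y\boxl\lambda$ is exactly one-to-one onto $W\mu$. I also need the regularity criterion for $x \in \Wext$ rather than only for $\Waff^\bbf$. I would handle the latter by rerunning the stabilizer argument of Lemma~\ref{lem:boxl-regular} directly for $x$.
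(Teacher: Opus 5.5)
Your proposal is correct and follows the paper's proof step for step. Both arguments expand $\phi(M^{\bbf_\lambda}_w) = N_{t_\varsigma w_\bo}M^{\bbf_\lambda}_w$ via Lemma~\ref{lem:mfw-formula}, specialize at $v=1$ so that $N_{t_\varsigma w_\bo}H_{yw_{\bbf_\lambda}}$ becomes $\pi_\cN(H_{t_\varsigma w_\bo y w_{\bbf_\lambda}})$, apply the second part of Lemma~\ref{lem:theta-iw}, use the bijection between left $W_{\bbf_\lambda}$-cosets in $WwW_{\bbf_\lambda}$ and the orbit $W(w\boxl\lambda)$, and finish with Brauer's formula. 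Your one addition is an explicit check, which the paper leaves implicit, that the non-regular terms are exactly those where the extended $\chi(\nu+(p-1)\varsigma)$ vanishes; this uses that $\varsigma-\rho$ is orthogonal to all coroots and that the stabilizer argument of Lemma~\ref{lem:boxl-regular} works for any element of $\Wext$.
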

\begin{proof}
Let $\lambda \in -p\bba \cap \bX^+$, and let $w \in \oWext^{\bbf_\lambda}$.  Using Lemma~\ref{lem:mfw-formula} and the formula for $\hat\phi$, we obtain
\begin{align*}
\phi(M_w^{\bbf_\lambda}) &= \hat\phi\Big(\Big( \sum_{y \in \Wext^{\bbf_\lambda} \cap W w W_\bbf} v^{\length(w) - \length(y)} H_{yw_{\bbf_\lambda}} \Big) \uH_{w_{\bbf_\lambda}}\Big) \\
&= \Big( \sum_{y \in \Wext^{\bbf_\lambda} \cap W w W_{\bbf_\lambda}} v^{\length(w) - \length(y)} N_{t_\varsigma w_\bo} H_{yw_{\bbf_\lambda}} \Big) \uH_{w_{\bbf_\lambda}}.
\end{align*}
Now specialize at $v = 1$. We have
\[
N_{t_\varsigma w_\bo} H_{yw_{\bbf_\lambda}}|_{v=1} = 
\pi_\cN(H_{t_\varsigma w_\bo} H_{yw_{\bbf_\lambda}})|_{v=1} =
\pi_\cN(H_{t_\varsigma w_\bo y w_{\bbf_\lambda}})|_{v=1}.
\]
By Lemma~\ref{lem:theta-iw}, we obtain
\begin{equation}\label{eqn:theta-phi1}
\theta^\IW(\phi(M_w^{\bbf_\lambda})) = \sum_{y \in \Wext^{\bbf_\lambda}  \cap W w W_{\bbf_\lambda}} \chi(t_\varsigma w_\bo y w_{\bbf_\lambda} \boxl \lambda - \varsigma)
\end{equation}
Since $w_{\bbf_\lambda} \boxl \lambda = \lambda$, we can omit the $w_{\bbf_\lambda}$ from this expression.  Next, let us determine the possible values of $y \boxl \lambda$.  For any $u \in WwW_{\bbf_\lambda}$, we have $u \boxl \lambda \in W\cdot(w \boxl \lambda)$, and there is a bijection
\[
\{ \text{left $W_{\bbf_\lambda}$-cosets contained in $WwW_{\bbf_\lambda}$} \} \overset{\sim}{\longleftrightarrow} W\cdot(w \boxl \lambda).
\]
Since the sum in~\eqref{eqn:theta-phi1} involves exactly one element $y$ from each left $W_{\bbf_\lambda}$-coset contained in $W w W_{\bbf_\lambda}$, we can rewrite it as
\[
\theta^\IW(\phi(M_w^{\bbf_\lambda})) = \sum_{\mu \in W \cdot (w \boxl \lambda)} \chi(t_\varsigma w_\bo \boxl \mu - \varsigma).
\]
Since the action of $w_\bo$ just permutes the $W$-orbit $W \cdot (w \boxl \lambda)$, we can omit it, and we obtain
\[
\theta^\IW(\phi(M_w^{\bbf_\lambda})) = \sum_{\mu \in W \cdot (w \boxl \lambda)}
\chi(t_\varsigma \boxl \mu - \varsigma)
=\sum_{\mu \in W \cdot (w \boxl \lambda)} \chi((p -1) \varsigma + \mu).
\]
By Brauer's formula~\cite[Lemma~II.5.8(b)]{jan:rag}, the last expression is equal to $\chi((p -1) \varsigma) s(w_\bo(w \boxl \lambda))$.
\end{proof}

\section{Affine Grassmannians, affine flag varieties}
\label{sec:affinegr}

\subsection{Loop groups and affine flag varieties}

Let $\Gv$ be the Langlands dual group to $G$ over an algebraically closed field $\F$ of characteristic $\ell$, where $\ell > 0$ and $\ell \ne p$.  Let $\Tv$ be a maximal torus, and let $\Bv, \Bv^+ \subset \Gv$ be a pair of opposite Borel subgroups containing $\Tv$.  Identify the root system of $\Gv$ with $\fRv$ in such a way that roots of the $\Tv$-action on $\Lie(\Bv^+)$ are those in $\fRv^+$.

Let $\Loop \Gv$ and $\Loop^+\Gv$ be the loop group and arc group of $\Gv$, respectively.  For an $\F$-algebra $R$, they are defined by
\[
\Loop \Gv(R) = \Gv(R((z))),
\qquad
\Loop^+\Gv(R) = \Gv(R[[z]]),
\]
where $z$ is a formal variable.  For each facet $\bbf \subset \bba$, we have a parahoric subgroup
\[
\para_\bbf \subset \Loop \Gv.
\]
As a special case, the parahoric subgroup associated to $\ba$ itself is an \emph{Iwahori subgroup} that will be denoted by
\[
\Iw = \para_\ba.
\]
There is also an opposite Iwahori subgroup $\Iw^+ \subset \Loop^+\Gv$.
At the other extreme, for the singleton facet $\bo$ we have
\[
\Loop^+\Gv = \para_{\bo}.
\]

For each facet $\bbf$ we have a partial affine flag variety $\Fl_\bbf$ defined by
\[
\Fl_\bbf = \Loop\Gv/\para_\bbf.
\]
In the special case $\bbf = \ba$, we obtain the \emph{full} affine flag variety, which we denote by
\[
\Fl = \Loop\Gv/\Iw.
\]
At the other extreme, for $\bbf = \bo$, we obtain the \emph{affine Grassmannian}, denoted by
\[
\Gr = \Loop \Gv / \Loop^+\Gv.
\]
For any $\bbf \subset \bba$, we have $\Iw \subset \para_\bbf$, so there is a natural quotient map
\begin{equation}\label{eqn:pif-defn}
\pi_\bbf: \Fl \to \Fl_\bbf.
\end{equation}
This is smooth morphism of relative dimension $\length(w_\bbf) = \dim \para_\bbf/\Iw$.

For $\nu \in \bX$, let $\bz^\nu$ be corresponding $\F$-point of $\Loop\Gv$, i.e., the $\F$-point obtained as the composition
\[
\Spec \F((z)) \to \Spec \F[z,z^{-1}] = \Gm \xrightarrow{\nu} \Tv \hookrightarrow \Gv,
\]
and then let
\[
\rL_\lambda = \bz^\lambda \Loop^+\Gv \quad \in \quad \Gr.
\]

\subsection{Loop rotation and twisted loop rotation}

The group $\Gm$ acts on $\F((z))$ by scaling the formal variable $z$: that is, for $t \in \Gm$ and $f(z) \in \F((z))$, we set
\begin{equation}\label{eqn:looprot-defn}
t \cdot f(z) = f(tz).
\end{equation}
This induces an action of $\Gm$ (called the \emph{loop rotation} action) on $\Loop\Gv$ and on its parahoric subgroups, so we can form the semidirect products $\Gm \ltimes \Loop\Gv$ and $\Gm \ltimes \para_\bbf$.  For $t \in \Gm$ and $\mu \in \bX$, we have the following equalities in $\Gm \ltimes \Loop\Gv$:
\begin{equation}\label{eqn:tz-conj}
t \bz^\mu t^{-1} = \mu(t)\bz^\mu = \bz^\mu \mu(t).
\end{equation}

Next, we can identify
\[
\Fl_\bbf = (\Gm \ltimes \Loop\Gv)/(\Gm \ltimes \para_\bbf).
\]
The action of $\Gm \subset \Gm \ltimes \Loop\Gv$ on $\Fl_\bbf$ by left multiplication is again called the \emph{loop rotation} action, or sometimes the \emph{untwisted loop rotation} action.  

Now let $\nu \in \bX$, regarded as a map $\nu: \Gm \to \Tv$, and let
\begin{equation}\label{eqn:hatnu-defn}
\hat\nu = (\id,\nu): \Gm \to \Gm \times \Tv.
\end{equation}
In an abuse of notation, we sometimes treat $\hat \nu$ as a map from $\Gm$ to $\Gm \times \Gv$, or to $\Gm \ltimes \Loop^+\Gv$, or to $\Gm \ltimes \Iw$.  

Then one can consider the $\Gm$-action on $\Fl_\bbf$ by composing $\hat\nu$ with the left-multiplication action of $\Gm \times \Tv \subset \Gm \ltimes \Loop^+\Gv$.  This action is called the \emph{$\nu$-twisted loop rotation} action.   Of course, if $\nu = 0$, this coincides with the untwisted loop rotation action.

In the special case of $\Gr$, for all $\mu, \nu \in \bX$, the point $L_\mu \in \Gr$ is fixed by the $\nu$-twisted loop rotation action.

The image of $\hat\nu$ in $\Loop^+\Gv$ normalizes $\Iwu$ and $\Iwup$.  We define
\[
\Gm \ltimes_{\hat\nu} \Iwu = (\text{image of $\hat \nu$}) \ltimes \Iwu,
\]
and likewise for $\Gm \ltimes_{\hat\nu} \Iwup$.

\subsection{Orbits}

The groups $\Loop^+\Gv$, $\Iw$, and $\Iw^+$ all act on $\Fl_\bbf$, and there are bijections
\begin{equation}\label{eqn:fl-orbits}
\begin{aligned}
\{\text{$\Loop^+\Gv$-orbits on $\Fl_\bbf$}\} &\overset{\sim}{\longleftrightarrow} \oWext^\bbf, \\
\{\text{$\Iw$-orbits on $\Fl_\bbf$}\} &\overset{\sim}{\longleftrightarrow} \Wext^\bbf, \\
\{\text{$\Iw^+$-orbits on $\Fl_\bbf$}\} &\overset{\sim}{\longleftrightarrow} \Wext^\bbf.
\end{aligned}
\end{equation}
We will sometimes refer to $\Loop^+\Gv$-orbits as \emph{spherical orbits}\footnote{In the literature, the term ``spherical'' is often reserved for the case of $\Loop^+\Gv$-orbits on $\Gr$, but in this paper, it will be convenient to use it in this more general setting.}.  Given a element $w$ in $\oWext^\bbf$ or in $\Wext^\bbf$ (as appropriate), we denote by $\Fl^\sph_{\bbf,w}$, resp.~$\Fl_{\bbf,w}$, resp.~$\Fl^+_{\bbf,w}$ the corresponding $\Loop^+\Gv$-orbit, resp.~$\Iw$-orbit, resp.~$\Iw^+$-orbit, and we write
\[
j^\sph_w: \Fl^\sph_{\bbf,w} \hookrightarrow \Fl_\bbf,
\qquad
j_w: \Fl_{\bbf,w} \hookrightarrow \Fl_\bbf,
\qquad
j^+_w: \Fl^+_{\bbf,w} \hookrightarrow \Fl_\bbf
\]
for the inclusion maps.

We now record some variations and special cases of these bijections:

\subsubsection{Orbits in identity components}
The ind-scheme $\Fl_\bbf$ may be disconnected; let $\Fl^\circ_\bbf$ denote the connected component containing the coset of the identity element.  To classify the $\Loop^+\Gv$- or $\Iw^+$-orbits contained in $\Fl^\circ_\bbf$, replace ``$\Wext$'' in~\eqref{eqn:fl-orbits} with ``$\Waff$.''

\subsubsection{Orbits and loop rotation}
The classification of $\Gm \ltimes \Loop^+\Gv$-orbits (resp.~$\Gm \ltimes \Iw$-orbits, $\Gm \ltimes \Iw^+$-orbits) on $\Fl_\bbf$ is identical to that of $\Loop^+\Gv$-orbits (resp.~$\Iw$-orbits, $\Iw^+$-orbits), because the latter orbits each contain a $\Gm$-fixed point.

Moreover, for any $\nu \in \bX$, the $\nu$-twisted loop rotation action of $\Gv$ also preserves each orbit for $\Loop^+\Gv$, $\Iw$, or $\Iw^+$.

\subsubsection{Orbits of a prounipotent radical}
Recall that $\Iw^+$ is the preimage of $\Bv^+$ under the evaluation map $\ev_0: \Loop^+\Gv \to \Gv$ that sends $z \mapsto 0$.  Let $\Uv^+$ be the unipotent radical of $\Bv^+$, and let $\Iwup = \ev_0^{-1}(\Uv^+)$.  It can be shown that each $\Iw^+$-orbit is also an $\Iwup$-orbit.

\subsubsection{Orbits in the affine Grassmannian}
\label{sss:orbits-gr}
In the special case where $\bbf = \bo$ and $\Fl_\bbf = \Gr$, using~\eqref{eqn:x-wext}, we obtain
\[
\begin{aligned}
\{\text{$\Loop^+\Gv$-orbits on $\Gr$}\} &\overset{\sim}{\longleftrightarrow} \bX^+, \\
\{\text{$\Iw^+$-orbits on $\Gr$}\} &\overset{\sim}{\longleftrightarrow} \bX.
\end{aligned}
\]
We thus usually denote spherical orbits on $\Gr$ as $\Gr_\lambda$ ($\lambda \in \bX^+$) and $\Iw^+$-orbits as $\Gr^+_\lambda$ ($\lambda \in \bX$).  Concretely, $\Gr_\lambda = \Loop^+\Gv \cdot \rL_\lambda$ and $\Gr^+_\lambda = \Iw^+ \cdot \rL_\lambda$.

\subsection{Derived categories of sheaves}
\label{ss:derivedsheaves}

Since $\bk$ has characteristic different from that of $\F$, it makes sense to consider \'etale $\bk$-sheaves on (ind-)schemes over $\F$.  We denote by
\[
\Db_{\Gm \ltimes \Iwu}(\Fl_\bbf,\bk),
\qquad
\Db_{\Gm \ltimes_{\hat\nu} \Iwu}(\Fl_\bbf,\bk),
\qquad
\Db_{\Gm \ltimes \Loop^+\Gv}(\Fl_\bbf,\bk)
\]
the equivariant derived categories of $\Fl_\bbf$ (in the sense of~\cite{bl:esf}) with respect to left multiplication actions of $\Gm \ltimes \Iwu$, $\Gm \ltimes_{\hat\nu} \Iwu$, and $\Gm \ltimes \Loop^+\Gv$, respectively.  (In the first and third cases, the $\Gm$ acts by untwisted loop rotation, whereas in the second case, it acts by $\nu$-twisted loop rotation for some $\nu \in \bX$.)

We say that a constructible complex $\cF$ of \'etale $\bk$-sheaves on $\Fl_\bbf$ is \emph{spherical} if for every $\Loop^+\Gv$-orbit $O$ in $\Fl_\bbf$, the restriction $\cF|_O$ has the property that 
\[
\text{$\mathcal{H}^i(\cF|_O)$ is a constant sheaf for all $i \in \Z$.}
\]
All objects of $\Db_{\Gm \ltimes \Loop^+\Gv}(\Fl_\bbf,\bk)$ are automatically spherical.  We denote by
\[
\Db_{\Gm,\sph}(\Fl_\bbf,\bk),
\qquad\text{resp.}\qquad
\Db_{\Gm,\sph|\nu}(\Fl_\bbf,\bk),
\]
the full subcategory of $\Db_{\Gm \ltimes \Iwu}(\Fl_\bbf,\bk)$, resp.~$\Db_{\Gm \ltimes_{\hat\nu} \Iwu}(\Fl_\bbf,\bk)$ consisting of spherical objects.

\subsection{Iwahori--Whittaker sheaves}
\label{ss:whittaker}

For each simple coroot $\check\alpha \in \fRvs$, let $\Uv_{\check\alpha}$ be the corresponding root subgroup of $\Gv$, and fix an isomorphism $\phi_{\check\alpha}: \Uv_{\check\alpha} \simto \Ga$.  Recall that the quotient $\Uv^+/[\Uv^+,\Uv^+]$ can be identified with the product of all the $\Uv_{\check\alpha}$.  Let
\begin{equation}\label{eqn:chi-defn}
\chi: \Iwup \to \Ga
\end{equation}
denote the following composition:
\[
\Iwup \xrightarrow{\ev_0} \Uv^+ \twoheadrightarrow \Uv^+/[\Uv^+,\Uv^+] \cong \prod_{\check\alpha \in \fRvs} \Uv_{\check\alpha} \xrightarrow[\sim]{\prod \phi_{\check\alpha}} \Ga^{|\fRvs|} \xrightarrow{\sum} \Ga.,
\]
If we let $\Gm$ act on $\Iwup$ by loop rotation and trivially on all other varieties above, then these maps (and their composition $\chi$) are $\Gm$-equivariant.

Let $\AS$ denote an Artin--Schreier $\bk$-local system on $\Ga$.  We regard it as $\Gm$-equivariant (for the trivial $\Gm$-action).  Then $\chi^*\AS$ is $\Gm$-equivariant for the loop rotation action.  We denote by
\[
\Db_{\Gm,\IW}(\Fl_\bbf,\bk)
\]
the bounded constructible $(\Gm \ltimes \Iwup, \chi^*\AS)$-equivariant category of $\bk$-sheaves on $\Fl_\bbf$.  For more details on the definition of these categories, see~\cite[Appendix~A]{ar}.  Objects in these categories are called ($\Gm$-equivariant) \emph{Iwahori--Whittaker complexes}.  

In contrast with the spherical case, not every $\Iwup$-orbit admits a nonzero Iwa\-ho\-ri--Whittaker local system.  Let $\bbf \subset \bba$ be a facet, and consider an element $w \in \Wext^\bbf$.  The orbit $\Fl^+_{\bbf,w}$ admits a nonzero Iwahori--Whittaker local system if and only there exists an $\Iwup$-equivariant map
\[
\Fl^+_{\bbf,w} \to \Ga,
\]
where $\Iwup$ acts on $\Ga$ via $\chi$. Such a map exists if and only if the stabilizer in $\Iwup$ of any point in $\Fl^+_{\bbf,w}$ is contained in the kernel of $\chi: \Iwup \to \Ga$.  In combinatorial terms, we have
\[
\begin{array}{c}
\text{$\Fl^+_{\bbf,w}$ admits a nonzero}\\
\text{Iwahori--Whittaker local system}
\end{array}
\qquad\Longleftrightarrow\qquad
w \in \spWext^\bbf.
\]
In the special case where $\bbf = \bo$, this condition becomes:
\[
\begin{array}{c}
\text{$\Gr^+_\lambda$ admits a nonzero}\\
\text{Iwahori--Whittaker local system}
\end{array}
\qquad\Longleftrightarrow\qquad
\lambda \in \bX^{++}
\]
For $\lambda \in \bX^{++}$, we denote by
\[
\cL^{\lambda}_\AS
\qquad\text{or simply}\qquad
\cL_\AS
\]
the unique irreducible Iwahori--Whittaker local system on $\Gr^+_\lambda$.  Recall that $j^+_\lambda: \Gr^+_\lambda \hookrightarrow \Gr$ denotes the inclusion map.  There is a natural transformation
\begin{equation}\label{eqn:iw-clean-defn}
j^+_{\lambda!}\cL^\lambda_\AS \to j^+_{\lambda*}\cL^\lambda_\AS.
\end{equation}
We say that the weight $\lambda$ is \emph{IW-clean} if~\eqref{eqn:iw-clean-defn} is an isomorphism.  For instance, the weight $\varsigma$ is IW-clean because $\Gr^+_\varsigma$ is minimal (with respect to the closure partial order) among $\Iwup$-orbits admitting a nonzero Iwahori--Whittaker local system (cf.~\cite[Eq.~(3.2)]{bgmrr}).

\subsection{Graded Hom groups and equivariant cohomology}
\label{ss:bhom}

We will assume from this section forward that $p$ is not a torsion prime for $\Gv$ (see~\cite[\S 2.6]{jmw:ps} for a discussion of this condition).  If $\cF$ and $\cG$ are two objects in a triangulated category, we can form the graded vector space
\[
\bHom(\cF,\cG) = \bigoplus_{n \in \Z}\Hom(\cF,\cG[n]).
\]
We will occasionally consider only the even-graded part: we write
\[
\bHomev(\cF, \cG) = \bigoplus_{n \in \Z}\Hom(\cF,\cG[2n]).
\]
We sometimes decorate ``$\bHom$'' with a subscript to indicate the category in which the $\Hom$-groups are to be computed, in case there is some risk of ambiguity.  We also use the notation
\[
\bEnd(\cF) = \bHom(\cF,\cF). 
\]

For instance, let $H$ be an algebraic group over $\F$, and consider the equivariant derived category $\Db_H(\pt,\bk)$.  Then
\[
\bEnd_{\Db_H(\pt,\bk)}(\underline{\bk}_\pt)
\]
is identified with the equivariant cohomology ring $\coh^\bullet_H(\pt,\bk)$. 

More generally, if $X$ is a variety over $\F$ equipped with an $H$-action, then for $\cF, \cG \in \Db_H(X,\bk)$, the graded vector space $\bHom(\cF,\cG)$ can naturally be regarded as a graded $\coh^\bullet_H(\pt,\bk)$-module.  Concretely, if $h \in \coh^i_H(\pt,\bk) = \Hom(\underline{\bk}_\pt, \underline{\bk}_\pt[i])$ and $f \in \Hom(\cF,\cG[j])$, then the product $hf$ is given by
\begin{equation}\label{eqn:eqcoh-module}
hf = a^*h \otimes^L f
\end{equation}
where $a: X \to \pt$ is the structure map of $X$.

The most commonly occurring equivariant cohomology rings in this paper are the following:
\[
\coh^\bullet_\Gm(\pt,\bk),
\qquad
\coh^\bullet_{\Tv}(\pt,\bk),
\qquad
\coh^\bullet_{\Gv}(\pt,\bk).
\]
Recall that $\coh^\bullet_\Gm(\pt,\bk)$ is the symmetric algebra on $\coh^2_\Gm(\pt,\bk)$, which is a $1$-di\-men\-sion\-al vector space.  (This $1$-di\-men\-sion\-al vector space is canonically identified with $\bk(-1)$, the dual of the Tate module: see Section~\ref{ss:pi-prelim}.)  Similarly, $\coh^\bullet_{\Tv}(\pt,\bk)$ is the symmetric algebra on $\coh^2_{\Tv}(\pt,\bk)$, and we have
\[
\coh^2_{\Tv}(\pt,\bk) \cong \bXv \otimes_\bk \bk(-1).
\]
The $W$-action on $\bXv$ induces a $W$-action on $\coh^\bullet_{\Tv}(\pt,\bk)$, and we denote by $\coh^\bullet_{\Tv}(\pt,\bk)^W$ the subring of $W$-invariant elements in this ring.

Finally, the inclusion map $\Tv \hookrightarrow \Gv$ induces a map of equivariant cohomology rings
\begin{equation}\label{eqn:cohgt-incl}
\coh^\bullet_{\Gv}(\pt,\bk) \to \coh^\bullet_{\Tv}(\pt,\bk).
\end{equation}
Since $p$ is not a torsion prime for $\Gv$, the map above is injective and yields an isomorphism
\[
\coh^\bullet_{\Gv}(\pt,\bk) \to \coh^\bullet_{\Tv}(\pt,\bk)^W.
\]
A related fact is that there is a canonical isomorphism
\begin{equation}\label{eqn:gmgv-coh}
\coh^\bullet_{\Gm \times \Gv}(\pt,\bk) \cong \coh^\bullet_\Gm(\pt,\bk) \otimes \coh^\bullet_{\Gv}(\pt,\bk).
\end{equation}

Let $\nu \in \bX$.  The group homomorphisms
\begin{align*}
\nu &: \Gm \to \Tv, \\
\hat\nu &: \Gm \to \Gm \times \Gv
\end{align*}
induce ring homomorphisms in equivariant cohomology, which we denote by
\begin{align}
\rS(\nu) &: \coh^\bullet_{\Tv}(\pt,\bk) \to \coh^\bullet_{\Gm}(\pt,\bk), \notag \\
\rS(\hat\nu) &: \coh^\bullet_{\Gm \times \Gv}(\pt,\bk) \to \coh^\bullet_{\Gm}(\pt,\bk). \label{eqn:s-hatnu-defn}
\end{align}
Concretely, $\rS(\nu)$ is determined by its restriction to $\coh^2_{\Tv}(\pt,\bk) \to \coh^2_\Gm(\pt,\bk)$, and this restriction is identified with $\la\nu,{-}\ra: \bXv(-1) \to \bk(-1)$.  The map $\rS(\hat\nu)$ can be described in similar terms.  We will write
\[
\coh^\bullet_{\Gm}(\pt,\bk)_{\rS(\nu)}
\]
to mean $\coh^\bullet_\Gm(\pt,\bk)$ regarded as a $(\coh^\bullet_{\Gm}(\pt,\bk), \coh^\bullet_{\Tv}(\pt,\bk))$-bimodule, where $\coh^\bullet_{\Tv}(\pt,\bk)$ acts by multiplication on the right through the homomorphism $\rS(\nu)$.  Via~\eqref{eqn:cohgt-incl}, we may also regard $\coh^\bullet_{\Gm}(\pt,\bk)_{\rS(\nu)}$ as a $(\coh^\bullet_{\Gm}(\pt,\bk), \coh^\bullet_{\Gv}(\pt,\bk))$-bimodule.  The latter perspective is equivalent to treating $\coh^\bullet_\Gm(\pt,\bk)$ as a $\coh^\bullet_{\Gm \times \Gv}(\pt,\bk)$-module via $\rS(\hat\nu)$.

\subsection{Parity sheaves}
\label{ss:parity}

In each of the derived categories introduced in the preceding sections, there is an additive category of parity sheaves (following~\cite{jmw:ps}).  These categories are denoted by:
\begin{gather*}
\Parity_{\Gm \ltimes \Iwu}(\Fl_\bbf,\bk), 
\qquad
\Parity_{\Gm \ltimes_{\hat\nu} \Iwu}(\Fl_\bbf,\bk), 
\qquad
\Parity_{\Gm \ltimes \Loop^+\Gv}(\Fl_\bbf,\bk), 
\\
\Parity_{\Gm,\sph}(\Fl_\bbf,\bk), 
\qquad
\Parity_{\Gm,\sph|\nu}(\Fl_\bbf,\bk), 
\qquad
\Parity_{\Gm,\IW}(\Fl_\bbf,\bk).
\end{gather*}
For $w \in \Wext^\bbf$, we let
\[
\cE^\bbf_w \in \Parity_{\Gm \ltimes \Iwu}(\Fl_\bbf,\bk)
\]
denote the unique (up to isomorphism) indecomposable parity sheaf whose support is $\overline{\Fl_{\bbf,w}}$, and which satisfies
\[
\cE^\bbf_w|_{\Fl_{\bbf,w}} \cong \underline{\bk}_{\Fl_{\bbf,w}}[\dim \Fl_{\bbf,w}].
\]
In the special case where $\bbf = \ba$, we typically omit the ``$\bbf$'' from the notation and simply write $\cE_w$.

We may also write $\cE^\bbf_w$ for the object of $\Parity_{\Gm \ltimes_{\hat\nu} \Iwu}(\Fl_\bbf,\bk)$ with the same characterization.  If $w$ belongs to $\oWext^\bbf$, then we may regard $\cE^\bbf_w$ as an object of $\Parity_{\Gm \ltimes \Loop^+\Gv}(\Fl_\bbf,\bk)$, $\Parity_{\Gm,\sph}(\Fl_\bbf,\bk)$, or $\Parity_{\Gm,\sph|\nu}(\Fl_\bbf,\bk)$.

Similarly, for $w \in \spWext^\bbf$, we let
\[
\cE^{\IW,\bbf}_w \in \Parity_{\Gm,\IW}(\Fl_\bbf,\bk)
\]
denote the indecomposable parity sheaf supported on $\overline{\Fl^+_{\bbf,w}}$ and satisfying
\[
\cE^{\IW,\bbf}_w|_{\Fl^+_{\bbf,w}} \cong \cL^w_\AS[\dim \Fl_{\bbf,w}].
\]

A key property of equivariant parity sheaves is that their $\bHom$-groups are free modules over the appropriate equivariant cohomology ring.  For instance, if $\cF, \cG \in \Parity_{\Gm \ltimes \Loop^+\Gv}(\Fl_\bbf,\bk)$, then
\[
\text{$\bHom(\cF,\cG)$ is a free $\coh^\bullet_{\Gm \times \Gv}(\pt,\bk)$-module.}
\]
Moreover, this module structure interacts well with forgetful functors.  To make this precise, let $\nu \in \bX$, and let
\[
\For_\nu: \Parity_{\Gm \ltimes \Loop^+\Gv}(\Fl_\bbf,\bk) \to \Parity_{\Gm,\sph|\nu}(\Fl_\bbf,\bk)
\]
be the forgetful functor.  For $\cF, \cG \in \Parity_{\Gm \ltimes \Loop^+\Gv}(\Fl_\bbf,\bk)$, we have a natural isomorphism
\[
\bHom(\For_\nu(\cF),\For_\nu(\cG)) \cong
\coh^\bullet_{\Gm}(\pt)_{\rS(\nu)} \otimes_{\coh^\bullet_{\Gm \times \Gv}(\pt,\bk)} \bHom(\cF,\cG).
\]

\subsection{Parity sheaves on the affine Grassmannian}
\label{ss:parity-gr}

We will use a slightly different notation for parity sheaves on $\Fl_\bo = \Gr$.  Recall from Section~\ref{sss:orbits-gr} that spherical orbits on $\Gr$ are in bijection with $\bX^+$.  For $\lambda \in \bX^+$, we denote by
\[
\cE^\sph_\lambda \in \Parity_{\Gm \ltimes \Loop^+\Gv}(\Gr,\bk)
\]
the unique indecomposable parity sheaf that is supported on $\overline{\Gr_\lambda}$ and that satisfies $\cE^\sph_\lambda|_{\Gr_\lambda} \cong \underline{\bk}_{\Gr_\lambda}[\dim \Gr_\lambda]$.  We have
\[
\cE^\sph_\lambda \cong \cE^{\bo}_w
\qquad\text{where}\qquad
w \in \oWext^\bo \cap W t_\lambda W.
\]

Similarly, recall from Section~\ref{ss:whittaker} that $\Gr^+_\lambda$ admits a nonzero Iwahori--Whittaker local system if and only if $\lambda \in \bX^{++}$.  We denote by
\[
\cE^{\IW,\bo}_\lambda = \cE^{\IW,\bo}_w
\qquad\text{where}\qquad
w \in \spWext^\bo \cap t_\lambda W.
\]

\subsection{Scaled versions of loop groups}
\label{ss:scaled}

The construction of loop groups (and their various subgroups and quotients) involve rings of the form $R((z))$ or $R[[z]]$, where $R$ is an $\F$-algebra.  Consider the element $z^p \in R[[z]]$.  We can treat $z^p$ as a formal variable in its own right, and repeat the constructions above in terms of this variable.  For instance, we have the group (ind-)schemes
\[
\Loop_p\Gv(R) = \Gv(R((z^p)))
\qquad
\Loop^+_p\Gv(R) = \Gv(R[[z^p]]).
\]
We similarly define
\[
\para_{p,\bbf},
\quad
\Iw_p,
\quad
\Iwul,
\quad
\Iw^+_p,
\quad
\Iwupl,
\quad
\Fl_{p,\bbf},
\quad
\Gr_p.
\]

The inclusion $R[[z^p]] \hookrightarrow R[[z]]$ lets us regard $\Loop_p\Gv$ as a subgroup of $\Loop\Gv$, and we likewise have $\Loop^+_p\Gv \subset \Loop^+\Gv$ and $\Iwupl \subset \Iwup$.  Let
\[
\chi_p: \Iwupl \to \Ga
\]
be the restriction of the map $\chi: \Iwup \to \Ga$ from~\eqref{eqn:chi-defn}.  

We define the loop rotation action of $\Gm$ on $\F((z^p))$ to be the restriction of the loop rotation action on $\F((z))$.  That is, for $t \in \Gm$ and $f(z^p) \in \F((z^p))$, we have
\[
t \cdot f(z^p) = f(t^p z^p).
\]
(Compare this with~\eqref{eqn:looprot-defn}.)  We can then define all the analogous categories to those in Sections \ref{ss:whittaker}, \ref{ss:derivedsheaves}, and \ref{ss:parity}, including
\[
\Db_{\Gm \ltimes \Loop^+_p\Gv}(\Fl_{p,\bbf},\bk), 
\qquad
\Parity_{\Gm,\IW}(\Fl_{p,\bbf},\bk), 
\qquad
\text{etc.}
\]

We remark that it is only the loop rotation action that distinguishes the ``$z$'' version of our set-up from the ``$z^p$'' version: if we omit the loop rotation, then, for instance, the categories
\[
\Db_{\Loop^+\Gv}(\Fl_\bbf,\bk)
\qquad\text{and}\qquad
\Db_{\Loop^+_p\Gv}(\Fl_{p,\bbf},\bk)
\]
are canonically equivalent via the substitution $z \mapsto z^p$.

\subsection{Fixed points under \texorpdfstring{$\varpi$}{varpi}}
\label{ss:fixed-pts}

Recall that $\varpi \subset \Gm$ denotes the group of $p$-th roots of unity.  Let $\varpi$ act on $\Gr$ via the (untwisted) loop rotation action of $\Gm$.  The fixed-point locus
\[
\Gr^\varpi \subset \Gr
\]
is a closed sub-(ind-)scheme of $\Gr$ that contains all points of the form $\rL_\lambda$.  The group $\Loop_p\Gv$ acts on $\Gr^\varpi$, and for $\lambda \in -p\bba \cap \bX^+$, the stabilizer of $\rL_\lambda$ in $\Loop_p\Gv$ is $\para_{p,\bbf_\lambda}$.  There is a map of ind-schemes
\begin{equation}\label{eqn:grpi-overcount}
\bigsqcup_{\lambda \in -p\bba \cap \bX^+} \Fl_{p,\bbf_\lambda} \to \Gr^\varpi
\qquad\text{given by}\qquad
(g\para_{p,\bbf_\lambda} \in \Fl_{p,\bbf_\lambda}) \mapsto g \cdot \rL_\lambda.
\end{equation}
According to~\cite[Proposition~4.7]{rw:st}, when we take just the identity component of each term in the domain of~\eqref{eqn:grpi-overcount}, we get an isomorphism of ind-schemes
\begin{equation}\label{eqn:grpi-isom}
\bigsqcup_{\lambda \in (-p\bba) \cap \bX^+} \Fl_{p,\bbf_\lambda}^\circ \simto \Gr^\varpi.
\end{equation}
In this decomposition, the component corresponding to $\lambda = 0$ is $\Gr^\circ_{p}$.  The union of components corresponding to $\lambda \in (-p\bba) \cap p \bX^+$ is $\Gr_p$.

The argument in~\cite[Proposition~4.7]{rw:st} shows more generally that the restriction of~\eqref{eqn:grpi-overcount} to \emph{any} connected component of its domain induces an isomorphism with its image, which is a connected component of $\Gr^\varpi$.  However,~\eqref{eqn:grpi-overcount} is not injective in general.

According to~\cite[Lemma~4.4]{rw:st}, each $\Loop^+\Gv$-orbit, resp.~$\Iwup$-orbit, in $\Gr$ meets exactly one $\Loop_p\Gv$-orbit, resp.~$\Iwupl$-orbit, in $\Gr^\varpi$.  There are thus two ways to label such orbits: one labelling inherited from $\Gr$, and the other coming from the left-hand side of~\eqref{eqn:grpi-isom}.  To make this explicit, let $\lambda \in -p\bba \cap \bX^+$. Let $w \in \Waff^\bbf$, and set $\mu = w \boxl \lambda \in \bX$.  Let $\Gr^{\varpi+}_\mu$ be given by
\[
\Gr^{\varpi+}_\mu = \Gr^+_\mu \cap \Gr^\varpi = \Iwupl \cdot \rL_\mu, \qquad\text{corresponding to}\qquad
\Fl^+_{p,\bbf_\lambda,w} \subset \Fl^\circ_{p,\bbf_\lambda},
\]
where ``corresponding to'' should be understood with respect to~\eqref{eqn:grpi-isom}.  Similarly, if $w \in \oWaff^\bbf$, set $\mu' = w_\bo(w \boxl \lambda)$.  By Lemma~\ref{lem:boxl-wext}\eqref{it:boxl-max}, $\mu' \in \bX^+$.  Let $\Gr^{\varpi}_{\mu'}$ be given by
\[
\Gr^{\varpi}_{\mu'} = \Gr_{\mu'} \cap \Gr^\varpi = \Loop_p^+\Gv \cdot \rL_{\mu'}, \qquad\text{corresponding to}\qquad
\Fl^\sph_{p,\bbf_\lambda,w} \subset \Fl^\circ_{p,\bbf_\lambda}.
\]

As in Sections~\ref{ss:parity}--\ref{ss:parity-gr}, we can consider parity sheaves on $\Gr^\varpi$.  We will use only the labelling from the right-hand side of~\eqref{eqn:grpi-isom}: we have
\begin{align*}
\cE^{\varpi,\sph}_\lambda &\in \Parity_{\Gm \times \Loop^+_p\Gv}(\Gr^\varpi,\bk) & \lambda &\in \bX^+, \\
\cE^{\varpi,\IW,\bo}_\lambda &\in \Parity_{\Gm,\IW}(\Gr^\varpi,\bk) & \lambda &\in \bX^{++}.
\end{align*}

\subsection{Fixed points under a twisted loop rotation action}

If we instead consider the $\nu$-twisted loop rotation action for some $\nu \in \bX$, the fixed-point locus of $\varpi \subset \Gm$ may be in general be different from $\Gr^\varpi$.  However, if $\nu$ is divisible by $p$, then $\varpi$ is contained in the kernel of $\nu: \Gm \to \Tv$, and it follows that
\[
\Gr^\varpi = 
\begin{array}{c}
\text{the fixed-point locus for $\varpi \subset \Gm$ acting on $\Gr$}\\
\text{by the $\nu$-twisted loop rotation action for any $\nu \in p\bX$.}
\end{array}
\]

\subsection{Smith categories and Smith--Treumann localization}

We briefly recall the construction of Smith categories and the Smith--Treumann localization functor.  However, some statements in the following subsections will reference more nuanced constructions that are only detailed in the Appendix.

Let $X$ be an (ind-)scheme over $\F$ with an action of $\Gm$.  A $\Gm$-equivariant complex on $X^{\varpi}$, also being $\varpi$-equivariant, has stalks that can be viewed as complexes of $\bk[\varpi]$-modules.  The category
\[
\Db_\Gm(X^{\varpi},\bk)_\perf
\]
denotes the full subcategory of complexes for which these stalks admit finite free resolutions as  $\bk[\varpi]$-modules (such complexes are called \textit{$\varpi$-perfect}).

The \emph{Smith category} of $X^{\varpi}$ is the Verdier quotient
\[
\Sm(X^{\varpi},\bk) = \Db_\Gm(X^{\varpi},\bk)/ \Db_\Gm(X^{\varpi},\bk)_\perf,
\]
and we denote by
\[
\rQ: \Db_\Gm(X^{\varpi},\bk) \to \Sm(X^{\varpi},\bk)
\]
the resulting quotient functor.

Let $i: X^\varpi \hookrightarrow X$ be the inclusion map.  By~\cite[Lemma~3.5]{rw:st}, for any $\cF \in \Db_\Gm(X,\bk)$, the cone of the natural map $i^!\cF \to i^*\cF$ is $\varpi$-perfect.  Thus, this natural map becomes an isomorphism after passage to $\Sm(X^\varpi,\bk)$.  The \emph{Smith--Treumann localization functor} is the functor
\[
\Psm: \Db_\Gm(X,\bk) \to \Sm(X^\varpi,\bk)
\]
arising from the composition of $\rQ$ with either $i^!$ or $i^*$.

\subsection{Smith categories for the affine Grassmannian}

Given a weight $p\nu \in p\bX$, we have the following categories of sheaves on $\Gr^\varpi$:
\[
\Db_{\Gm \ltimes_{p\nu} \Iwul}(\Gr^\varpi,\bk),
\qquad
\Db_{\Gm,\sph|p\nu}(\Gr^\varpi,\bk),
\qquad
\Db_{\Gm,\IW}(\Gr^\varpi,\bk).
\]
For each of these categories, we can take its image under the appropriate quotient functor $\rQ: \Db_\Gm(\Gr^\varpi,\bk) \to \Sm(\Gr^\varpi,\bk)$, and then take the full triangulated category generated by that image.  The resulting categories  are denoted by
\[
\Sm_{\Iwul|p\nu}(\Gr^\varpi,\bk),
\qquad
\Sm_{\sph|p\nu}(\Gr^\varpi,\bk),
\qquad
\Sm_{\IW}(\Gr^\varpi,\bk),
\]
respectively.  Note that $\Sm_{\Iwul|p\nu}(\Gr^\varpi,\bk)$ and $\Sm_{\IW}(\Gr^\varpi,\bk)$ both fit the general framework considered in Section~\ref{ss:tweq}.  In particular, Lemma~\ref{lem:smith-2defns} gives an alternative description of these two Smith categories.  In~\cite{rw:st}, that alternative description was taken as the definition of $\Sm_\IW(\Gr^\varpi,\bk)$ (see~\cite[\S6.1]{rw:st}), so Lemma~\ref{lem:smith-2defns} allows us to apply results from that paper.

In contrast, the category $\Sm_{\sph|p\nu}(\Gr^\varpi,\bk)$ does \emph{not} fit the framework of Section~\ref{ss:tweq}.  However, by construction, it is a full subcategory of $\Sm_{\Iwul|p\nu}(\Gr^\varpi,\bk)$.

The general theory from Section~\ref{ss:smithpar} lets us define Smith parity sheaves in the $\Iwul$-equivariant and Iwahori--Whittaker settings.  These categories are denoted by
\[
\SmParity_{\Iwul|p\nu}(\Gr^\varpi,\bk)
\qquad\text{and}\qquad
\SmParity_{\IW}(\Gr^\varpi,\bk),
\]
respectively.  By Corollary~\ref{cor:parity-full}, there are enough Smith parity sheaves in the $\Iwul$-equivariant and Iwahori--Whittaker settings.  We define
\[
\SmParity_{\sph|p\nu}(\Gr^\varpi,\bk) = \SmParity_{\Iwul|p\nu}(\Gr^\varpi,\bk) \cap \Sm_{\sph|p\nu}(\Gr^\varpi,\bk).
\]

Denote the inclusion map of $\Gr^\varpi$ into $\Gr$ by
\[
\bi: \Gr^\varpi \hookrightarrow \Gr.
\]
The functors $\bi^!$ and $\bi^*$ send spherical, resp.~Iwahori--Whittaker, objects on $\Gr$ to spherical, resp.~Iwahori--Whittaker, objects on $\Gr^\varpi$.  We can therefore consider the spherical and Iwahori--Whittaker versions of the Smith--Treumann localization functor (cf.~Section~\ref{ss:st-defn}) in the following two cases:
\begin{align*}
\Psm_{p\nu} &:\Db_{\Gm,\sph|p\nu}(\Gr,\bk) \to \Sm_{\sph|p\nu}(\Gr^\varpi,\bk), \\
\Psm_{\IW} &: \Db_{\Gm,\IW}(\Gr,\bk) \to \Sm_\IW(\Gr^\varpi,\bk).
\end{align*}
In both cases, the functor is defined to be $\rQ \circ \bi^! \cong \rQ \circ \bi^*$, where $\rQ$ is the quotient functor to the Smith category (see Section~\ref{ss:smith-defn}).  Of course, $\bi^!$ preserves the property of being $!$-even (or $!$-odd), while $\bi^*$ preserves the property of being $*$-even (or $*$-odd).  Since $\rQ$ sends parity objects to parity objects (see Section~\ref{ss:smithpar}), Smith--Treumann localization also sends parity objects to parity objects.  We obtain functors
\begin{align*}
\Psm_{p\nu} &:\Parity_{\Gm,\sph|p\nu}(\Gr,\bk) \to \SmParity_{\sph|p\nu}(\Gr^\varpi,\bk), \\
\Psm_{\IW} &: \Parity_{\Gm,\IW}(\Gr,\bk) \to \SmParity_\IW(\Gr^\varpi,\bk).
\end{align*}

\section{Convolution}
\label{sec:convolution}

Throughout this section, we retain the assumption from Section~\ref{ss:bhom} that $p$ is not a torsion prime for $\Gv$.

\subsection{Review of convolution}

Here is a brief informal review of the ``convolution'' operation for sheaves on $\Gr$ and $\Fl_\bbf$.  Consider the following diagram:
\begin{equation}\label{eqn:conv-diag}
\Gr \times \Fl_\bbf \xleftarrow{p} \Loop\Gv \times \Fl_\bbf \xrightarrow{q} \Loop\Gv \times^{\Loop^+\Gv} \Fl_\bbf \xrightarrow{m} \Fl_\bbf
\end{equation}
Here, $p$ is induced by the quotient map $\Loop\Gv \to \Gr$; $q$ is the quotient for the ``middle action'' of $\Loop^+\Gv$ on $\Loop\Gv \times \Fl_\bbf$ given by $g \cdot (h,x) = (hg^{-1}, gx)$; and $m$ is the multiplication map that sends $[h : x ]$ to $hx$.  

Let us now equip the diagram~\eqref{eqn:conv-diag} with a $\Gm$-action.  Let $\Gm$ act on $\Gr \times \Fl_\bbf$ by acting by loop rotation on both factors, and likewise for $\Loop\Gv \times \Fl_\bbf$.  Then $p$ is $\Gm$-equivariant.  Next, we can combine the $\Gm$-action on $\Loop\Gv \times \Fl_\bbf$ with the ``middle action'' of $\Loop^+\Gv$ to get an action of $\Gm \ltimes \Loop^+\Gv$.  Since the map $q$ in~\eqref{eqn:conv-diag} is the quotient by the action of $\Loop^+\Gv$, the quotient space $\Loop\Gv \times^{\Loop^+\Gv}\Fl_\bbf$ inherits an action of the quotient group $(\Gm \ltimes \Loop^+\Gv)/\Loop^+\Gv = \Gm$.  Finally, the map $m$ is $\Gm$-equivariant with respect to the aforementioned action on $\Loop\Gv \times^{\Loop^+\Gv} \Fl_\bbf$ and the loop rotation action on $\Fl_\bbf$.

Suppose now that we have
\[
\cF \in \Db_\Gm(\Gr,\bk)
\qquad\text{and}\qquad
\cG \in \Db_{\Gm \ltimes \Loop^+\Gv}(\Fl_\bbf,\bk).
\]
Then one can consider the object $\cF \boxtimes \cG \in \Db_\Gm(\Gr \times \Fl_\bbf)$.  Because $\cG$ is assumed to be $\Loop^+\Gv$-equivariant, the pullback $p^*(\cF \boxtimes \cG)$ is $(\Gm \ltimes \Loop^+\Gv)$-equivariant (where $\Loop^+\Gv$ acts by the middle action), and there is a canonical object
\begin{equation}\label{eqn:tboxtimes-defn}
\cF \tboxtimes \cG \in \Db_\Gm(\Loop\Gv \times^{\Loop^+\Gv} \Fl_\bbf,\bk)
\qquad\text{such that}\qquad
q^*(\cF \tboxtimes \cG) \cong p^*(\cF \boxtimes \cG).
\end{equation}
The convolution product $\cF \star \cG$ is defined by
\[
\cF \star \cG = m_*(\cF \tboxtimes \cG).
\]
This construction defines a bifunctor
\[
\star: \Db_\Gm(\Gr,\bk) \times \Db_{\Gm \ltimes \Loop^+\Gv}(\Fl_\bbf,\bk) \to \Db_\Gm(\Fl_\bbf,\bk).
\]
The following lemma follows from the definition of convolution and the definition of the $\coh^\bullet_\Gm(\pt,\bk)$-module structure (see~\eqref{eqn:eqcoh-module}).

\begin{lem}\label{lem:looprot-conv}
Let $\cF_1, \cF_2 \in \Db_\Gm(\Gr,\bk)$ and $\cG_1, \cG_2 \in \Db_{\Gm \ltimes \Loop^+\Gv}(\Fl_\bbf,\bk)$.  The map
\[
\bHom(\cF_1,\cF_2) \times \bHom(\cG_1,\cG_2) \to \bHom(\cF_1 \star \cG_1, \cF_2 \star \cG_2)
\]
is $\coh^\bullet_\Gm(\pt,\bk)$-bilinear.
\end{lem}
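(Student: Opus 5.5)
The plan is to unwind the convolution map into its three constituent operations and to check $\coh^\bullet_\Gm(\pt,\bk)$-linearity of each one separately. Given $f \in \Hom(\cF_1,\cF_2[i])$ and $g \in \Hom(\cG_1,\cG_2[j])$, the map in question sends $(f,g)$ to $m_*(f \tboxtimes g)$. Here $f \tboxtimes g$ is the unique morphism $\cF_1 \tboxtimes \cG_1 \to \cF_2 \tboxtimes \cG_2[i+j]$ whose pullback under $q^*$ agrees with $p^*(f \boxtimes g)$. It is unique because $q^*$ is fully faithful on the relevant equivariant categories, since $q$ is a quotient by the free middle action of $\Loop^+\Gv$. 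So it suffices to prove the following: for $h \in \coh^k_\Gm(\pt,\bk)$, we have $(hf)\boxtimes g = h(f \boxtimes g) = f \boxtimes (hg)$ in $\Db_\Gm(\Gr\times\Fl_\bbf,\bk)$, and each of the functors $p^*$, $q^*$ (hence the descent step producing $\tboxtimes$), and $m_*$ is $\coh^\bullet_\Gm(\pt,\bk)$-linear on $\bHom$.

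The functor steps follow formally from the description~\eqref{eqn:eqcoh-module}. For any $\Gm$-equivariant morphism $\phi: Y \to X$ with structure maps $a_X$ and $a_Y = a_X \circ \phi$, we have $\phi^*(a_X^*h \otimes^L f) \cong a_Y^*h \otimes^L \phi^*f$. We also have $\phi_*(a_Y^*h \otimes^L f) \cong a_X^*h \otimes^L \phi_* f$ by the projection formula, since $a_Y^* h = \phi^* a_X^* h$. Both isomorphisms are compatible with composition of morphisms. Applying this to $p$, $q$ and $m$ gives linearity of $p^*$, $q^*$ and $m_*$. Linearity of the descent step then follows from faithfulness of $q^*$: the morphisms $h(f\tboxtimes g)$ and $(hf)\tboxtimes g$ have the same image under $q^*$, so they coincide.

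For the external product, the $\Gm$-action on $\Gr\times\Fl_\bbf$ is diagonal, and the structure map factors as $a_{\Gr}\circ \mathrm{pr}_1$ and also as $a_{\Fl_\bbf}\circ\mathrm{pr}_2$. Hence $a^*h \otimes^L (f\boxtimes g)$ can be rewritten as $(a_{\Gr}^*h\otimes^L f)\boxtimes g$, and also as $f\boxtimes(a_{\Fl_\bbf}^*h \otimes^L g)$, using $f \boxtimes g = \mathrm{pr}_1^*f\otimes^L \mathrm{pr}_2^* g$ and the associativity and commutativity of $\otimes^L$. The only subtle point is the Koszul sign arising when $h$ is moved past $f$. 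Since $h$ has even degree (as $\coh^\bullet_\Gm(\pt,\bk)$ is concentrated in even degrees), no sign appears. This gives bilinearity.

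The main obstacle is bookkeeping in the equivariant setting: we need the equivariant version of $\boxtimes$ (with the diagonal $\Gm$) and the descent along $q$, which involves the group $\Gm\ltimes\Loop^+\Gv$, to be compatible with the $\coh^\bullet_\Gm$-action defined via the structure map to $\pt$, where $\pt$ carries $\Gm$-equivariance only. I will handle this by noting that every space in~\eqref{eqn:conv-diag} maps $\Gm$-equivariantly to $\pt$, and that these structure maps commute with $p$, $q$ and $m$.
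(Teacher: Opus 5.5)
Your proposal is correct. It is the paper's argument made explicit: the paper only says the lemma follows from the definition of convolution and of the module structure in \eqref{eqn:eqcoh-module}, and your separate linearity checks for the external product (with evenness of $\coh^\bullet_\Gm(\pt,\bk)$ ruling out Koszul signs), $p^*$, descent along $q$, and $m_*$ are exactly that unwinding. One point to tighten: you place $f \boxtimes g$ in $\Db_\Gm(\Gr\times\Fl_\bbf,\bk)$, but $f\boxtimes g$ and $p^*(f\boxtimes g)$ must be morphisms in the $(\Gm \ltimes \Loop^+\Gv)$-equivariant categories (with $\coh^\bullet_\Gm(\pt,\bk)$ acting through $\coh^\bullet_{\Gm\times\Gv}(\pt,\bk)$), since descent along $q$ and faithfulness of $q^*$ only hold there.
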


Below are some variants of this construction.

\subsubsection{Equivariance properties for $\cF$}
If the object $\cF$ starts out with any of the properties below, then these properties are inherited by $\cF \star \cG$:
\[
\text{$\Loop^+\Gv$-equivariant, \quad
spherical, \quad
Iwahori--Whittaker.}
\]
These properties can optionally be further combined with loop rotation equivariance.  For instance, we get functors
\begin{gather*}
\star: \Db_{\Loop^+\Gv}(\Gr,\bk) \times \Db_{\Loop^+\Gv}(\Fl_\bbf,\bk) \to \Db_{\Loop^+\Gv}(\Fl_\bbf,\bk), \\
\star: \Db_{\Gm,\IW}(\Gr,\bk) \times \Db_{\Gm \ltimes \Loop^+\Gv}(\Fl_\bbf,\bk) \to \Db_{\Gm,\IW}(\Fl_\bbf,\bk).
\end{gather*}

\subsubsection{Parity sheaves}\label{sss:conv-parity}
It is well known that convolution sends parity sheaves to parity sheaves.  In particular, we have functors
\begin{gather*}
\star: \Parity_{\Gm \ltimes \Loop^+\Gv}(\Gr,\bk) \times \Parity_{\Gm \ltimes \Loop^+\Gv}(\Fl_\bbf,\bk) \to \Parity_{\Gm \ltimes \Loop^+\Gv}(\Fl_\bbf,\bk), \\
\star: \Parity_{\Gm,\IW}(\Gr,\bk) \times \Parity_{\Gm \ltimes \Loop^+\Gv}(\Fl_\bbf,\bk) \to \Parity_{\Gm,\IW}(\Fl_\bbf,\bk)
\end{gather*}
The first case goes back to~\cite[Theorem~4.8]{jmw:ps} (at least for sheaves in the analytic topology; for the \'etale setting, see the discussion in~\cite[\S9.4]{rw:tmpcb}).  For the second case, see~\cite[Lemma~4.5]{rw:scf} and~\cite[Lemma~4.14]{bgmrr}.

\subsubsection{Scaled loop groups}
For simplicity, the results in Sections~\ref{ss:conv-wo} and~\ref{ss:indecomp} below are stated for ``ordinary'' loop groups and related varieties, but they also apply to the ``$p$-scaled'' versions from Section~\ref{ss:scaled}.  

However, there is one consideration that is relevant only to the $p$-scaled version: since the subgroup $\varpi \subset \Gm$ acts trivially in the loop rotation action on $\Gr_p$ and $\Fl_{p,\bbf}$, one can speak of $\varpi$-perfect objects on these varieties.  Suppose we have
\[
\cF \in \Db_\Gm(\Gr_p,\bk)
\qquad\text{and}\qquad
\cG \in \Db_{\Gm \ltimes \Loop^+_p\Gv}(\Fl_{p,\bbf},\bk).
\]
If at least one of $\cF$ or $\cG$ is $\varpi$-perfect, Lemma~\ref{lem:sheaf-perfect} tells us that $\cF \boxtimes \cG$ is also $\varpi$-perfect.  

\begin{lem}\label{lem:conv-perfect}
Let $\cF \in \Db_\Gm(\Gr_p,\bk)$ and $\cG \in \Db_{\Gm \ltimes \Loop^+_p\Gv}(\Fl_{p,\bbf},\bk)$.  If at least one of $\cF$ or $\cG$ is $\varpi$-perfect, then $\cF \star \cG$ is $\varpi$-perfect.  As a consequence, there is an induced bifunctor
\[
\star: \Sm(\Gr_p,\bk) \times \Sm_{\Loop^+\Gv}(\Fl_{p,\bbf},\bk) \to \Sm(\Fl_\bbf,\bk).
\]
\end{lem}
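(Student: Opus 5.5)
We track $\varpi$-perfectness through each step of the convolution diagram~\eqref{eqn:conv-diag}, in its $p$-scaled version
\[
\Gr_p \times \Fl_{p,\bbf} \xleftarrow{p} \Loop_p\Gv \times \Fl_{p,\bbf} \xrightarrow{q} \Loop_p\Gv \times^{\Loop^+_p\Gv} \Fl_{p,\bbf} \xrightarrow{m} \Fl_{p,\bbf}.
\]
The relevant facts are that $\varpi \subset \Gm$ acts trivially on every space in this diagram, and that being $\varpi$-perfect is a stalkwise condition: each stalk, viewed as a complex of $\bk[\varpi]$-modules, must be perfect.

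The first step is that $\cF \boxtimes \cG$ is $\varpi$-perfect whenever one of $\cF$, $\cG$ is. This is exactly the cited Lemma~\ref{lem:sheaf-perfect}. The underlying reason is that the stalk at $(x,y)$ is $\cF_x \otimes^L_\bk \cG_y$ with the diagonal $\varpi$-action, and the tensor product of a perfect $\bk[\varpi]$-complex with any bounded complex is perfect, since $\bk[\varpi]\otimes V \cong \bk[\varpi]^{\dim V}$.

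The second step transfers this to $\cF \tboxtimes \cG$. Pullback along $p$ preserves $\varpi$-perfectness, because stalks of a pullback are stalks of the original. By~\eqref{eqn:tboxtimes-defn} we have $q^*(\cF \tboxtimes \cG) \cong p^*(\cF\boxtimes\cG)$. Since $q$ is surjective and $\Gm$-equivariant, every stalk of $\cF \tboxtimes \cG$ appears as a stalk of $q^*(\cF\tboxtimes\cG)$, with the same $\varpi$-action. Hence $\cF \tboxtimes \cG$ is $\varpi$-perfect. One should check that the $\Gm$-action induced on the twisted product is compatible here, i.e.\ that $\varpi$ still acts trivially on the underlying space; this holds because $\varpi$ acts trivially on $\Loop_p\Gv$ and on $\Fl_{p,\bbf}$.

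The third step, which I expect to be the main obstacle, is showing that $m_*$ preserves $\varpi$-perfectness. Because $m$ is ind-proper, I would work on one finite-dimensional closed piece at a time, namely the preimage of a closure of orbits. There, $m_* = m_!$, and proper base change expresses each stalk of $m_*(\cF\tboxtimes\cG)$ as $R\Gamma_c$ of the restriction of $\cF\tboxtimes\cG$ to a fiber, with its $\varpi$-action. The needed input is that for a variety $Y$ with trivial $\varpi$-action, $R\Gamma_c(Y,-)$ sends $\varpi$-perfect complexes to perfect $\bk[\varpi]$-complexes. I would argue this by dévissage over a stratification on which the cohomology sheaves are locally constant, reducing to the case of a single stratum. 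Alternatively, one can identify the $\varpi$-perfect objects in $\Db_\Gm(Y)$ with the thick subcategory generated by objects of the form $(\text{$\varpi$-induced from the trivial group}) \otimes \cH$; these are visibly preserved by all four pushforward functors, since such functors commute with $\bk[\varpi]\otimes({-})$. This general preservation statement is presumably also available in the Appendix (cf.~\cite[\S3]{rw:st}).

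Finally, the bifunctor is induced formally. Convolution is exact in each variable and, by the above, sends $\Db_\Gm(\Gr_p,\bk)_\perf \times \Db$ and $\Db \times \Db_{\Gm \ltimes \Loop^+_p\Gv}(\Fl_{p,\bbf},\bk)_\perf$ into $\Db_\Gm(\Fl_{p,\bbf},\bk)_\perf$. By the universal property of Verdier quotients, applied in each variable separately, $\star$ therefore factors through the Smith categories.
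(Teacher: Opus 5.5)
Your proposal is correct and follows the paper's proof. The paper uses the same three steps: $\cF\boxtimes\cG$ is $\varpi$-perfect by Lemma~\ref{lem:sheaf-perfect}, this transfers to $\cF\tboxtimes\cG$ by comparing stalks through $p$ and $q$, and $m_*$ then preserves $\varpi$-perfectness. The step you flag as the main obstacle is already the $f_*$ case of Lemma~\ref{lem:sheaf-perfect} (that is, \cite[Lemma~3.6]{rw:st}), which the paper simply cites, so your proper-base-change and d\'evissage sketch is unnecessary.
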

\begin{proof}
If at least one of $\cF$ or $\cG$ is $\varpi$-perfect, Lemma~\ref{lem:sheaf-perfect} tells us that $\cF \boxtimes \cG$ is also $\varpi$-perfect.  From~\eqref{eqn:conv-diag} we see that every stalk of $\cF \tboxtimes \cG$ is isomorphic to a stalk of $\cF \boxtimes \cG$, so $\cF \tboxtimes \cG$ is also $\varpi$-perfect.  Finally, Lemma~\ref{lem:sheaf-perfect} again tells us that $m_*(\cF \tboxtimes \cG)$ is $\varpi$-perfect.
\end{proof}

\subsection{The right action of equivariant cohomology}
\label{ss:rightact}

Let $\delta$ be the skyscraper sheaf on the basepoint of $\Gr$.  Then we have
\[
\bEnd_{\Db_{\Gm \ltimes \Loop^+\Gv}}(\delta) \cong \coh^\bullet_{\Gm \times \Gv}(\pt,\bk).
\]
Now let $\cF \in \Db_{\Gm}(\Gr,\bk)$.  We have $\cF \cong \cF \star \delta$, and hence
\[
\bEnd_{\Db_\Gm(\Gr,\bk)}(\cF) \cong \bEnd_{\Db_\Gm(\Gr,\bk)}(\cF \star \delta).
\]
Now the functor $\cF \star ({-})$ induces a map
\[
\bEnd_{\Db_{\Gm \ltimes \Loop^+\Gv}}(\delta) \to 
\bEnd_{\Db_\Gm(\Gr,\bk)}(\cF).
\]
In other words, it makes $\bEnd(\cF)$ into a module over $\coh^\bullet_{\Gm \times \Gv}(\pt,\bk)$.

Here is a more ``hands-on'' construction of the $\coh^\bullet_{\Gm \times \Gv}(\pt,\bk)$-action on $\bEnd(\cF)$.  Let $\Loop^{++}\Gv$ be the kernel of the evaluation map $\ev_0: \Loop^+\Gv \to \Gv$, and then let
\[
\widetilde{\Gr} = \Loop\Gv/\Loop^{++}\Gv.
\]
This is an ind-variety with a free (right) action of $\Gv$, and it induces an isomorphism
\[
\widetilde{\Gr}/\Gv \cong \Gr.
\]
We therefore have an equivalence of categories
\[
\Db_\Gm(\Gr,\bk) \cong \Db_{\Gm \times \Gv}(\widetilde{\Gr},\bk),
\]
where the right-hand side involves the right action of $\Gv$ on $\widetilde{\Gr}$.  All $\bHom$-groups on the right-hand side are naturally $\coh^\bullet_{\Gm \times \Gv}(\pt,\bk)$-modules, and transferring this structure across the equivalence gives the desired action on $\bEnd(\cF)$.

\begin{lem}\label{lem:jnu-coh}
Let $\nu \in \bX^{++}$.  There is a canonical isomorphism of $\coh^\bullet_{G \times \Gm}(\pt,\bk)$-modules
\[
\bEnd(j^+_{\nu!}\cL^\nu_\AS) \cong \coh^\bullet_\Gm(\pt,\bk)_{\rS(\nu)}.
\]
\end{lem}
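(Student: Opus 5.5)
The plan is to compute $\bEnd(j^+_{\nu!}\cL^\nu_\AS)$ by reducing to the orbit $\Gr^+_\nu$, then identifying the two module structures: the left $\coh^\bullet_\Gm(\pt,\bk)$-action and the right $\coh^\bullet_{\Gm\times\Gv}(\pt,\bk)$-action from Section~\ref{ss:rightact}. Since $j^+_{\nu}$ is a locally closed embedding and $j^+_{\nu!}$ is fully faithful (adjunction with $j^{+*}_\nu$, and $j^{+*}_\nu j^+_{\nu!}\cong\id$), we have
\[
\bEnd(j^+_{\nu!}\cL^\nu_\AS)\cong \bEnd_{\Db_{\Gm,\IW}(\Gr^+_\nu,\bk)}(\cL^\nu_\AS).
\]
The orbit $\Gr^+_\nu = \Iwup\cdot\rL_\nu$ is an affine space. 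Because $\nu\in\bX^{++}$, the stabilizer of $\rL_\nu$ in $\Iwup$ lies in $\ker\chi$, and we have an equivalence $\Db_{\Gm\ltimes\Iwup,\chi^*\AS}(\Gr^+_\nu)\simeq \Db_{\Gm}(\pt)$, sending $\cL_\AS$ to $\underline\bk$. (The $\Gm$-stabilizer of $\rL_\nu$ for loop rotation is all of $\Gm$, and $\Iwup$ acts with contractible, unipotent stabilizer.) This yields the underlying isomorphism $\bEnd\cong\coh^\bullet_\Gm(\pt,\bk)$ as a left $\coh^\bullet_\Gm(\pt,\bk)$-module.

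Next I identify the right action. I would use the hands-on description: pull back to $\widetilde\Gr = \Loop\Gv/\Loop^{++}\Gv$, where the preimage of $\Gr^+_\nu$ is $\Iwup\bz^\nu\Gv/\Loop^{++}\Gv$, a $\Gv$-torsor over $\Gr^+_\nu$. Equivalently, after the reduction above, $\bEnd$ is the $(\Gm\times\Gv)$-equivariant (twisted by $\AS$) cohomology of the fiber over $\rL_\nu$, i.e.\ of $\Gv$ with $\Gm\times\Gv$ acting. The relevant point is that the $\Gm$-action at $\rL_\nu$ is loop rotation, and by~\eqref{eqn:tz-conj} it is transported through $\bz^\nu$ to right multiplication by $\nu(t)$ on the fiber $\Gv$. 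Thus the fiber is $(\Gm\times\Gv)/\hat\nu(\Gm)$, and we get
\[
\coh^\bullet_{\Gm\times\Gv}\bigl((\Gm\times\Gv)/\hat\nu(\Gm)\bigr)\cong\coh^\bullet_\Gm(\pt,\bk),
\]
with the $\coh^\bullet_{\Gm\times\Gv}(\pt,\bk)$-module structure given by restriction along $\hat\nu$, i.e.\ by $\rS(\hat\nu)$. By the discussion at the end of Section~\ref{ss:bhom}, this is exactly the bimodule $\coh^\bullet_\Gm(\pt,\bk)_{\rS(\nu)}$.

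I expect the main obstacle to be the careful bookkeeping in the middle step. One has to check that the equivalence $\Db_\Gm(\Gr)\cong\Db_{\Gm\times\Gv}(\widetilde\Gr)$ and the restriction to $\bz^\nu$ really produce the homomorphism $\hat\nu = (\id,\nu)$, rather than something like $(\id,-\nu)$ or a conjugate; the sign and side conventions in~\eqref{eqn:tz-conj} must be tracked here. One must also check that the Artin--Schreier twist does not interfere: it is trivial on the $\Gv$-fiber, since $\Loop^{++}\Gv\cap\Iwup\subset\ker\chi$. Finally, the action defined through $\cF\star\delta$ must be shown to agree with the hands-on one, which the paper asserts.
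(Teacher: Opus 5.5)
Your proposal is correct and follows essentially the same route as the paper. Both use full faithfulness of $j^+_{\nu!}$ and the stalk equivalence at $\rL_\nu$ to get the left $\coh^\bullet_\Gm(\pt,\bk)$-structure; both then compute on the fiber $X_\nu = h^{-1}(\rL_\nu) \subset \widetilde{\Gr}$, where $(t,g)\cdot\bz^\nu\Loop^{++}\Gv = \bz^\nu\nu(t)g^{-1}\Loop^{++}\Gv$ via~\eqref{eqn:tz-conj} identifies the stabilizer with the image of $\hat\nu$ and gives the $\rS(\hat\nu)$-module structure. One small correction: the Artin--Schreier twist disappears on the fiber because the $\Iwup$-stabilizer of $\rL_\nu$ lies in $\ker\chi$ (which follows from $\nu\in\bX^{++}$, as you noted earlier), not because $\Loop^{++}\Gv\cap\Iwup\subset\ker\chi$.
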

(The lemma is stated for the only case that will actually be needed in this paper.  But the same result is true if we replace $\cL^\nu_\AS$ by the constant sheaf on either an $\Iwup$-orbit or an $\Iwu$-orbit (and in these cases, one could allow all $\nu \in \bX$).)
\begin{proof}
Since $j^+_{\nu!}$ is fully faithful, the $\bEnd$-group we wish to compute is the same as $\bEnd(\cL^\nu_\AS)$, computed in $\Db_{\Gm,\IW}(\Iwup \cdot \rL_\nu,\bk)$.  Taking the stalk at $\rL_\nu$ gives an equivalence
\[
\Db_{\Gm,\IW}(\Iwup \cdot \rL_\nu,\bk) \cong 
\Db_\Gm(\{\rL_\nu\},\bk),
\]
under which $\cL^\nu_\AS$ correspondings to the constant sheaf $\underline{\bk}_\pt$ on the point $\{\rL_\nu\}$.  As a left $\coh^\bullet_\Gm(\pt,\bk)$-module, $\bEnd(\cF)$ is identified with
\[
\bEnd_{\Db_\Gm(\{\rL_\nu\},\bk)}(\underline{\bk}_\pt) = \coh^\bullet_\Gm(\pt,\bk).
\]

Let us now determine its structure as a right $\coh^\bullet_{\Gv}(\pt,\bk)$-module.  Let $h: \widetilde{\Gr} \to \Gr$ be the projection map, and let $X_\nu = h^{-1}(\rL_\nu)$.  We must compute
\[
\bEnd_{\Db_{\Gm \times \Gv}(X_\nu,\bk)}(\underline{\bk}_{X_\nu}).
\]
Concretely, we have $X_\nu = \bz^\nu \Loop^+\Gv/\Loop^{++}\Gv$.  Consider the point $\bz^\nu \Loop^{++}\Gv \in X_\nu$.  Using~\eqref{eqn:tz-conj}, we compute the action of $(t,g) \in \Gm \times \Gv$ on this point as follows:
\[
(t,g) \cdot \bz^\nu \Loop^{++}\Gv = t\bz^\nu t^{-1}g^{-1} \Loop^{++}\Gv = \bz^\nu \nu(t)g^{-1} \Loop^{++}\Gv.
\]
In particular, the stabilizer of $\bz^\nu \Loop^{++}\Gv$ is the image of $\hat \nu: \Gm \to \Gm \times \Gv$, and $\hat \nu$ induces an equivalence of categories
\[
\Db_{\Gm \times \Gv}(X_\nu,\bk) \cong \Db_{\Gm}(\{ \bz^\nu \Loop^{++}\Gv\},\bk).
\]
We deduce that
\[
\bEnd_{\Db_{\Gm \times \Gv}(X_\nu,\bk)}(\underline{\bk}_{X_\nu}) \cong \coh^\bullet_{\Gm}(\pt,\bk),
\]
where the right-hand side is a $\coh^\bullet_{\Gm \times \Gv}(\pt,\bk)$-module via $\rS(\hat\nu)$.
\end{proof}

\subsection{Convolution without \texorpdfstring{$\Loop^+\Gv$}{L+ check G}-equivariance}
\label{ss:conv-wo}

As noted in Section~\ref{ss:further}, the right-hand vertical map in Theorem~\ref{thm:main-intro} does not a priori make sense.  In this subsection, we explain how to construct a version of that functor after suitably twisting the $\Gm$-action.

\begin{lem}\label{lem:nuconv-bhom}
Let $\nu \in \bX^{++}$.  For $\cF, \cG \in \Parity_{\Gm \ltimes \Loop^+\Gv}(\Fl_\bbf,\bk)$, there is a unique way to fill in the dotted arrow below such that the diagram commutes:
\[
\begin{tikzcd}
\bHom(\cF, \cG) \ar[dr, "j^+_{\nu!}\cL^\nu_\AS \star ({-})"'] \ar[r]
&
\coh^\bullet_\Gm(\pt,\bk)_{\rS(\nu)} \otimes_{\coh^\bullet_{\Gm \times \Gv}(\pt,\bk)} \bHom(\cF, \cG) \ar[d,dashed] \\
&\bHom(j^+_{\nu!}\cL^\nu_\AS \star \cF, j^+_{\nu!}\cL^\nu_\AS \star \cG)
\end{tikzcd}
\]
Moreover, the dotted arrow is a homomorphism of $\coh^\bullet_\Gm(\pt,\bk)$-modules.
\end{lem}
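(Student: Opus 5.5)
Since the horizontal arrow $h \mapsto 1 \otimes h$ has image generating the tensor product as a left $\coh^\bullet_\Gm(\pt,\bk)$-module, uniqueness is automatic once we require the dotted arrow to be $\coh^\bullet_\Gm(\pt,\bk)$-linear. Existence therefore reduces to one claim: the map $\Phi = j^+_{\nu!}\cL^\nu_\AS \star ({-})$ on $\bHom(\cF,\cG)$ is $\coh^\bullet_{\Gm\times\Gv}(\pt,\bk)$-balanced with respect to $\rS(\hat\nu)$. Concretely, for $c \in \coh^\bullet_{\Gm \times \Gv}(\pt,\bk)$ and $f \in \bHom(\cF,\cG)$, we need
\[
\Phi(c f) = \rS(\hat\nu)(c)\cdot \Phi(f).
\]
Once this holds, the dotted arrow is $a \otimes f \mapsto a\Phi(f)$, which is well defined and $\coh^\bullet_\Gm(\pt,\bk)$-linear. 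For $c \in \coh^\bullet_\Gm(\pt,\bk)$ alone, the identity is just Lemma~\ref{lem:looprot-conv}, so the real content is the $\coh^\bullet_{\Gv}(\pt,\bk)$-part.

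To prove the balancing identity, I would factor the action of $c$ on $\bHom(\cF,\cG)$ through the skyscraper $\delta$. Write $\cF \cong \delta \star \cF$ and $\cG \cong \delta \star \cG$. Then $cf$ equals $(c \star \id)\circ f$, the composition of $f$ with the endomorphism $c \star \id_\cG$, where $c \in \bEnd(\delta)$. This identification is a standard compatibility: the $\coh^\bullet_{\Gm\times\Gv}$-module structure on Hom between $\Loop^+\Gv$-equivariant objects agrees with convolution by $\bEnd(\delta)$ on the left. It can be checked by viewing $\cG$ on $\Loop\Gv\times^{\Loop^+\Gv}\Fl_\bbf$-type spaces, or by pulling back along $\Loop^+\Gv\backslash \Fl_\bbf \to \pt/(\Gm\ltimes\Loop^+\Gv)$.

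By associativity of convolution,
\[
\Phi(cf) = (\id_{j^+_{\nu!}\cL_\AS} \star c \star \id_\cG)\circ \Phi(f).
\]
Now $\id_{j^+_{\nu!}\cL_\AS}\star c$ is precisely the right action of $c$ on $\bEnd(j^+_{\nu!}\cL^\nu_\AS)$ defined in Section~\ref{ss:rightact}, applied to the identity. By Lemma~\ref{lem:jnu-coh}, this equals $\rS(\hat\nu)(c)$, acting as an element of $\coh^\bullet_\Gm(\pt,\bk)$ via the left module structure. Since the left $\coh^\bullet_\Gm$-action commutes with convolution (again Lemma~\ref{lem:looprot-conv}), we get $(\rS(\hat\nu)(c)\cdot\id)\star\id_\cG = \rS(\hat\nu)(c)\cdot \id$, and the identity follows.

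The main obstacle is the compatibility step: identifying the intrinsic $\coh^\bullet_{\Gm\times\Gv}$-module structure on $\bHom(\cF,\cG)$, for $\cF,\cG$ equivariant on $\Fl_\bbf$, with convolution on the left by $\bEnd(\delta)$. The right action in Section~\ref{ss:rightact} is defined in exactly this way, but for the second factor one must check that the two constructions agree. I would verify this using the $\widetilde{\Gr}$ description, via the equivalence $\Db_{\Gm\ltimes\Loop^+\Gv}(\Fl_\bbf) \cong$ equivariant sheaves on a $\Gv$-torsor, together with base change for the convolution diagram~\eqref{eqn:conv-diag}. Parity of $\cF,\cG$ is not needed for this argument; it only ensures that the tensor product is the expected free module.
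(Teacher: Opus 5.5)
Your proposal is correct and is essentially the paper's argument. Both realize the $\coh^\bullet_{\Gm \times \Gv}(\pt,\bk)$-action on $\bHom(\cF,\cG)$ by convolving with $\bEnd(\delta)$, which the paper uses implicitly and you rightly flag as needing a check. Both then conclude from Lemma~\ref{lem:jnu-coh}. The paper phrases existence as $j^+_{\nu!}\cL^\nu_\AS \star ({-})$ killing $J \cdot \bHom(\cF,\cG)$ for $J = \ker \rS(\hat\nu)$; this is equivalent to your balancing identity. As you observe, only right exactness of the tensor product is needed there, not freeness.

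The one thing to tighten is uniqueness. The lemma claims it among all fillers, not only $\coh^\bullet_\Gm(\pt,\bk)$-linear ones. So note, as the paper does, that the horizontal arrow is itself surjective, because $\rS(\hat\nu)$ is surjective: it restricts to the identity on $\coh^\bullet_\Gm(\pt,\bk) \otimes 1$. Uniqueness is then unconditional, and linearity of the filler follows from that of the two solid arrows.
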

\begin{proof}
The top horizontal map is surjective (because it is induced by tensoring with the surjective map $\rS(\hat\nu)$ from~\eqref{eqn:s-hatnu-defn}), so if the dotted arrow exists, its uniqueness is clear.  Moreover, both solid arrows are homomorphisms of $\coh^\bullet_\Gm(\pt,\bk)$-modules (see Lemma~\ref{lem:looprot-conv} for the arrow induced by $j^+_{\nu!}\cL^\nu_\AS \star ({-})$), so if the dotted arrow exists, it is automatically an $\coh^\bullet_\Gm(\pt,\bk)$-homomorphism.

Let us now prove that the dotted arrow exists.  Let $J$ be the kernel of $\rS(\hat \nu)$.  Since $\bHom(\cF,\cG)$ is free over $\coh^\bullet_{\Gm \times \Gv}(\pt,\bk)$, we have a short exact sequence
\[
0 \to J \otimes \bHom(\cF,\cG) \to \bHom(\cF,\cG) \xrightarrow{\rS(\hat\nu) \otimes({-})} \coh^\bullet_\Gm(\pt,\bk)_{\rS(\nu)} \otimes \bHom(\cF, \cG) \to 0
\]
where all tensor products are over $\coh^\bullet_{\Gm \times \Gv}(\pt,\bk)$.  Thus, to prove the lemma, it is enough to show the following claim:
\begin{equation}\label{eqn:hom-j-claim}
\begin{minipage}{4in}
For any $j \in J$ and $f \in \bHom(\cF,\cG)$, the functor $j^+_{\nu!}\cL^\nu_\AS \star ({-})$ sends $j \otimes f \in \bHom(\cF,\cG)$ to $0$.
\end{minipage}
\end{equation}
Furthermore, it is enough to prove this when $j$ and $f$ are homogeneous elements, say of degrees $d$ and $e$ respectively.  In particular, $f$ can be regarded as a morphism $f: \cF \to \cG[e]$.  Similarly, since $J \subset \coh^\bullet_{\Gm \times \Gv}(\pt,\bk) \cong \bEnd(\delta)$, we can regard $j$ as a morphism $j: \delta \to \delta[d]$.  To prove~\eqref{eqn:hom-j-claim}, we must show that the morphism
\[
j^+_{\nu!}\cL^\nu_\AS \star j \star f: \cF \to \cG[d+e]
\]
is zero.  This follows from Lemma~\ref{lem:jnu-coh}, which implies that
\[
j^+_{\nu!}\cL^\nu_\AS \star j: j^+_{\nu!}\cL^\nu_\AS \star \delta \to j^+_{\nu!}\cL^\nu_\AS \star \delta[d]
\]
is already zero.
\end{proof}

\begin{lem}\label{lem:nuconv-parity}
Let $\nu \in \bX^{++}$, and suppose it is IW-clean.  There is a unique way (up to natural isomorphism) to fill in the dotted arrow below such that the diagram commutes up to natural isomorphism:
\[
\begin{tikzcd}
\Parity_{\Gm \ltimes \Loop^+\Gv}(\Fl_\bbf,\bk) \ar[r] \ar[dr, "j^+_{\nu!}\cL^\nu_\AS \star ({-})"'] &
\Parity_{\Gm,\sph|\nu}(\Fl_\bbf,\bk) \ar[d, dashed, "j^+_{\nu!}\cL^\nu_\AS \star ({-})"] \\
& \Parity_{\Gm,\IW}(\Fl_\bbf,\bk)
\end{tikzcd}
\]
\end{lem}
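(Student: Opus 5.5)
The plan is to build the dotted functor on objects and on morphisms separately, using Lemma~\ref{lem:nuconv-bhom} for morphisms. The key input is a description of $\Parity_{\Gm,\sph|\nu}(\Fl_\bbf,\bk)$ as a category whose objects are the images under $\For_\nu$ of objects of $\Parity_{\Gm \ltimes \Loop^+\Gv}(\Fl_\bbf,\bk)$, and whose $\bHom$-spaces are obtained by base change along $\rS(\hat\nu)$.

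\textbf{Objects.} Every indecomposable object of $\Parity_{\Gm,\sph|\nu}(\Fl_\bbf,\bk)$ should be a shift of some $\cE^\bbf_w$ with $w \in \oWext^\bbf$. This follows from the uniqueness of indecomposable parity sheaves with given support and local system, together with the fact that spherical objects have constant local systems on $\Loop^+\Gv$-orbits. Hence the horizontal functor $\For_\nu$ is essentially surjective onto objects up to direct sums. On morphisms, the isomorphism recalled in Section~\ref{ss:parity},
\[
\bHom(\For_\nu\cF,\For_\nu\cG) \cong \coh^\bullet_\Gm(\pt,\bk)_{\rS(\nu)} \otimes_{\coh^\bullet_{\Gm\times\Gv}(\pt,\bk)} \bHom(\cF,\cG),
\]
identifies $\Parity_{\Gm,\sph|\nu}$ with the additive category whose objects are those of $\Parity_{\Gm \ltimes \Loop^+\Gv}$ and whose graded Homs are the base changes of the original ones. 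Using this, I would define the dotted functor on $\For_\nu(\cF)$ to be $j^+_{\nu!}\cL^\nu_\AS \star \cF$. On morphisms I would use the dotted arrow of Lemma~\ref{lem:nuconv-bhom}, restricted to degree $0$.

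\textbf{Functoriality and target.} Next I check that this assignment is a functor. Identities are clearly preserved. For composition, the top arrow in Lemma~\ref{lem:nuconv-bhom} is surjective and compatible with composition, since base change of free modules is compatible with composition. The diagonal arrow is a functor, so composition is preserved because it is preserved after precomposing with surjections. The object $j^+_{\nu!}\cL^\nu_\AS \star \cF$ lies in $\Parity_{\Gm,\IW}$ by Section~\ref{sss:conv-parity}. Here the IW-clean hypothesis matters: $j^+_{\nu!}\cL^\nu_\AS \cong j^+_{\nu*}\cL^\nu_\AS$ is a parity Iwahori--Whittaker object, indeed $\cE^{\IW,\bo}_\nu$ up to shift. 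The triangle then commutes by construction, up to the canonical isomorphism.

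\textbf{Uniqueness and the main obstacle.} Suppose another functor $\Phi$ makes the triangle commute. It agrees with ours on objects up to isomorphism, since every object is $\For_\nu$ of something. It agrees on morphisms, since every morphism is the image of a morphism under the surjection above. Transporting the isomorphisms along direct sums gives a natural isomorphism between the two functors.

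The main obstacle is the object-level claim that the realization of $\Parity_{\Gm,\sph|\nu}$ above is an equivalence. This means verifying that $\For_\nu$ is full (surjectivity onto Homs, which the base-change isomorphism gives) and essentially surjective up to direct summands. If it is only essentially surjective up to summands, I would pass to Karoubi envelopes: idempotents lift along the surjective map of graded endomorphism rings because the rings are finite-dimensional in each degree. Equivalently, I can pick for each indecomposable a preferred preimage.
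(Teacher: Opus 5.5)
Your proposal is correct and follows the paper's proof. The paper likewise identifies $\Parity_{\Gm,\sph|\nu}(\Fl_\bbf,\bk)$ with the category that has the objects of $\Parity_{\Gm \ltimes \Loop^+\Gv}(\Fl_\bbf,\bk)$ and $\Hom$-groups base-changed along $\rS(\hat\nu)$, deduces the lemma from Lemma~\ref{lem:nuconv-bhom}, and leaves implicit the checks you spell out: functoriality and uniqueness via surjectivity of the base-change map, and essential surjectivity of $\For_\nu$ from the classification of indecomposable spherical parity sheaves, which already makes your Karoubi-envelope fallback unnecessary.
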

In this lemma, the assumption of IW-cleanness means that $j^+_{\nu!}\cL^\nu_\AS$ is a parity sheaf, so by Section~\ref{sss:conv-parity}, the functor $j^+_{\nu!}\cL^\nu_\AS \star ({-})$ does indeed take values in $\Parity_{\Gm,\IW}(\Fl_\bbf,\bk)$.
\begin{proof}
The category $\Parity_{\Gm,\sph|\nu}(\Fl_\bbf,\bk)$ admits the following description: its objects are the same as those of $\Parity_{\Gm \ltimes \Loop^+\Gv}(\Fl_\bbf,\bk)$, but its $\Hom$-groups are given by the rule
\begin{multline*}
\bHom_{\Parity_{\Gm,\sph|\nu}(\Fl_\bbf,\bk)}(\cF, \cG)
\cong \\
\coh^\bullet_\Gm(\pt,\bk)_{\rS(\nu)} \otimes_{\coh^\bullet_{\Gm \times \Gv}(\pt,\bk)} \Hom_{\Parity_{\Gm \ltimes \Loop^+\Gv}(\Fl_\bbf,\bk)}(\cF,\cG).
\end{multline*}
The result then follows from Lemma~\ref{lem:nuconv-bhom}.
\end{proof}

\begin{lem}\label{lem:nuconv-smith}
Let $p\nu \in p\bX^{++}$, and suppose it is IW-clean.  There is a unique way (up to natural isomorphism) to fill in the dotted arrow below such that the diagram commutes up to natural isomorphism:
\[
\begin{tikzcd}
\Parity_{\Gm,\sph|p\nu}(\Fl_{p,\bbf},\bk) \ar[r] \ar[d, "j^{p+}_{p\nu!}\cL^{p\nu}_\AS \star ({-})"', "\text{\normalfont Lem.~\ref{lem:nuconv-parity}}"] &
\SmParity_{\sph|p\nu}(\Fl_{p,\bbf},\bk) \ar[d, dashed, "j^{p+}_{p\nu!}\cL^{p\nu}_\AS \star ({-})"] \\
\Parity_{\Gm,\IW}(\Fl_{p,\bbf},\bk) \ar[r] &
\SmParity_\IW(\Fl_{p,\bbf},\bk)
\end{tikzcd}
\]
\end{lem}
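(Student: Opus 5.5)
Since $\SmParity_{\sph|p\nu}(\Fl_{p,\bbf},\bk)$ is a full subcategory of the Smith category and the horizontal arrows are restrictions of the quotient functor $\rQ$, I will first describe the top-right category concretely as the ``Smith reduction'' of the top-left one, and then mimic the proof of Lemma~\ref{lem:nuconv-parity}. By the theory of Smith parity sheaves in the Appendix (Section~\ref{ss:smithpar}) and Corollary~\ref{cor:parity-full}, I expect every object of $\SmParity_{\sph|p\nu}(\Fl_{p,\bbf},\bk)$ to be (a direct summand of) $\rQ$ of an object of $\Parity_{\Gm,\sph|p\nu}(\Fl_{p,\bbf},\bk)$. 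I also expect the morphism spaces to be the Tate-type localization: for parity $\cF,\cG$,
\[
\bHom_{\Sm}(\rQ\cF,\rQ\cG) \cong \bHom(\cF,\cG) \otimes_{\coh^\bullet_\Gm(\pt,\bk)} \coh^\bullet_\Gm(\pt,\bk)[x^{-1}],
\]
where $x$ generates $\coh^2_\Gm(\pt,\bk)$. Since $\varpi$ acts trivially on $\Fl_{p,\bbf}$, this should follow from the general Appendix results describing $\Hom$ in the Smith category for free $\coh^\bullet_\Gm$-modules, possibly up to the $\Z/2$-periodic/degree-zero part. The key point is that $\rQ$ on parity objects is ``essentially surjective up to summands, and on $\bHom$ given by base change along $\coh^\bullet_\Gm(\pt,\bk)\to \coh^\bullet_\Gm(\pt,\bk)[x^{-1}]$'' (or its appropriate quotient).

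Given this, I define the dashed functor on objects of the form $\rQ\cF$ by $\rQ(j^{p+}_{p\nu!}\cL^{p\nu}_\AS \star \cF)$. On morphisms, I take the base change of the map of Lemma~\ref{lem:nuconv-parity}. That map on $\bHom$-groups is $\coh^\bullet_\Gm(\pt,\bk)$-linear by Lemma~\ref{lem:nuconv-bhom} (ultimately Lemma~\ref{lem:looprot-conv}), so it extends uniquely to the localized $\bHom$-groups, compatibly with composition. Lemma~\ref{lem:conv-perfect} guarantees compatibility with the Smith structure: convolution with $j^{p+}_{p\nu!}\cL^{p\nu}_\AS$ sends $\varpi$-perfect objects to $\varpi$-perfect objects, so the construction is well defined independently of choices of lift. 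I then extend to direct summands by Karoubi completeness, since idempotents in the Smith parity category are images of idempotents and the functor is additive. Uniqueness follows as in Lemma~\ref{lem:nuconv-bhom}: the horizontal map is essentially surjective up to summands and surjective onto a generating set on $\bHom$ after inverting $x$. Hence any functor making the square commute is determined on objects and on morphisms of the form $f\otimes x^{-k}$, the latter by $\coh^\bullet_\Gm$-linearity of functors between $\coh^\bullet_\Gm$-linear categories.

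The main obstacle is the first step: rigorously identifying $\bHom$ in $\SmParity_{\sph|p\nu}$ with the localization of $\bHom$ in $\Parity_{\Gm,\sph|p\nu}$. The subtlety is that the spherical Smith category is only a full subcategory of $\Sm_{\Iwul|p\nu}$, so the Appendix's framework does not apply to it directly. I would handle this by computing $\bHom$ inside $\Sm_{\Iwul|p\nu}(\Fl_{p,\bbf},\bk)$, where the Appendix does apply, and noting that the forgetful map from spherical to $\Iwul$-equivariant parity sheaves is fully faithful on $\bHom$ as $\coh^\bullet_\Gm$-modules. Both are free, so localization commutes with the comparison. A secondary check is that $\coh^\bullet_\Gm$-linearity is genuinely needed and holds for the functor of Lemma~\ref{lem:nuconv-parity}; this is exactly the content of the final assertion of Lemma~\ref{lem:nuconv-bhom}.
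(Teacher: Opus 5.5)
Your proposal is correct and is essentially the paper's argument. The paper uses Lemma~\ref{lem:sphom-gen} and Corollary~\ref{cor:parity-full} (valid in the $\Iwul$-equivariant Smith category, of which the spherical one is a full subcategory) to model $\SmParity_{\sph|p\nu}(\Fl_{p,\bbf},\bk)$ as the category with the same objects as $\Parity_{\Gm,\sph|p\nu}(\Fl_{p,\bbf},\bk)$ and with $\Hom$-groups $\coh^\bullet_\Gm(\pt,\bk)/(\can^2-1)\otimes_{\coh^\bullet_\Gm(\pt,\bk)}\bHomev(\cF,\cG)$, which is equivalent to the degree-zero part of your localization at $x$; it then concludes from the $\coh^\bullet_\Gm(\pt,\bk)$-linearity in Lemma~\ref{lem:nuconv-bhom}, exactly as you do, and your appeal to Lemma~\ref{lem:conv-perfect} is superfluous once this $\Hom$-description is in place (it also does not literally apply, since objects of $\Db_{\Gm,\sph|p\nu}$ are not $\Loop^+_p\Gv$-equivariant).
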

\begin{proof}
The proof is very similar to that of Lemma~\ref{lem:nuconv-parity}: by Lemma~\ref{lem:sphom-gen} and Corollary~\ref{cor:parity-full}, $\SmParity_{\sph|p\nu}(\Fl_{p,\bbf},\bk)$ can be identified with the category whose objects are the same as those of $\Parity_{\Gm,\sph|p\nu}(\Fl_{p,\bbf},\bk)$, but with $\Hom$-groups given by
\begin{multline*}
\Hom_{\SmParity_{\sph|p\nu}(\Fl_{p,\bbf},\bk)}(\cF,\cG) 
\cong \\
\coh^\bullet_\Gm(\pt,\bk)/(\can^2 - 1) \otimes_{\coh^\bullet_\Gm(\pt,\bk)} \bHomev_{\Parity_{\Gm,\sph|p\nu}(\Fl_{p,\bbf},\bk)}(\cF,\cG).
\end{multline*}
Since the map on $\bHom$-groups in $\Parity_{\Gm,\sph|p\nu}(\Fl_{p,\bbf},\bk)$ is a $\coh^\bullet_\Gm(\pt,\bk)$-homo\-morphism (see Lemma~\ref{lem:nuconv-bhom}), the present lemma follows.
\end{proof}

We conclude this subsection by showing that the assumption of Lemma~\ref{lem:nuconv-smith} is not vacuous.

\begin{lem}
\phantomsection
\label{lem:steinberg-clean}
\begin{enumerate}
\item We have $j^+_{\varsigma!}\cL^\varsigma_\AS[\langle \varsigma, 2\check\rho\rangle] \star \cE^\sph_{(p-1)\varsigma} \cong j^+_{p\varsigma!}\cL^{p\varsigma}_\AS[\langle p\varsigma, 2\check\rho\rangle]$.
\item The weight $p\varsigma \in \bX^+$ is IW-clean.
\end{enumerate}
\end{lem}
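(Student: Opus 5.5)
Part~(1) is the geometric incarnation of the identity $\St \otimes T(\mu) \dots$, more precisely of the fact that $\Av_\IW$ of the spherical object for $(p-1)\varsigma$ is the Iwahori--Whittaker standard (equal to costandard) object attached to $p\varsigma$. The plan is to work through the equivalence of \cite{bgmrr}: the functor $\cF \mapsto j^+_{\varsigma!}\cL^\varsigma_\AS[\langle\varsigma,2\check\rho\rangle] \star \cF$ is the Iwahori--Whittaker averaging functor, an equivalence from $\Loop^+\Gv$-equivariant perverse sheaves to Iwahori--Whittaker perverse sheaves. The key points, in order, are as follows.

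First, since $p$ is not torsion (and, as needed, using \cite{mr:etsps}), $\cE^\sph_{(p-1)\varsigma}$ is the perverse parity sheaf corresponding under geometric Satake to the tilting module $T((p-1)\varsigma)=\St$. Moreover $\St$ is simultaneously simple, Weyl and induced: $\St = L((p-1)\varsigma)=\Delta((p-1)\varsigma)=\nabla((p-1)\varsigma)$. Hence $\cE^\sph_{(p-1)\varsigma}$ is the IC sheaf and also equals both ${}^p\!j_{!}$ and ${}^p\!j_{*}$ of the shifted constant sheaf on $\Gr_{(p-1)\varsigma}$.

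Second, under the \cite{bgmrr} equivalence, spherical standard and costandard perverse sheaves go to Iwahori--Whittaker standard and costandard objects. Concretely, $\Av_\IW({}^p\!j_{!}) \cong j^+_{\lambda+\varsigma!}\cL_\AS[\dim]$ and similarly for $*$, up to the normalization shift $\langle \cdot,2\check\rho\rangle$. With $\lambda=(p-1)\varsigma$, the image of $\cE^\sph_{(p-1)\varsigma}$ is therefore isomorphic to $j^+_{p\varsigma!}\cL^{p\varsigma}_\AS[\langle p\varsigma,2\check\rho\rangle]$, which gives~(1). I would check the shift by comparing $\dim\Gr^+_{p\varsigma}=\langle p\varsigma,2\check\rho\rangle$ with the dimensions of the two convolved pieces.

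For~(2), the same argument gives the costandard version simultaneously: the image of the same object is also $j^+_{p\varsigma*}\cL^{p\varsigma}_\AS[\langle p\varsigma,2\check\rho\rangle]$. Under these identifications the canonical map $j^+_! \to j^+_*$ corresponds to the image of the identity/canonical map of $\Delta(\St)\to\nabla(\St)$, which is an isomorphism. Alternatively, one can argue directly from~(1): the canonical map is an isomorphism if and only if the $*$-restrictions of $j^+_!$ to the boundary orbits $\Gr^+_\mu$ with $\mu\in\bX^{++}$ vanish, and by~(1) these restrictions are those of a convolution. One could also show that $\cE_\St$ being $!$-clean on $\Gr_{(p-1)\varsigma}$, together with $\varsigma$ being IW-clean, implies that the convolution is both a $!$- and a $*$-extension, because convolution with a clean object on an orbit commutes with the respective extensions.

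The main obstacle is the second step: justifying precisely that averaging sends spherical $\Delta/\nabla$ to IW $\Delta/\nabla$ at the sheaf level, not merely on Grothendieck groups. It also requires the normalization of shifts and $\Gm$-equivariance. I would cite \cite[Lemma~3.9/Theorem~3.6]{bgmrr} for the non-$\Gm$ statement and lift it to $\Gm$-equivariant objects by noting that the forgetful functor detects isomorphisms of such clean extensions.
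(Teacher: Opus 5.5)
Your main argument is correct and is essentially the paper's proof. The $t$-exact functor $j^+_{\varsigma!}\cL^\varsigma_\AS[\langle\varsigma,2\check\rho\rangle]\star({-})$ of \cite{bgmrr}, composed with Satake, sends $\Delta(\lambda)$ and $\nabla(\lambda)$ to the (shifted) $j^+_{(\lambda+\varsigma)!}\cL_\AS$ and $j^+_{(\lambda+\varsigma)*}\cL_\AS$. Since $\St=\Delta((p-1)\varsigma)=\nabla((p-1)\varsigma)$ corresponds to $\cE^\sph_{(p-1)\varsigma}$, its image is simultaneously $j^+_{p\varsigma!}$ and $j^+_{p\varsigma*}$, which gives both parts.

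Note that an abstract isomorphism $j^+_{p\varsigma!}\cL_\AS\cong j^+_{p\varsigma*}\cL_\AS$ already forces the canonical map to be an isomorphism. So you do not need to track canonical maps through the equivalence. Your ``alternative'' criterion for (2) is misstated: the $*$-restrictions of $j^+_{p\varsigma!}\cL_\AS$ to boundary orbits vanish automatically, and you would want the $!$-restrictions instead. That detour is unnecessary in any case.
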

\begin{proof}
Let $\Perv_{\Loop^+\Gv}(\Gr,\bk)$ and $\Perv_{\IW}(\Gr,\bk)$ be the abelian categories of perverse sheaves in $\Db_{\Loop^+\Gv}(\Gr,\bk)$ and in $\Db_\IW(\Gr,\bk)$, respectively.  According to~\cite[Theorem~3.9]{bgmrr}, the functor $j^+_{\varsigma!}\cL^\varsigma_\AS[\langle \varsigma, 2\check\rho\rangle] \star ({-}): \Db_{\Loop^+\Gv}(\Gr,\bk) \to \Db_{\IW}(\Gr,\bk)$ is $t$-exact, and it restricts to an equivalence of abelian categories
\[
j^+_{\varsigma!}\cL^\varsigma_\AS[\langle \varsigma, 2\check\rho\rangle]  \star ({-}): \Perv_{\Loop^+\Gv}(\Gr,\bk) \simto \Perv_{\IW}(\Gr,\bk).
\]
Composing this with the geometric Satake equivalence, we get an equivalence of abelian categories
\[
\Rep(G) \simto \Perv_{\IW}(\Gr,\bk).
\]
This equivalence has the property that for any $\lambda \in \bX^+$, it sends
\begin{align*}
\Delta(\lambda) &\mapsto j^+_{(\lambda+\varsigma)!}\cL^{\lambda+\varsigma}_\AS[\langle \lambda + \varsigma, 2\check\rho\rangle], \\
\nabla(\lambda) &\mapsto j^+_{(\lambda+\varsigma)*}\cL^{\lambda+\varsigma}_\AS[\langle \lambda + \varsigma, 2\check\rho\rangle],
\end{align*}
where $\Delta(\lambda)$, resp.~$\nabla(\lambda)$, is the Weyl module, resp.~dual Weyl module, of highest weight $\lambda$.  Now take $\lambda = (p-1)\varsigma$.  By~\cite[Remark~II.3.19]{jan:rag}, $\Delta((p-1)\varsigma) \cong \nabla((p-1)\varsigma)$, so that both are isomorphic to the indecomposable tilting module $T((p-1)\varsigma)$.  This tilting module corresponds under the geometric Satake equivalence to the parity sheaf $\cE^\sph_{(p-1)\varsigma}$.

We conclude that the functor $j^+_{\varsigma!}\cL^\varsigma_\AS[\langle \varsigma, 2\check\rho\rangle]  \star ({-})$ sends $\cE^\sph_{(p-1)\varsigma}$ to both
\[
j^+_{p\varsigma!}\cL^{p\varsigma}_\AS[\langle p\varsigma, 2\check\rho\rangle] 
\qquad\text{and}\qquad
j^+_{p\varsigma*}\cL^{p\varsigma}_\AS[\langle p\varsigma, 2\check\rho\rangle].
\]
Both parts of the lemma follow.
\end{proof}

\subsection{Indecomposability}
\label{ss:indecomp}

The following result is a slight generalization of~\cite[Proposition~4.6]{rw:scf}.

\begin{prop}\label{prop:diw-indecomp}
The functor
\[
j^+_{\varsigma!}\cL^\varsigma_\AS \star ({-}): \Parity_{\Gm,\sph|\varsigma}(\Fl_\bbf,\bk) \to \Parity_{\Gm,\IW}(\Fl_\bbf,\bk)
\]
sends indecomposable objects to indecomposable objects.
\end{prop}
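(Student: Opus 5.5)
The strategy is to follow the proof of \cite[Proposition~4.6]{rw:scf}. We reduce indecomposability to a statement about graded endomorphism rings and show that the functor induces a surjection onto the graded endomorphism ring of the image. An object of a Krull--Schmidt category of parity sheaves is indecomposable if and only if its graded endomorphism ring is local; equivalently, its degree-zero part is a local finite-dimensional algebra. Let $\cF \in \Parity_{\Gm,\sph|\varsigma}(\Fl_\bbf,\bk)$ be indecomposable. Such an object is a shift of some $\cE^\bbf_w$ with $w \in \oWext^\bbf$, regarded via $\For_\varsigma$. Then $\bEnd(\cF)$ is a nonnegatively graded $\coh^\bullet_\Gm(\pt,\bk)$-algebra whose degree-zero part is local. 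Surjectivity of $\bEnd(\cF) \to \bEnd(j^+_{\varsigma!}\cL^\varsigma_\AS \star \cF)$ then suffices: the image of a local graded ring is local, provided the target is nonzero.

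\textbf{Step 1 (adjunction).} Rewrite the target $\bHom$ via adjunction. For $\cG$ spherical, I would identify
\[
\bHom(j^+_{\varsigma!}\cL^\varsigma_\AS \star \cF, j^+_{\varsigma!}\cL^\varsigma_\AS \star \cG)
\]
with a $\bHom$ out of $j^+_{\varsigma!}\cL^\varsigma_\AS$ into an averaged object. Using that $\varsigma$ is IW-clean (so $j^+_{\varsigma!}\cL_\AS \cong j^+_{\varsigma*}\cL_\AS$), this becomes the costalk (or stalk) along the orbit $\Iwup\cdot\rL_\varsigma$ of a convolution-type object built from $\cF$ and $\cG$. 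Since $\Gr^+_\varsigma$ is minimal among orbits carrying IW local systems, this costalk computation reduces to an equivariant computation on a single $\Loop^+\Gv$-type orbit. The outcome is that the target $\bHom$ is identified, as a $\coh^\bullet_\Gm(\pt,\bk)$-module, with $\coh^\bullet_\Gm(\pt,\bk)_{\rS(\varsigma)}\otimes_{\coh^\bullet_{\Gm\times\Gv}(\pt,\bk)}\bHom_{\Parity_{\Gm\ltimes\Loop^+\Gv}}(\cF,\cG)$. By Lemma~\ref{lem:jnu-coh} and the description of $\Parity_{\Gm,\sph|\varsigma}$ in the proof of Lemma~\ref{lem:nuconv-parity}, this is exactly the source $\bHom$.

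\textbf{Step 2 (compatibility).} Check that the map of Lemma~\ref{lem:nuconv-bhom} agrees with this identification. I would compare both maps after applying the canonical surjection from $\bHom_{\Parity_{\Gm\ltimes\Loop^+\Gv}}$ and using the uniqueness clause of Lemma~\ref{lem:nuconv-bhom}. It then suffices to check on the $\Loop^+\Gv$-equivariant level, where the non-$\Gm$ case is \cite[Proposition~4.6]{rw:scf} (and \cite[Theorem~3.9]{bgmrr}). With loop rotation added, both sides are free modules over $\coh^\bullet_\Gm(\pt,\bk)$, so a graded Nakayama argument should reduce surjectivity to the non-equivariant specialization.

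\textbf{Step 3 (nonvanishing).} Show the image is nonzero. The object $j^+_{\varsigma!}\cL^\varsigma_\AS\star\cE^\bbf_w$ has nonzero restriction to $\Fl^+_{\bbf,\varsigma\cdot w}$, where $\varsigma\cdot w$ denotes the element of $\spWext^\bbf$ coming from Lemma~\ref{lem:regular-coset} and Corollary~\ref{cor:spwext-contain}, with regularity supplied by $t_\varsigma w_\bo$. Concretely, its support contains the top IW-admissible orbit. Equivalently, the $\theta^\IW$ computation of Lemma~\ref{lem:theta-compare} shows the character is nonzero.

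The main obstacle is Step 1, namely making the adjunction/costalk identification precise in the $\Gm$-equivariant Iwahori--Whittaker setting for partial flag varieties $\Fl_\bbf$ rather than $\Gr$. If this is problematic, I would instead pull back along $\pi_\bbf$ in \eqref{eqn:pif-defn}. Convolution commutes with $\pi_\bbf^*$ and $\pi_{\bbf*}$. Since $\pi_\bbf$ is smooth and proper with fibers $\para_\bbf/\Iw$, we have $\pi_{\bbf*}\pi_\bbf^*\cF\cong\cF\otimes\coh^\bullet(\para_\bbf/\Iw)$. This reduces the claim about surjectivity to the case $\bbf=\ba$, where the argument of \cite{rw:scf} applies directly.
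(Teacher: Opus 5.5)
Your fallback has the same key idea as the paper's proof: reduce to $\bbf=\ba$ along $\pi_\bbf$, using $J\star\pi_\bbf^*\cF\cong\pi_\bbf^*(J\star\cF)$, where $J=j^+_{\varsigma!}\cL^\varsigma_\AS$. The difference is what you push through the reduction: surjectivity on graded endomorphisms, which is more than is needed. The paper transfers indecomposability directly:
\begin{itemize}
\item $\pi_\bbf^*\cF$ is indecomposable by \cite[Lemma~A.5]{acr:pets};
\item hence $J\star\pi_\bbf^*\cF$ is indecomposable by the case $\bbf=\ba$, i.e.\ \cite[Proposition~4.6]{rw:scf}, used only as a statement about indecomposable objects;
\item since $\pi_\bbf^*$ kills no nonzero object, a nontrivial decomposition $J\star\cF\cong\cG_1\oplus\cG_2$ would pull back to a nontrivial decomposition of $J\star\pi_\bbf^*\cF$.
\end{itemize}
This also gives $J\star\cF\neq0$ for free, so your Step~3 is not needed.

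Your version of the reduction rests on two inputs you do not justify.
\begin{itemize}
\item[(i)] You need surjectivity of $\bEnd(\pi_\bbf^*\cF)\to\bEnd(J\star\pi_\bbf^*\cF)$ for $\bbf=\ba$ in the $\Gm$-equivariant $\sph|\varsigma$ setting. That is more than the statement of \cite[Proposition~4.6]{rw:scf}, which is all the paper uses.
\item[(ii)] You need a retraction of the unit $\cF\to\pi_{\bbf*}\pi_\bbf^*\cF$ inside $\Parity_{\Gm,\sph|\varsigma}(\Fl_\bbf,\bk)$. The isomorphism $\pi_{\bbf*}\pi_\bbf^*\cF\cong\cF\otimes\coh^\bullet(\para_\bbf/\Iw)$ is not automatic equivariantly with mod-$p$ coefficients. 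The retraction is what lets surjectivity descend to the retract $\bEnd(\cF)$ of $\bEnd(\pi_\bbf^*\cF)\cong\bHom(\cF,\pi_{\bbf*}\pi_\bbf^*\cF)$, compatibly with $J\star({-})$.
\end{itemize}
Both can probably be supplied by standard parity-sheaf arguments, but the paper's route avoids them entirely.

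Your primary route has a real gap. Step~1, supported only by a heuristic costalk computation, asserts in effect that $J\star({-})$ is fully faithful on $\Fl_\bbf$ after base change along $\rS(\hat\varsigma)$. That is considerably stronger than the proposition, and the sketch does not establish it.

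A minor point: $\bEnd(\cF)$ need not be nonnegatively graded, because indecomposable parity sheaves on $\Fl_\bbf$ need not be perverse, so negative-degree endomorphisms can occur. Only the finite-dimensional local ring $\End(\cF)$ enters your argument, so this is harmless.
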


\begin{proof}
In the special case where $\bbf = \ba$, this is~\cite[Proposition~4.6]{rw:scf}.  

Suppose now that $\bbf \ne \ba$.  We will reduce to the previous case using the map $\pi_\bbf$ from~\eqref{eqn:pif-defn}.  Let $\cF \in \Parity_{\Gm,\sph}(\Fl_\bbf,\bk)$ be indecomposable.  A routine argument shows that
\begin{equation}\label{eqn:diw-indecomp}
j^+_{\varsigma!}\cL^\varsigma_\AS \star \pi_\bbf^*\cF \cong \pi_\bbf^*(j^+_{\varsigma!}\cL^\varsigma_\AS \star \cF).
\end{equation}
(See, for instance,~\cite[Lemma~5.3(ii)]{ar:agsr} for a similar statement.)  By~\cite[Lemma~A.5]{acr:pets}, $\pi_\bbf^*\cF$ is indecomposable, and then by the previous paragraph (i.e., by~\cite[Proposition~4.6]{rw:scf}), the left-hand side of~\eqref{eqn:diw-indecomp} is indecomposable.  Since $\pi_\bbf^*$ kills no nonzero object, we see from the right-hand side of~\eqref{eqn:diw-indecomp} that $j^+_{\varsigma!}\cL^\varsigma_\AS \star \cF$ must also be indecomposable.
\end{proof}

Proposition~\ref{prop:diw-indecomp} applies to the $p$-scaled flag variety $\Fl_{p,\bbf}$, and says that
\[
j^{p+}_{p\varsigma!}\cL^\varsigma_\AS \star ({-}): \Parity_{\Gm,\sph|p\varsigma}(\Fl_{p,\bbf},\bk) \to \Parity_{\Gm,\IW}(\Fl_{p,\bbf},\bk)
\]
sends indecomposable objects to indecomposable objects.  By taking a sum over facets $\bbf_\lambda$ for $\lambda \in -p\bba \cap \bX^+$ as in~\eqref{eqn:grpi-overcount} and~\eqref{eqn:grpi-isom}, we obtain the following consequence.

\begin{cor}\label{cor:gv-indecomp}
The functor
\[
j^{p+}_{p\varsigma!}\cL^p_\AS \star ({-}): \Parity_{\Gm,\sph|p\varsigma}(\Gr^\varpi,\bk) \to \Parity_{\Gm,\IW}(\Gr^\varsigma,\bk)
\]
sends indecomposable objects to indecomposable objects.  In particular, we have
\[
j^{p+}_{p\varsigma!}\cL^p_\AS[\langle \varsigma, 2\check\rho\rangle] \star \cE^{\varpi,\sph}_\lambda \cong \cE^{\varpi,\IW,\bo}_{\lambda + p \varsigma}.
\]
\end{cor}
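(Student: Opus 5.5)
The first assertion follows from Proposition~\ref{prop:diw-indecomp} applied componentwise. By~\eqref{eqn:grpi-isom}, $\Gr^\varpi$ is a disjoint union of the connected components $\Fl^\circ_{p,\bbf_\lambda}$ for $\lambda \in -p\bba \cap \bX^+$. Each of the categories $\Parity_{\Gm,\sph|p\varsigma}(\Gr^\varpi,\bk)$ and $\Parity_{\Gm,\IW}(\Gr^\varpi,\bk)$ is therefore the direct sum of the corresponding categories on these components. An indecomposable object is supported on a single component.

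Convolution with $j^{p+}_{p\varsigma!}\cL^{p\varsigma}_\AS$ preserves each component. It is computed on the $p$-scaled partial flag variety $\Fl_{p,\bbf_\lambda}$ and then transported along the map~\eqref{eqn:grpi-overcount}, which restricts to an isomorphism from $\Fl^\circ_{p,\bbf_\lambda}$ onto its image. Since $p\varsigma$ lies in $p\bX$, the convolution takes place within the $\Loop_p\Gv$-world, and the open orbit translates each component to itself up to the identification of components. Proposition~\ref{prop:diw-indecomp}, in its $p$-scaled form, then gives indecomposability on each $\Fl_{p,\bbf_\lambda}$. Restricting to the identity component, which is a union of connected components of $\Fl_{p,\bbf_\lambda}$, preserves this statement.

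For the displayed isomorphism, I first use that the convolution of $\cE^{\varpi,\sph}_\lambda$ is indecomposable, so it is enough to identify its support and its restriction to the open stratum. The support of $\cE^{\varpi,\sph}_\lambda$ is $\overline{\Gr^\varpi_\lambda}$. The convolution is supported on the closure of $\Iwupl$-orbits of the form $\bz^{p\varsigma}\cdot(\text{points of }\overline{\Gr^\varpi_\lambda})$, and the maximal such orbit is $\Gr^{\varpi+}_{\lambda+p\varsigma}$.

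This reduces the claim to a computation on the open piece. There I would mimic the argument on $\Gr$ (cf.~\cite[Lemma~3.8]{bgmrr}): the convolution map $m$ restricted over $\Gr^{\varpi+}_{\lambda+p\varsigma}$ is an isomorphism from $\Gr^{\varpi+}_{p\varsigma}\tilde\times \Gr^\varpi_\lambda$ restricted to the relevant cell. Hence the restriction of the convolution to $\Gr^{\varpi+}_{\lambda+p\varsigma}$ is $\cL_\AS$, shifted by
\[
\dim \Gr^{\varpi+}_{p\varsigma} + \dim \Gr^\varpi_\lambda - \langle p\varsigma,2\check\rho\rangle.
\]
Combined with the normalising shift $[\langle\varsigma,2\check\rho\rangle]$ in the statement, this should give exactly the normalisation of $\cE^{\varpi,\IW,\bo}_{\lambda+p\varsigma}$. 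By the uniqueness of indecomposable parity sheaves with given support and open restriction, the isomorphism follows.

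The main obstacle is this dimension and normalisation bookkeeping. Dimensions of orbits in $\Gr^\varpi$ differ from those in $\Gr$; for instance, $\Gr^{\varpi+}_{p\varsigma}$ has dimension $\langle\varsigma,2\check\rho\rangle$ in the scaled world. One must also check that the "suitably normalized" convention for $\cE^{\varpi,\sph}_\lambda$, which uses the labelling from~\eqref{eqn:grpi-isom}, matches the one for $\cE^{\varpi,\IW,\bo}_{\lambda+p\varsigma}$. I would handle this by working inside a single component $\Fl^\circ_{p,\bbf_\mu}$. There I would express $\lambda = w_\bo(w\boxl\mu)$ and $\lambda+p\varsigma = t_{\varsigma}w_\bo w\boxl\mu$, using the bijections~\eqref{eqn:xpp-spwaff-bij} and~\eqref{eqn:xp-owaff-bij}, and then read off the lengths from Lemma~\ref{lem:un-sigma}.
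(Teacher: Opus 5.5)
Your route for the first assertion is the paper's: Proposition~\ref{prop:diw-indecomp} on each $p$-scaled $\Fl_{p,\bbf_\mu}$, assembled via \eqref{eqn:grpi-overcount} and \eqref{eqn:grpi-isom}. However, the assembly step as you wrote it is false, because convolution with $j^{p+}_{p\varsigma!}\cL^p_\AS$ does not preserve the components of $\Gr^\varpi$. It carries the component through $\rL_\mu$ to the component through $\rL_{\mu+p\varsigma}$. Already for $\lambda=0$ the output is $j^{p+}_{p\varsigma!}\cL^p_\AS$ itself, which lives on the component of $\Gr_p$ through $\rL_{p\varsigma}$. This is not $\Gr_p^\circ$ whenever $\varsigma\notin\Z\fR$ (e.g.\ $G=\mathrm{SL}_2$). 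Inside $\Fl_{p,\bbf_\mu}$, the convolution of an object on $\Fl^\circ_{p,\bbf_\mu}$ lives on $\bz^{p\varsigma}\Fl^\circ_{p,\bbf_\mu}$, so ``restricting to the identity component'' would in general kill it. What you need instead is the following. Extend the object by zero to all of $\Fl_{p,\bbf_\mu}$, apply Proposition~\ref{prop:diw-indecomp} there, and transport the result along \eqref{eqn:grpi-overcount}. This uses two facts. First, the map is equivariant for $\Loop_p\Gv$ and for (twisted) loop rotation, so it intertwines the two convolutions. Second, its restriction to \emph{every} connected component of $\Fl_{p,\bbf_\mu}$, not just the identity one, is an isomorphism onto a component of $\Gr^\varpi$. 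That is the remark following \eqref{eqn:grpi-isom}, and it is why the paper cites both maps.

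The paper gives no separate argument for the displayed isomorphism. Your strategy (indecomposability, then support and restriction to the open orbit, then uniqueness) is a sound way to supply one, but the shift you display is wrong. Since $j^{p+}_{p\varsigma!}\cL^p_\AS$ carries no shift, the restriction of $j^{p+}_{p\varsigma!}\cL^p_\AS\star\cE^{\varpi,\sph}_\lambda$ to $\Gr^{\varpi+}_{\lambda+p\varsigma}$ is $\cL_\AS[\dim\Gr^\varpi_\lambda]$. Your expression, with your own value $\dim\Gr^{\varpi+}_{p\varsigma}=\langle\varsigma,2\check\rho\rangle$, is smaller than this by $(p-1)\langle\varsigma,2\check\rho\rangle$, so it would not produce $\cE^{\varpi,\IW,\bo}_{\lambda+p\varsigma}$.

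Moreover, the bookkeeping you call the main obstacle needs no length computation via Lemma~\ref{lem:un-sigma}. It follows from the geometric facts you already invoke. The fibre of $m$ over $\rL_{\lambda+p\varsigma}$ is the single point $[\bz^{p\varsigma}:\rL_\lambda]$: it embeds via $\bit$ into the fibre of $\Gr^+_{p\varsigma}\tilde\times\overline{\Gr_\lambda}\to\Gr$ over the same point, which is a point. Also, $\Gr^{\varpi+}_{\lambda+p\varsigma}$ is open in the image of $m$. Together these say that $m$ maps a nonempty open subset of the irreducible variety $\Gr^{\varpi+}_{p\varsigma}\tilde\times\Gr^\varpi_\lambda$ isomorphically onto $\Gr^{\varpi+}_{\lambda+p\varsigma}$. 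Hence $\dim\Gr^{\varpi+}_{\lambda+p\varsigma}=\langle\varsigma,2\check\rho\rangle+\dim\Gr^\varpi_\lambda$ automatically. The external shift $[\langle\varsigma,2\check\rho\rangle]$ then yields exactly the normalisation $\cL_\AS[\dim\Gr^{\varpi+}_{\lambda+p\varsigma}]$ of $\cE^{\varpi,\IW,\bo}_{\lambda+p\varsigma}$. No further matching of labellings is needed, since both $\cE^{\varpi,\sph}_\lambda$ and $\cE^{\varpi,\IW,\bo}_{\lambda+p\varsigma}$ are indexed by the orbit labels inherited from $\Gr$.
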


\section{Split Grothendieck groups}
\label{sec:groth}

For an additive category $\cA$, denote its split Grothendieck group by $\Ko(\cA)$.  Suppose now that $\cA$ is a full additive subcategory of a triangulated category that is stable under the shift operation $[1]$.  Then $[1]$ induces an automorphism of the abelian group $\Ko(\cA)$.  We can then make $\Ko(\cA)$ into a $\Z[v,v^{-1}]$-module by letting $v$ act by the shift operation $[1]$.

In this section, we will study the $\Z[v,v^{-1}]$-modules $\Ko(\cA)$ when $\cA$ is a category of parity sheaves or Smith parity sheaves on a partial affine flag variety.  We continue to retain the assumption from Section~\ref{ss:bhom} that $p$ is not a torsion prime for $\Gv$.

\subsection{Parity sheaves on partial affine flag varieties}
\label{ss:groth-parity}

We begin by studying the Grothendieck groups
\[
\Ko(\Parity_{\Gm \ltimes \Iw}(\Fl,\bk)),
\quad
\Ko(\Parity_{\Gm,\sph}(\Fl_\bbf,\bk)),
\quad
\Ko(\Parity_{\Gm,\IW}(\Fl_\bbf,\bk)).
\]
Note that in all three cases, forgetting the $\Gm$-action induces a bijection on isomorphism classes of objects, so we may identify
\begin{equation}\label{eqn:groth-isom}
\Ko(\Parity_{\Gm \ltimes \Iw}(\Fl,\bk)) = \Ko(\Parity_\Iw(\Fl,\bk)),
\end{equation}
and likewise for other categories involving minor changes in equivariance.

Here are descriptions of these Grothendieck groups in some cases.

\begin{lem}
\phantomsection
\label{lem:groth}
\begin{enumerate}
\item The map $\ch: \Ko(\Parity_{\Gm \ltimes \Iw}(\Fl,\bk)) \simto \cH_\ext$ given by
\[
\ch([\cF]) = \sum_{\substack{w \in \Wext \\ i \in \Z}} \rank \coh^{i- \dim \Fl_w}(j_w^*\cF) v^{-i} H_w
\]
is an isomorphism of $\Z[v,v^{-1}]$-algebras.\label{it:groth-hecke}
\item The map $\ch: \Ko(\Parity_{\Gm,\sph}(\Fl_\bbf,\bk)) \simto \cM^\bbf$
given by
\[
\ch([\cF]) = \sum_{\substack{w \in \oWext^\bbf \\ i \in \Z}} \rank \coh^{i- \dim \Fl^\sph_{\bbf,w}}((j^\sph_w)^*\cF) v^{-1} M^\bbf_w
\]
is an isomorphism of $\Z[v,v^{-1}]$-modules.  In the special case where $\bbf = \ba$, this is an isomorphism of right $\cH_\ext$-modules.\label{it:groth-sph}
\item The map $\ch: \Ko(\Parity_{\Gm,\IW}(\Fl_\bbf,\bk)) \simto \cN^\bbf$ given by
\[
\ch(\cF) = \sum_{\substack{w \in \spWext^\bbf \\ i \in \Z}} \rank \coh^{i- \dim \Fl^+_{\bbf,w}}((j^+_w)^*\cF) v^{-i} N^\bbf_w.
\]
is an isomorphism of $\Z[v,v^{-1}]$-modules.\label{it:groth-anti}
\end{enumerate}
\end{lem}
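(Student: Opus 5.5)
Part~\eqref{it:groth-hecke} is the standard categorification of the affine Hecke algebra, and I would quote it rather than reprove it: see~\cite[\S4.1]{rw:tmpcb} and~\cite{jmw:ps} for the \'etale setting. By~\eqref{eqn:groth-isom} we may ignore the $\Gm$-equivariance. The only point to check is that the normalization of $\ch$ above, with stalks shifted by $\dim \Fl_w$ and $v$ acting by $[1]$, matches the one used there.

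For parts~\eqref{it:groth-sph} and~\eqref{it:groth-anti}, I would use the general principle for parity sheaves with respect to a stratification by affine spaces and locally constant coefficients. The class of a parity complex $\cF$ is determined by the graded ranks of its $*$-restrictions to the strata, because the recollement triangles split in the split Grothendieck group by parity vanishing. Moreover, the indecomposable parity sheaves (one per admissible stratum, up to shift) form a basis that is unitriangular with respect to the ``characters'' of the stratum-supported objects. So $\ch$ is injective with image the free module on the stratum labels.

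In case~\eqref{it:groth-sph} the relevant strata are the $\Loop^+\Gv$-orbits, labelled by $\oWext^\bbf$ via~\eqref{eqn:fl-orbits}. Sphericity forces the restrictions to be constant sheaves, so the map lands in the span of the $M^\bbf_w$, which is a basis of $\cM^\bbf$; unitriangularity gives bijectivity. (I would read the exponent ``$v^{-1}$'' as $v^{-i}$.) For the right $\cH_\ext$-module statement when $\bbf = \ba$, I would note that right convolution by $\Iw$-equivariant parity sheaves preserves spherical parity objects. Then I would compare $\ch$ with the function $\ch$ of part~\eqref{it:groth-hecke} under the forgetful embedding $\Parity_{\Gm,\sph}(\Fl) \subset \Parity_{\Gm \ltimes \Iwu}(\Fl)$. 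That embedding corresponds exactly to the inclusion $\cM = \uH_{w_\bo}\cH_\ext \subset \cH_\ext$, by the formula $M_w = \sum_{u \in Ww} v^{\length(w)-\length(u)} H_u$ and the fact that each spherical orbit is a union of $\Iw$-orbits of dimensions $\dim\Fl_w - (\length(w)-\length(u))$. Compatibility with convolution then follows from part~\eqref{it:groth-hecke}.

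For~\eqref{it:groth-anti}, the strata supporting Iwahori--Whittaker local systems are exactly the $\Fl^+_{\bbf,w}$ with $w \in \spWext^\bbf$ (Section~\ref{ss:whittaker}), so the same argument gives a basis indexed by $\spWext^\bbf$ and hence an isomorphism onto the free module on $\{N^\bbf_w\}$, which is $\cN^\bbf$.

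The main obstacle is normalization: making sure the definition~\eqref{eqn:nfw-defn} of $N^\bbf_w$ as $N_{ww_\bbf}\uH_{w_\bbf}$ (and similarly $M^\bbf_w$) agrees with what pullback along $\pi_\bbf$ does on Grothendieck groups. I would verify this by computing $\pi_\bbf^*$, which is smooth with fibres $\para_\bbf/\Iw$, on stratum-supported objects. This should reproduce multiplication by $\uH_{w_\bbf}$, via~\eqref{eqn:uh-long-formula} and Lemma~\ref{lem:mfw-formula}, with the ordering and sign conventions as in~\cite{rw:scf}.
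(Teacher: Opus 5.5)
Your proposal is correct and follows the same route as the paper: you quote part~(1), and for parts~(2) and~(3) you note that $\ch([\cE^\bbf_w]) = M^\bbf_w + (\text{lower terms})$ (and the analogue with $N^\bbf_w$), so $\ch$ carries a basis to a basis; for $\bbf = \ba$ you deduce the right $\cH_\ext$-module structure from part~(1), exactly as the paper does. You are also right that $v^{-1}$ should read $v^{-i}$. The comparison with $\pi_\bbf^*$ that you flag as the main obstacle is not needed for this lemma, because $\ch$ is defined directly in terms of the bases $\{M^\bbf_w\}$ and $\{N^\bbf_w\}$; that compatibility is a separate statement, which the paper proves afterwards as Lemma~\ref{lem:groth-pull}.
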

(When $\bbf = \ba$, the map in part~\eqref{it:groth-anti} can also be shown to be an isomorphism of right $\cH_\ext$-modules, but we will not need this fact.)
\begin{proof}[Proof sketch]
Part~\eqref{it:groth-hecke} is well known: see, for instance,~\cite[Eq.~(1.2.3) and Theorem~10.3.2]{rw:tmpcb}.

For part~\eqref{it:groth-sph}, observe that the collection $\{ [\cE^\bbf_w] : w \in \oWext^\bbf \}$ is a $\Z[v,v^{-1}]$-basis for $\Ko(\Parity_{\Gm,\sph}(\Fl_\bbf,\bk))$.  The formula for $\ch$ yields
\[
\ch([\cE^\bbf_w]) = M^\bbf_w + (\text{lower terms}),
\]
where ``lower terms'' means a linear combination of various $M^\bbf_y$ with $\dim \Fl_{\bbf,y} < \dim \Fl_{\bbf,w}$.  This description shows that the map $\ch$ takes a basis of the Grothendieck group $\Ko(\Parity_{\Gm,\sph}(\Fl_\bbf,\bk))$ to a basis of $\cM^\bbf$, so it is at least an isomorphism of $\Z[v,v^{-1}]$-modules.  When $\bbf = \ba$, $\cM$ is a $\Z[v,v^{-1}]$-submodule of $\cH_\ext$, and the fact that $\ch$ is an isomorphism of right $\cH_\ext$-modules follows from the fact that part~\eqref{it:groth-hecke} is a ring isomorphism.

For part~\eqref{it:groth-anti}, the proof that we have an isomorphism of $\Z[v,v^{-1}]$-modules is simlar to that in part~\eqref{it:groth-sph}.
\end{proof}

\begin{lem}\label{lem:groth-pull}
The following diagrams commute:
\[
\begin{tikzcd}
\Ko(\Parity_{\Gm,\sph}(\Fl_\bbf,\bk)) \ar[r, "\ch"] \ar[d, "\pi_\bbf^*{[\length(w_\bbf)]}"'] & \cM^\bbf \ar[d, hook] \\
\Ko(\Parity_{\Gm,\sph}(\Fl,\bk)) \ar[r, "\ch"] \ & \cM
\end{tikzcd}
\qquad
\begin{tikzcd}
\Ko(\Parity_{\Gm,\IW}(\Fl_\bbf,\bk)) \ar[r, "\ch"] \ar[d, "\pi_\bbf^*{[\length(w_\bbf)]}"'] & \cN^\bbf \ar[d, hook] \\
\Ko(\Parity_{\Gm,\IW}(\Fl,\bk)) \ar[r, "\ch"] \ & \cN
\end{tikzcd}
\]
\end{lem}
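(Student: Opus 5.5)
The plan is to reduce both diagrams to stalk computations, because both $\ch$ maps are defined entirely through ranks of stalk cohomology along orbits. Since everything is $\Z[v,v^{-1}]$-linear and additive, and $\pi_\bbf^*$ commutes with shifts, it suffices to check commutativity on the class $[\cF]$ of a single parity object $\cF$ in $\Parity_{\Gm,\sph}(\Fl_\bbf,\bk)$, resp.\ $\Parity_{\Gm,\IW}(\Fl_\bbf,\bk)$. Moreover, $\cF$ can be filtered by its restrictions to orbits, and the stalks of $\cF$ along each orbit are constant (resp.\ the Artin--Schreier local system). Hence it will be enough to show that for each $w \in \oWext^\bbf$ (resp.\ $w \in \spWext^\bbf$), a ``$!$-extension of the constant sheaf, shifted to be perverse on the orbit'' has the expected image. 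Equivalently, we compare the coefficient bookkeeping orbit by orbit.

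For the spherical diagram, let $w \in \oWext^\bbf$. The preimage $\pi_\bbf^{-1}(\Fl^\sph_{\bbf,w})$ is the union of the $\Loop^+\Gv$-orbits $\Fl^\sph_{u}$ on $\Fl$ for $u \in \oWext \cap WwW_\bbf$. Each of these $\Iw$-orbits $\Fl_u$, $u \in WwW_\bbf$, maps smoothly onto an $\Iw$-orbit of $\Fl_\bbf$ with fibres affine spaces. Pulling back the constant sheaf on $\Fl^\sph_{\bbf,w}$, shifted by $\dim\Fl^\sph_{\bbf,w}$ and then by $\length(w_\bbf)$, gives on each $\Fl_u$ the constant sheaf placed in degree $-\dim\Fl^\sph_{\bbf,w}-\length(w_\bbf)$. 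Here $\dim \Fl^\sph_{\bbf,w} + \length(w_\bbf) = \length(w)$, since $w$ is maximal in $WwW_\bbf$. So the coefficient of $H_u$ in the $\ch$ formula of Lemma~\ref{lem:groth}\eqref{it:groth-hecke} becomes $v^{\length(w)-\length(u)}$. Summing over $u$ reproduces exactly the first expression in~\eqref{eqn:mfw-defn} for $M^\bbf_w$, viewed inside $\cM \subset \cH_\ext$. This is the step where the normalization $[\length(w_\bbf)]$ must match the $v$-exponents, so I will check the relative dimension and the dimension formulas for orbits carefully; that is routine.

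For the Iwahori--Whittaker diagram, let $w \in \spWext^\bbf$. By Lemma~\ref{lem:regular-left}, every element of $wW_\bbf$ lies in $\spWext$. The preimage under $\pi_\bbf$ of $\Fl^+_{\bbf,w}$ is the union of the $\Iwup$-orbits $\Fl^+_u$ for $u \in wW_\bbf$, and the pullback of $\cL_\AS$ is again the Iwahori--Whittaker local system on each of them. Doing the same shift count, with $\dim\Fl^+_{\bbf,w}+\length(w_\bbf)=\length(w)$ since $w$ is maximal in its coset, gives $\sum_{u\in wW_\bbf} v^{\length(w)-\length(u)}N_u$. By~\eqref{eqn:nfw-defn} this is exactly $N^\bbf_w$.

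The main obstacle is justifying that the IW local systems pull back to IW local systems, with the matching normalization, on every orbit in the coset $wW_\bbf$. This holds because $\pi_\bbf$ is $\Iwup$-equivariant and the characters $\chi$ are compatible. I also need that no orbit with $u \notin \spWext$ appears; Lemma~\ref{lem:regular-left} guarantees this.
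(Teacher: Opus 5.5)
Your proposal is correct and is essentially the paper's argument: compare stalk ranks of $\pi_\bbf^*\cF[\length(w_\bbf)]$ on the orbits in the preimage of each orbit of $\Fl_\bbf$, use that this orbit has dimension $\length(w)-\length(w_\bbf)$, and match the resulting exponents $v^{\length(w)-\length(u)}$ with \eqref{eqn:mfw-defn} and \eqref{eqn:nfw-defn}. (The detour through $!$-extensions and the worry about pulling back Iwahori--Whittaker local systems are not really needed, since $\ch$ only records stalk ranks and pullback preserves stalks. Also, where you sum over $\Iw$-orbits using the first expression in \eqref{eqn:mfw-defn}, the paper sums over the $\Loop^+\Gv$-orbits $\Fl^\sph_u$ using the second, which is equivalent.)
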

\begin{proof}
The map $\pi_\bbf$ is smooth of relative dimension $\length(w_\bbf)$.  For $w \in \Wext^\bbf$, the preimage of the $\Iwup$-orbit $\Fl^+_{\bbf,w}$ is given by
\[
\pi_\bbf^{-1}(\Fl^+_{\bbf,w}) = \bigcup_{u \in wW_\bbf} \Fl^+_u.
\]
If $\cF \in \Db_{\Gm,\IW}(\Fl_\bbf,\bk)$, then for $u \in wW_\bbf$, we have
\[
\rank \coh^i((j^+_u)^*\pi_\bbf^*\cF)
= \rank \coh^i((j^+_w)^*\cF),
\]
for any $i \in \Z$.  Recall that $\dim \Fl^+_u = \length(u)$, while $\dim \Fl^+_{\bbf,w} = \length(w) - \length(w_\bbf)$.   We can rewrite the preceding equation as
\[
\rank \coh^{i + \length(u) - \length(w) - \dim \Fl^+_u}((j^+_u)^*\pi_\bbf^*\cF[\length(w_\bbf)])
= \rank \coh^{i- \dim \Fl^+_{\bbf,w}}((j^+_w)^*\cF).
\]
The commutativity of the diagram involving $\cN^\bbf$ and $\cN$ follows from this equation, the definition of $\ch$, and~\eqref{eqn:nfw-defn}.

Now let $w \in \oWext^\bbf$.  Then we have
\[
\pi_\bbf^{-1}(\Fl^\sph_{\bbf,w}) = \bigcup_{u \in WwW_\bbf} \Fl_u = \bigcup_{u \in \oWext \cap WwW_\bbf} \Fl^\sph_u.
\]
The argument for the diagram involving $\cM^\bbf$ and $\cM$ proceeds similarly, using this observation along with~\eqref{eqn:mfw-defn}.
\end{proof}

\begin{lem}\label{lem:groth-whitav}
The following diagram commutes:
\[
\begin{tikzcd}
\Ko(\Parity_{\Gm,\sph}(\Fl_\bbf,\bk)) \ar[r, "\ch"] \ar[d, "j^+_{\varsigma!}\cL^\varsigma_\AS{[\langle \varsigma, 2\check\rho\rangle]} \star ({-})"'] & \cM^\bbf \ar[d, "\phi"] \\
\Ko(\Parity_{\Gm,\IW}(\Fl_\bbf,\bk)) \ar[r, "\ch"] & \cN^\bbf
\end{tikzcd}
\]
\end{lem}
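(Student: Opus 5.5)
The plan is to reduce to the case $\bbf = \ba$ using Lemma~\ref{lem:groth-pull}, and then to prove that case by combining the right $\cH_\ext$-module structures with a single computation on one generator.

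\textbf{Reduction to $\bbf = \ba$.} The vertical maps $\pi_\bbf^*[\length(w_\bbf)]$ in Lemma~\ref{lem:groth-pull} intertwine $\ch$ with the inclusions $\cM^\bbf \hookrightarrow \cM$ and $\cN^\bbf \hookrightarrow \cN$. By construction, $\phi$ on $\cM^\bbf$ is the restriction of $\phi$ on $\cM$. The isomorphism~\eqref{eqn:diw-indecomp}, $j^+_{\varsigma!}\cL^\varsigma_\AS \star \pi_\bbf^*(-) \cong \pi_\bbf^*(j^+_{\varsigma!}\cL^\varsigma_\AS \star -)$, shows that convolution commutes with the pullback maps, and the shifts match. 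Since $\cN^\bbf \hookrightarrow \cN$ is injective, commutativity for $\Fl$ therefore implies commutativity for $\Fl_\bbf$.

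\textbf{The case $\bbf = \ba$.} By Lemma~\ref{lem:groth}\eqref{it:groth-sph}, $\ch$ identifies $\Ko(\Parity_{\Gm,\sph}(\Fl,\bk))$ with $\cM$ as right $\cH_\ext$-modules, where $\cH_\ext$ acts by right convolution with $\Iw$-equivariant parity sheaves on $\Fl$. On the Whittaker side I would also use that $\ch$ is a right $\cH_\ext$-module isomorphism onto $\cN$; this is the parenthetical remark after Lemma~\ref{lem:groth}, and it is proved as in \cite{rw:scf}, Lemma~4.5 and \S3. Convolution on the left with $j^+_{\varsigma!}\cL^\varsigma_\AS$ commutes with right convolution by associativity, so the left vertical arrow is a map of right $\cH_\ext$-modules. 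The map $\phi$ is one as well, because $\phi = \hat\phi|_\cM$ and $\hat\phi(h) = N_{t_\varsigma w_\bo} h$. The module $\cM$ is generated over $\cH_\ext$ by $M_{w_\bo} = \uH_{w_\bo}$, which is $\ch$ of the constant sheaf on the closed orbit $\Loop^+\Gv/\Iw \subset \Fl$ (shifted by $\length(w_\bo)$), that is, $\pi_\bo^*\delta[\length(w_\bo)]$. It therefore suffices to check the single identity
\[
\ch\bigl(j^+_{\varsigma!}\cL^\varsigma_\AS[\langle\varsigma,2\check\rho\rangle] \star \pi_\bo^*\delta[\length(w_\bo)]\bigr) = \phi(M_{w_\bo}) = \uN_{t_\varsigma}.
\]
By~\eqref{eqn:diw-indecomp} again, the left-hand object is $\pi_\bo^*(j^+_{\varsigma!}\cL^\varsigma_\AS)$ with appropriate shifts. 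Since $\varsigma$ is IW-clean, this is the indecomposable Whittaker parity sheaf on $\pi_\bo^{-1}(\overline{\Gr^+_\varsigma})$. That preimage is the union of $\Fl^+_{t_\varsigma w_\bo z}$ for $z\in W$, each carrying the Artin--Schreier local system. The character of this sheaf is $\sum_z v^{\length(w_\bo)-\length(z)} N_{t_\varsigma w_\bo z}$, which equals $N_{t_\varsigma w_\bo}\uH_{w_\bo} = \uN_{t_\varsigma}$ by the proof of Lemma~\ref{lem:un-sigma}. Alternatively, this is just the second square of Lemma~\ref{lem:groth-pull} with $\bbf = \bo$ applied to the basepoint object.

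\textbf{Main obstacle.} The hardest part is the bookkeeping of shifts and normalizations. One must check that the shift $\langle\varsigma,2\check\rho\rangle = \dim\Gr^+_\varsigma$ together with $\length(w_\bo)$ produces exactly $v^{\length(w_\bo)-\length(z)}$ and not an overall power of $v$. One must also check that $\ch$ is compatible with right convolution on the Whittaker side, which I would cite from \cite{rw:scf} rather than reprove.
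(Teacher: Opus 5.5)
Your proposal is correct and follows the paper's proof: the reduction to $\bbf = \ba$ via Lemma~\ref{lem:groth-pull} and~\eqref{eqn:diw-indecomp} is exactly the paper's argument. The only difference is that for $\bbf = \ba$ the paper simply cites \cite[Lemma~4.5]{rw:scf}, while you sketch that case yourself (right $\cH_\ext$-linearity of both composites, plus the check $\ch(\pi_\bo^*(j^+_{\varsigma!}\cL^\varsigma_\AS)[\langle\varsigma,2\check\rho\rangle+\length(w_\bo)]) = N_{t_\varsigma w_\bo}\uH_{w_\bo} = \uN_{t_\varsigma}$ on the generator $\uH_{w_\bo}$), and you still rely on \cite{rw:scf} for the Whittaker-side module compatibility of $\ch$.
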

\begin{proof}
Using the map~\eqref{eqn:pif-defn} along with Lemma~\ref{lem:groth-pull} and the formula~\eqref{eqn:diw-indecomp}, we can reduce to the special case where $\bbf = \ba$.  In this special case, the present lemma is~\cite[Lemma~4.5]{rw:scf}.
\end{proof}

\subsection{Parity sheaves on the affine Grassmannian}

Lemma~\ref{lem:groth} has already given us one description of the Grothendieck groups on $\Gr = \Fl_\bo$, but in this case there is additional information coming from the geometric Satake equivalence, summarized in the following statement.

\begin{lem}
\phantomsection
\label{lem:groth-gr}
\begin{enumerate}
\item The map $\ch: \Ko(\Parity_{\Gm,\sph}(\Gr,\bk))|_{v=1} \to \Z[\bX]^W$ given by\label{it:ggr-sph}
\[
\ch([\cF]) = \sum_{\substack{\mu \in \bX^+ \\ i \in \Z}} \rank \coh^i(\cF|_{\Gr_\mu}) s(\lambda)
\]
is an isomorphism of abelian groups.  Moreover, if the parity sheaf $\cF$ is also perverse and corresponds to $V \in \Rep(G)$ under the geometric Satake equivalence, then
\[
\ch([\cF]) = \ch V.
\]
\item The map $\ch: \Ko(\Parity_{\Gm,\IW}(\Gr,\bk))|_{v= 1} \to \Z[\bX]^W$ given by\label{it:ggr-iw}
\[
\ch([\cF]) = \sum_{\substack{\mu \in \bX^{++} \\ i \in \Z}} \rank \coh^i(\cF|_{\Gr^{+}_\mu}) \chi(\lambda - \varsigma)
\]
is an isomorphism of abelian groups.  Moreover, if the parity sheaf $\cF$ is also perverse and corresponds to $V \in \Rep(G)$ under the Iwahori--Whittaker model for the Satake equivalence from~\cite{bgmrr}, then
\[
\ch([\cF]) = \ch V.
\]
\end{enumerate}
\end{lem}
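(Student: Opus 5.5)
The proof combines Lemma~\ref{lem:groth}, specialized to $\bbf = \bo$, with the combinatorial isomorphisms $\theta^\sph_\aff$ and $\theta^\IW_\aff$ of Lemmas~\ref{lem:theta-sph} and~\ref{lem:theta-iw}, and then brings in geometric Satake for the statements about perverse objects.

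For part~\eqref{it:ggr-sph}, I would specialize Lemma~\ref{lem:groth}\eqref{it:groth-sph} to $\bbf = \bo$ and set $v=1$. This gives an isomorphism $\Ko(\Parity_{\Gm,\sph}(\Gr,\bk))|_{v=1} \cong \cM^\bo|_{v=1}$ that sends $[\cF]$ to $\sum_w \sum_i \rank \coh^i(\cF|_{\Gr_\mu}) M^\bo_w$. Here $w \in \oWext^\bo$ corresponds to $\mu \in \bX^+$ via $W t_\mu W$, and I use that the constancy of the cohomology sheaves on spherical orbits lets us replace the stalk by the restriction.

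Next I compose with the map $M^\bo_w \mapsto s(\mu)$. Since $\{s(\mu)\}$ is a $\Z$-basis of $\Z[\bX]^W$ and $\{M^\bo_w\}$ is a basis of $\cM^\bo$ indexed bijectively by $\bX^+$ through~\eqref{eqn:x-wext}, this map is an isomorphism. (For $\lambda=0$ it agrees with $\theta^\sph$ on the affine part; on other components one uses the extended version.) This proves the first claim.

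For the statement about perverse parity sheaves, I would use that under geometric Satake the character of $V$ is computed by the weight functors. By Mirković--Vilonen, $\dim V_\nu = \rank \coh^\bullet_c(S_\nu \cap \Gr,\cF)$. For parity sheaves, the hyperbolic localization/weight functor is computed by the total stalk cohomology: each stalk along $\Gr_\mu$ contributes one to each weight in $W\mu$, so the total is $\sum_\mu \rank \coh^\bullet(\cF|_{\Gr_\mu}) s(\mu)$. This is the main obstacle. I would justify it either by citing the known fact (used by Juteau--Mautner--Williamson) that for parity sheaves on $\Gr$, $\ch$ of the Satake object equals the stalk generating function, or by parity vanishing. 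In the latter approach, the $\Tv$-equivariant cohomology of the free module $\bHom(\underline{\bk}_{\rL_\nu},\cF)$ localizes at the fixed points $\rL_\nu$, and the stalk at $\rL_\nu$ has the same rank as the costalk, and hence the same rank as the weight-space contribution.

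For part~\eqref{it:ggr-iw}, the argument is analogous. Lemma~\ref{lem:groth}\eqref{it:groth-anti} with $\bbf=\bo$ identifies the Grothendieck group with $\cN^\bo$, whose basis $N^\bo_w$ is indexed by $\lambda \in \bX^{++}$. The map $N^\bo_w \mapsto \chi(\lambda - \varsigma)$ is an isomorphism because $\mu \mapsto \mu - \varsigma$ is a bijection $\bX^{++} \to \bX^+$ and the $\chi$'s form a basis.

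For perverse objects, I would use the equivalence $\Rep(G) \simto \Perv_\IW(\Gr,\bk)$ from the proof of Lemma~\ref{lem:steinberg-clean}, which sends $\Delta(\lambda)$ to $j^+_{(\lambda+\varsigma)!}\cL_\AS[\cdots]$. A perverse IW-parity sheaf corresponds to a tilting module (it is a parity object in the highest-weight category), so it has a $\Delta$-flag. In the Grothendieck group its class is therefore $\sum (T:\Delta(\lambda))[\Delta(\lambda)]$, and the multiplicity $(T:\Delta(\lambda))$ equals the total rank of the $*$-restriction to $\Gr^+_{\lambda+\varsigma}$ by the standard $\Hom$ to costandards computation. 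This yields $\ch V = \sum (V:\Delta(\lambda))\chi(\lambda)$, as claimed. Signs from shifts disappear because parity sheaves are $*$-even up to shift, so at $v=1$ every contribution comes with the correct positive count.
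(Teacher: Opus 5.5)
Your overall plan is the paper's: basis comparisons for the two isomorphisms, \cite[Corollary~4.1]{jmw:pstm} for the Satake compatibility in part~(1), and \cite{bgmrr} for part~(2). There is, however, a gap in part~(2). The step ``a perverse IW-parity sheaf corresponds to a tilting module (it is a parity object in the highest-weight category)'' does not follow from the reason you give, and that reason is false in general.

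Perverse plus parity does not imply tilting in categories of this kind. On $\mathbb{P}^1$ with its Schubert stratification, $\underline{\bk}_{\mathbb{P}^1}[1]$ is perverse and parity, but its $*$-restriction to the point is $\bk[1]$. In the Grothendieck group one has $[\underline{\bk}_{\mathbb{P}^1}[1]] = [\Delta_{\mathbb{A}^1}] - [\Delta_{\mathrm{pt}}]$, so it has no $\Delta$-flag. In the IW setting on $\Gr$ the tilting property does hold, but this needs genuine input, for instance the JMW theorem that perverse spherical parity sheaves are tilting, transported along $\Av_\IW$.

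Tilting is in fact unnecessary. For any $\cF \in \Perv_\IW(\Gr,\bk)$, the open--closed triangles give, in $K_0(\Perv_\IW(\Gr,\bk)) = K_0(\Db_\IW(\Gr,\bk))$,
\[
[\cF] = \sum_{\mu \in \bX^{++}} \Big( \sum_{i} (-1)^{i + \dim \Gr^+_\mu} \rank \coh^i(\cF|_{\Gr^+_\mu}) \Big) \big[ j^+_{\mu!}\cL^\mu_\AS[\dim \Gr^+_\mu] \big].
\]
The equivalence of \cite{bgmrr} sends $j^+_{\mu!}\cL^\mu_\AS[\dim \Gr^+_\mu]$ to $\Delta(\mu-\varsigma)$, whose character is $\chi(\mu-\varsigma)$. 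Now suppose $\cF$ is also parity. Each indecomposable summand is the IW parity sheaf $\cE$ on some $\overline{\Gr^+_\lambda}$ with $\cE|_{\Gr^+_\lambda} \cong \cL^\lambda_\AS[\dim \Gr^+_\lambda]$; perversity forces this shift. Its stalks on $\Gr^+_\mu$ then sit in degrees $\equiv \dim\Gr^+_\lambda \pmod 2$. The IW-relevant orbits have dimension $\langle \mu, 2\check\rho\rangle$, whose parity is constant on each connected component. Hence every sign above is $+1$. This is exactly your closing remark about signs, and it is precisely what fails in the $\mathbb{P}^1$ example.

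Two smaller corrections.

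First, your aside that $M^\bo_w \mapsto s(\mu)$ ``agrees with $\theta^\sph$'' on the $\lambda=0$ summand is wrong. $\theta^\sph$ encodes the $p$-dilated combinatorics of $\Gr^\varpi$, whose $\lambda=0$ component is $\Gr^\circ_p$. It sends $M^\bo_w$, for $w \in \oWext^\bo \cap W t_\mu W$, to $s(p\mu)$. The maps you need are just the basis bijections $M^\bo_w \mapsto s(\mu)$ and $N^\bo_w \mapsto \chi(\mu-\varsigma)$; $\theta^\sph_\aff$ and $\theta^\IW_\aff$ play no role in this lemma.

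Second, your ``localization'' alternative in part~(1) is not a proof. The weight space is a hyperbolic localization along a semi-infinite orbit. That its dimension equals the stalk rank at $\rL_\nu$ for parity sheaves is exactly the content of the Juteau--Mautner--Williamson result; it does not follow from stalks and costalks having equal rank. Cite \cite[Corollary~4.1]{jmw:pstm}, as the paper does.
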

\begin{proof}
In both cases, the fact that the maps are isomorphisms of abelian groups is straightforward from a comparison of bases for both sides.  In part~\eqref{it:ggr-sph}, the claim that $\ch([\cF]) = \ch V$ holds by~\cite[Corollary~4.1]{jmw:pstm}.  In part~\eqref{it:ggr-iw}, that claim follows from the main results of~\cite{bgmrr}.
\end{proof}

\begin{rmk}
If $p$ is good for $G$, then by~\cite[Corollary~1.6]{mr:etsps}, every $\cE^\sph_\lambda$ is perverse, and the second part of Lemma~\ref{lem:groth-gr}\eqref{it:ggr-sph} applies to all $\cE^\sph_\lambda$.  This point will come up again when we apply Lemma~\ref{lem:groth-gr}\eqref{it:ggr-sph} in the proof of Theorem~\ref{thm:main}.
\end{rmk}

\subsection{Parity sheaves on the fixed-point locus}

We now turn to Grothendieck groups of parity sheaves on $\Gr^\varpi$.

\begin{lem}
\phantomsection
\label{lem:groth-varpi}
\begin{enumerate}
\item The map $\ch: \Ko(\Parity_{\Gm,\sph}(\Gr^\varpi,\bk))|_{v= 1} \to \Z[\bX]^W$ given by\label{it:gv-sph}
\[
\ch([\cF]) = \sum_{\substack{\mu \in \bX^+ \\ i \in \Z}} \rank \coh^i(\cF|_{\Gr^{\varpi}_\mu}) s(\mu)
\]
is an isomorphism of abelian groups.  Moreover, this isomorphism makes the following diagram commute:
\[
\begin{tikzcd}[column sep=huge]
\displaystyle\bigoplus_{\lambda \in (-p\bba) \cap \bX^+} \Ko(\Parity_{\Gm,\sph}(\Fl_{p,\bbf_\lambda}^\circ))|_{v=1} \ar[r, "\ch\ \text{\normalfont (Lemma~\ref{lem:groth})}", "\sim"'] \ar[d, "\eqref{eqn:grpi-isom}"', "\wr"] &
\displaystyle\bigoplus_{\lambda \in (-p\bba) \cap \bX^+} \cM^{\bbf_\lambda}_\aff|_{v=1} \ar[d, "\theta^\sph_\aff\ \text{\normalfont (Lemma~\ref{lem:theta-sph})}", "\wr"'] \\
\Ko(\Parity_{\Gm,\sph}(\Gr^\varpi,\bk))|_{v= 1} \ar[r, "\ch", "\sim"'] &
\Z[\bX]^W
\end{tikzcd}
\]
\item The map $\ch: \Ko(\Parity_{\Gm,\IW}(\Gr^\varpi,\bk))|_{v= 1} \to \Z[\bX]^W$ given by\label{it:gv-iw}
\[
\ch([\cF]) = \sum_{\substack{\mu \in \bX^{++} \\ i \in \Z}} \rank \coh^i(\cF|_{\Gr^{\varpi,+}_\mu}) \chi(\mu - \varsigma)
\]
is an isomorphism of abelian groups.  Moreover, this isomorphism makes the following diagram commute:
\[
\begin{tikzcd}[column sep=huge]
\displaystyle\bigoplus_{\lambda \in (-p\bba) \cap \bX^+} \Ko(\Parity_{\Gm,\IW}(\Fl_{p,\bbf_\lambda}^\circ))|_{v=1} \ar[r, "\ch\ \text{\normalfont (Lemma~\ref{lem:groth})}", "\sim"'] \ar[d, "\eqref{eqn:grpi-isom}"', "\wr"] &
\displaystyle\bigoplus_{\lambda \in (-p\bba) \cap \bX^+} \cN^{\bbf_\lambda}_\aff|_{v=1}\ar[d, "\theta^\IW_\aff\ \text{\normalfont (Lemma~\ref{lem:theta-iw})}", "\wr"'] \\
\Ko(\Parity_{\Gm,\IW}(\Gr^\varpi,\bk))|_{v= 1} \ar[r, "\ch", "\sim"'] &
\Z[\bX]^W
\end{tikzcd}
\]
\end{enumerate}
\end{lem}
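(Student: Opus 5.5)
The plan is to reduce both parts to statements on each connected component of $\Gr^\varpi$, via the isomorphism~\eqref{eqn:grpi-isom}, and there to compare bases. Fix $\lambda \in -p\bba \cap \bX^+$ and the component $\Fl^\circ_{p,\bbf_\lambda}$.

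For part~\eqref{it:gv-sph}, restrict Lemma~\ref{lem:groth}\eqref{it:groth-sph} (applied to the $p$-scaled variety) to the identity component. This gives an isomorphism of $\Ko(\Parity_{\Gm,\sph}(\Fl^\circ_{p,\bbf_\lambda}))$ with $\cM^{\bbf_\lambda}_\aff$, since the spherical orbits in $\Fl^\circ_{p,\bbf_\lambda}$ are indexed by $\oWaff^{\bbf_\lambda}$. At $v=1$ the defining formula says that $[\cF]$ goes to $\sum_w \sum_i \rank\coh^i(\cF|_{\Fl^\sph_{p,\bbf_\lambda,w}}) M^{\bbf_\lambda}_w$; the shift by $\dim$ and the sign of $v$ disappear after specialization.

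By Section~\ref{ss:fixed-pts}, the orbit $\Fl^\sph_{p,\bbf_\lambda,w}$ corresponds under~\eqref{eqn:grpi-isom} to $\Gr^\varpi_{\mu'}$ with $\mu' = w_\bo(w\boxl\lambda)$. Moreover, $\theta^\sph_\aff$ sends $M^{\bbf_\lambda}_w$ to $s(\mu')$. Hence $\theta^\sph_\aff\circ\ch$ on this summand equals the formula defining $\ch$ on $\Gr^\varpi$, so the square commutes after summing over $\lambda$.

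The isomorphism claim then follows formally. The left vertical map is an isomorphism, because~\eqref{eqn:grpi-isom} is an isomorphism of ind-schemes compatible with the $\Loop^+_p\Gv$- and $\Gm$-actions, so it identifies the parity categories and the split Grothendieck group is a direct sum over components. The top map is an isomorphism by Lemma~\ref{lem:groth}, and $\theta^\sph_\aff$ is an isomorphism by Lemma~\ref{lem:theta-sph}. Commutativity therefore forces the bottom $\ch$ to be an isomorphism.

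Part~\eqref{it:gv-iw} follows the same steps with $\cN^{\bbf_\lambda}_\aff$, Lemma~\ref{lem:groth}\eqref{it:groth-anti} and Lemma~\ref{lem:theta-iw}. Here the $\Iwupl$-orbits in the component that support Whittaker local systems are $\Fl^+_{p,\bbf_\lambda,w}$ for $w\in\spWaff^{\bbf_\lambda}$. Each corresponds to $\Gr^{\varpi+}_\mu$ with $\mu = w\boxl\lambda\in\bX^{++}$, by Lemma~\ref{lem:boxl-wext}\eqref{it:boxl-spec}, and $\theta^\IW_\aff(N^{\bbf_\lambda}_w)=\chi(\mu-\varsigma)$. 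We also need that $\ch$ on $\Gr^\varpi$ only sees the $\Iwupl$-orbits carrying nonzero IW local systems. This holds because restrictions of IW complexes to the other orbits vanish.

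The main obstacle is bookkeeping rather than mathematics. One has to check that the labelling in Section~\ref{ss:fixed-pts} matches the bijections~\eqref{eqn:xp-owaff-bij} and~\eqref{eqn:xpp-spwaff-bij}, including the $w_\bo$ twist in the spherical case. One also has to check that the identification of IW parity sheaves across~\eqref{eqn:grpi-isom} respects the character $\chi_p$: the restriction of $\chi$ to $\Iwupl$ is the $p$-scaled IW character, so the categories agree. For the $w_\bo$ twist I would point to Lemma~\ref{lem:boxl-wext}\eqref{it:boxl-max} and the definition of $\Gr^\varpi_{\mu'}$. For the character I would point to Lemma~\ref{lem:smith-2defns} and~\cite[\S4]{rw:st}.
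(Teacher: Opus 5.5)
Your proposal is correct and follows essentially the same route as the paper: compose the componentwise $\ch$ of Lemma~\ref{lem:groth} with $\theta^\sph_\aff$ (resp.\ $\theta^\IW_\aff$), and match orbit labels via Section~\ref{ss:fixed-pts}, namely $\Fl^\sph_{p,\bbf_\lambda,w} = \Gr^\varpi_{w_\bo(w\boxl\lambda)}$ and $\Fl^+_{p,\bbf_\lambda,w} = \Gr^{\varpi+}_{w\boxl\lambda}$. You also spell out that the bottom $\ch$ is an isomorphism because the other three arrows are; the paper leaves this step implicit.
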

\begin{proof}
For part~\eqref{it:gv-sph}, combining the formula from Lemma~\ref{lem:groth}\eqref{it:groth-sph} with the formula for $\theta^\sph_\aff$ from Lemma~\ref{lem:theta-sph}, we find that
\[
\theta^\sph_\aff(\ch([\cF])) =
\sum_{\lambda \in (-p\bba) \cap \bX^+} \sum_{\substack{w \in \oWaff^{\bbf_\lambda} \\ i \in \Z}}
\rank \coh^i(\cF|_{\Fl_{p,\bbf_\lambda,w}^\sph}) s(w_\bo(w \boxl \lambda)).
\]
Recall from Section~\ref{ss:fixed-pts} that $\Fl_{p,\bbf_\lambda,w} = \Gr^{\varpi}_{w_\bo(w \boxl \lambda)}$.  Thus, the formula above agrees with the one in the statement of the present lemma, and the diagram commutes.

The proof of part~\eqref{it:gv-iw} is similar: combine the formulas from Lemma~\ref{lem:groth}\eqref{it:groth-anti} and Lemma~\ref{lem:theta-iw}, and recall from Section~\ref{ss:fixed-pts} that for $w \in \spWaff^{\bbf_\lambda}$, $\Fl^+_{p,\bbf_\lambda,w} = \Gr^{\varpi+}_{w \boxl \lambda}$.
\end{proof}

\begin{prop}\label{prop:groth-varpi-total}
The following diagram commutes:
\[
\begin{tikzcd}
\Ko(\Parity_{\Gm,\sph}(\Gr^\varpi,\bk))|_{v=1} \ar[r, "\ch", "\sim"'] \ar[d, "j^{p+}_{p\varsigma!}\cL^{p\varsigma}_\AS{[\langle \varsigma, 2\check\rho\rangle]} \star ({-})"'] & \Z[\bX]^W \ar[d, "\text{\normalfont multiply by $\chi((p-1)\varsigma)$}"] \\
\Ko(\Parity_{\Gm,\IW}(\Gr^\varpi,\bk)) \ar[r, "\ch", "\sim"'] & \Z[\bX]^W
\end{tikzcd}
\]
\end{prop}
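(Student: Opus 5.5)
The plan is to decompose everything according to the connected components of $\Gr^\varpi$ via~\eqref{eqn:grpi-isom}, and to reduce to the combinatorial comparison already established in Lemma~\ref{lem:theta-compare}. By Lemma~\ref{lem:groth-varpi}, both horizontal maps $\ch$ in the statement factor as the composition of the isomorphism~\eqref{eqn:grpi-isom} on Grothendieck groups, the componentwise maps $\ch$ of Lemma~\ref{lem:groth} (landing in $\cM^{\bbf_\lambda}_\aff|_{v=1}$, resp.\ $\cN^{\bbf_\lambda}_\aff|_{v=1}$), and then $\theta^\sph_\aff$, resp.\ $\theta^\IW_\aff$. The commutativity will therefore follow once I have checked two things: the convolution functor is compatible with the decomposition into components, and on each component it is computed by $\phi|_{v=1}$.

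For the first point, the functor $j^{p+}_{p\varsigma!}\cL^{p\varsigma}_\AS[\langle \varsigma,2\check\rho\rangle]\star({-})$ is defined using convolution for the $p$-scaled loop group $\Loop_p\Gv$. This convolution acts on $\Gr^\varpi$ through the map~\eqref{eqn:grpi-overcount}, so it preserves each component $\Fl^\circ_{p,\bbf_\lambda}$. Under~\eqref{eqn:grpi-isom}, its restriction to that component is identified with the functor $j^{p+}_{p\varsigma!}\cL^{p\varsigma}_\AS\star({-})$ on $\Fl_{p,\bbf_\lambda}$ from Proposition~\ref{prop:diw-indecomp}, applied in its $p$-scaled form as in Corollary~\ref{cor:gv-indecomp}. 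One small issue needs care here: the domain component is $\Fl^\circ_{p,\bbf_\lambda}$, while convolution with a sheaf supported on $\Gr^+_{p\varsigma}$ may move it to another component of $\Fl_{p,\bbf_\lambda}$. This is harmless, because Lemma~\ref{lem:theta-iw} defines $\theta^\IW$ on all of $\cN^{\bbf_\lambda}|_{v=1}$, indexed by $\Wext$ rather than $\Waff$, and the image component is again identified with a component of $\Gr^\varpi$ by the remark after~\eqref{eqn:grpi-isom}. I would keep track of the labelling by working with $\theta^\IW$ on the full $\Wext$-indexed module and checking that it agrees with the stalk labelling $\Gr^{\varpi+}_{w\boxl\lambda}$ from Section~\ref{ss:fixed-pts} for every $w\in\spWext^{\bbf_\lambda}$.

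For the second point, on each component I apply the $p$-scaled version of Lemma~\ref{lem:groth-whitav}. This says that $\ch\circ(j^{p+}_{p\varsigma!}\cL^{p\varsigma}_\AS[\ldots]\star({-}))=\phi\circ\ch$ as maps $\Ko\to\cN^{\bbf_\lambda}$. The $p$-scaling only changes the loop rotation, so the combinatorics are the same as in the unscaled case, with $p\varsigma$ playing the role of $\varsigma$ and $\Loop_p$ that of $\Loop$. After specializing at $v=1$, Lemma~\ref{lem:theta-compare} gives $\theta^\IW\circ\phi|_{v=1}=\chi((p-1)\varsigma)\cdot\theta^\sph$. Summing over $\lambda\in -p\bba\cap\bX^+$ yields the proposition.

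I expect the main obstacle to be the bookkeeping of shifts and labels. The shift $[\langle\varsigma,2\check\rho\rangle]$ is harmless at $v=1$. The real issue is whether $\theta^\IW$, which evaluates $N_w^{\bbf_\lambda}$ at $w\boxl\lambda$ using the $\boxl$-action of $\Wext$ (with $t_\varsigma$ acting by $+p\varsigma$), matches the geometric labelling of $\Iwupl$-orbits in the image component. To settle this, I would first verify it on the base point $\rL_{\lambda}\mapsto\rL_{\lambda+p\varsigma}$, using~\eqref{eqn:tz-conj}, and then propagate the match equivariantly.
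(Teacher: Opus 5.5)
Your proposal is correct and is essentially the paper's argument: the paper's proof just combines Lemma~\ref{lem:groth-varpi}, the $p$-scaled Lemma~\ref{lem:groth-whitav}, and Lemma~\ref{lem:theta-compare}. Your explicit treatment of the fact that convolution with $j^{p+}_{p\varsigma!}\cL^{p\varsigma}_\AS$ can land in a non-identity component of $\Fl_{p,\bbf_\lambda}$ fills in a detail the paper leaves implicit: this requires the full $\theta^\IW$ and the labelling $\Fl^+_{p,\bbf_\lambda,w}\leftrightarrow\Gr^{\varpi+}_{w\boxl\lambda}$ for all $w\in\spWext^{\bbf_\lambda}$, not only $w \in \Waff$. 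Just drop your initial claim that the convolution preserves each component, which you then correctly retract.
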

\begin{proof}
Combine the commutative diagrams from Lemmas~\ref{lem:theta-compare}, \ref{lem:groth-whitav}, and~\ref{lem:groth-varpi}.
\end{proof}

\subsection{Split Grothendieck groups for Smith parity sheaves}

In this subsection, we will study the split Grothendieck groups
\[
\Ko(\SmParity_{\sph}(\Gr^\varpi,\bk)),
\qquad
\Ko(\SmParity_{\IW}(\Gr^\varpi,\bk)).
\]
As in Section~\ref{ss:groth-parity}, these groups are a priori modules over $\Z[v,v^{-1}]$, with $v$ acting by $[1]$.  But since $[2]$ is isomorphic to the identity functor (see Section~\ref{ss:smith-defn}), the Grothendieck groups above are actually modules over
\[
\Z[v]/(v^2 - 1).
\]

\begin{lem}\label{lem:groth-psm}
The localization functor $\Psm$ induces isomorphisms
\begin{align*}
\Ko(\SmParity_{\sph}(\Gr^\varpi,\bk)) &\cong \Z[v]/(v^2-1) \otimes_{\Z[v,v^{-1}]}
\Ko(\Parity_{\Gm,\sph}(\Gr,\bk)), \\
\Ko(\SmParity_{\IW}(\Gr^\varpi,\bk)) &\cong \Z[v]/(v^2-1) \otimes_{\Z[v,v^{-1}]}
\Ko(\Parity_{\Gm,\IW}(\Gr,\bk)).
\end{align*}
\end{lem}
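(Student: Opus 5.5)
The plan is to show that both sides are free modules over $\Z[v]/(v^2-1)$ with bases labelled by the same set, and that $\Psm$ matches the two bases. We treat the spherical and Iwahori--Whittaker cases in parallel.

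\emph{Step 1: the left-hand sides.} By Lemma~\ref{lem:groth}, $\Ko(\Parity_{\Gm,\sph}(\Gr,\bk))$ is free over $\Z[v,v^{-1}]$ with basis $\{[\cE^\sph_\lambda] : \lambda \in \bX^+\}$. Likewise, $\Ko(\Parity_{\Gm,\IW}(\Gr,\bk))$ is free with basis $\{[\cE^{\IW,\bo}_\lambda] : \lambda \in \bX^{++}\}$. The right-hand sides of the lemma are therefore free $\Z[v]/(v^2-1)$-modules on the same index sets.

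\emph{Step 2: the Smith side.} We need a classification of indecomposable Smith parity sheaves, from the appendix (Section~\ref{ss:smithpar} and Corollary~\ref{cor:parity-full}). This result should say the following. For each stratum of $\Gr^\varpi$ carrying a nonzero local system of the appropriate type, there is a unique indecomposable Smith parity object up to the shift $[1]$ (and $[2]\cong\id$). It is characterized by its support and by its restriction to the open stratum. Moreover, every object of $\SmParity$ is a finite direct sum of these.

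By~\cite[Lemma~4.4]{rw:st}, the strata of $\Gr^\varpi$ are $\Gr^\varpi_\mu$ for $\mu\in\bX^+$, and respectively $\Gr^{\varpi+}_\mu$. In the Iwahori--Whittaker case, the stratum $\Gr^{\varpi+}_\mu$ supports a nonzero IW local system exactly when $\mu\in\bX^{++}$, by Lemma~\ref{lem:boxl-wext} and the description in Section~\ref{ss:whittaker}. So $\Ko(\SmParity)$ is free over $\Z[v]/(v^2-1)$ on classes indexed by $\bX^+$, respectively $\bX^{++}$.

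\emph{Step 3: compatibility of $\Psm$.} We will show that $\Psm(\cE^\sph_\lambda)$ is isomorphic to the indecomposable Smith parity object supported on $\overline{\Gr^\varpi_\lambda}$, up to shift, plus possibly other summands. The restriction of $\Psm\cE^\sph_\lambda$ to the open stratum $\Gr^\varpi_\lambda = \Gr_\lambda\cap\Gr^\varpi$ equals $\rQ$ of the restriction of the constant sheaf. This is nonzero in the Smith category: a constant sheaf $\underline{\bk}$ with trivial $\varpi$-action is not $\varpi$-perfect, since $\bk$ is not a free $\bk[\varpi]$-module. Its support is contained in $\overline{\Gr_\lambda}\cap\Gr^\varpi$.

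Hence, for a suitable order refining the closure order, the induced map is unitriangular with respect to the two bases. It is therefore an isomorphism of the free $\Z[v]/(v^2-1)$-modules. Note that the map is well defined: $\Psm$ is additive, commutes with $[1]$, and $[2]\cong\id$ on the target, so it factors through the tensor product. The Iwahori--Whittaker case is identical, with the local system $\cL_\AS$ in place of the constant sheaf.

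\emph{Main obstacle.} The hard part is Step 2: showing that indecomposable Smith parity objects are classified by strata, with endomorphism rings local, so that Krull--Schmidt holds in the Smith category. I would first try to deduce this from the Hom formula used in the proof of Lemma~\ref{lem:nuconv-smith}. That formula identifies Smith Hom-spaces with $\coh^\bullet_\Gm(\pt,\bk)/(\can^2-1)\otimes\bHomev$, which yields locality of endomorphism rings from the $\Gm$-equivariant parity case. A further point to check in Step 3 is that $\Psm$ of a parity sheaf does not have a vanishing top stratum part; this is handled by the non-perfectness of the constant sheaf noted there.
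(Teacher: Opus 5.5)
Your proposal is correct and is essentially the paper's argument: both sides have bases indexed by $\bX^+$ (resp.~$\bX^{++}$) given by indecomposable (Smith) parity sheaves. Moreover, $\Psm(\cE^\sph_\lambda)$ is $\cE^{\varpi,\sph}_{\Sm,\lambda}$ up to shift plus summands on strata $\Gr^\varpi_\mu$ with $\dim\Gr_\mu<\dim\Gr_\lambda$, so the induced map is unitriangular; your extra justifications (the top-stratum restriction is a single copy of a non-$\varpi$-perfect local system, and the appendix's Krull--Schmidt and classification results) only make explicit what the paper's short proof uses implicitly.
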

\begin{proof}
We will explain the details for the spherical case; the Iwahori--Whittaker case is very similar. The functor $\Psm$ at least induces a map
\begin{equation}\label{eqn:groth-smith-map}
\Z[v]/(v^2-1) \otimes_{\Z[v,v^{-1}]}
\Ko(\Parity_{\Gm,\sph}(\Gr,\bk)) \to \Ko(\SmParity_{\sph}(\Gr^\varpi,\bk)).
\end{equation}
The collection $\{\cE^\sph_\lambda : \lambda \in \bX^+ \}$ is a $\Z[v,v^{-1}]$-basis for $\Ko(\Parity_{\Gm,\sph}(\Gr,\bk))$.  Since this is supported on the variety
\[
\overline{\Gr_\lambda} =
\Gr_\lambda \cup \bigcup
\left(
\text{various orbits $\Gr_\mu$ with $\dim \Gr_\mu < \dim \Gr_\lambda$}
\right),
\]
we see that
\[
\Psm(\cE^\sph_\lambda) \cong \cE^{\varpi,\sph}_{\Sm,\lambda} \oplus \bigoplus \left(
\text{various $\cE^{\varpi,\sph}_{\Sm,\mu}[n]$ with $\dim \Gr_\mu < \dim \Gr_\lambda$}
\right),
\]
This shows that the map~\eqref{eqn:groth-smith-map} sends a $\Z[v]/(v^2-1)$-basis of its domain to a basis of its codomain, so it is an isomorphism.
\end{proof}

\begin{lem}\label{lem:groth-q}
The quotient functor $\rQ$ induces isomorphisms
\begin{align*}
\Ko(\SmParity_{\sph}(\Gr^\varpi,\bk)) &\cong \Z[v]/(v^2-1) \otimes_{\Z[v,v^{-1}]}
\Ko(\Parity_{\Gm,\sph}(\Gr^\varpi,\bk)), \\
\Ko(\SmParity_{\IW}(\Gr^\varpi,\bk)) &\cong \Z[v]/(v^2-1) \otimes_{\Z[v,v^{-1}]}
\Ko(\Parity_{\Gm,\IW}(\Gr^\varpi,\bk)).
\end{align*}
\end{lem}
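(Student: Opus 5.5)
The plan is to mimic the proof of Lemma~\ref{lem:groth-psm}: compare bases on both sides, using the classification of indecomposable objects on the Smith side. I treat the spherical case; the Iwahori--Whittaker case is identical after replacing spherical orbits $\Gr^\varpi_\mu$ ($\mu \in \bX^+$) by $\Iwupl$-orbits $\Gr^{\varpi+}_\mu$ ($\mu \in \bX^{++}$) and constant sheaves by the local systems $\cL_\AS$.

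First, $\rQ$ sends parity objects to Smith parity objects (Section~\ref{ss:smithpar}) and is additive. Since $\rQ \circ [2] \cong \rQ$, it induces a $\Z[v,v^{-1}]$-linear map
\[
\Z[v]/(v^2-1) \otimes_{\Z[v,v^{-1}]} \Ko(\Parity_{\Gm,\sph}(\Gr^\varpi,\bk)) \to \Ko(\SmParity_{\sph}(\Gr^\varpi,\bk)).
\]
The domain is free over $\Z[v]/(v^2-1)$ with basis the classes $[\cE^{\varpi,\sph}_\mu]$, $\mu \in \bX^+$. This holds because the indecomposable parity sheaves on $\Gr^\varpi$ are, up to shift, exactly the $\cE^{\varpi,\sph}_\mu$: via~\eqref{eqn:grpi-isom}, $\Gr^\varpi$ is a disjoint union of partial affine flag varieties, each stratified by simply connected (affine-space-bundle) spherical orbits with stalks having even equivariant cohomology.

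Second, I describe the target. By Corollary~\ref{cor:parity-full} and the general theory in the Appendix, the indecomposable Smith parity sheaves in $\SmParity_{\sph}(\Gr^\varpi,\bk)$ are, up to $[1]$, exactly the objects $\cE^{\varpi,\sph}_{\Sm,\mu} = \rQ(\cE^{\varpi,\sph}_\mu)$. Two facts are needed:
\begin{enumerate}
\item $\rQ(\cE^{\varpi,\sph}_\mu)$ is indecomposable.
\item $\rQ(\cE^{\varpi,\sph}_\mu) \not\cong \rQ(\cE^{\varpi,\sph}_\mu)[1]$, and these objects are pairwise non-isomorphic for distinct $\mu$.
\end{enumerate}

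For (1), I use the description of $\Hom$-groups in the Smith parity category (Lemma~\ref{lem:sphom-gen}):
\[
\Hom_{\Sm}(\rQ\cF,\rQ\cG) \cong \coh^\bullet_\Gm(\pt,\bk)/(\can^2-1) \otimes_{\coh^\bullet_\Gm(\pt,\bk)} \bHomev(\cF,\cG).
\]
The endomorphism ring $\bHomev(\cE,\cE)$ is a nonnegatively graded ring whose degree-$0$ part is local; it is local because $\cE$ is indecomposable and restricting to the open orbit gives $\bk$ modulo a nilpotent ideal. Its Smith version is obtained by specializing $\can \mapsto$ a unit, i.e., setting the grading generator invertible modulo $\can^2-1$. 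The resulting ring is still local: the image of the graded radical plus positive-degree parts stays a nilpotent-modulo-filtration ideal, because the support filtration by orbit dimension is preserved. The same restriction-to-open-orbit argument distinguishes supports, which gives the pairwise non-isomorphism in (2). The non-isomorphism with the $[1]$-shift holds because the restriction to the open stratum is $\rQ$ of a constant sheaf in even degree, and odd and even objects on a point are distinct in the Smith category of a point.

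Granting these, $\rQ$ sends the $\Z[v]/(v^2-1)$-basis $\{[\cE^{\varpi,\sph}_\mu]\}$ to the basis $\{[\cE^{\varpi,\sph}_{\Sm,\mu}]\}$, so the map is an isomorphism.

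I expect step (1), locality of the Smith endomorphism ring, to be the main obstacle. If it is already recorded in the Appendix, which seems likely since Corollary~\ref{cor:parity-full} classifies the indecomposables, I would simply cite it. Otherwise I would argue on each stratum via the Smith category of a point, $\Sm(\pt,\bk)$, where $\rQ(\underline{\bk})$ has endomorphism ring $\bk$ in degree $0$, and then use the triangulated filtration by closed unions of orbits.
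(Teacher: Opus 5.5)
Your plan is the paper's proof: compare the bases $\{[\cE^{\varpi,\sph}_\mu]\}$ (and their Iwahori--Whittaker analogues) on the two sides, with indecomposability of $\rQ(\cE^{\varpi,\sph}_\mu)$ supplied by Corollary~\ref{cor:parity-full}, so citing that corollary, as you propose, settles step~(1). Cite it rather than use your own locality sketch, which never uses that $\varpi$ acts trivially and fails without that input: for example, $\bk[c,h]/(h^2-ch)$ with $c,h$ in degree~$2$ is the $\Gm$-equivariant cohomology of $\mathbb{P}^1$ for a weight-one action, has degree-zero part $\bk$, but becomes $\bk\times\bk$ at $c=1$; what actually works is that the $\Gm$-action factors through $t\mapsto t^p$ and $p=0$ in $\bk$, so the equivariant $\bHomev(\cE,\cE)$ is $\coh^\bullet_\Gm(\pt,\bk)\otimes_\bk B$ with $B$ the nonequivariant even endomorphism algebra, and $B$ is a finite-dimensional graded algebra with local degree-zero part, hence local.
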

\begin{proof}
The proof is similar to that of Lemma~\ref{lem:groth-psm} (but somewhat simpler, as $\rQ$ sends indecomposable parity sheaves to indecomposable Smith parity sheaves, by Corollary~\ref{cor:parity-full}).
\end{proof}

\begin{lem}\label{lem:groth-smith-v1}
Let $\cF \in \SmParity_{\Gm,\IW}(\Gr^\varpi,\bk)$.  We have $\cF = 0$ if and only if the class of $\cF$ in the specialized split Grothendieck group
\begin{multline*}
\Ko(\SmParity_{\Gm,\IW}(\Gr^\varpi,\bk))|_{v=1} ={} \\
 \Z[v]/(v-1) \otimes_{\Z[v]/(v^2-1)} \Ko(\SmParity_{\Gm,\IW}(\Gr^\varpi,\bk)).
\end{multline*}
vanishes.
\end{lem}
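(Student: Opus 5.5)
The plan is to use the fact that $\Ko$ of a Krull--Schmidt category is free on the indecomposables, with the shift acting by a permutation of order at most two. Since $[2]\cong\id$ in the Smith category, each indecomposable orbit under $[1]$ has size $1$ or $2$. The class $[\cF]$ is then a nonnegative combination of orbit classes, and we want to show that specialization at $v=1$ keeps it nonzero.

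\textbf{Step 1: the Krull--Schmidt basis.} By Corollary~\ref{cor:parity-full} and Lemma~\ref{lem:groth-q}, every object of $\SmParity_{\IW}(\Gr^\varpi,\bk)$ is a finite direct sum of shifts of the indecomposables $\rQ(\cE^{\varpi,\IW,\bo}_\mu)$ for $\mu\in\bX^{++}$. Moreover, $\Ko(\SmParity_{\IW}(\Gr^\varpi,\bk))$ is the free $\Z[v]/(v^2-1)$-module on their classes. The key point is that $\rQ(\cE^{\varpi,\IW,\bo}_\mu)$ and $\rQ(\cE^{\varpi,\IW,\bo}_\mu)[1]$ are non-isomorphic. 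Otherwise the rank of this module over $\Z[v]/(v^2-1)$ would drop, contradicting Lemma~\ref{lem:groth-q}, which identifies it with the base change of a free $\Z[v,v^{-1}]$-module with basis indexed by $\bX^{++}$.

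\textbf{Step 2: passing to $v=1$.} Write $\cF\cong\bigoplus_\mu \big(\rQ(\cE^{\varpi,\IW,\bo}_\mu)^{\oplus a_\mu}\oplus \rQ(\cE^{\varpi,\IW,\bo}_\mu)[1]^{\oplus b_\mu}\big)$ with $a_\mu,b_\mu\ge 0$. Then
\[
[\cF]=\sum_\mu (a_\mu+b_\mu v)[\rQ(\cE^{\varpi,\IW,\bo}_\mu)].
\]
Specializing at $v=1$ gives $\sum_\mu (a_\mu+b_\mu)[\rQ(\cE^{\varpi,\IW,\bo}_\mu)]|_{v=1}$. Because the base change of a free module is free on the image basis, this vanishes only if $a_\mu+b_\mu=0$ for all $\mu$. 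Since the coefficients are nonnegative, that forces $a_\mu=b_\mu=0$, hence $\cF=0$. The converse direction is trivial.

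\textbf{Main obstacle.} The only real issue is justifying the Krull--Schmidt decomposition in the Smith category: local finiteness of $\Hom$-spaces, idempotent completeness, and the classification of indecomposables up to shift. I would take these from Corollary~\ref{cor:parity-full} and the appendix on Smith parity sheaves. In particular, the $\Hom$ spaces there are finite-dimensional (the description via $\coh^\bullet_\Gm/(\can^2-1)$), so endomorphism rings of indecomposables are local.

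The positivity argument in Step 2 is what replaces any need for a finer invariant. The specialization loses only the distinction between even and odd shifts, and it cannot produce cancellation because all multiplicities are nonnegative.
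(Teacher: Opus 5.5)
Your proof is correct and follows essentially the same route as the paper: indecomposable Smith parity sheaves give a $\Z$-basis of the split Grothendieck group, specializing at $v=1$ gives a free abelian group on indecomposables up to shift, and nonnegativity of multiplicities rules out cancellation. Your extra check that $\rQ(\cE^{\varpi,\IW,\bo}_\mu)\not\cong\rQ(\cE^{\varpi,\IW,\bo}_\mu)[1]$ is correct but not needed: specializing at $v=1$ a free module whose basis is permuted by $v$ always yields a free module on the shift-orbits.
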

\begin{proof}
The isomorphism classes of indecomposable Smith parity sheaves form a $\Z$-basis for $\Ko(\SmParity_{\Gm,\IW}(\Gr^\varpi,\bk))$.  Specializing at $v= 1$, we obtain a free abelian group with a basis consisting of isomorphism classes of indecomposable Smith parity sheaves \emph{up to shift}.  It is clear that any nonzero Smith parity object is a nonnegative linear combination of basis elements in this basis.
\end{proof}

\section{Main results}
\label{sec:geom}

\subsection{Compatibility of convolution and Smith--Treumann localization}
\label{ss:compat}

In this subsection, we will study how Smith--Treumann localization interacts with convolution, and we will prove the main geometric theorem of the paper.  Up through Proposition~\ref{prop:main-summand}, we assume that $p$ is not a torsion prime for $\Gv$.  Later, we will impose the stronger assumption that $p$ is good for $G$.

Recall that $\Gr_p$ is identified with a union of connected components of $\Gr^\varpi$.  The restriction of $\bi$ to $\Gr_p$ is denoted by
\[
\bi_\thin: \Gr_p \hookrightarrow \Gr.
\]
There is a convolution operation for $\Gr_p$ and $\Gr^\varpi$ that is defined using the following counterpart of~\eqref{eqn:conv-diag}:
\[
\Gr_p \times \Gr^\varpi
\xleftarrow{p_p} \Loop_p\Gv \times \Gr^\varpi \xrightarrow{q_p} \Loop_p \times^{\Loop_p^+\Gv} \Gr^\varpi \xrightarrow{m_p} \Gr^\varpi.
\]
For $\cF \in \Dbc(\Gr_p)$ and $\cG \in \Db_{\Loop^+_p\Gv}(\Gr^\varpi)$, we can form the objects
\[
\cF \tboxtimes \cG
\qquad\text{and}\qquad
\cF \star \cG = m_{p*}(\cF \tboxtimes \cG).
\]

\begin{lem}\label{lem:twistprod-pull}
Let $\bit: \Loop_p\Gv \times^{\Loop^+_p\Gv} \Gr^\varpi \hookrightarrow \Loop\Gv \times^{\Loop^+\Gv} \Gr$ be the inclusion map.  For $\cF \in \Db_\Gm(\Gr,\bk)$ and $\cG \in \Db_{\Gm \ltimes \Loop^+\Gv}(\Gr,\bk)$, there are natural isomorphisms
\begin{align*}
\bit^!(\cF \tboxtimes \cG) \cong (\bi_\thin^!\cF) \tboxtimes (\bi^!\cG), \\
\bit^*(\cF \tboxtimes \cG) \cong (\bi_\thin^*\cF) \tboxtimes (\bi^*\cG).
\end{align*}
\end{lem}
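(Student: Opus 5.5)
We will reduce the statement to base change for the Cartesian squares relating the convolution diagram \eqref{eqn:conv-diag} for $\Gr$ to its counterpart for $\Gr^\varpi$. Write $\bi_\Loop: \Loop_p\Gv \hookrightarrow \Loop\Gv$ for the inclusion. We then have the inclusion $\bi_\Loop \times \bi: \Loop_p\Gv \times \Gr^\varpi \hookrightarrow \Loop\Gv \times \Gr$ and the inclusion $\bi_\thin \times \bi : \Gr_p \times \Gr^\varpi \hookrightarrow \Gr \times \Gr$. These fit into a commutative diagram with the maps $p, q$ and $p_p, q_p$. The proof then has three steps.

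\textbf{Step 1: the two squares are Cartesian.} The square formed by $p$, $p_p$ and the two inclusions is Cartesian. The reason is that $\Loop_p\Gv$ is the preimage of $\Gr_p$ in $\Loop\Gv$ under $\Loop\Gv \to \Gr$, up to the fact that the fiber $\Loop^+\Gv$ must be replaced by $\Loop^+_p\Gv$. More precisely, $\Loop_p\Gv \to \Gr_p$ is the $\Loop^+_p\Gv$-torsor obtained from the $\Loop^+\Gv$-torsor $\Loop\Gv|_{\Gr_p}$ by reduction of structure group. For this reason it is cleaner to argue with $\varpi$-fixed points. Let $\varpi$ act on everything by loop rotation. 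Then $(\Loop\Gv)^\varpi = \Loop_p\Gv$ and $(\Loop^+\Gv)^\varpi = \Loop^+_p\Gv$. Moreover, $(\Loop\Gv \times^{\Loop^+\Gv} \Gr)^\varpi$ contains $\Loop_p\Gv \times^{\Loop^+_p\Gv} \Gr^\varpi$ as a union of connected components. This follows from the analogue of \cite[Proposition~4.7]{rw:st}: the convolution variety is, via $(m,\mathrm{pr}_1)$, isomorphic to $\Gr \times \Gr$, and its $\varpi$-fixed points are $\Gr^\varpi \times \Gr^\varpi$, of which the $\bit$-image is the components lying over $\Gr_p$.

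\textbf{Step 2: transport along the twisted product and apply base change.} In the description
\[
\Loop\Gv \times^{\Loop^+\Gv} \Gr \cong \Gr \times \Gr, \qquad [g:x] \mapsto (g\Loop^+\Gv, gx),
\]
the map $\bit$ becomes $\bi_\thin \times \bi$ restricted to the appropriate components. Since $q$ is a $\Loop^+\Gv$-torsor (hence smooth), $q_p$ is a $\Loop^+_p\Gv$-torsor, and $p$ and $p_p$ are torsors as well, we can test the isomorphism after pullback along $q_p$. Shifted smooth pullback commutes with $!$-restriction, by smooth base change, so
\[
q_p^*\bit^!(\cF \tboxtimes \cG) \cong (\bi_\Loop\times\bi)^! q^*(\cF\tboxtimes\cG) \cong (\bi_\Loop\times\bi)^! p^*(\cF\boxtimes\cG).
\]
The first isomorphism uses the Cartesian-ness from Step 1 together with the fact that $\bi_\Loop \times \bi$ and $\bit$ have the same relative codimension behaviour. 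The second uses \eqref{eqn:tboxtimes-defn}. Applying base change once more to the $p$-square, this becomes $p_p^*(\bi_\thin\times\bi)^!(\cF\boxtimes\cG)$. We then use the Künneth isomorphism
\[
(\bi_\thin\times\bi)^!(\cF\boxtimes\cG)\cong \bi_\thin^!\cF\boxtimes\bi^!\cG,
\]
which is standard for closed embeddings; the $*$-version is trivial. The result is $p_p^*(\bi_\thin^!\cF \boxtimes \bi^!\cG) \cong q_p^*\bigl((\bi_\thin^!\cF) \tboxtimes (\bi^!\cG)\bigr)$.

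\textbf{Step 3: descend along $q_p$ and check equivariance.} To descend the isomorphism along $q_p$, we check that it respects the $\Gm \ltimes \Loop^+_p\Gv$-equivariant structures. Then full faithfulness of $q_p^*$ on equivariant categories gives the claim. $\Gm$-equivariance throughout is automatic, since every map involved is $\Gm$-equivariant.

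The main obstacle is Step 1 and its use in Step 2. The square for $q$ is not literally Cartesian: $q^{-1}(\operatorname{im}\bit)$ is $\Loop_p\Gv\cdot\Loop^+\Gv \times \Gr^\varpi$ rather than $\Loop_p\Gv\times\Gr^\varpi$. So $!$-pullback does not naively commute. We would therefore avoid the issue by working entirely with the isomorphism $\Loop\Gv \times^{\Loop^+\Gv}\Gr \cong \Gr\times\Gr$. In that model $\cF\tboxtimes\cG$ is locally, along a trivialization of $\Gr\times\Gr\to\Gr$ over the first factor, a product. We would choose $\varpi$-stable (indeed $\Loop_p$-compatible) local sections, for instance via big-cell charts $\bz^\lambda \Loop^{--}\Gv$, whose $\varpi$-fixed part is the $p$-scaled chart. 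On such charts both sides reduce to Künneth for $\bi_\thin\times\bi$. The identifications then glue because they are canonical.
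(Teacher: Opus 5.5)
\textbf{Gap in Steps 1--2.} Your argument rests on base change, but neither square is Cartesian. You notice this for the $q$-square, but it already fails for the $p$-square:
\[
p^{-1}(\Gr_p \times \Gr^\varpi) = (\Loop_p\Gv\cdot\Loop^+\Gv)\times\Gr^\varpi \supsetneq \Loop_p\Gv\times\Gr^\varpi .
\]
Your own remark about reduction of structure group says exactly this. So both ``base change'' isomorphisms in your displayed chain are unjustified as stated. Moreover, $p$ and $q$ are torsors for the infinite-dimensional group $\Loop^+\Gv$, so ``shifted smooth pullback'' has no meaning until you pass to finite-dimensional models.

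The fallback via big-cell trivializations is a different route, but as sketched it is incomplete. Isomorphisms built chart by chart do not glue in a derived category unless you first produce a global comparison morphism and only check invertibility locally. Also, the sections $\bz^{p\mu}g\rL_0 \mapsto \bz^{p\mu}g$ are $\varpi$-equivariant but not $\Gm$-equivariant. So the local K\"unneth identifications do not obviously take place in $\Db_\Gm$.

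\textbf{The missing idea.} Cartesian squares are not needed at all; this is how the paper argues.
\begin{itemize}
\item[(a)] Choose a closed finite union $Z$ of $\Loop^+\Gv$-orbits containing the support of $\cG$. Choose a normal subgroup $K \subset \Loop^+\Gv$ acting trivially on $Z$ with $J = \Loop^+\Gv/K$ finite-dimensional, and define $K_p$, $J_p$ by substituting $z^p$ for $z$.
\item[(b)] The maps $\bar p: (\Loop\Gv/K)\times Z \to \Gr\times Z$ and $\bar q: (\Loop\Gv/K)\times Z\to(\Loop\Gv/K)\times^J Z$ are both $J$-torsors. Hence they are smooth of the same relative dimension, so $\cF\tboxtimes\cG$ is equally characterized by $\bar q^!(\cF\tboxtimes\cG)\cong\bar p^!(\cF\boxtimes\cG)$. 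The same holds for $\bar p_p,\bar q_p$ with $J_p$.
\item[(c)] Let $h: (\Loop_p\Gv/K_p)\times Z^\varpi \to (\Loop\Gv/K)\times Z$ be the inclusion. Mere commutativity of the diagram gives $\bar q_p^!\bit^! \cong h^!\bar q^!$ and $h^!\bar p^! \cong \bar p_p^!(\bi_\thin\times\bi)^!$.
\item[(d)] K\"unneth for $(\bi_\thin\times\bi)^!$ finishes the $!$-statement. The $*$-statement is the same argument with $*$-pullbacks and the original characterization.
\end{itemize}

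Your Step~2 chain becomes correct essentially verbatim on these finite-dimensional models if you replace each appeal to base change by two facts: $f^*\cong f^![-2d](-d)$ for $f$ smooth of relative dimension $d$, and functoriality of $!$-pullback. This uses $\dim J = \dim J_p$, which is presumably what your remark about ``the same relative codimension behaviour'' was reaching for.
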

\begin{proof}
Let $Z \subset \Gr$ be a closed union of finitely many $\Loop^+\Gv$-orbits that contains the support of $\cG$.  Then the $\Loop^+\Gv$-action on $Z$ factors through some finite-dimensional quotient group $J$.  Let $K$ be the kernel of $\Loop^+\Gv \twoheadrightarrow J$.  (Thus, $K$ acts trivially on $Z$.). By substituting $z^p$ for $z$, we get an analogous sequence of subgroups $K_p \hookrightarrow \Loop^+_p\Gv \twoheadrightarrow J_p$.  

Consider the fixed-point set $Z^\varpi = Z \cap \Gr^\varpi$, which carries an action of $\Loop^+_p\Gv$.  The subgroup $K_p$ acts trivially on $Z^\varpi$, so the $\Loop^+_p\Gv$-action on $Z^\varpi$ factors through $\Loop^+\Gv \twoheadrightarrow J_p$.

We now consider the following diagram (whose top line comes from~\eqref{eqn:conv-diag}):
\begin{equation}\label{eqn:tppull1}
\begin{tikzcd}
\Gr \times Z \ar[d, equal] & \Loop\Gv \times Z \ar[l, "p"'] \ar[r, "q"] \ar[d] & \Loop\Gv \times^{\Loop^+\Gv} Z \ar[d, equal] \\
\Gr \times Z & (\Loop\Gv/K) \times \Gr \ar[l, "\bar p"'] \ar[r, "\bar q"] & (\Loop\Gv/K) \times^J Z
\end{tikzcd}
\end{equation}
For $\cF$ and $\cG$ as in the statement of the lemma, the object $\cF \tboxtimes \cG$ was characterized in~\eqref{eqn:tboxtimes-defn}, but the diagram above shows that it is also the unique object such that
\[
\bar q^*(\cF \tboxtimes \cG) \cong \bar p^*(\cF \boxtimes \cG).
\]
The maps $\bar p$ and $\bar q$ are both $J$-torsors, and hence smooth of the same relative dimension (namely, $\dim J$).  It follows that $\cF \tboxtimes \cG$ is also characterized as the unique object such that
\[
\bar q^!(\cF \tboxtimes \cG) \cong \bar p^!(\cF \boxtimes \cG).
\]

Next, consider the commutative diagram
\[
\begin{tikzcd}
\Gr_p \times Z^\varpi \ar[d, "\bi_\thin \times \bi"'] & (\Loop_p\Gv/K_p) \times Z^\varpi \ar[l, "\bar p_p"'] \ar[r, "\bar q_p"] \ar[d, "h"] & (\Loop_p\Gv/K_p) \times^{J_p} Z \ar[d, "\bit"] \\
\Gr \times Z & (\Loop\Gv/K) \times \Gr \ar[l, "\bar p"'] \ar[r, "\bar q"] & (\Loop\Gv/K) \times^J Z
\end{tikzcd}
\]
in which the bottom line is the same as that in~\eqref{eqn:tppull1}, and the top row is its $p$-scaled version.  We clearly have
\begin{gather*}
\bar q_p^!(\bit^!(\cF \tboxtimes \cG)) \cong h^! \bar p^!(\cF \boxtimes \cG)
\cong \bar p_p^!(\bi_\thin^!\cF \boxtimes \bi^!\cG), \\
\bar q_p^*(\bit^*(\cF \tboxtimes \cG)) \cong h^* \bar p^*(\cF \boxtimes \cG)
\cong \bar p_p^*(\bi_\thin^*\cF \boxtimes \bi^*\cG),
\end{gather*}
and the result follows.
\end{proof}

\begin{lem}\label{lem:grpi-square}
For $\cF \in \Db_\Gm(\Gr,\bk)$ and $\cG \in \Db_{\Gm \ltimes \Loop^+\Gv}(\Gr,\bk)$, there is a natural commutative diagram
\[
\begin{tikzcd}
(\bi_\thin^!\cF) \star (\bi^!\cG) \ar[r] \ar[d] & (\bi_\thin^*\cF) \star (\bi^*\cG)   \\
\bi^!(\cF \star \cG) \ar[r] & \bi^*(\cF \star \cG) \ar[u]
\end{tikzcd}
\]
\end{lem}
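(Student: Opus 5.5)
We construct the diagram by applying base change to the convolution diagram, using Lemma~\ref{lem:twistprod-pull} to describe the pullbacks of the twisted product. The geometric input is the commutative square
\[
\begin{tikzcd}
\Loop_p\Gv \times^{\Loop^+_p\Gv} \Gr^\varpi \ar[r, "m_p"] \ar[d, "\bit"'] & \Gr^\varpi \ar[d, "\bi"] \\
\Loop\Gv \times^{\Loop^+\Gv} \Gr \ar[r, "m"] & \Gr.
\end{tikzcd}
\]
After restricting to a closed finite-dimensional piece containing the support, $m$ and $m_p$ are proper (ind-)morphisms. So $m_* = m_!$ and $m_{p*} = m_{p!}$.

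\emph{Left vertical arrow.} The adjunction $\bit_!\bit^! \to \id$ composed with $m_*$ gives $m_*\bit_*\bit^! \to m_*$. Since $m\circ\bit = \bi\circ m_p$, this is a map $\bi_* m_{p*}\bit^! \to m_*$. By adjunction (with $\bi$ a closed embedding, so $\bi_*=\bi_!$) it corresponds to a map
\[
m_{p*}\bit^!(\cF\tboxtimes\cG) \to \bi^! m_*(\cF\tboxtimes\cG)=\bi^!(\cF\star\cG).
\]
Lemma~\ref{lem:twistprod-pull} identifies the source with $(\bi_\thin^!\cF)\star(\bi^!\cG)$. This is the left vertical arrow.

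\emph{Right vertical arrow.} Dually, we have the base change morphism $\bi^* m_* \to m_{p*}\bit^*$, obtained by adjunction from $m_* \to m_*\bit_*\bit^* = \bi_* m_{p*}\bit^*$. Again by Lemma~\ref{lem:twistprod-pull}, this is a map
\[
\bi^*(\cF\star\cG) \to (\bi_\thin^*\cF)\star(\bi^*\cG).
\]

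\emph{Horizontal arrows.} The bottom arrow is the canonical map $\bi^! \to \bi^*$ applied to $\cF\star\cG$; it is the composite $\bi^! \to \bi^*\bi_*\bi^! \to \bi^*$. The top arrow is $m_{p*}$ applied to the canonical map $\bit^!(\cF\tboxtimes\cG) \to \bit^*(\cF\tboxtimes\cG)$. This has to be matched, via Lemma~\ref{lem:twistprod-pull}, with the twisted product of the canonical maps $\bi_\thin^!\cF\to\bi_\thin^*\cF$ and $\bi^!\cG\to\bi^*\cG$. To do this, we check that the isomorphisms in the proof of Lemma~\ref{lem:twistprod-pull} are compatible with $(-)^!\to(-)^*$. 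Those isomorphisms arise from smooth pullback along the torsors $\bar p$, $\bar q$, $\bar p_p$, $\bar q_p$ (where $!$ and $*$ pullbacks agree up to shift) together with the Künneth-type compatibility of $(\bi_\thin\times\bi)^!\to(\bi_\thin\times\bi)^*$ with $\boxtimes$. Because the twisted product is uniquely characterized by its pullback, it suffices to check this compatibility after applying $\bar q_p^*$, where it reduces to the compatibility for external products and the smooth map $h$.

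\emph{Commutativity.} The square then reduces to the general identity, for a commutative square $m\bit=\bi m_p$ with $m,m_p$ proper and $\bi,\bit$ closed embeddings, that
\[
m_{p*}\bit^! \to \bi^! m_* \to \bi^* m_* \to m_{p*}\bit^*
\]
equals $m_{p*}$ applied to $\bit^!\to\bit^*$. We verify this by unwinding units and counits. Every map in the chain is built from the adjunction $(\bi_*,\bi^!)$, the adjunction $(\bi^*,\bi_*)$, the identification $m_*\bit_*\cong\bi_*m_{p*}$, and the corresponding structures for $\bit$. The map $\id\to\bit_*\bit^*$ and its $!$-counterpart are compatible with pushforward along $m$. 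Hence both composites equal $m_{p*}$ applied to $\bit^!\to\bit_*\bit^!\to\ldots\to\bit^*$.

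The main obstacle is this bookkeeping: verifying that the isomorphisms of Lemma~\ref{lem:twistprod-pull} intertwine the $!\to*$ maps, and the routine but fiddly unit/counit verification. The $\Gm$-equivariance is carried along formally throughout, since every map above is $\Gm$-equivariant.
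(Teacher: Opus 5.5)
Your proposal is correct and follows the paper's route. In both, the vertical arrows are the base-change morphisms for $m\circ\bit=\bi\circ m_p$, the composite around the square is $m_{p*}$ of $\bit^!\to\bit^*$, and Lemma~\ref{lem:twistprod-pull} identifies the corners. The paper obtains the vertical arrows by factoring $\bit$ through $Y=m^{-1}(\Gr^\varpi)$ and using base change for the cartesian square defining $Y$, whereas you build them directly from units and counits; these are the same maps.

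Your explicit unit/counit check of commutativity, and your attention to whether Lemma~\ref{lem:twistprod-pull} is compatible with the $!\to*$ maps, spell out what the paper leaves implicit. One small slip: the map $h$ in that lemma's proof is not smooth, since it involves the closed embedding $Z^\varpi\hookrightarrow Z$. That reduction should therefore be phrased via the smooth torsors $\bar p,\bar q,\bar p_p,\bar q_p$ together with the closed maps $\bit$ and $\bi_\thin\times\bi$.
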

\begin{proof}
Let $\bit: \Loop_p\Gv \times^{\Loop^+_p\Gv} \Gr^\varpi \hookrightarrow \Loop\Gv \times^{\Loop^+\Gv} \Gr$ be as in Lemma~\ref{lem:twistprod-pull}.  Let $m$ and $m_p$ be the multiplication maps involved in convolution (see~\eqref{eqn:conv-diag}), and let $Y = m^{-1}(\Gr^\varpi)$.  We have the following diagram, which the bottom square is cartesian, and all vertical maps are closed immersions:
\[
\begin{tikzcd}[row sep=small]
\Loop_p\Gv \times^{\Loop^+_p\Gv} \Gr^\varpi \ar[d, "h"] \ar[dd, bend right=30, "\bit"']  \ar[r, "m_p"] & \Gr^\varpi \ar[d, equal] \\
Y \ar[d, "k"] \ar[r, "m"] & \Gr^\varpi \ar[d, "\bi"] \\
\Loop\Gv \times^{\Loop^+\Gv} \Gr \ar[r, "m"] & \Gr
\end{tikzcd}
\]
By adjunction, we have natural maps
\[
h_!h^!k^!(\cF \tboxtimes \cG) \to k^!(\cF \tboxtimes \cG) \to k^*(\cF \tboxtimes \cG) \to h_*h^*k^*(\cF \tboxtimes \cG).
\]
Substitute $h^!k^! = \bit^!$ and $h^*k^* = \bit^*$, and also apply $m_*$ to obtain
\[
m_*h_!\bit^!(\cF \tboxtimes \cG) \to m_*k^!(\cF \tboxtimes \cG) \to m_*k^*(\cF \tboxtimes \cG) \to m_*h_*\bit^*(\cF \tboxtimes \cG).
\]
For the first and last terms, we have $m_*h_! = m_*h_* = m_{p*}$.  For the middle two terms, we have $m_*k^! \cong \bi^!m_*$ and $m_*k^* \cong \bi^* m_*$.  Combining these observations with Lemma~\ref{lem:twistprod-pull} yields the desired result.
\end{proof}

\begin{lem}\label{lem:conv-psm-compat}
Let $p\nu \in p\bX^{++}$, and suppose it is IW-clean. For any object $\cF \in  \Parity_{\Gm \ltimes \Loop^+\Gv}(\Gr,\bk)$, there is a natural isomorphism
\[
\rQ_\IW(j^{p+}_{p\nu!}\cL^p_\AS \star \bi^!\cF) \cong j^{p+}_{p\nu!}\cL^p_\AS \star \Psm_{p\nu}(\cF)
\]
in $\SmParity_\IW(\Gr^\varpi,\bk)$.
\end{lem}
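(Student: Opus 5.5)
The plan is to compare two functors out of $\Parity_{\Gm \ltimes \Loop^+\Gv}(\Gr,\bk)$ with values in $\SmParity_\IW(\Gr^\varpi,\bk)$:
\[
\Phi_1(\cF) = \rQ_\IW(j^{p+}_{p\nu!}\cL^p_\AS \star \bi^!\cF),
\qquad
\Phi_2(\cF) = j^{p+}_{p\nu!}\cL^p_\AS \star \Psm_{p\nu}(\cF).
\]
I would show that both factor through $\SmParity_{\sph|p\nu}(\Gr^\varpi,\bk)$ in the manner of Lemma~\ref{lem:nuconv-smith}, and that they agree on objects and on a generating set of morphisms. Here $\Phi_2$ is by definition the composite of $\Psm_{p\nu}$ (applied after forgetting to $\sph|p\nu$) with the dashed functor of Lemma~\ref{lem:nuconv-smith}. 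The convolution defining $\Phi_1$ is the honest convolution $\Gr_p \times \Gr^\varpi \to \Gr^\varpi$, and it makes sense because $\bi^!\cF$ is $\Gm \ltimes \Loop^+_p\Gv$-equivariant. Since $p\nu \in p\bX$, the group $\varpi$ acts on $\Gr^\varpi$ through untwisted loop rotation, so the twisting by $p\nu$ causes no trouble with $\varpi$-perfectness.

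\textbf{Step 1 (realize $\Psm_{p\nu}(\cF)$ equivariantly).} By Lemma~\ref{lem:conv-perfect}, convolution with $j^{p+}_{p\nu!}\cL^p_\AS$ sends $\varpi$-perfect $\Loop^+_p\Gv$-equivariant objects to $\varpi$-perfect objects. It therefore descends to a functor on the Verdier quotient of the $\Gm \ltimes \Loop^+_p\Gv$-equivariant category by its $\varpi$-perfect objects, landing in $\Sm_\IW(\Gr^\varpi,\bk)$. Consequently $\Phi_1(\cF)$ depends only on the image of $\bi^!\cF$ in this equivariant Smith quotient. Moreover, by the appendix results (Corollary~\ref{cor:parity-full}), that image is isomorphic to $\rQ(\cE)$ for an honest parity object $\cE$, namely a direct sum of shifts of the $\cE^{\varpi,\sph}_\mu$. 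This works because the stalks of $\bi^!\cF$ are computed orbitwise as in the proof of Lemma~\ref{lem:groth-psm}. Combining these, $\Phi_1(\cF) \cong \rQ_\IW(j^{p+}_{p\nu!}\cL^p_\AS \star \cE)$, and by the commutative square of Lemma~\ref{lem:nuconv-smith} the latter is $\Phi_2(\cF)$. This handles objects.

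\textbf{Step 2 (morphisms and naturality).} On morphisms, $\Phi_1$ is $\coh^\bullet_\Gm(\pt,\bk)$-linear by Lemma~\ref{lem:looprot-conv}. I would also show that it kills $J \otimes \bHom(\cF,\cG)$, where $J = \ker \rS(\widehat{p\nu})$, by repeating the proof of Lemma~\ref{lem:nuconv-bhom}. The point is that $J$ acts through $\bEnd(\delta)$, and by Lemma~\ref{lem:jnu-coh} (in its $p$-scaled form for $\rL_{p\nu}$) the composite $j^{p+}_{p\nu!}\cL^p_\AS \star j$ already vanishes on $j^{p+}_{p\nu!}\cL^p_\AS \star \delta_{\Gr^\varpi}$. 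Compatibility of $\bi^!$ with the $\coh^\bullet_{\Gm\times\Gv}$-action should be formal: base change along $\widetilde{\Gr}^\varpi \to \widetilde{\Gr}$, together with Lemma~\ref{lem:twistprod-pull} in the case $\cF = \delta$. It follows that $\Phi_1$ factors through $\Parity_{\Gm,\sph|p\nu}$ and then through $\SmParity_{\sph|p\nu}$, because the Hom description used in Lemma~\ref{lem:nuconv-smith} only involves the quotient by $(\can^2-1)$ of even Homs. The uniqueness statement of Lemma~\ref{lem:nuconv-smith} then identifies the two factorizations, giving a natural isomorphism $\Phi_1 \cong \Phi_2$.

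\textbf{Main obstacle.} I expect the hardest point to be making Step~1 natural rather than merely objectwise. The isomorphism $\rQ(\bi^!\cF) \cong \rQ(\cE)$ is a roof whose cones must be $\varpi$-perfect \emph{within the $\Loop^+_p\Gv$-equivariant category}, not just within the $\Iwul$-equivariant one, since the convolution requires the former. My first attempt would be to avoid choosing $\cE$ at all. Instead, I would work directly with the functor $\cF \mapsto \rQ(\bi^!\cF)$ into the equivariant Smith quotient, and check that its Hom-groups satisfy the same description as in Lemma~\ref{lem:sphom-gen}, so that the uniqueness argument of Step~2 applies verbatim. If this fails, the fallback is to use Lemma~\ref{lem:grpi-square} with $\cF$ replaced by $j^{+}_{p\nu!}\cL_\AS$ on the thin part, to compare $\bi^!$ and $\bi^*$ versions and transport the parity decomposition.
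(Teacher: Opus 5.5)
Your Step~1 has a genuine gap. It sits exactly where you place your main obstacle, and neither proposed remedy closes it.

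Corollary~\ref{cor:parity-full}, together with the orbitwise argument of Lemma~\ref{lem:groth-psm}, only gives $\rQ(\bi^!\cF)\cong\rQ(\cE)$ in $\Sm_{\Iwul|p\nu}(\Gr^\varpi,\bk)$. That is, the isomorphism is realized by a roof of morphisms that are merely $\Gm\ltimes_{\widehat{p\nu}}\Iwul$-equivariant, and you cannot convolve such morphisms with $j^{p+}_{p\nu!}\cL^p_\AS$. Perfectness of cones is tested after forgetting equivariance, so the real issue is the equivariance of the morphisms, not of the cones.

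Your first remedy fails. The $\Loop^+_p\Gv$-equivariant Smith quotient keeps the whole $\coh^\bullet_{\Gm\times\Gv}(\pt,\bk)$-action on its $\Hom$-spaces, so they do not have the description of Lemma~\ref{lem:sphom-gen}. The paper notes that this category can fail to be Krull--Schmidt. Your fallback, Lemma~\ref{lem:grpi-square}, addresses a different comparison: $\bi^!$ versus $\bi^*$ of a convolution, which the paper uses later in Proposition~\ref{prop:main-summand}.

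Step~2 cannot substitute for Step~1 either. The uniqueness in Lemma~\ref{lem:nuconv-smith} characterizes a functor on $\SmParity_{\sph|p\nu}(\Gr^\varpi,\bk)$ through its composite with $\rQ$ on parity complexes \emph{on $\Gr^\varpi$}. Your $\Phi_1$ is built from $\bi^!\cF$, which is $!$-even but not parity. There are further problems:
\begin{itemize}
\item $\Psm_{p\nu}$ is not essentially surjective onto $\SmParity_{\sph|p\nu}(\Gr^\varpi,\bk)$; its image is not closed under summands (cf.\ Theorem~\ref{thm:dmu}).
\item The effect of $\Psm_{p\nu}$ on $\Hom$-spaces is not given by the $(\can^2-1)$-formula, which describes $\rQ$ on a single space.
\end{itemize}
So ``factoring $\Phi_1$ through $\SmParity_{\sph|p\nu}$'' is not available. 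Even if it were, relating the result to convolution of parity complexes on $\Gr^\varpi$ is exactly the comparison that is missing.

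The idea you need is to replace the roof by a \emph{single} morphism and then lift that morphism equivariantly. This is the paper's argument:
\begin{enumerate}
\item Reduce to $\cF$ even. Then $\bi^!\cF$ is $!$-even and $\rQ(\bi^!\cF)$ is Smith-even.
\item Proposition~\ref{prop:smeven-lift} gives an even parity complex $\cE$ and a morphism $\phi\colon\cE\to\bi^!\cF$ in the $\Gm\ltimes_{\widehat{p\nu}}\Iwul$-equivariant category with $\varpi$-perfect cone.
\item Any nonspherical indecomposable summand of $\cE$ would survive under $\rQ$ (Corollary~\ref{cor:parity-full}). Hence $\cE$ is spherical and upgrades to $\Parity_{\Gm\ltimes\Loop^+_p\Gv}(\Gr^\varpi,\bk)$.
\item $\bHom_{\Gm\ltimes\Loop^+_p\Gv}(\cE,\bi^!\cF)$ is free over $\coh^\bullet_{\Gm\times\Gv}(\pt,\bk)$, and the twisted-$\Iwul$ $\bHom$ is its base change along the surjection $\rS(\widehat{p\nu})$. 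So $\phi$ lifts to a $\Gm\ltimes\Loop^+_p\Gv$-equivariant morphism $\tilde\phi$ with the same underlying cone.
\item Lemma~\ref{lem:conv-perfect} makes $\rQ(j^{p+}_{p\nu!}\cL^p_\AS\star\tilde\phi)$ an isomorphism. Lemma~\ref{lem:nuconv-smith} identifies its source with $j^{p+}_{p\nu!}\cL^p_\AS\star\rQ(\cE)\cong j^{p+}_{p\nu!}\cL^p_\AS\star\Psm_{p\nu}(\cF)$.
\item Independence of the choices and naturality in $\cF$ come from Proposition~\ref{prop:smeven-nat}, not from a uniqueness argument.
\end{enumerate}
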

\begin{proof}
It is enough to prove this lemma when $\cF$ is even (or odd): for a general parity sheaf, the desired isomorphism is then the sum of those for its even and odd components.

Assume for the rest of the proof that $\cF$ is even.  Then $\bi^!\cF$ is $!$-even, and $\bi^*\cF$ is $*$-even, so $\Psm_{p\nu}(\cF) \cong \rQ_{p\nu}(\bi^!\cF) \cong \rQ_{p\nu}(\bi^*\cF)$ is Smith-even. By Proposition~\ref{prop:smeven-lift}, there exists an even complex
\[
\cE \in \Parity_{\Gm \ltimes_{\widehat{p\nu}} \Iwul}(\Gr^\varpi,\bk)
\]
and a map
\[
\phi: \cE \to \bi^!\cF
\qquad\text{in}\qquad
\Db_{\Gm \ltimes_{\widehat{p\nu}} \Iwul}(\Gr^\varpi,\bk)
\]
such that $\rQ_{p\nu}(\cE) \to \rQ_{p\nu}(\bi^!\cF)$ is an isomorphism.

We claim that $\cE$ actually lies in $\Parity_{\Gm,\sph}(\Gr^\varpi,\bk)$.  If not, then $\cE$ would have an indecomposable summand $\cE'$ that is not spherical, i.e., whose support is not the closure of a $\Loop^+_p\Gv$-orbit.  But then $\rQ_{p\nu}(\cE)$ would also not be spherical, contradicting the fact that $\rQ_{p\nu}(\bi^!\cF)$ is spherical.

Since $\cE$ is spherical, we can upgrade it to an object
\[
\cE \in \Parity_{\Gm \ltimes \Loop^+_p\Gv}(\Gr^\varpi,\bk).
\]
Moreover, by~\cite[Lemma~A.11]{ar:agsr}, $\bHom_{\Db_{\Gm \ltimes \Loop^+_p\Gv}}(\cE,\bi^!\cF)$ is free over $\coh^\bullet_{\Gm \times \Gv}(\pt,\bk)$, and there is a natural isomorphism
\begin{multline*}
\bHom_{\Db_{\Gm \times_{\widehat{p\nu}} \Iwul}(\Gr^\varpi,\bk) }(\cE,\bi^!\cF)
\cong \\
\coh^\bullet_\Gm(\pt,\bk)_{\rS(p\nu)} \otimes_{\coh^\bullet_{\Gm \times \Gv}(\pt,\bk)} \bHom_{\Db_{\Gm \ltimes \Loop^+_p\Gv}(\Gr^\varpi,\bk)}(\cE,\bi^!\cF).
\end{multline*}
The map $\phi$ belongs to the left-hand side of this isomorphism.  Since the map $\rS(\widehat{p\nu})$ from~\eqref{eqn:s-hatnu-defn} is surjective, the natural map
\begin{equation}\label{eqn:cpc-bhom}
\bHom_{\Db_{\Gm \ltimes \Loop^+_p\Gv}(\Gr^\varpi,\bk)}(\cE,\bi^!\cF) \to
\bHom_{\Db_{\Gm \times_{\widehat{p\nu}} \Iwul}(\Gr^\varpi,\bk) }(\cE,\bi^!\cF)
\end{equation}
is surjective.  In particular, there exists a map
\[
\tilde\phi: \cE \to \bi^!\cF
\qquad\text{in}\qquad
\Db_{\Gm \ltimes \Loop^+_p\Gv}(\Gr^\varpi,\bk)
\]
whose image under~\eqref{eqn:cpc-bhom} is $\phi$.  Since the cone of $\phi$ is $\varpi$-perfect, so is the cone of $\tilde\phi$.

Since $\tilde\phi$ is a $\Gm \ltimes \Loop^+_p\Gv$-equivariant map, it makes sense to apply $j^{p+}_{p\nu!}\cL^p_\AS \star ({-})$ to it: we get a map
\[
j^{p+}_{p\nu!}\cL^p_\AS \star \tilde\phi: j^{p+}_{p\nu!}\cL^p_\AS \star \cE \to j^{p+}_{p\nu!}\cL^p_\AS \star \bi^!\cF.
\]
The cone of this map is $\varpi$-perfect (by Lemma~\ref{lem:conv-perfect}), so the map
\[
\rQ_\IW(j^{p+}_{p\nu!}\cL^p_\AS \star \tilde\phi): \rQ(j^{p+}_{p\nu!}\cL^p_\AS \star \cE) \simto \rQ(j^{p+}_{p\nu!}\cL^p_\AS \star \bi^!\cF)
\]
is an isomorphism.  By Lemma~\ref{lem:nuconv-smith}, the first term is naturally identified with $j^{p+}_{p\nu!}\cL^p_\AS \star \rQ(\cE)$.  To summarize, we have a diagram
\[
\begin{tikzcd}[column sep=huge]
\rQ(j^{p+}_{p\nu!}\cL^p_\AS \star \bi^!\cF)
 &
j^{p+}_{p\nu!}\cL^p_\AS \star \rQ(\cE) \ar[r, "j^{p+}_{p\nu!}\cL^p_\AS \star \rQ(\tilde\phi)", "\sim"'] \ar[l, "\rQ(j^{p+}_{p\nu!}\cL^p_\AS \star \tilde\phi)"', "\sim"] &
j^{p+}_{p\nu!}\cL^p_\AS \star \rQ(\bi^!\cF)
\end{tikzcd}
\]
The composition of these maps is the desired isomorphism in the statement of the lemma.  The claim that this isomorphism is well defined (i.e., independent of the choices of $\cE$, $\phi$, and $\tilde\phi$) and natural in $\cF$ follows by a routine argument using Proposition~\ref{prop:smeven-nat}.
\end{proof}

\begin{prop}\label{prop:main-summand}
Let $p\nu \in p\bX^{++}$, and suppose it is IW-clean.  For any object $\cF \in \Parity_{\Gm \ltimes \Loop^+\Gv}(\Gr,\bk)$, there is a natural commutative triangle
\[
\begin{tikzcd}[column sep=0pt,row sep=small]
j^{p+}_{p\nu!}\cL^p_\AS \star \Psm_{p\nu}(\cF) \ar[dr] \ar[rr, "\id"] && j^{p+}_{p\nu!}\cL^p_\AS \star \Psm_{p\nu}(\cF) \\
& \Psm(j^+_{p\nu!}\cL_\AS \star \cF) \ar[ur]
\end{tikzcd}
\]
in $\SmParity_\IW(\Gr^\varpi,\bk)$.
\end{prop}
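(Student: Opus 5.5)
The triangle will be built from the square of Lemma~\ref{lem:grpi-square}, applied with $j^+_{p\nu!}\cL_\AS$ in the role of the left factor and $\cF$ in the role of $\cG$. Then $\rQ_\IW$ is applied to it and the outer terms are identified via Lemma~\ref{lem:conv-psm-compat}.

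First, identify $\bi_\thin^!(j^+_{p\nu!}\cL_\AS)$ and $\bi_\thin^*(j^+_{p\nu!}\cL_\AS)$ with $j^{p+}_{p\nu!}\cL^p_\AS$ (up to the same shift). The orbit $\Gr^+_{p\nu}$ meets $\Gr_p$ exactly in the $\Iwupl$-orbit $\Gr^{\varpi+}_{p\nu}$, by \cite[Lemma~4.4]{rw:st}. IW-cleanness gives $j^+_{p\nu!}\cL_\AS \cong j^+_{p\nu*}\cL_\AS$, so base change for $!$-pullback of $*$-extensions and $*$-pullback of $!$-extensions identifies both restrictions with the extension of $\cL_\AS|_{\Gr^{\varpi+}_{p\nu}}$ from that orbit. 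The $!$-version carries a Tate twist and shift by the codimension, which is even, and this is harmless in the Smith category. The restricted local system is the Artin--Schreier local system for $\chi_p$, since $\chi_p$ is the restriction of $\chi$.

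Substituting these into Lemma~\ref{lem:grpi-square} gives a commutative diagram
\[
j^{p+}_{p\nu!}\cL^p_\AS \star \bi^!\cF \to \bi^!(j^+_{p\nu!}\cL_\AS\star\cF) \to \bi^*(j^+_{p\nu!}\cL_\AS\star\cF) \to j^{p+}_{p\nu!}\cL^p_\AS \star \bi^*\cF,
\]
whose composite is $j^{p+}_{p\nu!}\cL^p_\AS \star ({-})$ applied to the canonical map $\bi^!\cF\to\bi^*\cF$. Now apply $\rQ_\IW$. The middle arrow becomes an isomorphism, and either middle term is $\Psm(j^+_{p\nu!}\cL_\AS\star\cF)$. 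The cone of $\bi^!\cF\to\bi^*\cF$ is $\varpi$-perfect by \cite[Lemma~3.5]{rw:st}, hence so is its convolution with $j^{p+}_{p\nu!}\cL^p_\AS$ by Lemma~\ref{lem:conv-perfect}. Therefore the composite also becomes an isomorphism.

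Finally, identify the two outer terms with $j^{p+}_{p\nu!}\cL^p_\AS \star \Psm_{p\nu}(\cF)$ using Lemma~\ref{lem:conv-psm-compat}, once for $\bi^!$ and once for the analogous $\bi^*$ version, whose proof is the same. The resulting triangle has long edge equal to the image of $\rQ(\bi^!\cF)\simto\rQ(\bi^*\cF)$ under convolution. This is the identity once $\Psm_{p\nu}(\cF)$ is defined via that canonical identification.

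\textbf{Main obstacle.} I expect the difficulty to be checking that the isomorphisms of Lemma~\ref{lem:conv-psm-compat} are compatible with this canonical map, that is, that the long edge is really $\id$. For this I would reuse the same lift $\tilde\phi\colon\cE\to\bi^!\cF$ composed with $\bi^!\cF\to\bi^*\cF$ as the lift for the $\bi^*$ version, and then invoke naturality (Proposition~\ref{prop:smeven-nat}). Reducing to $\cF$ even or odd handles the general case, and naturality in $\cF$ follows from the naturality of Lemma~\ref{lem:grpi-square}.
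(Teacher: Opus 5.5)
Your proposal is correct and follows the paper's proof. Both arguments apply Lemma~\ref{lem:grpi-square} to $j^+_{p\nu!}\cL_\AS$ and $\cF$, pass to $\rQ$, identify $\bi_\thin^* j^+_{p\nu!}\cL_\AS \cong j^{p+}_{p\nu!}\cL^p_\AS$ by base change, and invoke Lemma~\ref{lem:conv-psm-compat}.

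The paper is more economical, and this removes your ``main obstacle'' entirely. It never computes $\bi_\thin^! j^+_{p\nu!}\cL_\AS$, and it never proves a $\bi^*$-version of Lemma~\ref{lem:conv-psm-compat}. That proof would not carry over verbatim anyway, since Proposition~\ref{prop:smeven-lift} needs a cohomologically $!$-even input and $\bi^*\cF$ need not be one; your composite lift $\cE \to \bi^!\cF \to \bi^*\cF$ does repair this. Instead, the paper applies Lemma~\ref{lem:conv-psm-compat} once, to $(\bi_\thin^* j^+_{p\nu!}\cL_\AS)\star\bi^!\cF$. It then transports that identification along the natural top-row maps of the square, all of which become isomorphisms after $\rQ$, so the long edge is $\id$ by construction.
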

In other words, this proposition says that $j^{p+}_{p\nu!}\cL^p_\AS \star \Psm_{p\nu}(\cF)$ can naturally be identified with a direct summand of $\Psm(j^+_{p\nu!}\cL_\AS \star \cF)$.
\begin{proof}
By Lemma~\ref{lem:grpi-square}, we have a commutative square
\[
\begin{tikzcd}
(\bi_\thin^! j^+_{p\nu!}\cL_\AS) \star (\bi^!\cF) \ar[r] \ar[d] &
  (\bi_\thin^* j^+_{p\nu!}\cL_\AS) \star (\bi^!\cF) \ar[r] &  
  (\bi_\thin^* j^+_{p\nu!}\cL_\AS) \star (\bi^*\cF) \\
\bi^!(j^+_{p\nu!}\cL_\AS \star \cF) \ar[rr] &&
  \bi^!(j^+_{p\nu!}\cL_\AS \star \cF) \ar[u]
\end{tikzcd}
\]
After applying $\rQ$, all horizontal maps become isomorphisms.  The objects in the bottom row are identified with $\Psm(j^+_{p\nu!}\cL_\AS \star \cF)$.  For the top row, since $(\Iwup \cdot \rL_{p\nu}) \cap \Gr^\varpi = \Iwupl \cdot \rL_{p\nu}$, we see by proper base change that
\[
\bi_\thin^* j^+_{p\nu!}\cL_\AS \cong j^{p+}_{p\nu!}\cL^p_\AS.
\]
Then Lemma~\ref{lem:conv-psm-compat} lets us identify the objects in the top row with $j^{p+}_{p\nu!}\cL^p_\AS \star \Psm_{p\nu}(\cF)$. 
\end{proof}

In the following statement, which makes the ``approximate'' statement Theorem~\ref{thm:main-intro} precise, we assume that $p$ is good for $G$.  By~\cite[\S4.1]{ss:cc}, this implies that $p$ is not a torsion prime for $\Gv$, so we may freely apply results from earlier in the paper.

\begin{thm}\label{thm:main}
Assume that $p$ is good for $G$.  The following diagram commutes up to natural isomorphism:
\[
\begin{tikzcd}[column sep=small]
\Parity_{\Gm \ltimes \Loop^+\Gv}(\Gr,\bk) \ar[r] \ar[d, "\cE^\sph_{(p-1)\varsigma} \star ({-})"'] &
  \Parity_{\Gm,\sph|p\varsigma}(\Gr,\bk) \ar[rr, "\Psm_{p\varsigma}"]
  &&
  \SmParity_{\sph|p\varsigma}(\Gr^\varpi,\bk) \ar[dd, "j^{p+}_{p\varsigma!}\cL^p_\AS \star ({-})"] \\
\Parity_{\Gm \ltimes \Loop^+\Gv}(\Gr,\bk) \ar[d, "j^+_{\varsigma!}\cL_\AS \star ({-})"'] \\
 \Parity_{\Gm,\IW}(\Gr,\bk) \ar[rrr, "\Psm_\IW"] &&&
  \SmParity_{\IW}(\Gr^\varpi,\bk)
\end{tikzcd}
\]
\end{thm}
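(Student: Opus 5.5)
First identify the left column. By Lemma~\ref{lem:steinberg-clean}(1), up to the shift $[\langle\varsigma,2\check\rho\rangle]$ (which is harmless since all functors commute with shifts), $j^+_{\varsigma!}\cL_\AS \star \cE^\sph_{(p-1)\varsigma} \cong j^+_{p\varsigma!}\cL_\AS$. Using associativity of convolution, the composite down the left column is therefore naturally isomorphic to $\cF \mapsto j^+_{p\varsigma!}\cL_\AS \star \cF$. The theorem then reduces to exhibiting a natural isomorphism
\[
\Psm_\IW(j^+_{p\varsigma!}\cL_\AS \star \cF) \cong j^{p+}_{p\varsigma!}\cL^p_\AS \star \Psm_{p\varsigma}(\cF)
\]
for $\cF \in \Parity_{\Gm \ltimes \Loop^+\Gv}(\Gr,\bk)$. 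By Lemma~\ref{lem:steinberg-clean}(2), $p\varsigma$ is IW-clean, so Proposition~\ref{prop:main-summand} applies with $\nu = \varsigma$. It shows that the right-hand side is naturally a direct summand of the left-hand side. Write $\mathcal{C}(\cF)$ for the complementary summand. It remains to prove that $\mathcal{C}(\cF)=0$. Once this holds, the split injection is an isomorphism, and naturality is inherited from Proposition~\ref{prop:main-summand}.

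To show $\mathcal{C}(\cF)=0$, use Lemma~\ref{lem:groth-smith-v1}: it suffices to check that the classes of the two objects agree in $\Ko(\SmParity_{\IW}(\Gr^\varpi,\bk))|_{v=1}$. Through Lemma~\ref{lem:groth-q}, identify this group with $\Ko(\Parity_{\Gm,\IW}(\Gr^\varpi,\bk))|_{v=1}$, and then with $\Z[\bX]^W$ via the map $\ch$ of Lemma~\ref{lem:groth-varpi}(2). Make the parallel identifications on the spherical side. By Lemmas~\ref{lem:groth-psm}, \ref{lem:groth-gr} and~\ref{lem:groth-varpi}, both Smith categories have specialized Grothendieck groups isomorphic to $\Z[\bX]^W$. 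The key character claim is that under these identifications, $\Psm$ (both spherical and IW versions) induces the identity on $\Z[\bX]^W$. This should follow from the stalk relation implicit in Lemma~\ref{lem:groth-psm}: parity sheaves on $\Gr$ and their Smith localizations have matching stalk ranks on $\Gr_\mu$ versus $\Gr^\varpi_\mu$ (respectively $\Gr^+_\mu$ versus $\Gr^{\varpi+}_\mu$), modulo the $v^2=1$ collapse and perfect objects. This uses~\cite[Lemma~4.4]{rw:st}, which says that orbits meet $\Gr^\varpi$ in single scaled orbits.

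With this in hand, compute characters at $v=1$. Let $\chi_\cF \in \Z[\bX]^W$ be the character of $\cF$. The left column multiplies by $\ch \St = \chi((p-1)\varsigma)$: this holds by Lemma~\ref{lem:groth-gr} together with the good-$p$ remark (all $\cE^\sph_\lambda$ are perverse, so convolution is tensoring of representations), and because the Iwahori--Whittaker averaging $j^+_{\varsigma!}\cL_\AS\star$ preserves characters (part~\eqref{it:ggr-iw}). So $\ch\Psm_\IW(j^+_{p\varsigma!}\cL_\AS\star\cF) = \chi((p-1)\varsigma)\chi_\cF$. On the other side, $\Psm_{p\varsigma}(\cF)$ has character $\chi_\cF$. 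By Proposition~\ref{prop:groth-varpi-total}, lifted to Smith categories via Lemma~\ref{lem:groth-q} and the compatibility of Lemma~\ref{lem:nuconv-smith}, convolving with $j^{p+}_{p\varsigma!}\cL^p_\AS$ also multiplies by $\chi((p-1)\varsigma)$. Hence the classes agree, $[\mathcal{C}(\cF)]=0$, and therefore $\mathcal{C}(\cF)=0$. The twisted-$\Gm$ category is combinatorially the same as the untwisted one (Grothendieck groups ignore the $\Gm$-action, cf.~\eqref{eqn:groth-isom}).

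The main obstacle is the character compatibility of $\Psm$, that is, checking that its effect on specialized Grothendieck groups is literally the identity on $\Z[\bX]^W$. This needs care about Smith parity sheaves being counted up to shift, and about stalks of $\rQ(\bi^!\cF)$ versus $\bi^*\cF$. The approach I would try first is to compute via $\bi^*$ on each orbit $\Gr_\mu$, where $\cF|_{\Gr_\mu}$ is constant with $\varpi$ acting trivially. Then $\bi^*\cF|_{\Gr^\varpi_\mu}$ has the same stalk ranks, and $\rQ$ of a constant sheaf with trivial $\varpi$-action is nonzero with rank read off mod $v^2-1$. This matches $\ch$ on both sides.
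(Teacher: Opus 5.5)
Your proposal is correct and follows essentially the same route as the paper. You reduce to the square with $j^+_{p\varsigma!}\cL_\AS\star({-})$ via Lemma~\ref{lem:steinberg-clean}, realize $j^{p+}_{p\varsigma!}\cL^p_\AS\star\Psm_{p\varsigma}(\cF)$ as a natural summand via Proposition~\ref{prop:main-summand}, and kill the complement with Lemma~\ref{lem:groth-smith-v1} by checking the specialized Grothendieck-group cube of Figure~\ref{fig:main-groth}; your stalk-level argument that $\Psm$ acts as the identity on $\Z[\bX]^W$ spells out what the paper handles for the top and bottom faces by simply comparing the definitions of $\ch$.
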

\begin{proof}
By Lemma~\ref{lem:steinberg-clean}, for any $\cF \in \Parity_{\Gm \ltimes \Loop^+\Gv}(\Gr,\bk)$, we have
\[
j^+_{\varsigma!}\cL_\AS \star \cE^\sph_{(p-1)\varsigma} \star \cF \cong j^+_{p\varsigma!}\cL_\AS \star \cF,
\]
so to prove the theorem above, it is enough to show that the diagram 
\[
\begin{tikzcd}
\Parity_{\Gm,\sph|p\varsigma}(\Gr,\bk) \ar[r, "\Psm_{p\varsigma}"]
  \ar[d, "j^+_{p\varsigma!}\cL_\AS \star ({-})"']
  &
  \SmParity_{\sph|p\varsigma}(\Gr^\varpi,\bk) \ar[d, "j^{p+}_{p\varsigma!}\cL^p_\AS \star ({-})"] \\
 \Parity_{\Gm,\IW}(\Gr,\bk) \ar[r, "\Psm_\IW"] &
  \SmParity_{\IW}(\Gr^\varpi,\bk)
\end{tikzcd}
\]
commutes up to natural isomorphism.

By Proposition~\ref{prop:main-summand} (combined with Lemma~\ref{lem:steinberg-clean}), $j^{p+}_{p\varsigma!}\cL_\AS \star \Psm_{p\varsigma}(\cF)$ is naturally a direct summand of $\Psm(j^+_{p\varsigma!}\cL_\AS \star \cF)$, so there is some isomorphism
\[
\Psm(j^+_{p\varsigma!}\cL_\AS \star \cF)
\cong j^{p+}_{p\varsigma!}\cL_\AS \star \Psm_{p\varsigma}(\cF) \oplus \cG,
\]
and in the split Grothendieck group $\Ko(\SmParity_\IW(\Gr^\varpi,\bk))$, we have
\[
[\Psm(j^+_{p\varsigma!}\cL_\AS \star \cF)]
= [j^{p+}_{p\varsigma!}\cL_\AS \star \Psm_{p\varsigma}(\cF)] + [\cG].
\]
We wish to prove that $\cG = 0$.  By Lemma~\ref{lem:groth-smith-v1}, it is enough to show that the class $[\cG]$ vanishes after specializing the Grothendieck group at $v = 1$.  That is, we have to show that
\[
[\Psm(j^+_{p\varsigma!}\cL_\AS \star \cF)]|_{v=1}
= [j^{p+}_{p\varsigma!}\cL_\AS \star \Psm_{p\varsigma}(\cF)]|_{v=1},
\]
or in other words, that the diagram of (split, specialized) Grothendieck groups shown in Figure~\ref{fig:main-groth} commutes.  The rest of the proof is devoted to this.
Note that in the top back row of Figure~\ref{fig:main-groth}, we have implicitly identified
\[
\Ko(\Parity_{\Gm \ltimes \Loop^+\Gv}(\Gr,\bk))|_{v=1} \cong \Ko(\Parity_{\Gm, \sph|p\varsigma}(\Gr,\bk))|_{v=1},
\]
using an analogue of~\eqref{eqn:groth-isom}.

The ``front'' face of Figure~\eqref{fig:main-groth}, involving only $\Z[\bX]^W$ and multiplication by $\chi((p-1)\varsigma)$, obviously commutes.

The ``top'' face, involving $\Psm_{p\varsigma}$, can be seen to commute by comparing the definitions of the ``$\ch$'' map from Lemma~\ref{lem:groth-gr}\eqref{it:ggr-sph} with that in Lemma~\ref{lem:groth-varpi}\eqref{it:gv-sph}.  Similarly, ``bottom'' face commutes by comparing the ``$\ch$'' map from Lemma~\ref{lem:groth-gr}\eqref{it:ggr-iw} with that in Lemma~\ref{lem:groth-varpi}\eqref{it:gv-iw}.

The ``right-hand'' face commutes by Proposition~\ref{prop:groth-varpi-total} (together with Lemma~\ref{lem:groth-q}).

We now turn to the ``left-hand'' face.  Because we have assumed that $p$ is good for $G$, a result of Mautner--Riche~\cite[Corollary~1.6]{mr:etsps} says that every $\cE^\sph_\lambda$ is perverse, and hence that $\Ko(\Parity_{\Gm \ltimes \Loop^+\Gv}(\Gr,\bk))|_{v=1}$ admits a basis consisting of the classes of perverse parity sheaves.  For perverse parity sheaves, Lemma~\ref{lem:groth-gr}\eqref{it:ggr-sph} includes a compatibility with the geometric Satake equivalence, and this compatibility implies that the upper-left square in Figure~\ref{fig:main-groth} commutes.  Finally, the main results of~\cite{bgmrr} imply that the lower left square, involving $j^+_{\varsigma!}\cL_\AS \star ({-})$, commutes.

Since every arrow joining the back face of Figure~\ref{fig:main-groth} to the front face is an isomorphism, the preceding paragraphs imply that the back face also commutes, as desired.
\end{proof}

\begin{figure}
\[
\hbox{\small$
\begin{tikzcd}[column sep=tiny]
\Ko(\Parity_{\Gm \ltimes \Loop^+\Gv}(\Gr,\bk))|_{v=1} \ar[rr, "\Psm_{p\varsigma}", "\sim"'] \ar[dd, "\cE^\sph_{(p-1)\varsigma} \star ({-})"'] \ar[dr, "\text{Lem.~\ref{lem:groth-gr}\eqref{it:ggr-sph}}" {description,pos=0.4}, "\sim"' near end] && \Ko(\SmParity_{\sph|p\varsigma}(\Gr^\varpi,\bk))|_{v=1} \ar[dr, "\text{Lem.~\ref{lem:groth-varpi}\eqref{it:gv-sph}}" {description,pos=0.4}, "\sim"' near end]\ar[dddd, "j^{p+}_{p\varsigma!}\cL^p_\AS \star ({-})"] \\ 
& \Z[\bX]^W \ar[rr, equal, crossing over] \ar[dd, "\chi((p-1)\varsigma)"] && \Z[\bX]^W \ar[dddd, "\chi((p-1)\varsigma)"'] \\
\Ko(\Parity_{\Gm \ltimes \Loop^+\Gv}(\Gr,\bk))|_{v=1} \ar[dd, "j^+_{\varsigma!}\cL_\AS \star ({-})"'] \ar[dr, "\text{Lem.~\ref{lem:groth-gr}\eqref{it:gv-sph}}" {description,pos=0.4}, "\sim"' near end]\\ 
& \Z[\bX]^W \\
\Ko(\Parity_{\Gm,\IW}(\Gr,\bk))|_{v=1} \ar[rr, "\Psm_\IW" near start] \ar[dr, "\text{Lem.~\ref{lem:groth-gr}\eqref{it:ggr-iw}}" {description,pos=0.4}, "\sim"' near end] && \Ko(\SmParity_\IW(\Gr^\varpi,\bk))|_{v=1} \ar[dr, "\text{Lem.~\ref{lem:groth-varpi}\eqref{it:gv-iw}}" {description,pos=0.4}, "\sim"' near end] \\
& \Z[\bX]^W \ar[uu, equal, crossing over] \ar[rr, equal] && \Z[\bX]^W
\end{tikzcd}$}
\]
\caption{Diagram for the proof of Theorem~\ref{thm:main}}\label{fig:main-groth}
\end{figure}

\subsection{Applications}
\label{ss:applications}

For $\lambda \in \bX^+$, we define the \emph{Steinberg quotient character} $t(\lambda) \in \Z[\bX]^W$ by
\[
t(\lambda) = \frac{ \ch T(\lambda + (p-1)\varsigma)}{ \ch T((p-1)\varsigma)}.
\]
Let us check that this definition is independent of the choice of $\varsigma$.  If $\varsigma'$ is another weight satisfying~\eqref{eqn:varsig-exist}, then the weight $(p-1)(\varsigma - \varsigma')$ is orthogonal to all coroots, and so $G$ admits $1$-dimensional tilting representation $\bk_{(p-1)(\varsigma - \varsigma')}$.  For any $\lambda \in \bX^+$, we have
\[
T(\lambda + (p-1)\varsigma) \cong T(\lambda + (p-1)\varsigma') \otimes \bk_{(p-1)(\varsigma - \varsigma')},
\]
and it follows from this that $t(\lambda)$ is independent of $\varsigma$.  In particular, the definition above agrees with those in Section~\ref{ss:over-stquot} and in~\cite{sob1,sob2}.  The following proposition is one of the main motivations for the present paper.

\begin{thm}\label{thm:stquot}
Assume that $p$ is good for $G$. For $\lambda \in \bX^+$, we have
\[
\ch ([\cE^{\varpi,\sph}_\lambda]) = t(\lambda),
\]
where the left-hand side is defined as in Lemma~\ref{lem:groth-varpi}\eqref{it:gv-sph}.
\end{thm}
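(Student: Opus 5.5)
The plan is to push $\cE^{\varpi,\sph}_\lambda$ through the right-hand vertical functor of Theorem~\ref{thm:main} and then compare characters using Proposition~\ref{prop:groth-varpi-total}. Regard $\cE^{\varpi,\sph}_\lambda$ as an object of $\Parity_{\Gm,\sph|p\varsigma}(\Gr^\varpi,\bk)$; forgetting equivariance does not change its stalks, so its character as defined in Lemma~\ref{lem:groth-varpi}\eqref{it:gv-sph} is unchanged. By Corollary~\ref{cor:gv-indecomp},
\[
j^{p+}_{p\varsigma!}\cL^p_\AS[\langle \varsigma, 2\check\rho\rangle] \star \cE^{\varpi,\sph}_\lambda \cong \cE^{\varpi,\IW,\bo}_{\lambda+p\varsigma}.
\]
Proposition~\ref{prop:groth-varpi-total} then gives
\[
\chi((p-1)\varsigma)\cdot \ch([\cE^{\varpi,\sph}_\lambda]) = \ch([\cE^{\varpi,\IW,\bo}_{\lambda+p\varsigma}]).
\]
Since $\chi((p-1)\varsigma) = \ch T((p-1)\varsigma)$ (Steinberg) and $\Z[\bX]^W$ is a domain, it suffices to show
\[
\ch([\cE^{\varpi,\IW,\bo}_{\lambda+p\varsigma}]) = \ch T(\lambda+(p-1)\varsigma).
\]

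To identify $\cE^{\varpi,\IW,\bo}_{\lambda+p\varsigma}$, I would use the Riche--Williamson side. Let $\cT = \cE^{\IW,\bo}_{\lambda+p\varsigma} \in \Parity_{\Gm,\IW}(\Gr,\bk)$. Because $p$ is good, every $\cE^\sph_\mu$ is perverse by~\cite{mr:etsps}. Then $j^+_{\varsigma!}\cL_\AS[\langle\varsigma,2\check\rho\rangle]\star \cE^\sph_{\lambda+(p-1)\varsigma}$ is a perverse parity sheaf, and it is indecomposable by Proposition~\ref{prop:diw-indecomp} with $\bbf=\bo$. Its support is the closure of $\Gr^+_{\lambda+p\varsigma}$, so it equals $\cT$. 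Moreover, under~\cite{bgmrr} it corresponds to $T(\lambda+(p-1)\varsigma)$, so $\ch[\cT] = \ch T(\lambda+(p-1)\varsigma)$ by Lemma~\ref{lem:groth-gr}\eqref{it:ggr-iw}.

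Next, apply Theorem~\ref{thm:main} to $\cF=\cE^\sph_\lambda$. The bottom route gives $\Psm_\IW(\cT)$, which has character $\ch T(\lambda+(p-1)\varsigma)$, since the bottom face of Figure~\ref{fig:main-groth} commutes. The top route gives $j^{p+}_{p\varsigma!}\cL^p_\AS\star\Psm_{p\varsigma}(\cE^\sph_\lambda)$. By the argument of Lemma~\ref{lem:groth-psm},
\[
\Psm_{p\varsigma}(\cE^\sph_\lambda) \cong \rQ(\cE^{\varpi,\sph}_\lambda)\oplus(\text{shifts of }\rQ(\cE^{\varpi,\sph}_\mu),\ \mu<\lambda).
\]
Applying the convolution, this becomes $\cE^{\varpi,\IW,\bo}_{\lambda+p\varsigma}$ plus lower terms (after $\rQ$).

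The main obstacle is ruling out these extra summands. I would argue by induction on $\lambda$, comparing $\Psm_\IW(\cT)$ with $\rQ(\cE^{\varpi,\IW,\bo}_{\lambda+p\varsigma})$ directly. This comparison really only needs the equality of characters at $v=1$, together with a triangularity statement: both characters equal $\chi(\lambda+(p-1)\varsigma)$ plus lower terms. A cleaner route avoids indecomposability of $\Psm$ altogether. Because all functors in play are additive and $\ch$ is an isomorphism, I would show that the map
\[
\lambda\mapsto \ch([\cE^{\varpi,\sph}_\lambda])\cdot\chi((p-1)\varsigma)
\]
and the map $\lambda\mapsto \ch T(\lambda+(p-1)\varsigma)$ satisfy the same unitriangular recursion. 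That is, express $\ch[\cE^\sph_\lambda]$ in both pictures and compare coefficients in decreasing order of $\lambda$. If this bookkeeping fails, the fallback is Riche--Williamson's result that $\Psm_\IW$ of an indecomposable IW parity sheaf on $\Gr$ with label in the appropriate range is indecomposable. This result, combined with Corollary~\ref{cor:gv-indecomp}, forces the top-route object to be exactly $\rQ(\cE^{\varpi,\IW,\bo}_{\lambda+p\varsigma})$ and hence gives the displayed equality.
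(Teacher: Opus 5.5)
Your reduction in the first paragraph is exactly the paper's: Corollary~\ref{cor:gv-indecomp} plus Proposition~\ref{prop:groth-varpi-total} reduce the claim to $\ch([\cE^{\varpi,\IW,\bo}_{\lambda+p\varsigma}]) = \ch T(\lambda+(p-1)\varsigma)$. Your identification $\ch([\mathcal{T}]) = \ch T(\lambda+(p-1)\varsigma)$ for $\mathcal{T} = \cE^{\IW,\bo}_{\lambda+p\varsigma}$ is also fine.

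The gap is in the remaining step, which transfers this equality from $\mathcal{T}$ on $\Gr$ to $\cE^{\varpi,\IW,\bo}_{\lambda+p\varsigma}$ on $\Gr^\varpi$. Applying Theorem~\ref{thm:main} to $\cF = \cE^\sph_\lambda$ does \emph{not} put $\Psm_\IW(\mathcal{T})$ on the bottom route. It puts $\Psm_\IW$ of $j^+_{\varsigma!}\cL_\AS\star\cE^\sph_{(p-1)\varsigma}\star\cE^\sph_\lambda$ there. That object corresponds to $T(\lambda)\otimes\St \cong \bigoplus_\mu T(\mu+(p-1)\varsigma)^{\oplus d_{\mu,\lambda}}$, not to $T(\lambda+(p-1)\varsigma)$. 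So the ``extra summands'' of $\Psm_{p\varsigma}(\cE^\sph_\lambda)$ cannot be ruled out: by Theorem~\ref{thm:dmu} they genuinely occur, with multiplicities $d_{\mu,\lambda}$. In particular, your claim that the top-route object is forced to be $\rQ(\cE^{\varpi,\IW,\bo}_{\lambda+p\varsigma})$ is false in general.

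The recursion idea does not close either. At the level of characters, all the diagram gives is
\[
\sum_\mu d'_{\mu,\lambda}\,\chi((p-1)\varsigma)\,\ch([\cE^{\varpi,\sph}_\mu]) = \chi((p-1)\varsigma)\,\ch T(\lambda) = \sum_\mu d_{\mu,\lambda}\,\ch T(\mu+(p-1)\varsigma),
\]
where $d'_{\mu,\lambda}$ are the unknown multiplicities of $\cE^{\varpi,\sph}_{\Sm,\mu}$ in $\Psm_{p\varsigma}(\cE^\sph_\lambda)$. Two unitriangular systems with possibly different matrices $d'$ and $d$ do not force equal solutions. Proving $d'=d$ is Theorem~\ref{thm:dmu}, and its proof already uses the ingredient below. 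The observation that both sides are $\chi(\lambda+(p-1)\varsigma)$ plus lower terms is likewise too weak.

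The missing input is your fallback, but you should apply it directly to $\mathcal{T}$ rather than route it through Theorem~\ref{thm:main}. By \cite[Theorem~1.2]{rw:st}, $\Psm_\IW(\mathcal{T}) \cong \rQ(\cE^{\varpi,\IW,\bo}_{\lambda+p\varsigma})$. The bottom face of Figure~\ref{fig:main-groth} is the stalkwise comparison of Lemma~\ref{lem:groth-gr}\eqref{it:ggr-iw} with Lemma~\ref{lem:groth-varpi}\eqref{it:gv-iw}, via Lemma~\ref{lem:groth-q}. It then gives $\ch([\cE^{\varpi,\IW,\bo}_{\lambda+p\varsigma}]) = \ch([\mathcal{T}]) = \ch T(\lambda+(p-1)\varsigma)$, which is your displayed equality. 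This is precisely the paper's proof, and neither Theorem~\ref{thm:main} nor any induction on $\lambda$ is needed.
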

\begin{proof}
By Corollary~\ref{cor:gv-indecomp}, we have $j^{p+}_{p\varsigma!}\cL^p_\AS[\langle \varsigma, 2\check\rho\rangle] \star \cE^{\varpi,\sph}_\lambda \cong \cE^{\varpi,\IW,\bo}_{\lambda + p \varsigma}$.  By the right-hand face of Figure~\ref{fig:main-groth}, we see that
\[
\chi((p-1)\varsigma) \ch ([\cE^{\varpi,\sph}_\lambda]) = \ch([\cE^{\varpi,\IW,\bo}_{\lambda + p \varsigma}]).
\]
On the other hand, by~\cite[Theorem~1.2]{rw:st}, we have
\[
\Psm_\IW(\cE^{\IW,\bo}_{\lambda + p\varsigma}) \cong \cE^{\varpi, \IW,\bo}_{\lambda + p\varsigma},
\]
while Lemma~~\ref{lem:groth-gr}\eqref{it:ggr-iw} tells us that
\[
\ch([\cE^{\IW,\bo}_{\lambda + p\varsigma}]) = \ch T(\lambda + (p-1)\varsigma).
\]
Combining this with the bottom face of Figure~\ref{fig:main-groth}, we see that
\[
\ch ([\cE^{\varpi,\sph}_\lambda]) = \ch T(\lambda + (p-1)\varsigma)/ \chi((p-1)\varsigma) = t(\lambda).\qedhere
\]
\end{proof}

Theorem~\ref{thm:stquot} is closely related to the monotonicity property of Steinberg quotients that was mentioned in Section~\ref{ss:over-stquot}.  Let us review what this monotonicity property says.  Write
\begin{equation}\label{eqn:t-expand}
t(\lambda) = \sum b_{\mu,\lambda}s(\mu),
\end{equation}
where $b_{\mu,\lambda} \in \Z$.  In~\cite{sob1,sob2}, the second author established the following:

\begin{thm}[\cite{sob1,sob2}]\label{thm:mono}
For $\lambda, \mu \in \bX^+$, we have:
\begin{enumerate}
\item $b_{\lambda,\lambda}=1$.\label{it:mono-open}
\item If $b_{\mu,\lambda} \ge 1$, then $(\mu-\rho) \uparrow (\lambda-\rho)$.\label{it:mono-closure}
\item If $(\mu-\rho) \uparrow (\mu^{\prime}-\rho)$, then $b_{\mu,\lambda} \ge b_{\mu^{\prime},\lambda}$.\label{it:mono-mono}
\end{enumerate}
\end{thm}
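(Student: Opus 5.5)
The plan is to derive all three parts from Theorem~\ref{thm:stquot}. That theorem identifies $b_{\mu,\lambda}$ with $\sum_i \rank \coh^i(\cE^{\varpi,\sph}_\lambda|_{\Gr^\varpi_\mu})$, the total stalk dimension of the indecomposable parity sheaf $\cE^{\varpi,\sph}_\lambda$ along the orbit $\Gr^\varpi_\mu$. The three claims then become geometric statements about parity sheaves on $\Gr^\varpi$. The identity $(\mu-\rho)\uparrow(\lambda-\rho)$ should be matched with a closure relation between $\Loop^+_p\Gv$-orbits in $\Gr^\varpi$. Because $\uparrow$ is phrased with $\rho$, I would first restate it with $\varsigma$ in place of $\rho$ (the shift is harmless). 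Via~\eqref{eqn:grpi-isom} and~\eqref{eqn:xp-owaff-bij}, it becomes the statement that $\lambda,\mu$ have representatives in the same component $\Fl^\circ_{p,\bbf_\nu}$ with $w_\mu\le w_\lambda$ in $W\backslash\Waff/W_{\bbf_\nu}$. This is the standard description of the strong linkage order in terms of the box action and the Bruhat order. The result from \cite[\S II.6]{jan:rag} is written for $\dotl$, so it must be transported to $\boxl$ using Lemma~\ref{lem:boxl-wext}.

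Parts \eqref{it:mono-open} and \eqref{it:mono-closure} then follow immediately. The sheaf $\cE^{\varpi,\sph}_\lambda$ restricts to a rank-one shifted constant sheaf on its open orbit, which gives $b_{\lambda,\lambda}=1$. It is supported on $\overline{\Gr^\varpi_\lambda}$, which lies in a single connected component and is a union of orbits $\Gr^\varpi_\mu$ with $w_\mu\le w_\lambda$. Hence $b_{\mu,\lambda}\neq 0$ forces $(\mu-\rho)\uparrow(\lambda-\rho)$.

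Part \eqref{it:mono-mono} is the real content: along the closure order, the stalk dimensions of an indecomposable spherical parity sheaf are monotone, larger on smaller orbits. The approach is to pull back along $\pi_{\bbf}\colon\Fl_{p}\to\Fl_{p,\bbf}$ (Lemma~\ref{lem:groth-pull}) to reduce to $\Loop^+_p\Gv$-equivariant parity sheaves on the full affine flag variety. There, total stalk dimensions are the $v=1$ specializations of the coefficients of the $p$-canonical basis element ${}^p\!\uH_{w}$ in the spherical module $\cM$ (Lemma~\ref{lem:groth}\eqref{it:groth-sph}). So the needed statement is monotonicity of $p$-Kazhdan--Lusztig polynomials at $v=1$ for spherical cosets: if $y\le y'\le w$ in $\oWext$, then ${}^p\!h_{y,w}(1)\ge {}^p\!h_{y',w}(1)$. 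To prove it, I would adapt the Braden--MacPherson argument to parity sheaves. Take an attractive $\Gm$-action (via a regular dominant cocharacter) contracting a transverse slice to $\Fl_y$ onto the point $\Fl_y\cap(\text{slice})$. The stalk at the $y$-point is then computed by hyperbolic localization, and this dominates the costalk/stalk at nearby points because, for parity sheaves, the restriction to the attracting set has a filtration whose graded pieces are the stalks at points of $\Fl_{y'}$. The key inputs are parity vanishing, which gives degeneration, together with the freeness of equivariant cohomology for $p$ not torsion. An alternative for this step is to use that ${}^p\!\uH$ coefficients are nonnegative and satisfy the positivity $({}^p\!\uH_s)\cdot{}^p\!\uH_w\ge$ in the standard basis, which yields monotonicity by induction on length as in Irving's and Braden--MacPherson's proofs.

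I expect the main obstacle to be this last step, monotonicity for parity sheaves in characteristic $\ell$ transported to the fixed locus. Here I would first try the hyperbolic-localization argument on the $\Gm$-equivariant spherical parity sheaf $\cE^{\varpi,\sph}_\lambda$ directly on $\Gr^\varpi$, which avoids the Hecke-algebra reduction. A secondary obstacle is checking carefully that the Bruhat order on $W\backslash\Waff/W_{\bbf}$ matches $\uparrow$ after the $\rho\to\varsigma$ and $\dotl\to\boxl$ translation.
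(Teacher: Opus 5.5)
Parts (1) and (2) match the paper. You read off $b_{\mu,\lambda}$ from Theorem~\ref{thm:stquot}, so, like the paper, you only cover $p$ good. Part (1) is the normalization on the open orbit. Part (2) identifies the closure order on $\Loop^+_p\Gv$-orbits in $\Gr^\varpi$ with $\uparrow$, which the paper also only asserts, citing the methods of \cite{rw:st}.

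The gap is in part (3): neither mechanism you offer produces the inequality.

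In the slice argument, stratifying a contracting slice $S$ through a point of $\Gr^\varpi_\mu$ gives a spectral sequence. Its $E_1$-terms are $\coh^\bullet_c(S\cap\Gr^\varpi_{\mu'})\otimes(\text{stalk along }\Gr^\varpi_{\mu'})$, or the analogous terms with $!$-restrictions, not the stalks themselves. A spectral sequence only bounds its abutment from \emph{above}. To make the stalk at $\mu$ dominate you need degeneration. Parity of $\cE^{\varpi,\sph}_\lambda$ gives that only if each piece $S\cap\Gr^\varpi_{\mu'}$ has cohomology of a single parity, and nothing in your argument provides this.

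The Hecke-algebra alternative also fails. Nonnegativity of the coefficients and structure constants of the $p$-canonical basis does not imply monotonicity. Irving's proof goes through socle filtrations of Verma modules in characteristic $0$, and Braden--MacPherson's goes through moment graphs. Monotonicity of $p$-Kazhdan--Lusztig polynomials is itself obtained, where known, by the moment-graph argument below, so reducing to $\Fl_p$ via $\pi_\bbf^*$ gains nothing.

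The missing input is the moment-graph model of \cite{fw:psmg}.
\begin{enumerate}
\item By \cite[Proposition~7.1]{fw:psmg}, the torus action on $\Gr^\varpi$ satisfies the GKM-type hypotheses needed for localization with coefficients of characteristic $p$. Your sketch never addresses this hypothesis.
\item By \cite[Theorems~5.11 and~6.10]{fw:psmg}, the $\Loop^+_p\Gv$-equivariant parity sheaves on $\Gr^\varpi$ are then modeled by Braden--MacPherson sheaves on the moment graph.
\item In that setting the argument of \cite[Theorem~3.6]{bm:mgic} is purely algebraic. Take an edge $E$ from $x$ to $y$ with $x<y$ and label $\alpha_E$. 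The inductive construction of the canonical sheaf, together with flabbiness, makes $\mathcal{P}^x\to\mathcal{P}^y/\alpha_E\mathcal{P}^y$ surjective.
\item Tensoring down to $\bk$ over the equivariant cohomology ring gives a graded surjection from the stalk at $x$ onto the stalk at $y$.
\item Stalks are constant along $\Loop^+_p\Gv$-orbits, and the closure order is generated by such edges. Chaining these surjections gives $b_{\mu,\lambda}\ge b_{\mu',\lambda}$.
\end{enumerate}
This is the step your proposal would need to supply.
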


(Here, $\uparrow$ is the partial order on $\bX$ defined in~\cite[\S II.6.4]{jan:rag}.)  When $p$ is good, Theorem~\ref{thm:mono} can be deduced as a consequence of Theorem~\ref{thm:stquot}, as follows

\begin{proof}[Proof sketch for Theorem~\ref{thm:mono} when $p$ is good]
Comparing~\eqref{eqn:t-expand} with Theorem~\ref{thm:stquot}, we have
\[
b_{\mu, \lambda} = \sum_{i \in \Z} \rank \coh^i(\cE^{\varpi,\sph}_\lambda|_{\Gr^{\varpi}_\mu}).
\]
Part~\eqref{it:mono-open} of Theorem~\ref{thm:mono} is immediate from the definition of $\cE^{\varpi,\sph}_\lambda$.

Next, it can be shown using the methods of~\cite{rw:st} that $\Gr^{\varpi}_\mu \subset \overline{\Gr^{\varpi}_\lambda}$ if and only if $(\mu - \varsigma) \uparrow (\lambda - \varsigma)$.  Part~\eqref{it:mono-closure} of Theorem~\ref{thm:mono} then holds because $\cE^{\varpi,\sph}_\lambda$ is supported on $\overline{\Gr^{\varpi}_\lambda}$.

Finally, part~\eqref{it:mono-mono} of Theorem~\ref{thm:mono} is a general property of  parity sheaves that are equivariant with respect to a torus action satisfying certain assumptions, explained in~\cite{fw:psmg}.  In more detail, by~\cite[Proposition~7.1]{fw:psmg}, the action of the torus $\Tv$ on $\Gr^\varpi$ satisfies the appropriate assumptions, so by~\cite[Theorems~5.11 and 6.10]{fw:psmg}, objects in $\Parity_{\Loop^+_p\Gv}(\Gr^\varpi,\bk)$ can be modeled by the theory of ``sheaves on moment graphs.''  Then the argument of~\cite[Theorem~3.6]{bm:mgic} can be adapted to show the inequality in Theorem~\ref{thm:mono}\eqref{it:mono-mono}.
\end{proof}

Finally, we will describe decomposition multiplicities for the top arrow in Theorem~\ref{thm:main} in representation-theoretic terms.

\begin{lem}\label{lem:proj-tilt}
For $\lambda \in \bX^+$, every indecomposable summand of $T(\lambda) \otimes T((p-1)\varsigma)$ is of the form $T(\mu + (p-1)\varsigma)$ for some $\mu \in \bX^+$.
\end{lem}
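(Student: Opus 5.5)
The plan is to use the standard fact that $\St = T((p-1)\varsigma)$ is projective and injective as a $G_1$-module, where $G_1$ is the first Frobenius kernel. This requires $(p-1)\varsigma$ to be a $p$-restricted Steinberg-type weight, i.e.\ $\la (p-1)\varsigma, \check\alpha\ra = p-1$ for all simple coroots, which holds by~\eqref{eqn:varsig-exist}. Tensoring with a projective $G_1$-module gives a projective $G_1$-module, so $M = T(\lambda) \otimes T((p-1)\varsigma)$ is a tilting $G$-module that is projective over $G_1$. Hence every indecomposable summand $T(\nu)$ of $M$ is also $G_1$-projective, and it remains to show that an indecomposable tilting module $T(\nu)$ which is $G_1$-projective has $\nu - (p-1)\varsigma \in \bX^+$.

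For that step I would argue as follows. Suppose $T(\nu)$ is projective over $G_1$. Its restriction to $G_1T$ is then projective in the category of $G_1T$-modules, so it is a direct sum of $G_1T$-projective covers $\widehat Q_1(\mu)$, each with socle $\widehat L_1(\mu)$ (see~\cite[II.11]{jan:rag}). The weights of $\widehat Q_1(\mu)$ are bounded above by $2(p-1)\varsigma + w_\bo\mu$ for a suitable normalization, and its head and socle are $\widehat L_1(\mu)$. The simplest consequence to extract is that the highest weight $\nu$ of $T(\nu)$ must be the highest weight of some $\widehat Q_1(\mu)$ summand. Those highest weights are $(p-1)\varsigma$ plus a dominant weight up to $p\bX$-translation. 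More precisely, the highest weight of $\widehat Q_1(\mu)$ is $2(p-1)\varsigma + w_\bo \mu'$ with $\mu'$ the restricted part, and $\mu' + w_\bo\mu'$-type bounds give $\la \nu - (p-1)\varsigma, \check\alpha\ra \ge 0$ for each simple coroot.

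A cleaner alternative, which I would try first, avoids this bookkeeping by using the known criterion (Donkin, cf.~\cite[E.9, II.11.9]{jan:rag}): for $\nu = (p-1)\varsigma + \mu_0 + p\mu_1$ with $\mu_0$ restricted, one has $T(\nu) \cong T((p-1)\varsigma + \mu_0) \otimes T(\mu_1)^{[1]}$ when $p$ is large enough, and $T(\nu)$ is $G_1$-injective exactly when $\nu \in (p-1)\varsigma + \bX^+$. The direction needed here is the elementary one. If $\nu \notin (p-1)\varsigma + \bX^+$, there is a simple coroot $\check\alpha$ with $\la\nu,\check\alpha\ra < p-1$. Restricting to the Levi $L_\alpha$ (of semisimple rank one), I would check that the highest weight space generates an $(L_\alpha)_1T$-submodule that cannot be projective, because in rank one projective $(\mathrm{SL}_2)_1T$-modules have each weight string through a highest weight of length at least $p$. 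This rank-one check is elementary.

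The main obstacle is making the converse-free direction rigorous without assuming $p \ge 2h-2$. I expect the rank-one reduction to work: the $\check\alpha$-string through the highest weight $\nu$ of a $G_1$-projective (hence $(L_\alpha)_1T$-projective) module has length at least $p$, which forces $\la \nu, \check\alpha\ra \ge p-1$ for every simple coroot $\check\alpha$, i.e.\ $\nu - (p-1)\varsigma \in \bX^+$.
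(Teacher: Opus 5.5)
Your proposal is correct and follows the paper's proof: $\St = T((p-1)\varsigma)$ is $G_1$-projective, so the tilting module $T(\lambda)\otimes\St$ and all its indecomposable summands are $G_1$-projective, and $G_1$-projective indecomposable tilting modules have highest weights in $(p-1)\varsigma+\bX^+$, which the paper simply quotes as standard.

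Your rank-one justification of that last fact works in the form of your final paragraph, but not in the ``submodule generated by the highest weight space'' form, and your second paragraph should be dropped. The working form is: the whole $(\nu+\Z\alpha)$-weight component of $T(\nu)$ is an $(L_\alpha)_1T$-direct summand, hence projective, hence contains the $p$ consecutive weights of some baby Verma module. You then need to say explicitly that $s_\alpha$-invariance of $\ch T(\nu)$ confines that component between $s_\alpha\nu$ and $\nu$, which gives $\la\nu,\check\alpha\ra\ge p-1$. The submodule version fails because a non-projective submodule of a projective module gives no contradiction.
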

\begin{proof}
The tensor product of two tilting modules for $G$ is a tilting module.  Since the Steinberg module $T((p-1)\varsigma)$ is projective over the first Frobenius kernel $G_1$, the tensor product $T(\lambda) \otimes T((p-1)\varsigma)$ is also projective over $G_1$, and therefore its summands are as well.  The tilting modules that are projective over $G_1$ are those with highest weights of the form $\mu + (p-1)\varsigma$.
\end{proof}

Lemma~\ref{lem:proj-tilt} implies that there are nonnegative integers $d_{\mu,\lambda}$ characterized by the property that
\begin{equation}\label{eqn:dmu-defn}
T(\lambda) \otimes T((p-1)\varsigma) \cong \bigoplus_{\mu \in \bX^+} T(\mu + (p-1)\varsigma)^{\oplus d_{\mu,\lambda}}.
\end{equation}

\begin{thm}\label{thm:dmu}
Assume that $p$ is good for $G$, and let $d_{\mu,\lambda}$ be as in~\eqref{eqn:dmu-defn}.  We have
\[
\Psm_{p\varsigma}(\cE^\sph_\lambda) \cong \bigoplus_{\mu \in \bX^+} (\cE^{\varpi,\sph}_{\Sm,\mu})^{\oplus d_{\mu,\lambda}}.
\]
\end{thm}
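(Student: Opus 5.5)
Since $\SmParity_{\sph|p\varsigma}(\Gr^\varpi,\bk)$ is Krull--Schmidt, I write $\Psm_{p\varsigma}(\cE^\sph_\lambda) \cong \bigoplus_{\mu} (\cE^{\varpi,\sph}_{\Sm,\mu})^{\oplus m_\mu}$. Here I use Lemma~\ref{lem:groth-q} and Corollary~\ref{cor:parity-full}: the indecomposable Smith parity objects are, up to shift, the images $\rQ(\cE^{\varpi,\sph}_\mu)$. Shifts can be ignored because $[2]\cong\id$ and the statement is only up to isomorphism; to compare multiplicities it therefore suffices to work in $\Ko(\cdot)|_{v=1}$. The goal is to show $m_\mu = d_{\mu,\lambda}$. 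My plan is to apply the functor $j^{p+}_{p\varsigma!}\cL^p_\AS[\langle\varsigma,2\check\rho\rangle]\star({-})$ to both sides and then use Theorem~\ref{thm:main}.

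\textbf{Step 1: the left-hand side.} By Theorem~\ref{thm:main}, the image of $\Psm_{p\varsigma}(\cE^\sph_\lambda)$ under this functor is $\Psm_\IW(j^+_{\varsigma!}\cL_\AS\star\cE^\sph_{(p-1)\varsigma}\star\cE^\sph_\lambda)$, up to a shift. Here I use that $p$ is good, so by Mautner--Riche the sheaves $\cE^\sph_\lambda$ and $\cE^\sph_{(p-1)\varsigma}$ are perverse and correspond to $T(\lambda)$ and $T((p-1)\varsigma)$. Convolution corresponds to tensor product, and the result is again parity. So by~\eqref{eqn:dmu-defn} the convolution $\cE^\sph_{(p-1)\varsigma}\star\cE^\sph_\lambda$ is $\bigoplus_\mu(\cE^\sph_{\mu+(p-1)\varsigma})^{\oplus d_{\mu,\lambda}}$, because an indecomposable perverse parity sheaf is determined by its Satake tilting module. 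Applying the \cite{bgmrr} equivalence, which sends $\cE^\sph_\nu$ to $\cE^{\IW,\bo}_{\nu+\varsigma}$ (perverse tilting correspondence), and then \cite[Theorem~1.2]{rw:st}, I obtain $\bigoplus_\mu(\cE^{\varpi,\IW,\bo}_{\mu+p\varsigma})^{\oplus d_{\mu,\lambda}}$ in $\SmParity_\IW$, up to shifts.

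\textbf{Step 2: the right-hand side.} Corollary~\ref{cor:gv-indecomp} sends $\cE^{\varpi,\sph}_\mu$ to $\cE^{\varpi,\IW,\bo}_{\mu+p\varsigma}$. By Lemma~\ref{lem:nuconv-smith} this is compatible with $\rQ$, so the same holds for the Smith versions. Hence the image of the decomposition is $\bigoplus_\mu (\rQ\,\cE^{\varpi,\IW,\bo}_{\mu+p\varsigma})^{\oplus m_\mu}$. These objects are pairwise non-isomorphic indecomposables, again by Corollary~\ref{cor:parity-full}, and Krull--Schmidt then gives $m_\mu=d_{\mu,\lambda}$.

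\textbf{Main obstacle.} I expect the delicate point to be Step 1's identification of convolution on parity sheaves with tensor product of tiltings, and of $j^+_{\varsigma!}\cL_\AS\star\cE^\sph_\nu$ with $\cE^{\IW,\bo}_{\nu+\varsigma}$. Both rest on perversity for good $p$ and on the exactness and equivalence statements of \cite[Theorem~3.9]{bgmrr}. The latter also follows from Proposition~\ref{prop:diw-indecomp}, which gives indecomposability, together with a support and highest-weight comparison. A fallback that avoids this is to work directly in specialized Grothendieck groups. Theorem~\ref{thm:stquot} gives $\ch[\cE^{\varpi,\sph}_\mu]=t(\mu)$, and the top face of Figure~\ref{fig:main-groth} gives $\ch\Psm_{p\varsigma}(\cE^\sph_\lambda)=\ch T(\lambda)$. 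Then $\ch T(\lambda)=\sum m_\mu t(\mu)$, and multiplying by $\ch\St$ yields $\ch(T(\lambda)\otimes\St)=\sum m_\mu\ch T(\mu+(p-1)\varsigma)$. Since the characters of the $T(\nu)$ are linearly independent, this forces $m_\mu=d_{\mu,\lambda}$.
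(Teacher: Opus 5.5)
Your main argument is correct and is essentially the paper's own proof. You apply $j^{p+}_{p\varsigma!}\cL^p_\AS[\langle\varsigma,2\check\rho\rangle]\star({-})$, use Theorem~\ref{thm:main} with the Satake decomposition of $\cE^\sph_{(p-1)\varsigma}\star\cE^\sph_\lambda$, \cite{bgmrr}, and \cite[Theorem~1.2]{rw:st} to identify the image as $\bigoplus_\mu(\cE^{\varpi,\IW,\bo}_{\Sm,\mu+p\varsigma})^{\oplus d_{\mu,\lambda}}$, and conclude via Corollary~\ref{cor:gv-indecomp} and Krull--Schmidt; your Grothendieck-group fallback (Theorem~\ref{thm:stquot}, the top face of Figure~\ref{fig:main-groth}, and linear independence of tilting characters) is also valid and is the alternative the paper's closing remark points to, needing only the Grothendieck-group shadow of Theorem~\ref{thm:main}.
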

\begin{proof}
Since the right-hand vertical arrow in Theorem~\ref{thm:main} sends distinct indecomposable objects to distinct indecomposable objects (Corollary~\ref{cor:gv-indecomp}), to prove the theorem, it is enough to show that
\begin{equation}\label{eqn:dmu-comp1}
j^{p+}_{p\varsigma!}\cL_\AS[\langle \varsigma, 2\check\rho\rangle] \star \Psm_{p\varsigma}(\cE^\sph_\lambda) \cong
\bigoplus_{\mu \in \bX^+} (\cE^{\varpi,\IW,\bo}_{\Sm,\mu+p\varsigma})^{\oplus d_{\mu,\lambda}}.
\end{equation}

By the geometric Satake equivalence applied to~\eqref{eqn:dmu-defn}, we have
\begin{equation}\label{eqn:dmu-comp2}
\cE^\sph_{(p-1)\varsigma} \star \cE^\sph_\lambda \cong \bigoplus_{\mu \in \bX^+} (\cE^\sph_{\mu + (p-1)\varsigma})^{\oplus d_{\mu,\lambda}}.
\end{equation}
Recall that the lower-left and bottom arrows in Theorem~\ref{thm:main} are both fully faithful on perverse parity sheaves (by~\cite{bgmrr, rw:st}).  In particular, we have
\[
\Psm_{\IW}(j^+_{\varsigma!}\cL_\AS[\langle \varsigma, 2\check\rho\rangle] \star \cE^\sph_\nu) = \Psm_\IW(\cE^{\IW,\bo}_{\nu+\varsigma}) \cong \cE^{\varpi,\IW,\bo}_{\Sm,\nu+\varsigma}.
\]
Applying this to~\eqref{eqn:dmu-comp2}, we obtain
\[
\Psm_{\IW}(j^+_{\varsigma!}\cL_\AS[\langle \varsigma, 2\check\rho\rangle] \star \cE^\sph_{(p-1)\varsigma} \star \cE^\sph_\lambda) \cong
\bigoplus_{\mu \in \bX^+} (\cE^{\varpi,\IW,\bo}_{\Sm,\mu + p\varsigma})^{\oplus d_{\mu,\lambda}}.
\]
By Theorem~\ref{thm:main}, this implies~\eqref{eqn:dmu-comp1}.
\end{proof}

\begin{rmk}
Theorems~\ref{thm:stquot}, \ref{thm:mono}, and~\ref{thm:dmu} do not actually require the full strength of Theorem~\ref{thm:main}: they can all be deduced from the Grothendieck-group shadow of Theorem~\ref{thm:main}, i.e., from the commutative diagram in Figure~\ref{fig:main-groth}.  Moreover, the proof of that commutative diagram relies only on the results in Section~\ref{sec:groth}; in particular, it does not require Proposition~\ref{prop:main-summand} or other results from Section~\ref{ss:compat}.
\end{rmk}

\appendix
\section{Background on Smith--Treumann theory}
\label{app:st}

Let $\F$ be an algebraically closed field of characteristic $\ell \ne p$, and let
\[
\varpi = \{ \zeta \in \F^\times \mid \zeta ^{p} = 1 \}
\]
be the group of $p$-th roots of unity in $\F$.  We can also regard $\varpi$ as a (discrete) algebraic subgroup of the multiplicative group $\Gm$ over $\F$.  In this section, we review the foundations of \emph{Smith theory} for sheaves on varieties over $\F$ with a $\Gm$-action.  

\subsection{Preliminaries on \texorpdfstring{$\varpi$}{varpi}-modules}
\label{ss:pi-prelim}

For $g \in \varpi$, let $t_g$ denote the corresponding basis element in the group ring $\bk[\varpi]$.  Thus, a typical element of $\bk[\varpi]$ can be written as $\sum_{g \in \varpi} a_g t_g$, with $a_g \in \bk$.  Consider the $\bk$-linear maps
\begin{align*}
\epsilon &: \bk[\varpi] \to \bk &  \epsilon(t_g) &= 1 \quad\text{for all $g \in \varpi$,} \\
\iota&: \bk \to \bk[\varpi] & \iota(1) &= \textstyle\sum_{g \in \varpi} t_g.
\end{align*}
If we regard $\bk$ as the trivial $\bk[\varpi]$-module, then these are maps of $\bk[\varpi]$-modules.  Choose a generator $\zeta \in \varpi$.  Then we can form the four-term exact sequence
\[
0 \to \bk \xrightarrow{\iota} \bk[\varpi] \xrightarrow{\zeta - 1} \bk[\varpi] \xrightarrow{\epsilon} \bk \to 0,
\]
and this determines an element $c \in \Ext^2_{\bk[\varpi]}(\bk,\bk)$.  

We will now reformulate the preceding construction in a way that makes it independent of the choice of generator $\zeta$.  The group $\varpi$ is, of course, a cyclic group of order $p$ and thus a free module over $\Z/p\Z$, but it does not have a preferred generator.  We define the \emph{Tate module} over $\bk$ to be the $\bk$-vector space
\[
\bk(1) = \bk \otimes_{\Z/p\Z} \varpi.
\]
Again, this is a $1$-dimensional $\bk$-vector space, but with no preferred basis element.  Nevertheless, if we choose a generator $\zeta \in \varpi$, then $1 \otimes \zeta$ is a $\bk$-basis element for $\bk(1)$.  Define a map
\[
\iota_\zeta: \bk(1) \to \bk[\varpi] \qquad\text{by}\qquad \iota_\zeta(1 \otimes \zeta) = \textstyle\sum_{g \in \varpi} t_g.
\]
Now suppose $\xi$ is another generator of $\varpi$.  We must have $\xi = \zeta^r$ for some $r \in \{1, 2, \ldots, p-1\}$.  We have the following commutative diagram with exact rows:
\[
\begin{tikzcd}[column sep=small]
0 \ar[r] & \bk(1) \ar[d, equal] \ar[r, "\iota_\xi"] &
\bk[\varpi] \ar[rrr, "\xi - 1"] \ar[d, "1 + \zeta + \cdots + \zeta^{r-1}"] &&& \bk[\varpi] \ar[d, equal] \ar[r, "\epsilon"] & \bk \ar[d, equal] \ar[r] & 0 \\
0 \ar[r] & \bk(1) \ar[r, "\iota_\zeta"] & \bk[\varpi] \ar[rrr, "\zeta - 1"] &&& \bk[\varpi] \ar[r, "\epsilon"] & \bk \ar[r] & 0
\end{tikzcd}
\]
Either row of this diagram determines an element
\begin{equation}\label{eqn:can2-defn}
\can^2 \in \Ext^2_{\bk[\varpi]}(\bk, \bk(1)).
\end{equation}
that is canonical in the sense that it is independent of the choice of generator $\zeta \in \varpi$.  (The superscript ``$2$'' in ``$\can^2$'' is just a reminder that this element lies in an $\Ext^2$ group.  It is included for consistency of notation with~\cite{rw:st}.)

\subsection{Equivariant cohomology interpretation}

We will now describe another construction of $\can^2$ following~\cite{rw:st}.  Let
\[
\pt = \Spec \F.
\]
It is well known that the $\Gm$-equivariant cohomology of $\pt$ is given by
\[
\coh^n_\Gm(\pt,\bk) \cong
\begin{cases}
\bk(-n/2) & \text{if $n \ge 0$ and $n$ is even,} \\
0 & \text{otherwise.}
\end{cases}
\]
In particular, we have a canonical identification $\coh^2_\Gm(\pt,\bk)(1) = \bk$.  The element corresponding to $1 \in \bk$ is denoted by
\begin{equation}\label{eqn:can2-defn2}
\can^2 \in \coh^2_\Gm(\pt,\bk)(1).
\end{equation}
Let us justify this notation.  By restricting to the subgroup $\varpi \subset \Gm$, we get a map $\coh^\bullet_\Gm(\pt,\bk) \to \coh^\bullet_\varpi(\pt,\bk)$.  The latter is identified with $\Ext^\bullet_{\bk[\varpi]}(\bk,\bk)$ (cf.~\cite[Eq.~(3.6)]{rw:st}).  According to~\cite[Remark~3.10]{rw:st}, the image of~\eqref{eqn:can2-defn2} under the map
\[
\coh^2_\Gm(\pt,\bk)(1) \to \coh^2_\varpi(\pt,\bk)(1) \cong \Ext^2_{\bk[\varpi]}(\bk,\bk(1))
\]
is equal to~\eqref{eqn:can2-defn}.

\subsection{Definition of the Smith category}
\label{ss:smith-defn}

The derived category $\Db_\varpi(\pt,\bk)$ of $\pi$-equivariant constructible sheaves on a point contains as a subcategory the abelian category $\Sh_\varpi(\Spec \F,\bk)$ of constructible $\varpi$-equivariant $\bk$-sheaves, or equivalently of finite-dimensional $\bk[\varpi]$-modules.  In particular, one can consider the free module $\bk[\varpi]$ as an object of $\Sh_\varpi(\Spec \F,\bk)$.  Denote by
\[
\Db_\varpi(\Spec \F,\bk)_\perf \subset \Db_\varpi(\Spec \F,\bk)
\]
the full subcategory generated by the free module $\bk[\varpi]$.  Objects of the category $\Db_\varpi(\Spec \F,\bk)_\perf$ are called \emph{$\varpi$-perfect complexes}.  As an example, the cone of
\begin{equation}\label{eqn:can2-point}
\can^2: \bk[-2](-1) \to \bk
\end{equation}
is $\varpi$-perfect, as it is represented by the chain complex $\cdots \to \bk[\varpi] \xrightarrow{\zeta - 1} \bk[\varpi] \to \cdots$.

Suppose now that $X$ is an (ind-)scheme over $\F$ equipped with an action of $\Gm$, so that we may consider the $\Gm$-equivariant derived category $\Db_\Gm(X,\bk)$.  Assume that the subgroup $\varpi \subset \Gm$ acts trivially on $X$.  Then, for any object $\cF \in \Db_\Gm(X,\bk)$ and any geometric point $\bar z$ of $X$, the stalk $\cF_{\bar z}$ can be regarded as an object of the equivariant derived category $\Db_\varpi(\bar z, \bk)$.  We say that $\cF$ is \emph{$\varpi$-perfect} if every stalk $\cF_{\bar z}$ is $\varpi$-perfect in the sense of the preceding paragraph.  The $\varpi$-perfect objects form a full triangulated subcategory
\[
\Db_\Gm(X,\bk)_\perf \subset \Db_\Gm(X,\bk).
\]
We define the \emph{Smith category} of $X$ to be the Verdier quotient
\[
\Sm(X,\bk) = \Db_\Gm(X,\bk)/ \Db_\Gm(X,\bk)_\perf.
\]
The quotient functor is denoted
\[
\rQ: \Db_\Gm(X,\bk) \to \Sm(X,\bk).
\]

The map~\eqref{eqn:can2-point} gives rise to a map of constant sheaves $\can^2: \underline{\bk}_X[-2](-1) \to \underline{\bk}_X$ whose cone is $\varpi$-perfect.  Therefore, in the Smith category, we have an \emph{isomorphism}
\begin{equation}\label{eqn:can2-sheaf}
\can^2: \underline{\bk}_X[-2](-1) \cong \underline{\bk}_X.
\end{equation}
In the case where $X = \Spec \F$, the following lemma expands on this observation. For a proof, see~\cite[Lemma~3.7]{rw:st}.

\begin{lem}\label{lem:sm-point-endo}
In $\Sm(\pt,\bk)$, we have
\[
\Hom(\underline{\bk}_\pt, \underline{\bk}_\pt[n])
\cong
\begin{cases}
\bk(-n/2) & \text{if $n$ is even,} \\
0 & \text{if $n$ is odd.}
\end{cases}
\]
\end{lem}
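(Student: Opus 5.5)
The statement to prove is Lemma~\ref{lem:sm-point-endo}: in $\Sm(\pt,\bk)$, the group $\Hom(\underline{\bk}_\pt,\underline{\bk}_\pt[n])$ is $\bk(-n/2)$ for $n$ even and $0$ for $n$ odd. The plan is to identify these Hom groups with a localization of $\varpi$-group cohomology, namely Tate cohomology of $\varpi$.

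First, identify the categories. We have $\Db_\Gm(\pt,\bk)$ with $\varpi$ acting trivially on $\pt$, but stalks are computed in $\Db_\varpi(\pt,\bk)$. To keep things elementary, I would first compute in the quotient $\Db_\varpi(\pt,\bk)/\langle\bk[\varpi]\rangle$, which is the derived category of $\bk[\varpi]$-modules modulo perfect complexes. By Rickard/Buchweitz, this quotient is the stable module category $\underline{\mathrm{mod}}(\bk[\varpi])$, and there $\Hom(\bk,\bk[n])=\widehat{\Ext}{}^n_{\bk[\varpi]}(\bk,\bk)$. Concretely, a morphism $\bk\to\bk[n]$ in the Verdier quotient is a roof $\bk\leftarrow \cA\to \bk[n]$ whose cone is perfect. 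Since the isomorphism $\can^2$ exists in the quotient, these roofs can be replaced by iterates of $\can^2$. Explicitly, I would show
\[
\Hom_{\Sm}(\bk,\bk[n]) \cong \varinjlim_k \Ext^{n+2k}_{\bk[\varpi]}(\bk,\bk(k)),
\]
where the colimit is taken along multiplication by $\can^2$. This uses the standard calculus of fractions: every map from a perfect complex into a bounded complex factors through a truncation, and cones of powers of $\can^2$ are cofinal among perfect objects mapping to $\bk$ (given by the periodic resolution truncated).

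Next, compute the colimit. Since $p=\operatorname{char}\bk$ and $\varpi\cong\Z/p$, we have $\Ext^\bullet_{\bk[\varpi]}(\bk,\bk)\cong\bk[x]\otimes\Lambda(y)$ for $p$ odd, and $\bk[y]$ with $y^2=x$ for $p=2$, where $x=\can^2$ has degree $2$ and $y$ has degree $1$. Multiplication by $\can^2$ is an isomorphism $\Ext^m\to\Ext^{m+2}$ for $m\ge 0$. So the colimit in degree $n$ is $\Ext^{m}$ for $m\equiv n \pmod 2$ with $m$ large, which is $1$-dimensional in both parities. This seems to conflict with the claimed vanishing for $n$ odd.

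That conflict is the main obstacle, and I expect it is resolved by the $\Gm$-equivariance: $\Sm(\pt,\bk)$ is built from $\Db_\Gm(\pt,\bk)$, not from $\Db_\varpi$. Morphisms are computed from $\Hom_{\Db_\Gm}(\bk,\bk[m])=\coh^m_\Gm(\pt,\bk)$, which is $\bk(-m/2)$ for $m$ even and $0$ for $m$ odd. A perfect object in $\Db_\Gm(\pt)$ is one whose restriction to $\varpi$ is perfect. So the key step is to show that the cofinal system of perfect cones can be chosen $\Gm$-equivariantly as cones of $(\can^2)^k$ with $\can^2\in\coh^2_\Gm$. This is true because $\can^2$ is defined in $\coh^2_\Gm(\pt,\bk)(1)$, by \eqref{eqn:can2-defn2}. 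The required cofinality then says that any map $\cA\to\bk$ in $\Db_\Gm(\pt)$ with perfect cone is dominated by some $(\can^2)^k$. I would prove this by noting that the objects of $\Db_\Gm(\pt)$ are modules over $\coh^\bullet_\Gm=\bk[\can^2]$ (up to twist), that such a map is perfect exactly when it becomes invertible after inverting $\can^2$, and then applying an Ore-type argument. With this in place,
\[
\Hom_{\Sm}(\bk,\bk[n])=\varinjlim_k \coh^{n+2k}_\Gm(\pt,\bk)(k)=\bk[\can^{\pm2}]_n,
\]
which is $\bk(-n/2)$ for $n$ even and $0$ for $n$ odd, as claimed.
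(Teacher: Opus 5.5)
Your second half has the right architecture. You express $\Hom_{\Sm}(\underline{\bk}_\pt,\underline{\bk}_\pt[n])$ as a filtered colimit of $\Hom_{\Db_\Gm(\pt,\bk)}(\cA,\underline{\bk}_\pt[n])$ over maps $s\colon\cA\to\underline{\bk}_\pt$ with $\varpi$-perfect cone. You then show the maps $(\can^2)^k\colon\underline{\bk}_\pt[-2k](-k)\to\underline{\bk}_\pt$ are cofinal, and evaluate $\varinjlim_k\coh^{n+2k}_\Gm(\pt,\bk)(k)$. The paper gives no argument of its own; it refers to \cite[Lemma~3.7]{rw:st}. Your first half, via the stable module category of $\bk[\varpi]$, computes a different quotient and plays no role, so you can drop it.

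The gap is the one step that is not formal. To lift $(\can^2)^k$ through $s$ you need $g\circ(\can^2)^k=0$ for $k\gg0$, where $g\colon\underline{\bk}_\pt\to C$ is the next map in the triangle on $s$. Since $g\circ(\can^2)^k\in\coh^{2k}_\Gm(\pt,C)(k)$, it suffices that $\coh^m_\Gm(\pt,C)=0$ for $m\gg0$. You only assert this (``a map is perfect exactly when it becomes invertible after inverting $\can^2$''). But $\varpi$-perfectness is a condition on the restriction of $C$ to $\varpi$, while your Hom-groups are $\Gm$-equivariant. Neither formality of $\Db_\Gm(\pt,\bk)$ nor an Ore-type argument relates the two, and this comparison is exactly where $\Gm$ versus $\varpi$ enters.

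One way to supply it has two parts.
(i) $C|_\varpi$ lies in the thick subcategory generated by $\bk[\varpi]$, which is self-injective, so $\Hom_{\Db_\varpi(\pt,\bk)}(\underline{\bk}_\pt,C[m])=0$ for $m\gg0$.
(ii) Restriction $\coh^m_\Gm(\pt,C)\to\coh^m_\varpi(\pt,C)$ is injective. To see this, let $a\colon\Gm/\varpi\to\pt$. The right adjoint of $\For^{\Gm}_{\varpi}$ sends $\For^{\Gm}_{\varpi}(C)$ to $C\otimes a_*\underline{\bk}_{\Gm/\varpi}$. The object $a_*\underline{\bk}_{\Gm/\varpi}$ sits in a triangle $\underline{\bk}\to a_*\underline{\bk}\to\underline{\bk}(-1)[-1]\xrightarrow{\delta}$ with $\delta\in\coh^2_\Gm(\pt,\bk)(1)=\bk\cdot\can^2$. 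If $\delta\neq0$, then $\coh^\bullet_\Gm(\pt,a_*\underline{\bk})\cong\Ext^\bullet_{\bk[\varpi]}(\bk,\bk)$ would be concentrated in degree $0$. That is false, since $\Ext^1\cong\Hom(\varpi,\bk)\neq0$. So $\delta=0$, the triangle splits, and $\coh^m_\varpi(\pt,C)\cong\coh^m_\Gm(\pt,C)\oplus\coh^{m-1}_\Gm(\pt,C)(-1)$, with restriction equal to the inclusion of the first summand.

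Combining (i) and (ii) gives $\coh^m_\Gm(\pt,C)=0$ for $m\gg0$. Only this direction is needed; the converse is \cite[Lemma~3.9]{rw:st}, cf.\ Lemma~\ref{lem:perfect-crit}. With it, each $s$ is dominated by some $(\can^2)^k$. The $(\can^2)^k$ form a \emph{full} subcategory of the index category, because $\Hom(\underline{\bk}_\pt[-2k'](-k'),\underline{\bk}_\pt[-2k](-k))$ is either $0$ or $\bk\cdot(\can^2)^{k'-k}$. So domination suffices for cofinality, and the rest of your computation is correct.
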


The next lemma says that all the usual sheaf functors send $\varpi$-perfect complexes to $\varpi$-perfect complexes.

\begin{lem}\label{lem:sheaf-perfect}
Let $X$ and $Y$ be schemes over $\F$ equipped with $\Gm$-actions such that $\varpi \subset \Gm$ acts trivially.
\begin{enumerate}
\item The Verdier duality functor $\D$ on $X$ sends $\varpi$-perfect complexes to $\varpi$-perfect complexes.
\item If at least one of $\cF$ or $\cG$ is $\varpi$-perfect, then $\cF \otimes^L \cG$ and $\cRHom(\cF,\cG)$ are $\varpi$-perfect.
\item Let $f: X \to Y$ be a $\Gm$-equivariant map of schemes.  Then the functors $f^*$, $f^!$, $f_*$, and $f_!$ send $\varpi$-perfect complexes to $\varpi$-perfect complexes.
\end{enumerate}
\end{lem}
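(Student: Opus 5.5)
The statement to prove is Lemma~\ref{lem:sheaf-perfect}: the standard sheaf operations preserve $\varpi$-perfectness. The plan is to reduce everything to a stalkwise statement about $\Db_\varpi(\pt,\bk)$. The key input is a characterization of $\varpi$-perfect objects of $\Db_\varpi(\pt,\bk)$. Namely, $M$ is $\varpi$-perfect if and only if its image in the stable module category of $\bk[\varpi]$ vanishes. Equivalently, $\can^2$ acts invertibly, hence nilpotently, on $\bEnd(M)$; or again, $M$ is quasi-isomorphic to a bounded complex of free $\bk[\varpi]$-modules. Since $\bk[\varpi]$ is self-injective, free modules are also injective. So $\Db_\varpi(\pt,\bk)_\perf$ is a thick subcategory, stable under $\bk$-linear duality $\Hom_\bk(-,\bk)$. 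It is also a tensor ideal: $\bk[\varpi]\otimes_\bk N$ with the diagonal action is free for any finite-dimensional $N$.

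With this in hand, I would handle (2) first. Stalks of $\cF\otimes^L\cG$ are tensor products of stalks, so the tensor-ideal property gives the claim. For $\cRHom$ I would write $\cRHom(\cF,\cG)\cong\D(\cF\otimes^L\D\cG)$, so this case follows once (1) is proved.

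For (1) and (3), the issue is that stalks of $\D\cF$, $f_*\cF$ and $f_!\cF$ are not stalks of $\cF$. The cleanest route is to use that $\varpi$ acts trivially on $X$, so that $\Db_\Gm(X,\bk)$ maps to $\Db(X,\bk[\varpi])$-type objects. More precisely, these are complexes of sheaves of $\bk[\varpi]$-modules with constructible cohomology. In that setting, $\varpi$-perfectness is equivalent to local perfectness over $\bk[\varpi]$: an object is $\varpi$-perfect if and only if it is locally on a stratification quasi-isomorphic to a bounded complex of locally free $\bk[\varpi]$-sheaves. Equivalently, the natural map $\can^{2N}$ on it factors through zero for large $N$, by a noetherian induction on strata. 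All six functors are $\bk[\varpi]$-linear.

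I would then argue in four steps. First, $f^*$ is immediate, since stalks are preserved. Second, for $f_!$ I would use proper base change: $(f_!\cF)_{\bar y}=R\Gamma_c(f^{-1}(\bar y),\cF)$. Filtering by a stratification where $\cF$ has lisse cohomology, this is built from finitely many cones of compactly supported cohomology of perfect complexes, which are perfect over $\bk[\varpi]$ by finite cohomological dimension. Third, (1) follows the same way, using $\D\cF$ computed stratum by stratum from $\Hom_\bk$-duals of stalks, shifted, and duality-stability of perfects. Fourth, $f_*=\D f_!\D$ and $f^!=\D f^*\D$ then follow.

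The main obstacle is making the ``locally perfect'' characterization rigorous for the equivariant derived category, where stalks only live in $\Db_\varpi(\pt,\bk)$. I would instead use the cleaner criterion: $\cF$ is $\varpi$-perfect if and only if $\can^{2N}\colon\cF[-2N](-N)\to\cF$ vanishes for some $N$. This criterion is natural and commutes with every functor that is linear over $\coh^\bullet_\Gm(\pt,\bk)$, which all of the six functors are. Thus the lemma reduces to proving this criterion, via noetherian induction on a stratification and the pointwise fact. I would cite \cite[\S3]{rw:st} for this.
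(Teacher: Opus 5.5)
There is a genuine gap. The formal parts of your plan are fine:
\begin{itemize}
\item $f^*$ and $\otimes^L$ stalkwise;
\item $\cRHom(\cF,\cG)\cong\D(\cF\otimes^L\D\cG)$;
\item $f_*$ and $f^!$ by conjugating with $\D$;
\item granting your criterion, the observation that any $\coh^\bullet_\Gm(\pt,\bk)$-linear functor preserves the vanishing of $(\can^2)^N\cdot\id$.
\end{itemize}
But neither of your routes proves the one non-formal step: that $\varpi$-perfectness is preserved by $\D$, or equivalently by $j_*$ or $j^!$ for the inclusion of a stratum.

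In the first route, step three is wrong as stated. We have $j_S^*\D\cF\cong\D_S(j_S^!\cF)$, so along a stratum $S$ the stalks of $\D\cF$ are duals of stalks of $j_S^!\cF$, not of $\cF$. You give no argument that $j_S^!$ preserves perfectness, and this does not follow ``the same way'' as your proper-base-change argument for $f_!$.

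In the second route, the direction ``$\cF$ $\varpi$-perfect $\Rightarrow$ $(\can^2)^N\cdot\id_\cF=0$'' carries all the content, and noetherian induction plus the pointwise fact does not prove it. The induction does reduce you to a single stratum: if $(\can^2)^a$ kills $j_!j^*\cF$ and $(\can^2)^b$ kills $i_*i^*\cF$, then $(\can^2)^{a+b}$ kills $\cF$. But on a positive-dimensional stratum, nilpotence on each stalk in $\Db_\varpi(\bar x,\bk)$ does not by itself control $\bEnd_{\Db_\Gm(S,\bk)}(\cF|_S)$. That is a global equivariant hypercohomology group of $\cRHom(\cF|_S,\cF|_S)$, so you need a local-to-global input. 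One such input is that pushforward to a point preserves $\varpi$-perfectness, combined with the finite-dimensionality criterion on a point. Your pointwise statement is also garbled: for perfect $M$, $\can^2$ acts \emph{nilpotently} on the finite-dimensional $\bEnd(M)$. It becomes invertible only in the Smith quotient, where $M\cong 0$.

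The missing input is precisely \cite[Lemma~3.6]{rw:st}, that $f_*$ preserves $\varpi$-perfectness, which is the paper's only geometric ingredient. With it, the rest is formal and shorter than establishing your criterion:
\begin{itemize}
\item For a closed immersion $f$ with open complement $j$, the triangle $f_*f^!\cF\to\cF\to j_*j^*\cF$ shows $f^!\cF$ is perfect.
\item For a point $i_{\bar x}$, the isomorphism $i_{\bar x}^*\D\cF\cong\D(i_{\bar x}^!\cF)$ together with $\Hom_\bk(\bk[\varpi],\bk)\cong\bk[\varpi]$ handles $\D$.
\item Then $f^!\cong\D f^*\D$ and $f_!\cong\D f_*\D$ finish the proof.
\end{itemize}
You could repair your criterion-based route by proving the criterion on a stratum where $\cF$ has locally constant cohomology. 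There, push $\cRHom(\cF|_S,\cF|_S)$ forward to a point and apply the point criterion. But that repair already uses the same black box.
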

\begin{proof}
For $f^*$, this claim is obvious from the definitions, and for $f_*$, this is~\cite[Lemma~3.6]{rw:st}. For the remaining functors, we will treat some special cases first.

\textit{Case 1.  $f^!$ for $f$ a closed immersion.}  Let $j: U \hookrightarrow Y$ be the complementary open immersion.  For any $\varpi$-perfect complex $\cF$, we have the distinguished triangle $f_*f^!\cF \to \cF \to j_*j^*\cF \to$.  Since the second and third terms are $\varpi$-perfect, so is $f_*f^!\cF$.  Since the stalks of $f^!\cF$ coincide with (some of) those of $f_*f^!\cF$, $f^!\cF$ is $\varpi$-perfect as well.

\textit{Case 2. $\D$ for $X = \Spec \F$.}
The Verdier dual of the free module $\bk[\varpi]$ is $\Hom_\bk(\bk[\varpi],\bk)$, which is again isomorphic (as a $\bk[\varpi]$-module) to $\bk[\varpi]$.

\textit{Case 3. $\D$ in general.}
Let $\cF$ be a $\varpi$-perfect complex on $X$.  We must show that $\D_X\cF$ is $\varpi$-perfect, i.e., that for every geometric point $i: \bar x \to X$, the object $i^*\D_X\cF$ is $\varpi$-perfect.  But the latter is isomorphic to $\D_{\Spec F} (i^!\cF)$, and this is $\varpi$-perfect by Cases~1 and~2.

\textit{Case 4. $f^!$ and $f_!$ in general.} These follow from Case~3 and the formulas $f^! \cong \D \circ f^* \circ \D$ and $f_! \cong \D \circ f_! \circ \D$.

\textit{Case 5. $\otimes^L$ for $X = \Spec \F$.}
If $M$ is any finitely-generated $\bk[\varpi]$-module, the tensor product $\bk[\varpi] \otimes_\bk M$ is a perfect $\bk[\varpi]$-module.  The claim follows.

\textit{Case 6. $\otimes^L$ in general.}  If $\cF$ is $\varpi$-perfect, then for any geometric point $i: \bar x \to X$, the stalk $(\cF \otimes^L \cG)_{\bar x} \cong \cF_{\bar x} \otimes^L \cG_{\bar x}$ is perfect by Case~5.

\textit{Case 7. $\cRHom$ in general.}  This follows from Cases~3 and~6 using the formula $\cRHom(\cF,\cG) \cong \D(\cF \otimes^L \D(\cG))$.
\end{proof}

As a consequence of Lemma~\ref{lem:sheaf-perfect}, all the usual sheaf functors induce corresponding functors on the Smith category.  For instance, it makes sense to take a (derived) tensor product with the isomorphism~\eqref{eqn:can2-sheaf}.  We obtain a natural isomorphism of functors 
\[
\id \cong [2](1)
\qquad\text{in $\Sm(X,\bk)$.}
\]

\subsection{Smith--Treumann localization}
\label{ss:st-defn}

In this subsection, let $X$ be a scheme or ind-scheme over $\F$ with an action of $\Gm$, but we do \emph{not} assume that $\varpi \subset \Gm$ acts trivially.  Let
\[
X^\varpi
\]
be the closed subscheme of fixed points for the subgroup $\varpi \subset \Gm$, and let $i: X^\varpi \hookrightarrow X$ be the inclusion map.  By~\cite[Lemma~3.5]{rw:st}, for any $\cF \in \Db_\Gm(X,\bk)$, the cone of the natural map $i^!\cF \to i^*\cF$ is $\varpi$-perfect.  Thus, this natural map becomes an isomorphism after passage to $\Sm(X^\varpi,\bk)$.  The \emph{Smith--Treumann localization functor} is the functor
\[
\Psm: \Db_\Gm(X,\bk) \to \Sm(X^\varpi,\bk)
\]
induced by either $i^!$ or $i^*$.

\subsection{The equivariant Smith category}
\label{ss:smith-equiv}

Let $H$ be an algebraic group over $\F$ equipped with an action of $\Gm$ by group automorphisms, so that it makes sense to form the semidirect product $\Gm \ltimes H$.  If $X$ is a scheme over $\F$ with an action of $\Gm \ltimes H$ such that $\varpi \subset \Gm$ acts trivially, then we say that an object $\cF \in \Db_{\Gm \ltimes H}(X,\bk)$ is \emph{$\varpi$-perfect} if its image in $\Db_\Gm(X,\bk)$ (after forgetting $H$-equivariance) is $\varpi$-perfect.  The $\varpi$-perfect objects form a full subcategory $\Db_{\Gm \ltimes H}(X,\bk)_\perf \subset \Db_{\Gm \ltimes H}(X,\bk)$.  We define the \emph{equivariant Smith category} to be the quotient
\[
\Sm_H(X,\bk) = \Db_{\Gm \ltimes H}(X,\bk)/\Db_{\Gm \ltimes H}(X,\bk)_\perf.
\]

The reader should beware that if $H$ is a \emph{nonunipotent} group, then $\Sm_H(X,\bk)$ is rather unwieldy: for instance, the constant sheaf $\underline{\bk}_X$ typically has an infinite-dimensional endomorphism ring, and the whole category can fail to be Krull--Schmidt.  Thus, $\Sm_H(X,\bk)$ is not well suited for computations or classification questions, and indeed, this version of the Smith category does not appear in~\cite{treu:smgha, ll:psst, rw:st}.  Nevertheless, it is occasionally useful in the present paper as a technical tool.

\subsection{The (twisted-)equivariant Smith category for a unipotent group}
\label{ss:tweq}

The situation for a \emph{unipotent} group is much better.  In this subsection, we let  $V$ be a connected unipotent algebraic group over $\F$ equipped with an action of $\Gm$ by group automorphisms.  If we let $\mu: V \times V \to V$ be the group operation on $V$, then $\mu$ is $\Gm$-equivariant.

Let $\cX$ be a $\Gm$-equivariant multiplicative $\bk$-local system on $V$.  This means that $\cX$ is a $\Gm$-equivariant local system on $V$ together with an isomorphism
\[
\mu^*\cX \cong \cX \boxtimes \cX
\]
satisfying an appropriate associativity condition.  The constant sheaf $\underline{\bk}_V$ canonically has the structure of a $\Gm$-equivariant multiplicative local system, but there may be others as well.  

Now let $X$ be a scheme over $\F$, equipped with an action of $\Gm \ltimes V$.  Let $a: V \times X \to X$ be the action of $V$ on $X$.  Let
\[
\Db_{\Gm \ltimes V, \cX}(X,\bk)
\]
denote the $(\Gm \ltimes V, \cX)$-equivariant derived category of $X$.  By definition, an object of this category is a pair $(\cF, \beta)$, where $\cF \in \Db_\Gm(X,\bk)$, and $\beta$ is an isomorphism $\beta: a^*\cF \cong \cX \boxtimes \cF$ satisfying an appropriate cocycle condition.  There is a forgetful functor
\[
\For: \Db_{\Gm \ltimes V,\cX}(X,\bk) \to \Db_\Gm(X,\bk)
\]
that identifies $\Db_{\Gm \ltimes V, \cX}(X,\bk)$ with a full subcategory of $\Db_\Gm(X,\bk)$.  (See~\cite[Proposition~A.2]{ar} for a proof of this without the $\Gm$-action; the same arguments apply here.)  Here are some observations about this category:
\begin{itemize}
\item The full subcategory $\Db_{\Gm \ltimes V,\cX}(X,\bk) \subset \Db_\Gm(X,\bk)$ is stable under truncation with respect to both the natural and perverse $t$-structures.  (To see this, observe that an isomorphism $\beta: a^*\cF \cong \cX \boxtimes \cF$ induces a similar isomorphism after taking ordinary or perverse cohomology.)
\item The full subcategory $\Db_{\Gm \ltimes V,\cX}(X,\bk) \subset \Db_\Gm(X,\bk)$ is thick, i.e., stable under direct summands.  (This follows from the criterion in~\cite[Proposition~A.5]{ar}.)
\end{itemize}

We would now like to define the \emph{$(V,\cX)$-equivariant Smith category} of $X$.  There are two reasonable approaches to this.  The first is to define
\[
\Sm_{V,\cX}(X,\bk) =
\begin{array}{c}
\text{the full triangulated subcategory of $\Sm(X,\bk)$ generated by}\\
\text{the image of $\Db_{\Gm \ltimes V,\cX}(X,\bk) \to \Db_\Gm(X,\bk) \to \Sm(X,\bk)$}
\end{array}
\]

The second approach is this: let $\Db_{\Gm \ltimes V,\cX}(X,\bk)_\perf$ be the full subcategory of $\Db_{\Gm \ltimes V,\cX}$ consisting of objects that are $\varpi$-perfect when regarded as objects of $\Db_\Gm(X,\bk)$.  Then one can form the Verdier quotient category
\[
\Db_{\Gm \ltimes V,\cX}(X,\bk)/ \Db_{\Gm \ltimes V,\cX}(X,\bk)_\perf.
\]
In~\cite[\S6.1]{rw:st}, this quotient is taken to be the definition of the $(V,\cX)$-equivariant Smith category.  The following lemma says that the two approaches agree.

\begin{lem}\label{lem:smith-2defns}
Let $V$, $\cX$, and $X$ be as above.  The natural functor
\[
\Db_{\Gm \ltimes V,\cX}(X,\bk)/ \Db_{\Gm \ltimes V,\cX}(X,\bk)_\perf \to \Sm_{V,\cX}(X,\bk)
\]
is an equivalence of categories.
\end{lem}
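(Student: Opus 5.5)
Write $\cD = \Db_{\Gm \ltimes V,\cX}(X,\bk)$, $\cD_\perf$ for its $\varpi$-perfect objects, and $\cC = \Db_\Gm(X,\bk)$.  The plan is to build a right adjoint to the inclusion $\For: \cD \hookrightarrow \cC$ that preserves $\varpi$-perfect objects, and then apply the standard criterion for Verdier quotients.  That criterion says: if $\cD \subset \cC$ is a full triangulated subcategory with a right adjoint $\Av$ to the inclusion, if $\Av(\cC_\perf) \subset \cD_\perf$, and if $\cD_\perf = \cD \cap \cC_\perf$ (which holds by definition), then $\cD/\cD_\perf \to \cC/\cC_\perf$ is fully faithful.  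Indeed, a morphism in $\cC/\cC_\perf$ between objects of $\cD$ is a roof $\cF \leftarrow \cF' \to \cG$ with $\varpi$-perfect cone.  Applying the counit-adjunction, this roof can be replaced by $\cF \leftarrow \Av\cF' \to \cG$.  The cone of $\Av\cF' \to \Av\cF = \cF$ is $\Av$ of a perfect object, hence lies in $\cD_\perf$, and the same argument handles equivalences of roofs.  Once full faithfulness is established, the essential image is a triangulated subcategory containing the image of $\cD$.  Since $\Sm_{V,\cX}(X,\bk)$ is by definition the triangulated subcategory generated by that image, the functor is essentially surjective onto it, and hence an equivalence.

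The averaging functor comes from the usual construction.  Let $a, \mathrm{pr}: V \times X \to X$ be the action map and the projection; both are $\Gm$-equivariant because $\Gm$ acts on $V$ by group automorphisms.  Set
\[
\Av(\cF) = a_*\bigl(\cX^{-1} \boxtimes \cF\bigr)[2\dim V],
\]
or more precisely the version that uses $a_!$ with the appropriate shift, chosen so that it is right adjoint to $\For$.  For $V$ unipotent (so $V \cong \mathbb{A}^n$ as a variety), this is the standard fact that twisted averaging is adjoint to the fully faithful forgetful functor; see~\cite[Appendix~A]{ar}.  The arguments there are formal and compatible with the extra $\Gm$-equivariance, since every map involved is $\Gm$-equivariant.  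The step that $\Av$ preserves $\varpi$-perfectness then follows from Lemma~\ref{lem:sheaf-perfect}.  Pulling back along $\mathrm{pr}$ and tensoring with the local system $\cX^{-1}$ preserve $\varpi$-perfectness by parts (2) and (3) of that lemma: for $\otimes^L$ it suffices that one factor is perfect.  Likewise $a_*$ and $a_!$ preserve it by part (3).  For this we need $\varpi$ to act trivially on $V \times X$, or at least on the relevant space.  If $\varpi$ does not act trivially on $V$, I would instead factor the construction through the quotient $(V \times X)/V \cong X$ via the isomorphism $(v,x) \mapsto (v, vx)$, so that $a$ becomes a projection.

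The main obstacle I expect is precisely the $\Gm$-equivariance of $\Av$ in the presence of nontrivial $\Gm$-action on $V$, together with the hypothesis in Lemma~\ref{lem:sheaf-perfect} that $\varpi$ act trivially.  My first attempt is to note that perfectness is a stalkwise condition on $X$.  I would then compute the stalks of $\Av\cF$ by proper base change as compactly supported cohomology of $V$ with coefficients in $\cX^{-1} \otimes a^*\cF$.  To see that this is $\varpi$-perfect, I would filter $V$ by $\Gm$-stable subgroups with successive quotients $\Ga$ and reduce to $V = \Ga$, which amounts to checking fibres along the single affine line.  If that becomes messy, the fallback is to work only over the $\varpi$-fixed part, where everything is trivial, and invoke Lemma~\ref{lem:sheaf-perfect} directly.
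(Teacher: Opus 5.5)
Your proposal is essentially the paper's proof. The paper also takes the right adjoint $\Av_*$ of $\For$ from \cite[Lemma~A.3]{ar}, uses its explicit formula together with Lemma~\ref{lem:sheaf-perfect} to see that $\Av_*$ of a $\varpi$-perfect object is $\varpi$-perfect, deduces full faithfulness from the criterion \cite[Proposition~1.6.10]{ks} (which your roof argument reproves), and gets essential surjectivity from the definition of $\Sm_{V,\cX}(X,\bk)$.

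Two small points. First, the right adjoint is the $a_*$ version with no shift; the $a_!$ version with $[2\dim V]$ is the left adjoint. Second, the paper simply applies Lemma~\ref{lem:sheaf-perfect} on $V \times X$, tacitly using that $\varpi$ acts trivially on $V$, as it does in the paper's applications. Your change of variables $(v,x) \mapsto (v,vx)$ would not by itself handle the case you worry about, since it is equivariant for the same diagonal $\Gm$-action.
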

\begin{proof}
The image of this functor generates $\Sm_{V,\cX}(X,\bk)$ by the definition of the latter, so it is enough to show that the functor is fully faithful.  To this, we will use the criterion from~\cite[Proposition~1.6.10]{ks}.  Suppose we have a morphism
\[
\phi: \cF \to \cH
\]
in $\Db_\Gm(X,\bk)$, where $\cF$ is $(\Gm \ltimes V,\cX)$-equivariant, and $\cH$ is $\varpi$-perfect.  To apply~\cite[Proposition~1.6.10]{ks}, we must show that $\phi$ factors through an object $\cH'$ that is both $(\Gm \ltimes V,\cX)$-equivariant and $\varpi$-perfect.

By~\cite[Lemma~A.3]{ar}, the forgetful functor $\For: \Db_{\Gm \ltimes V,\cX}(Y,\bk) \to \Db_\Gm(Y,\bk)$ has a right adjoint, which we denote by $\Av_*$.  By adjunction, $\phi$ must factor through $\Av_*\cH$.  Moreover, the explicit formula for $\Av_*$ in~\cite[Lemma~A.3]{ar}, together with Lemma~\ref{lem:sheaf-perfect}, shows that $\Av_*\cH$ is $\varpi$-perfect.
\end{proof}

\subsection{The Smith category of an orbit}
\label{ss:smith-orbit}

Let $V$ and $\cX$ be as in Section~\ref{ss:tweq}.  Let $X$ be a variety over $\F$ with an action of $\Gm \ltimes V$.  In this subsection, we assume that:
\begin{itemize}
\item The group $V$ acts transitively on $X$.
\item The group $\Gm$ has a fixed point on $X$.
\end{itemize}

\begin{lem}\label{lem:smith-orbit}
Let $V$, $\cX$, and $X$ be as above.  Then exactly one of the following holds:
\begin{enumerate}
\item There are no nonzero $(\Gm \ltimes V, \cX)$-equivariant complexes on $X$, and we have $\Db_{\Gm \ltimes V,\cX}(X,\bk) = \Sm_{V,\cX}(X,\bk) = 0$.\label{it:tweq-zero}
\item There is up to isomorphism a unique irreducible $(\Gm \ltimes V,\cX)$-equivariant local system $\cL_X$ on $X$.  Moreover, there are equivalences of categories\label{it:tweq-nonzero}
\begin{align*}
\Db_{\Gm \ltimes V,\cX}(X,\bk) &\cong \Db_{\Gm}(\pt,\bk), \\
\Sm_{V,\cX}(X,\bk) &\cong \Sm(\pt,\bk)
\end{align*}
that send $\cL_X$ to $\underline{\bk}_\pt$.
\end{enumerate}
\end{lem}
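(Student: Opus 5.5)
Fix a $\Gm$-fixed point $x_0 \in X$, and let $V_{x_0}$ be its stabilizer in $V$. Since $\Gm$ fixes $x_0$ and normalizes $V$, it normalizes $V_{x_0}$, so $\Gm \ltimes V_{x_0}$ is the stabilizer of $x_0$ in $\Gm \ltimes V$. Because $V$ acts transitively, $X \cong V/V_{x_0}$ and $X \cong (\Gm \ltimes V)/(\Gm \ltimes V_{x_0})$ $\Gm$-equivariantly. I would then use the standard induction equivalence for the derived categories defined by pairs $(\cF,\beta)$ as in Section~\ref{ss:tweq}: restriction to $x_0$ identifies $\Db_{\Gm \ltimes V,\cX}(X,\bk)$ with $\Db_{\Gm \ltimes V_{x_0},\cX|_{V_{x_0}}}(\pt,\bk)$.

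Next I would analyze $\cX|_{V_{x_0}}$. The stabilizer $V_{x_0}$ is a closed subgroup of a unipotent group, hence unipotent; I would pass to its identity component, or argue directly if it is connected, as it is in the applications (orbits of pro-unipotent groups). A multiplicative local system on a unipotent group is either trivial or nontrivial. If $\cX|_{V_{x_0}}$ is nontrivial, then for any object $\cF$ the stalk at $x_0$ carries compatible isomorphisms $\cF_{x_0} \cong \cX_v \otimes \cF_{x_0}$ for $v \in V_{x_0}$, forcing $\cF_{x_0}=0$; by transitivity $\cF=0$. This gives case~\eqref{it:tweq-zero}, and the Smith category, being generated by the image of $0$, also vanishes.

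If instead $\cX|_{V_{x_0}}$ is trivialized (compatibly with $\Gm$; the trivialization is unique since $\Gm$ is connected), the category becomes $\Db_{\Gm \ltimes V_{x_0}}(\pt,\bk)$. Since $V_{x_0}$ is unipotent, hence contractible, forgetting $V_{x_0}$-equivariance gives an equivalence with $\Db_\Gm(\pt,\bk)$, in the same way as in the proof of Lemma~\ref{lem:jnu-coh}. Then $\cL_X$ is the object corresponding to $\underline{\bk}_\pt$, and it is the unique irreducible equivariant local system because $\Db_\Gm(\pt)$ has unique simple heart object $\bk$. Exactly one alternative holds since the two cases are distinguished by $\cX|_{V_{x_0}}$.

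For the Smith statement, I would use Lemma~\ref{lem:smith-2defns} to rewrite $\Sm_{V,\cX}(X,\bk)$ as the quotient of $\Db_{\Gm\ltimes V,\cX}(X,\bk)$ by its $\varpi$-perfect objects. An object is $\varpi$-perfect iff its stalk at $x_0$ is, because all stalks are $\Gm$-equivariantly related by the $V$-action twisted by $\cX$ (the main technical point is that $\varpi$-perfectness is transported along the orbit; since $\varpi$ acts trivially on $X$, I would use the $\Gm \ltimes V$ action on $X \times X$ to connect stalks). Thus the equivalence above matches the perfect subcategories, and the Verdier quotients give $\Sm_{V,\cX}(X,\bk) \cong \Sm(\pt,\bk)$, sending $\cL_X$ to $\underline{\bk}_\pt$.

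I expect the main obstacle to be the careful justification of the equivariant induction equivalence in the twisted, $\Gm$-semidirect setting, including any disconnectedness of $V_{x_0}$.
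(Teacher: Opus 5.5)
Your proposal is correct for connected stabilizers (see the caveats below). Its first step is the paper's: restriction to a $\Gm$-fixed point $x$ identifies $\Db_{\Gm\ltimes V,\cX}(X,\bk)$ with $\Db_{\Gm\ltimes V_x,\cX'}(\pt,\bk)$. After that the two routes diverge.

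The paper never looks at $\cX'=\cX|_{V_x}$. It views the stalk category as a full triangulated subcategory of $\Db_\Gm(\pt,\bk)$ that is stable under truncation and direct summands. Such a subcategory is either $0$ or contains $\underline{\bk}_\pt$, and in the latter case it is everything, because $\underline{\bk}_\pt$ generates $\Db_\Gm(\pt,\bk)$. The dichotomy and the uniqueness of $\cL_X$ therefore come for free.

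You instead split on whether $\cX'$ is trivial. In the nontrivial case you kill all objects by a monodromy argument; in the trivial case you forget the contractible $V_{x_0}$-equivariance. This costs a little more. In particular, the vanishing argument must use the isomorphism of complexes $\underline{\cF_{x_0}}\cong\cX'\otimes\cF_{x_0}$ on all of $V_{x_0}$; pointwise isomorphisms $\cF_{x_0}\cong\cX'_v\otimes\cF_{x_0}$ always exist and carry no information. In return you get an explicit criterion for which case occurs, namely triviality of $\cX$ on the stabilizer. This is the kind of criterion the paper invokes separately for Iwahori--Whittaker orbits (stabilizer contained in $\ker\chi$).

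For the Smith statement you are more explicit than the paper, which simply says it follows. Going through Lemma~\ref{lem:smith-2defns} and showing that $\varpi$-perfectness is detected at $x_0$ is the right way to fill this in.

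\textbf{Caveat 1: connectedness.} Your fallback of passing to the identity component of $V_{x_0}$ is not legitimate, because the component group contributes extra irreducible equivariant local systems. For example, let $\Ga$ act on $\mathbb{A}^1$ by $t\cdot x=x+t^\ell-t$, with trivial $\Gm$-action and $\cX$ constant. The stabilizer is $\F_\ell$, and there are $\ell$ pairwise non-isomorphic rank-one equivariant local systems, the Artin--Schreier sheaves $\mathcal{L}_\psi$ for $\psi\in\mathrm{Hom}(\F_\ell,\bk^\times)$. So neither alternative of the lemma holds there, and connectedness of $V_{x_0}$ is genuinely needed. The paper uses it tacitly when it calls the stalk category a full subcategory of $\Db_\Gm(\pt,\bk)$, and it holds in all the applications.

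\textbf{Caveat 2: transporting $\varpi$-perfectness.} The restriction of $\beta$ to $(v,x_0)$ is $\varpi$-equivariant only if $v$ is $\varpi$-fixed, so the identification of stalks along the orbit needs a little care. The fix is as follows.
\begin{enumerate}
\item Since $\varpi$ acts trivially on $X$, it preserves the coset $vV_{x_0}$.
\item Since $\varpi$ has order prime to $\ell$ and $V_{x_0}$ is unipotent, this coset contains a $\varpi$-fixed point $v'$.
\item This gives a $\varpi$-equivariant isomorphism $\cF_{vx_0}\cong\cX_{v'}\otimes\cF_{x_0}$.
\item The line $\cX_{v'}$ is a trivial $\varpi$-module, since $\varpi$ is a $p$-group and $\mathrm{char}\,\bk=p$.
\end{enumerate}
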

\begin{proof}
Let $x$ be a fixed point for the $\Gm$-action on $X$, and let $V_x$ be its (reduced) stabilizer in $V$.  Let $\cX'$ be the restriction of $\cX$ to $V_x$.  Then there is an equivalence of categories
\begin{equation}\label{eqn:tweq-point}
\Db_{\Gm \ltimes V, \cX}(X,\bk) \cong \Db_{Gm \ltimes V_x,\cX'}(\pt,\bk)
\end{equation}
that sends any object $\cF \in \Db_{\Gm \ltimes V, \cX}(X,\bk)$ to its stalk $\cF_x$.

The right-hand side of~\eqref{eqn:tweq-point} is a full triangulated subcategory of $\Db_\Gm(\pt,\bk)$.  Since $\Db_{Gm \ltimes V_x,\cX'}(\pt,\bk)$ is closed under truncation and direct summands, if it contains any nonzero object at all, it must contain the constant sheaf $\underline{\bk}_\pt$.  The lemma follows from this.
\end{proof}

The next statement follows by transferring~\cite[Lemma~3.9]{rw:st} across the equivalence of categories in Lemma~\ref{lem:smith-orbit}\eqref{it:tweq-nonzero}. 

\begin{lem}\label{lem:perfect-crit}
In the setting of Lemma~\ref{lem:smith-orbit}\eqref{it:tweq-nonzero}, an object $\cF \in \Db_{\Gm \ltimes V,\cX}(X,\bk)$ is $\varpi$-perfect if and only if the graded vector space $\bHom(\cL_X,\cF)$ is finite-dimensional.
\end{lem}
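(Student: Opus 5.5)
The plan is to transfer the point case across the equivalence of Lemma~\ref{lem:smith-orbit}\eqref{it:tweq-nonzero}. Fix a $\Gm$-fixed point $x \in X$ and let $V_x$ be its stabilizer. The equivalence $\Db_{\Gm \ltimes V,\cX}(X,\bk) \cong \Db_\Gm(\pt,\bk)$ is given by taking the stalk at $x$, and it sends $\cL_X$ to $\underline{\bk}_\pt$.

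The first step is to check that this equivalence respects the relevant notions. Since $V$ acts transitively and $\cF$ is $(V,\cX)$-equivariant, every stalk $\cF_{\bar z}$ is isomorphic, as an object of $\Db_\varpi(\bar z,\bk)$, to the stalk $\cF_x$, up to twisting by the rank-one stalk of $\cX$. Here one uses that $\varpi \subset \Gm$ acts trivially on $X$ and that translating by an element of $V$ is compatible with the $\varpi$-structure; because $V$ is connected, the $\varpi$-action on the stalk of $\cX$ is trivial. Consequently $\cF$ is $\varpi$-perfect if and only if its stalk $\cF_x$ is $\varpi$-perfect, which is exactly the condition that the corresponding object of $\Db_\Gm(\pt,\bk)$ is $\varpi$-perfect.

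The second step is to compare $\bHom$ spaces: full faithfulness of the equivalence identifies $\bHom(\cL_X,\cF)$ with $\bHom_{\Db_\Gm(\pt,\bk)}(\underline{\bk}_\pt,\cF_x)$. The statement then reduces to~\cite[Lemma~3.9]{rw:st}, which says that $\cG \in \Db_\Gm(\pt,\bk)$ is $\varpi$-perfect if and only if $\bHom(\underline{\bk}_\pt,\cG)$, its $\Gm$-equivariant cohomology, is finite-dimensional.

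If a self-contained argument for the point case is wanted, one can argue directly. The cone of $\can^2$ is $\varpi$-perfect and $\Db_\Gm(\pt,\bk)$ is generated by $\underline{\bk}_\pt$ under shifts and cones, so $\varpi$-perfect objects have finite-dimensional total equivariant cohomology, because that of $\bk[\varpi]$ is. Conversely, if $\bHom(\underline{\bk}_\pt,\cG)$ is finite-dimensional, then $\can^2$ acts nilpotently on it, and it follows that $\cG$ lies in the thick subcategory generated by cones of $\can^2$, so $\cG$ is $\varpi$-perfect.

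I expect the main obstacle to be the first step, namely verifying that $\varpi$-perfectness of $\cF$ is detected by the single stalk at $x$. This depends on the triviality of the $\varpi$-action on the stalks of $\cX$, so I would make that point explicit.
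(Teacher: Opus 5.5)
Your proposal is correct and follows the paper's own argument, which simply transfers \cite[Lemma~3.9]{rw:st} across the stalk-at-$x$ equivalence of Lemma~\ref{lem:smith-orbit}\eqref{it:tweq-nonzero}. Your additional check that $\varpi$-perfectness of $\cF$ is detected by the single stalk $\cF_x$ is left implicit in the paper, with one caveat: translation by $v$ gives a $\varpi$-equivariant identification of stalks only for $v \in V^\varpi$. This is harmless, since $\varpi$ acts trivially on the unipotent groups in all of the paper's applications, and in general $V^\varpi$ still acts transitively on $X$ because $p \ne \ell$.
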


For $V$, $\cX$, and $X$ as above, we define the notion of \emph{Smith-even object} in the category $\Sm_{V,\cX}(X,\bk)$ as follows:
\begin{itemize}
\item In the situation of Lemma~\ref{lem:smith-orbit}\eqref{it:tweq-zero}, we declare all objects of $\Sm_{V,\cX}(X,\bk)$ to be \emph{Smith-even}.  (Of course, all objects are zero!)
\item In the situation of Lemma~\ref{lem:smith-orbit}\eqref{it:tweq-nonzero}, we declare an object $\cF \in \Sm_{V,\cX}(X,\bk)$ to be \emph{Smith-even} if it is isomorphic to a (finite) direct sum of copies of $\cL_X$.
\end{itemize}
In both cases, we say that $\cF \in \Sm_{V,\cX}(X,\bk)$ is \emph{Smith-odd} if $\cF[1]$ is odd.  A \emph{Smith parity object} is an object that can be written as a direct sum of a Smith-even object and a Smith-odd object.  The full subcategory of $\Sm_{V,\cX}(X,\bk)$ consisting of Smith parity objects is denoted by
\[
\SmParity_{V,\cX}(X,\bk).
\]
Of course, in the usual derived category $\Db_{\Gm\ltimes V,\cX}(X,\bk)$, we have the ``classical'' notions of \emph{even}, \emph{odd}, and \emph{parity} from~\cite{jmw:ps}.  (In the situation of Lemma~\ref{lem:smith-orbit}\eqref{it:tweq-nonzero}, an even object is a (finite) direct sum of objects of the form $\cL_X[2m]$.)  The full subcategory of parity sheaves is denoted by
\[
\Parity_{\Gm \ltimes V,\cX}(X,\bk) \subset \Db_{\Gm \ltimes V,\cX}(X,\bk).
\]
It is obvious that the quotient functor $\rQ$ sends even (resp.~odd) objects to Smith-even (resp.~Smith-odd) objects, so it restricts to a functor
\[
\rQ: \Parity_{\Gm \ltimes V,\cX}(X,\bk) \to \SmParity_{V,\cX}(X,\bk).
\]
However, it is possible for on object $\cF \in \Db_{\Gm \ltimes V,\cX}(X,\bk)$ to become Smith-even under $\rQ$ without being even.  The following lemma gives us some control over the behavior of such $\cF$.

\begin{lem}\label{lem:smeven-crit}
In the setting of Lemma~\ref{lem:smith-orbit}\eqref{it:tweq-nonzero}, an object $\cF \in \Db_{\Gm \ltimes V,\cX}(X,\bk)$ is Smith-even if and only if the graded vector space $\bHom(\cL_X,\cF)$ has only finitely many nonzero graded components in odd degrees.  Moreover, in this case, there exists an even complex $\cE \in \Db_{\Gm\ltimes V,\cX}(X,\bk)$ and a morphism
\[
\phi: \cE \to \cF
\]
such that $\rQ(\phi)$ is an isomorphism in $\Sm_{V,\cX}(X,\bk)$.
\end{lem}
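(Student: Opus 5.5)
Via the equivalence of Lemma~\ref{lem:smith-orbit}\eqref{it:tweq-nonzero}, I will transfer everything to a point. Under that equivalence $\Db_{\Gm \ltimes V,\cX}(X,\bk) \cong \Db_\Gm(\pt,\bk)$, with $\cL_X \mapsto \underline{\bk}_\pt$, and $\varpi$-perfectness corresponds by Lemma~\ref{lem:perfect-crit}. So it suffices to prove both claims for $\cF \in \Db_\Gm(\pt,\bk)$, with $\bHom(\underline{\bk}_\pt,\cF) = \coh^\bullet_\Gm(\pt,\cF)$ a finitely generated graded module over $R = \coh^\bullet_\Gm(\pt,\bk) = \bk[\can^2]$ (polynomial in a degree-$2$ generator, up to Tate twist). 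The main tool is the structure theory of such modules: since $\Db_\Gm(\pt,\bk)$ is equivalent to a category of dg-modules over $R$, the formality of $R$ lets me decompose $\cF$, up to isomorphism, as a finite direct sum of shifts of $\underline{\bk}_\pt$ and of cones $C_m$ of the maps $(\can^2)^m : \underline{\bk}_\pt[-2m] \to \underline{\bk}_\pt$ (twists suppressed). Equivalently, $\bHom(\underline{\bk}_\pt,\cF)$ is a sum of free modules $R[n]$ and torsion modules $R/(\can^{2m})[n]$, and objects are classified by these modules. Establishing this classification, or at least enough of it to produce maps, is the step I expect to be the main obstacle. I would do it by induction on the length of a truncation filtration, using that $R$ has global dimension one, so that extensions split up to torsion summands.

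Next I check that each torsion summand is $\varpi$-perfect: its $\bHom$ is finite-dimensional, so Lemma~\ref{lem:perfect-crit} applies. In $\Sm$ the free summand $\underline{\bk}_\pt[n]$ survives, and since $[2] \cong \id$ there, it becomes isomorphic to $\cL_X$ or $\cL_X[1]$ according to the parity of $n$. Hence $\rQ(\cF)$ is a sum of $\cL_X^{\oplus a}$ and $\cL_X[1]^{\oplus b}$, where $b$ is the number of free summands in odd degree. This object is Smith-even iff $b = 0$; note that $\cL_X[1] \not\cong$ a sum of copies of $\cL_X$, by Lemma~\ref{lem:sm-point-endo}, since odd Homs vanish. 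Free summands in odd degree contribute infinitely many nonzero odd graded components to $\bHom$, while torsion summands contribute only finitely many. This proves the criterion. The only subtle point is to make sure that Smith-evenness is intrinsic, i.e.\ not affected by perfect summands; this holds because $\rQ$ kills exactly those.

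For the second claim, assuming $\cF$ is Smith-even, I take $\cE$ to be the sum of the free summands $\underline{\bk}_\pt[n]$. All of these have $n$ even, so $\cE$ is an even complex. I take $\phi : \cE \to \cF$ to be the split inclusion. Its cone is the sum of the $C_m$, which is $\varpi$-perfect, so $\rQ(\phi)$ is an isomorphism. Finally, I transport $\cE$ and $\phi$ back to $X$ through the equivalence, so that $\cE$ becomes a sum of copies of $\cL_X[2k]$, which is even in the sense of~\cite{jmw:ps}.
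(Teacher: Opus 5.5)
Your argument is correct, but it takes a heavier route than the paper. You first classify every object of $\Db_\Gm(\pt,\bk)$ as a finite sum of shifts of $\underline{\bk}_\pt$ and of the cones $C_m$, and then read off both claims from that decomposition.

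The classification is true, but it is genuine extra input. The clean proof uses a dg-enhancement identifying $\Db_\Gm(\pt,\bk)$ with perfect dg-modules over $\coh^\bullet_\Gm(\pt,\bk)$ (via intrinsic formality of a polynomial ring on one even generator), together with formality of dg-modules over a hereditary graded ring. Your suggested truncation induction is not automatic. The functor $\bHom(\underline{\bk}_\pt,-)$ is not faithful: for example, the nonzero connecting map $C_1\to\underline{\bk}_\pt[-1]$ induces zero on it for parity reasons. So splittings of modules do not directly lift to splittings of objects.

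The paper never decomposes $\cF$.
\begin{itemize}
\item For the forward direction, it represents $\rQ(\cF)\cong\cL_X^{\oplus n}$ by a roof $\cF\leftarrow\cF'\rightarrow\cL_X^{\oplus n}$ with $\varpi$-perfect cones. It then uses Lemma~\ref{lem:perfect-crit} to see that $\bHom(\cL_X,-)$ of $\cF$ and of $\cL_X^{\oplus n}$ agree outside finitely many degrees.
\item For the converse, it splits only the module $M=\bHom(\cL_X,\cF)=M_{\mathrm{tor}}\oplus M'$. It then uses the tautology that elements of $M^{2N}$ are morphisms $\cL_X[-2N]\to\cF$. The resulting $\phi\colon\cL_X[-2N]\otimes M^{2N}\to\cF$ is injective on $\bHom(\cL_X,-)$ with finite-dimensional cokernel, so its cone is $\varpi$-perfect by Lemma~\ref{lem:perfect-crit}.
\end{itemize}

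So the step you flag as the main obstacle can be bypassed. You only need a map from an even object that is injective with finite cokernel on $\bHom(\cL_X,-)$, not a splitting of $\cF$. Taking a homogeneous basis $m_i$ of $M'$ and $\phi=\sum m_i\colon\bigoplus\cL_X[-2d_i]\to\cF$ would also do.

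Your route gives more: $\phi$ is split, so $\cF\cong\cE\oplus(\text{perfect})$ already in $\Db$. The paper's route is self-contained, needing only the structure theorem over the graded PID $\coh^\bullet_\Gm(\pt,\bk)$ and Lemma~\ref{lem:perfect-crit}.
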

\begin{proof}
Suppose $\cF$ is Smith-even, so that it isomorphic in $\Sm_{V,\cX}(X,\bk)$ to $\cL_X^{\oplus n}$.  By the definition of the Smith category, there exist maps
\[
\cF \xleftarrow{f} \cF' \xrightarrow{g} \cL_X^{\oplus n}
\]
such that the cones of $f$ and $g$ are both $\varpi$-perfect.  By Lemma~\ref{lem:perfect-crit}, the maps
\[
\bHom(\cL_X,\cF') \xrightarrow{f} \bHom(\cL_X,\cF)
\qquad\text{and}\qquad
\bHom(\cL_X,\cF') \xrightarrow{g} \bHom(\cL_X,\cL_X^{\oplus n})
\]
must both be isomorphisms in all but finitely many grading degrees.  Since the graded vector space $\bHom(\cL,\cL_X^{\oplus_n})$ is concentrated in even degrees, there can only be finitely many odd degrees in which $\bHom(\cL_X,\cF)$ is nonzero.

Conversely, suppose $\bHom(\cL_X,\cF)$ has only finitely many nonzero graded components in odd degrees.  For brevity, write $M = \bHom(\cL_X,\cF)$.  Then $M$ is a finitely generated graded module over the ring $\bEnd(\cL_X) \cong \coh^\bullet_\Gm(\pt,\bk)$, which is of course a polynomial ring on the generator $\can^2$, and thus a principal ideal domain.  

We can decompose $M$ as $M = M_{\mathrm{tor}} \oplus M'$, where $M_{\mathrm{tor}}$ is the graded submodule of torsion elements, and $M'$ is a free graded $\coh^\bullet_\Gm(\pt,\bk)$-module.  Since $M_{\mathrm{tor}}$ is a finitely generated torsion module over $\coh^\bullet_{\Gm}(\pt,\bk)$, its total dimension is finite.  Our assumptions imply that all homogeneous elements in odd degrees are torsion, so $M'$ is concentrated in even degrees.  Choose a homogeneous basis $m_1, \ldots, m_r$ for $M'$, and suppose these elements live in degrees $2d_1, \ldots, 2d_r$.

Choose an even integer $2N$ such that $2N \ge 2d_1, \ldots, 2d_r$, and also such that $M_{\mathrm{tor}}$ is concentrated in degrees${}<2N$.  Then the graded component $M^{2N}$ is equal to $(M')^{2N}$, and is spanned by the elements
\[
(\can^2)^{N-d_1}\cdot m_1, \ldots, (\can^2)^{N-d_r} \cdot m_r.
\]
From the structure theory of $\coh^\bullet_\Gm(\pt,\bk)$-modules, one checks that the action map
\begin{equation}\label{eqn:smeven-hom1}
\coh^\bullet_\Gm(\pt,\bk) \otimes M^{2N} \to M
\end{equation}
is injective, and an isomorphism in degrees${}\ge 2N$.  Its cokernel lives in degrees${}<2N$ and is finite-dimensional.

Recall that $M^{2N} = \Hom(\cL_X[-2N], \cF)$, and let
\[
\cE = \cL_X[-2N] \otimes M^{2N} = \cL_X[-2N] \otimes \Hom(\cL_X[-2N], \cF).
\]
We have a canonical map $\phi: \cE \to \cF$.  The induced map
\begin{equation}\label{eqn:smeven-hom}
\bHom(\cL_X,\cE) \to \bHom(\cL_X,\cF)
\end{equation}
can be identified with~\eqref{eqn:smeven-hom1}.  In particular,~\eqref{eqn:smeven-hom} is injective, and its cokernel is finite-dimensional.  That cokernel is identified with $\bHom(\cL_X, \text{cone of $\phi$})$.  By Lemma~\ref{lem:perfect-crit}, we deduce that $\phi$ becomes an isomorphism after passing to the Smith category.  Since $\cE$ is even, we conclude that $\cF$ is Smith-even.
\end{proof}

For $\cF, \cG \in \Parity_{\Gm \ltimes V,\cX}(X,\bk)$, consider the graded vector space $\bHom(\cF,\cG)$ and its even part $\bHomev(\cF,\cG)$.  As usual for parity sheaves, the first equivalence in Lemma~\ref{lem:smith-orbit}\eqref{it:tweq-nonzero} implies that $\bHomev(\cF,\cG)$ is a free module over $\coh^\bullet_\Gm(\pt,\bk)$ of finite rank.

For the following statement, observe that the functor $\rQ: \Db_{\Gm \ltimes V,\cX}(X,\bk) \to \Sm_{V,\cX}(X,\bk)$ sends parity sheaves to parity sheaves.

\begin{lem}\label{lem:sphom-orbit}
For $V$, $\cX$, and $X$ as above, the functor
\[
\rQ : \Parity_{\Gm \ltimes V,\cX}(X,\bk) \to \SmParity_{V,\cX}(X,\bk)
\]
is essentially surjective.  Moreover, the natural map
\begin{equation}\label{eqn:spho1}
\bHomev(\cF, \cG) \to \Hom(\rQ(\cF), \rQ(\cG))
\end{equation}
induces an isomorphism
\[
\coh^\bullet_\Gm(\pt,\bk)/(\can^2 - 1) \otimes_{\coh^\bullet_\Gm(\pt,\bk)} \bHomev(\cF,\cG) \simto \Hom(\rQ(\cF), \rQ(\cG)).
\]
\end{lem}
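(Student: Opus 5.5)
Everything will be moved to a point. By Lemma~\ref{lem:smith-orbit}\eqref{it:tweq-nonzero} (case~\eqref{it:tweq-zero} is trivial), $\Db_{\Gm\ltimes V,\cX}(X,\bk)\cong\Db_\Gm(\pt,\bk)$ and $\Sm_{V,\cX}(X,\bk)\cong\Sm(\pt,\bk)$, with $\cL_X\mapsto\underline{\bk}_\pt$. These equivalences are compatible with $\rQ$, since $\varpi$-perfectness is detected on the stalk at the $\Gm$-fixed point. So it suffices to treat $X=\pt$ with $\cL_X=\underline{\bk}_\pt$.

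Essential surjectivity is immediate. A Smith-even object is by definition isomorphic in $\Sm$ to $\cL_X^{\oplus n}=\rQ(\cL_X^{\oplus n})$, and $\cL_X^{\oplus n}$ is even. Smith-odd objects are handled the same way after a shift, and a general Smith parity object is a sum of these.

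For the Hom statement, both sides are additive in $\cF$ and $\cG$, and every parity sheaf is a finite sum of shifts $\underline{\bk}_\pt[m]$. So it reduces to $\cF=\underline{\bk}_\pt$ and $\cG=\underline{\bk}_\pt[m]$.

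\textbf{Case $m$ even.} The left side is $\bHomev=\coh^\bullet_\Gm(\pt,\bk)$, regraded, a free rank-one module. Tensoring with $\coh^\bullet_\Gm/(\can^2-1)$ gives a one-dimensional space, ignoring Tate twists. By Lemma~\ref{lem:sm-point-endo}, $\Hom_{\Sm}(\underline{\bk},\underline{\bk}[m])\cong\bk(-m/2)$ is also one-dimensional.

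\textbf{Case $m$ odd.} The left side is $\bHomev=0$, and the right side is $0$ by Lemma~\ref{lem:sm-point-endo}.

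It remains to check, for $m$ even, that the map is well defined and nonzero. Under $\rQ$, the element $\can^2:\underline{\bk}[-2](-1)\to\underline{\bk}$ becomes the isomorphism~\eqref{eqn:can2-sheaf}. It is used to identify $\Hom(\rQ\cF,\rQ\cG[2n])$ with $\Hom(\rQ\cF,\rQ\cG)$, which is how the even-degree pieces of $\bHomev$ are collected into a single Hom space. Under this identification, precomposition with $\can^2$ and the identity induce the same map. Hence $(\can^2-1)\bHomev$ lies in the kernel, and the map factors through the quotient.

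The factored map sends the generator $\id$ to $\rQ(\id)=\id\neq0$. A nonzero map between one-dimensional spaces is an isomorphism.

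\textbf{Main obstacle.} The delicate point is making this identification precise, including the Tate twists. The statement is really about the graded-by-twist Hom collapsing via $\can^2$, so one must use $\id\cong[2](1)$ consistently and verify that multiplication by $\can^2$ on $\bHom$ corresponds to this isomorphism. I would do this by checking it directly on the generator $\can^2\in\coh^2_\Gm(\pt)(1)$, using the description of its image in $\Ext^2_{\bk[\varpi]}$. Naturality in $\cF$ and $\cG$ then extends the result to finite sums.
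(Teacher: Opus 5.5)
Your proposal is correct and follows essentially the same route as the paper. Both arguments reduce via Lemma~\ref{lem:smith-orbit} to $\rQ:\Parity_\Gm(\pt,\bk)\to\SmParity(\pt,\bk)$, dispose of the mixed-parity case by Lemma~\ref{lem:sm-point-endo}, and in the even case show that $\can^2-1$ lies in the kernel while the one-dimensional quotient $\coh^\bullet_\Gm(\pt,\bk)/(\can^2-1)$ maps nontrivially. The paper phrases that last step as the unital ring map $\coh^\bullet_\Gm(\pt,\bk)\to\End_{\Sm(\pt,\bk)}(\underline{\bk}_\pt)=\bk$ being surjective. Your extra care about how $\can^2$ identifies the shifted pieces, Tate twists included, makes explicit what the paper asserts ``by construction.''
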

\begin{proof}
In the situation of Lemma~\ref{lem:smith-orbit}\eqref{it:tweq-zero}, there is nothing to prove, so let us assume that we are in the situation of Lemma~\ref{lem:smith-orbit}\eqref{it:tweq-nonzero}.  After passing through the equivalences of that lemma, we may instead study
\[
\rQ: \Parity_\Gm(\pt,\bk) \to \SmParity(\pt,\bk).
\]
Essential surjectivity is clear from the definitions.  For the description of $\Hom$-groups, it is enough to treat the case where $\cF$ and $\cG$ are indecomposable.  If one of these objects is even and the other odd, then all $\Hom$-groups in the lemma vanish.

Suppose now that $\cF$ and $\cG$ are both even, say $\cF = \underline{\bk}_\pt[2n]$ and $\cG = \underline{\bk}_\pt[2m]$.  (The case where they are both odd can be treated identically.)  Up to isomorphism, none of the $\Hom$-groups in the lemma are changed by even shifts, so we may assume without loss of generality that $m = n = 0$.  In this case,~\eqref{eqn:spho1} can be rewritten as
\begin{equation}\label{eqn:spho2}
\coh^\bullet_\Gm(\pt,\bk) \to \End_{\Sm_(\pt,\bk)}(\underline{\bk}_\pt).
\end{equation}
This is a ring homomorphism, and the right-hand side is just the field $\bk$ (by Lemma~\ref{lem:sm-point-endo}), so the map is certainly surjective.  On the other hand, $\can^2 - 1$ is in the kernel of~\eqref{eqn:spho2} by construction, and the quotient $\coh^\bullet_\Gm(\pt,\bk)/(\can^2 - 1)$ is $1$-dimensional, so we obtain our desired isomorphism
\[
\coh^\bullet_\Gm(\pt,\bk)/(\can^2 - 1) \simto \End_{\Sm_(\pt,\bk)}(\underline{\bk}_\pt).\qedhere
\]
\end{proof}

\subsection{Smith parity sheaves in general}
\label{ss:smithpar}

Let $V$ and $\cX$ be as in Section~\ref{ss:tweq}, and let $X$ be a variety over $\F$ with an action of $\Gm \ltimes V$.  In this subsection, we assume that:
\begin{itemize}
\item The group $V$ acts on $X$ with finitely many orbits.
\item Each $V$-orbit is stable under $\Gm$.
\item The group $\Gm$ has a fixed point on each $V$-orbit in $X$.
\end{itemize}
An object $\cF \in \Sm_{V,\cX}(X,\bk)$ is said to be \emph{Smith *-even} (resp.~\emph{Smith $!$-even}) if its $*$-pullback (resp.~$!$-pullback) to each $V$-orbit is Smith-even in the sense of Section~\ref{ss:smith-orbit}.  The object $\cF$ is \emph{Smith-even} it is both Smith $*$-even and Smith $!$-even.  It is \emph{Smith-odd} if $\cF[1]$ is Smith-even.  Finally, a \emph{Smith parity object} in $\Sm_{V,\cX}(X,\bk)$ is an object that is a direct sum of a Smith-even object and a Smith-odd object.  The full additive subcategory of $\Sm_{V,\cX}(X,\bk)$ consisting of Smith parity objects is denoted~by
\[
\SmParity_{V,\cX}(X,\bk).
\]

As in Section~\ref{ss:smith-orbit}, we also have the ``classical'' theory of parity sheaves, which form a full subcategory
\[
\Parity_{\Gm \ltimes V,\cX}(X,\bk) \subset \Db_{\Gm \ltimes V,\cX}(X,\bk).
\]

\begin{lem}
The categories $\Parity_{\Gm \ltimes V,\cX}(X,\bk)$ and $\SmParity_{V,\cX}(X,\bk)$ are both Krull--Schmidt.
\end{lem}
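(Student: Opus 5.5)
The approach is the standard criterion that an additive category is Krull--Schmidt once every object is a finite direct sum of objects with local endomorphism rings. Equivalently, it suffices that the category is Hom-finite over the field $\bk$ and idempotent-complete (Karoubian). So for each of the two categories I will check two things: that all $\Hom$-spaces are finite-dimensional over $\bk$, and that idempotents split.

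For $\Parity_{\Gm \ltimes V,\cX}(X,\bk)$, I argue as in \cite{jmw:ps}, by induction on the number of $V$-orbits. Take an open orbit $j: U \hookrightarrow X$ with closed complement $i: Z \hookrightarrow X$.
\begin{itemize}
\item For parity objects $\cF,\cG$, the standard parity vanishing gives a short exact sequence
\[
0 \to \Hom(i^*\cF,i^!\cG) \to \Hom(\cF,\cG) \to \Hom(j^*\cF,j^*\cG) \to 0,
\]
up to the usual degree bookkeeping, so $\Hom(\cF,\cG)$ is an extension of $\Hom$-groups on fewer orbits.
\item On a single orbit, Lemma~\ref{lem:smith-orbit} identifies the category with $\Db_\Gm(\pt,\bk)$. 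There the degree-zero $\Hom$-spaces between finite sums of shifts of $\underline{\bk}_\pt$ are graded pieces of $\coh^\bullet_\Gm(\pt,\bk)$, hence finite-dimensional.
\item So all $\Hom$-spaces in the parity category are finite-dimensional.
\item Idempotents split: $\Db_{\Gm\ltimes V,\cX}(X,\bk)$ is a thick subcategory of $\Db_\Gm(X,\bk)$ (Section~\ref{ss:tweq}), and bounded derived categories of sheaves are Karoubian. A direct summand of a parity object is again parity, since even and odd are defined by stalk and costalk conditions that pass to summands.
\end{itemize}

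For $\SmParity_{V,\cX}(X,\bk)$ I use the same strategy in the Smith category. Hom-finiteness on a single orbit comes from Lemma~\ref{lem:sphom-orbit}, where
\[
\Hom\bigl(\rQ(\cL_X),\rQ(\cL_X)[n]\bigr)
\]
is $\bk$ or $0$ by Lemma~\ref{lem:sm-point-endo}. To extend this to $X$, I need Smith versions of the recollement sequence. The triangle $j_!j^*\cG \to \cG \to i_*i^*\cG$ exists in $\Sm$ because the sheaf functors descend (Lemma~\ref{lem:sheaf-perfect}). Combined with Smith parity vanishing on each orbit (odd-degree $\Hom$s between Smith-even objects vanish by Lemma~\ref{lem:sm-point-endo}), it gives the same short exact sequence of $\Hom$-spaces. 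Induction on orbits then yields finite-dimensionality.

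The main obstacle is idempotent completeness of the Smith category, since a Verdier quotient need not be Karoubian. I would avoid this by proving locality of endomorphism rings of indecomposables directly. Let $\cF$ be indecomposable Smith-parity, and restrict to an open orbit in its support.
\begin{itemize}
\item By the exact sequence, $\End(\cF)$ surjects onto $\End(j^*\cF)$ with kernel $\Hom(i^*\cF,i^!\cF)$.
\item By Krull--Schmidt on $\Sm(\pt)$, the ring $\End(j^*\cF)$ is a matrix algebra over $\bk$.
\item Hom-finiteness makes $\End(\cF)$ a finite-dimensional algebra. Finite-dimensional algebras lift idempotents modulo any ideal, so this yields the decomposition of $\cF$ and local endomorphism rings.
\end{itemize}

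Alternatively, if the paper later establishes that $\rQ$ is essentially surjective onto Smith parity objects (the analogue of Lemma~\ref{lem:smeven-crit} globally), then every Smith parity object is a sum of images of indecomposable parity sheaves. Krull--Schmidt then reduces to the parity case together with the fact that the $\rQ$-image of an indecomposable has endomorphism ring a quotient of a local ring, hence local or zero.
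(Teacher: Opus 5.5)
Your argument for $\Parity_{\Gm \ltimes V,\cX}(X,\bk)$ is fine. It is essentially the argument from \cite{jmw:ps} that the paper invokes.

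The Smith half has a genuine gap, exactly at the point you flag as the main obstacle. Lifting a nontrivial idempotent of $\End(j^*\cF)$ (a product of matrix algebras over $\bk$) to $\End(\cF)$ only produces an idempotent \emph{endomorphism} of $\cF$. That endomorphism gives a direct sum decomposition of $\cF$ only if it splits in $\SmParity_{V,\cX}(X,\bk)$, which is precisely the Karoubian property you meant to avoid. The case where $j^*\cF$ is indecomposable has the same problem. The finite-dimensional algebra $\End(\cF)$ is local if and only if the kernel of $\End(\cF)\to\bk$ is nilpotent, i.e.\ if and only if $\End(\cF)$ has no idempotents other than $0$ and $1$. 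For an indecomposable $\cF$, that is again the statement that idempotents split.

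This cannot be circumvented: an additive category in which every object is a finite sum of objects with local endomorphism rings is automatically idempotent complete. What you have actually proved is that $\SmParity_{V,\cX}(X,\bk)$ is Hom-finite. That reduces the lemma to Karoubianness but does not establish it.

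One way to supply the missing step is to show that $\Sm_{V,\cX}(X,\bk)$ itself is Karoubian, by induction on the number of $V$-orbits.
\begin{itemize}
\item The functors $j^*$, $j_!$, $i^*$, $i_*$ descend to Smith categories (Lemma~\ref{lem:sheaf-perfect}), and $j^*j_!\cong\id$, $i^*i_*\cong\id$ already hold before passing to the quotient. So you have functorial gluing triangles $j_!j^*T\to T\to i_*i^*T\to$ in $\Sm_{V,\cX}(X,\bk)$.
\item Pass to the idempotent completion, which is triangulated by Balmer--Schlichting. For a summand $T'$ of $T$, the triangle for $T'$ is a summand of the one for $T$. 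Hence $j^*T'$ and $i^*T'$ are summands of objects of $\Sm_{V,\cX}(U,\bk)$ and $\Sm_{V,\cX}(Z,\bk)$.
\item By induction, these summands already lie in those categories; the base case is the semisimple category $\Sm(\pt,\bk)$. So $T'$ is the cone of a morphism in $\Sm_{V,\cX}(X,\bk)$, and idempotents split there.
\end{itemize}
A Hom-finite Karoubian category is Krull--Schmidt, and $\SmParity_{V,\cX}(X,\bk)$ is closed under summands in it. This gives the lemma.

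Your fallback route does not work as stated, for two reasons. First, essential surjectivity of $\rQ$ (Corollary~\ref{cor:parity-full}) rests on the classification of indecomposable Smith parity objects, and that classification presupposes this lemma. Second, by Lemma~\ref{lem:sphom-gen}, $\End(\rQ(\cE))\cong\bHomev(\cE,\cE)/(\can^2-1)$. This is a quotient of the whole even graded endomorphism ring, not of the local ring $\End(\cE)$. The graded ring is not local: $\can^2\cdot\id$ and $(1-\can^2)\cdot\id$ are both nonunits, as one sees by restricting to the open orbit.
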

\begin{proof}[Proof sketch]
For $\Parity_{\Gm \ltimes V,\cX}(X,\bk)$, this is part of the general theory in~\cite[\S2.1]{jmw:ps}.  For $\SmParity_{V,\cX}(X,\bk)$, a proof for a specific example of $V$, $\cX$, and $X$ can be found in~\cite[Lemma~7.1]{rw:st}, but in fact this proof applies in the generality of the assumptions at the beginning of Section~\ref{ss:smithpar}.
\end{proof}

\begin{prop}
In either $\Parity_{\Gm \ltimes V,\cX}(X,\bk)$ or $\SmParity_{V,\cX}(X,\bk)$:
\begin{itemize}
\item The support of any indecomposable object is the closure of a single $V$-orbit.
\item For any $V$-orbit $Y$, there is up to isomorphism and shift at most one indecomposable object $\cE$ supported on $\overline{Y}$ with $\cE|_Y \ne 0$.  Moreover, $\cE|_Y$ is (up to shift) an irreducible local system.
\end{itemize}
\end{prop}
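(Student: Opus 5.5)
Both bullets come from the standard parity-sheaf argument of \cite[\S2]{jmw:ps}, run on the stratification by $V$-orbits; for $\Parity_{\Gm \ltimes V,\cX}(X,\bk)$ this is the classical statement. For $\SmParity_{V,\cX}(X,\bk)$ I would repeat the same argument with the orbit-level inputs of Section~\ref{ss:smith-orbit} (Lemmas~\ref{lem:smith-orbit}, \ref{lem:smeven-crit}, and~\ref{lem:sphom-orbit}) replacing the classical ones. The Krull--Schmidt property from the preceding lemma guarantees that indecomposables have local endomorphism rings.

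\emph{First bullet.} Let $\cE$ be an indecomposable Smith parity object. Since $\cE$ splits as even plus odd, we may assume it is Smith-even. Choose a $V$-orbit $Y$ that is open in the support of $\cE$, and write $j: Y \hookrightarrow \overline{Y}$ and $i$ for the closed complement inside the support. Then $\cE|_Y = j^*\cE$, which is Smith-even on $Y$, hence a nonzero sum of copies of $\cL_Y$ by Lemma~\ref{lem:smith-orbit}. I would compare $\bHom(\cE, \cE)$ with $\bHom(j^*\cE, j^*\cE)$ through the usual distinguished triangle for $\Hom$ on the open–closed decomposition of the support. The key vanishing is $\Hom(i^*\cE, i^!\cE[1]) = 0$. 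This holds because $i^*\cE$ is Smith $*$-even and $i^!\cE$ is Smith $!$-even, so a filtration by orbits together with the orbit-level statement (no odd $\Hom$ between Smith-even objects, from Lemma~\ref{lem:sphom-orbit} and Lemma~\ref{lem:sm-point-endo}) gives the vanishing. Consequently the restriction map $\End(\cE) \to \End(\cE|_Y)$ is surjective.

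Next I would use locality. Since $\End(\cE)$ is local, its image $\End(\cE|_Y) \cong \mathrm{Mat}_n(\bk)$ must be local, which forces $n=1$; this already gives the irreducibility claim in the second bullet. To see that the support of $\cE$ equals $\overline{Y}$, suppose the support contained another orbit not lying in $\overline{Y}$ and also open in the support. Restricting to the union of the two open orbits would yield two orthogonal idempotents, contradicting locality. Hence the support is $\overline{Y}$.

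\emph{Second bullet.} Suppose $\cE$ and $\cE'$ are indecomposable, both supported on $\overline{Y}$, with $\cE|_Y \cong \cE'|_Y \cong \cL_Y$ up to shift, and of the same parity after shifting. The surjectivity argument applied to $\bHom(\cE,\cE')$ and $\bHom(\cE',\cE)$ lets us lift an isomorphism on $Y$ and its inverse. The two composites of these lifts are then endomorphisms of indecomposables that restrict to the identity on $Y$. Such an endomorphism is not in the maximal ideal of the local ring, hence is invertible, so $\cE \cong \cE'$.

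\emph{Main obstacle.} The step I expect to be hardest is the $\Hom$-vanishing and surjectivity in the Smith category. Restriction to orbits (the functors $i^*$, $i^!$) must be compatible with passage to $\Sm$, which follows from Lemma~\ref{lem:sheaf-perfect}. In addition, the orbit-by-orbit dévissage must be carried out in $\Sm_{V,\cX}$ rather than in $\Db$. I would first handle this at the level of $\Db_{\Gm\ltimes V,\cX}$ using Lemma~\ref{lem:smith-2defns} and then pass to the Smith quotient.
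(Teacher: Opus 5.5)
Your proposal is correct and is essentially the paper's argument. The paper cites \cite[Theorem~2.12]{jmw:ps} for $\Parity_{\Gm \ltimes V,\cX}(X,\bk)$ and says that the proof of \cite[Proposition~7.3]{rw:st} carries over to $\SmParity_{V,\cX}(X,\bk)$; that proof is exactly your JMW-style argument (surjectivity of restriction to an open orbit via $\Hom(i^*\cE, i^!\cE[1])=0$, combined with locality of endomorphism rings), with the d\'evissage done directly in $\Sm_{V,\cX}$ using the descended recollement functors (Lemma~\ref{lem:sheaf-perfect}) and the orbit-level facts $\Hom(\cL_Y,\cL_Y[1])=0$ and $\End(\cL_Y)=\bk$ (Lemma~\ref{lem:sm-point-endo}).
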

\begin{proof}[Proof sketch]
For $\Parity_{\Gm \ltimes V,\cX}(X,\bk)$, this statement is~\cite[Theorem~2.12]{jmw:ps}.  For $\SmParity_{V,\cX}(X,\bk)$, a proof for a specific example of $V$, $\cX$, and $X$ is outlined in~\cite[Proposition~7.3]{rw:st}, and the same arguments apply in the present generality.
\end{proof}

Let $Y$ be a $V$-orbit on $X$.  For there to exist an indecomposable parity sheaf or Smith parity sheaf whose support is $\overline{Y}$, we must have $\Db_{\Gm \ltimes V,\cX}(Y,\bk) \ne 0$ (cf.~Lemma~\ref{lem:smith-orbit}).  When they exist, we denote by
\[
\cE^{V,\cX}(Y,\bk),
\qquad\text{resp.}\qquad
\cE^{V,\cX}_\Sm(Y,\bk)
\]
the unique indecomposable parity sheaf, resp.~Smith parity sheaf, supported on $\overline{Y}$ and normalized so that
\[
\cE^{V,\cX}(Y,\bk)|_Y \cong \cL_Y[\dim Y],
\qquad\text{resp.}\qquad
\cE_\Sm^{V,\cX}(Y,\bk)|_Y \cong \cL_Y[\dim Y],
\]
where $\cL_Y$ is the unique irreducible $(V,\cX)$-equivariant local system on $Y$.

We say that $X$ \emph{has enough parity sheaves} if $\cE^{V,\cX}(Y,\bk)$ exists for every $V$-orbit $Y$ that fits the situation of Lemma~\ref{lem:smith-orbit}\eqref{it:tweq-nonzero}.  Similarly, $X$ \emph{has enough Smith parity sheaves} if $\cE_\Sm^{V,\cX}(Y,\bk)$ exists for each such $Y$.

\begin{lem}\label{lem:sphom-gen}
Let $V$, $\cX$, and $X$ be as above.  For $\cF, \cG \in \Parity_{\Gm \ltimes V,\cX}(V,\bk)$, the natural map
\[
\bHomev(\cF, \cG) \to \Hom(\rQ(\cF), \rQ(\cG))
\]
induces an isomorphism
\[
\coh^\bullet_\Gm(\pt,\bk)/(\can^2 - 1) \otimes_{\coh^\bullet_\Gm(\pt,\bk)} \bHomev(\cF,\cG) \simto \Hom(\rQ(\cF), \rQ(\cG)).
\]
\end{lem}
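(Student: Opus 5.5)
We reduce to the orbit case, Lemma~\ref{lem:sphom-orbit}, by induction on the number of $V$-orbits in the support, following the standard argument for Hom spaces between parity sheaves (as in~\cite[Proposition~2.6]{jmw:ps}).

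\emph{Reductions.} Both sides are additive in $\cF$ and $\cG$. Both sides also vanish when one object is even and the other is odd: on the left because $\bHomev$ between opposite parities is zero by the parity vanishing of~\cite{jmw:ps}, and on the right by the Smith analogue of that vanishing. So we may assume $\cF$ and $\cG$ are both even; the odd case follows by shifting. Choose an open $V$-orbit $U \subset X$ with closed complement $i\colon Z \hookrightarrow X$ and open inclusion $j\colon U \hookrightarrow X$.

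\emph{Filtration.} Apply $\Hom(\cF,-)$ to the triangle $i_*i^!\cG \to \cG \to j_*j^*\cG$. Since $\cF,\cG$ are even, $i^*\cF$ and $i^!\cG$ are again even on $Z$, and $j^*\cF$, $j^*\cG$ are even on $U$. The standard argument then shows that the odd connecting maps vanish, giving a short exact sequence of free graded $\coh^\bullet_\Gm(\pt,\bk)$-modules
\[
0 \to \bHomev(i^*\cF,i^!\cG) \to \bHomev(\cF,\cG) \to \bHomev(j^*\cF,j^*\cG) \to 0.
\]
Because the outer terms are free, applying $\coh^\bullet_\Gm(\pt,\bk)/(\can^2-1)\otimes(-)$ keeps the sequence exact.

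\emph{Smith side.} Apply $\rQ$ to the same triangle to get a long exact sequence of $\Hom$-groups in $\Sm_{V,\cX}(X,\bk)$. The terms are $\Hom(\rQ i^*\cF,\rQ i^!\cG[n])$ and $\Hom(\rQ j^*\cF,\rQ j^*\cG[n])$. By induction on the number of orbits, and by Lemma~\ref{lem:sphom-orbit} on $U$, these are $\coh^\bullet_\Gm/(\can^2-1)\otimes\bHomev$ of the corresponding objects when $n$ is even. When $n$ is odd they vanish, again by the induction hypothesis, since the specialized odd Hom of even objects is zero.

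The natural maps from the specialized parity sequence to the Smith sequence form a morphism of exact sequences. The outer comparison maps are isomorphisms, so the five lemma gives the isomorphism in the middle. The inductive hypothesis should really be the stronger statement that $\Hom(\rQ\cF,\rQ\cG[n])=0$ for $n$ odd, for $\cF$ even and $\cG$ even on fewer orbits; the same five-lemma argument yields this too.

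\emph{Main obstacle.} The delicate point is the vanishing of the Smith connecting maps, that is, the odd-degree vanishing in the Smith category. It holds because $[2]\cong\id$ there, so every Hom is either an even one, which is computed by specialization, or an odd one, which vanishes by induction from the orbit case (Lemma~\ref{lem:sm-point-endo}). A second point to check is that $\rQ$ commutes with $i^!$, $i^*$, $j^*$, $j_*$; this follows from Lemma~\ref{lem:sheaf-perfect}. The same lemma ensures that the comparison maps are compatible with the triangles.
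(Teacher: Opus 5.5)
\textbf{There is a gap in the inductive step.} Your overall strategy is the one the paper's sketch intends (cf.\ the reference to \cite[Proposition~7.7]{rw:st}): induction on the number of $V$-orbits via the open/closed triangle, a split short exact sequence of free $\coh^\bullet_\Gm(\pt,\bk)$-modules on the parity side, the long exact sequence on the Smith side, and the five lemma, with Lemma~\ref{lem:sphom-orbit} as base case. But the inductive step rests on a false claim. For $\cF, \cG$ even on $X$, the objects $i^*\cF$ and $i^!\cG$ are \emph{not} even on $Z$ in general: $i^*$ preserves only $*$-evenness, and $i^!$ preserves only $!$-evenness.

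For example, let $X = \mathbb{P}^1 \times \mathbb{P}^1$. Let $V$ be the product of two copies of the unipotent radical of a Borel subgroup of $\mathrm{SL}_2$, acting by translations and with $\Gm$ scaling both factors; take $\cX$ trivial. The constant sheaf $\cF = \underline{\bk}_X$ is even. Removing the open orbit leaves $Z = (\{\infty\}\times\mathbb{P}^1) \cup (\mathbb{P}^1\times\{\infty\})$, two lines meeting transversally at $y = (\infty,\infty)$. Here $i^*\cF = \underline{\bk}_Z$, and its costalk at $y$ is $\bk[-1]\oplus\bk^{2}[-2]$ (up to Tate twist). So $i^*\cF$ is not $!$-even. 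Consequently your induction hypothesis cannot be applied to $\Hom(\rQ(i^*\cF), \rQ(i^!\cG)[n])$. The same objection applies to the strengthened hypothesis in your penultimate paragraph, since it is still stated for even objects.

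\textbf{The repair.} Run the induction over pairs with $\cF$ $*$-even and $\cG$ $!$-even, as in \cite[Proposition~2.6]{jmw:ps}, which you already cite. For such pairs, prove three things simultaneously:
\begin{enumerate}
\item $\bHom(\cF,\cG)$ is free and concentrated in even degrees;
\item $\Hom(\rQ(\cF),\rQ(\cG)[1]) = 0$;
\item the map $\coh^\bullet_\Gm(\pt,\bk)/(\can^2-1)\otimes\bHom(\cF,\cG) \to \Hom(\rQ(\cF),\rQ(\cG))$ is an isomorphism.
\end{enumerate}
This class of pairs is stable under passing to $(i^*\cF, i^!\cG)$ on $Z$ and to $(j^*\cF, j^*\cG)$ on $U$. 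On a single orbit, both conditions just mean ``even,'' so the base case is Lemma~\ref{lem:sphom-orbit} together with Lemma~\ref{lem:sm-point-endo}. With this formulation, your parity-side short exact sequence and your Smith-side five-lemma argument go through unchanged. The lemma is then the special case where $\cF$ and $\cG$ are both even, and the mixed-parity cases follow from the odd vanishing in (2).
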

\begin{proof}[Proof sketch]
This statement can be deduced from Lemma~\ref{lem:sphom-orbit} by induction on the number of $V$-orbits.  See~\cite[Proposition~7.7]{rw:st} for a similar statement.
\end{proof}

\begin{cor}\label{cor:parity-full}
Let $V$, $\cX$, and $X$ be as above.  The functor
\[
\rQ: \Parity_{\Gm \ltimes V,\cX}(X,\bk) \to \SmParity_{V,\cX}(X,\bk)
\]
sends indecomposable parity sheaves to indecomposable Smith parity sheaves.  As a consequence, if $X$ has enough parity sheaves, then it also has enough Smith parity sheaves, and the functor above is essentially surjective.
\end{cor}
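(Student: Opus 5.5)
The plan is to deduce both claims from Lemma~\ref{lem:sphom-gen}, using an induction on the number of $V$-orbits for the existence part.

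First, indecomposability. Let $\cF \in \Parity_{\Gm \ltimes V,\cX}(X,\bk)$ be indecomposable. By the structure proposition above, $\cF$ is supported on $\overline{Y}$ for a single orbit $Y$. Up to shift, $\cF$ is even, since an indecomposable parity sheaf is either even or odd.

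Next I compute $\End(\rQ(\cF))$. Lemma~\ref{lem:sphom-gen} identifies it with
\[
\coh^\bullet_\Gm(\pt,\bk)/(\can^2 - 1) \otimes_{\coh^\bullet_\Gm(\pt,\bk)} \bHomev(\cF,\cF).
\]
Set $A = \bEnd(\cF)$. This is a graded ring, finite free over $R = \coh^\bullet_\Gm(\pt,\bk) = \bk[\can^2]$, and concentrated in degrees $\ge 0$. Because $\cF$ is indecomposable in a Krull--Schmidt category, its degree-zero part $A^0 = \End(\cF)$ is local.

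The key claim is that $A^{\mathrm{ev}}/(\can^2-1)A^{\mathrm{ev}}$ is again local. I will argue as follows. The positive-degree part $A^{>0}$ together with $\mathrm{rad}(A^0)$ spans a graded two-sided ideal $I$ with $A/I \cong A^0/\mathrm{rad}(A^0)$, a division ring (in fact $\bk$, since $\bk$ is algebraically closed and $A^0$ is finite-dimensional). After specializing $\can^2 = 1$, the image of $I$ should be nilpotent. This uses finite-dimensionality of the specialization, which has dimension equal to the rank of $A^{\mathrm{ev}}$ over $R$. Alternatively I can filter by degree: an element of the specialized ring coming from $I$ should act nilpotently, because high powers of homogeneous elements of positive degree lie in high degree.

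That nilpotence step is the subtle point, and I expect it to be the main obstacle: the specialization destroys the grading, leaving only a $\Z/2$-grading plus a filtration. My first attempt will be to use the filtration by degree modulo multiples of $(\can^2-1)$. Its associated graded is $A^{\mathrm{ev}}/\can^2 A^{\mathrm{ev}}$, where nilpotence of the positive-degree part is clear. Then I would lift nilpotence through the filtration using the fact that the filtration is finite on a finite-dimensional algebra. Once this is done, the quotient by the nilpotent ideal is $\bk$, so $\End(\rQ(\cF))$ is local and $\rQ(\cF)$ is indecomposable. The same computation shows $\rQ(\cF) \ne 0$, since its endomorphism ring contains $\bk$.

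For the consequence, suppose $X$ has enough parity sheaves. Then $\cE^{V,\cX}(Y,\bk)$ is indecomposable and supported on $\overline{Y}$. Its restriction to $Y$ is $\cL_Y[\dim Y]$, and $\rQ$ commutes with restriction to $Y$. So $\rQ(\cE^{V,\cX}(Y,\bk))$ is an indecomposable Smith parity sheaf supported on $\overline{Y}$ whose restriction to $Y$ is $\cL_Y[\dim Y]$. This is exactly $\cE^{V,\cX}_\Sm(Y,\bk)$, which therefore exists. Essential surjectivity then follows: every object of $\SmParity_{V,\cX}(X,\bk)$ is a sum of indecomposables, and by the structure proposition each indecomposable is, up to shift, some $\cE_\Sm^{V,\cX}(Y,\bk)$, which lies in the image of $\rQ$.
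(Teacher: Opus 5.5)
\paragraph{Verdict: there is a genuine gap in the locality step.}
Your outline is the natural one. Lemma~\ref{lem:sphom-gen} identifies $\End(\rQ(\cF))$ with $B=A^{\mathrm{ev}}/(\can^2-1)A^{\mathrm{ev}}$. Locality of $B$ then gives indecomposability, and the ``consequence'' part follows from the classification of indecomposable Smith parity objects. That last part of your argument is fine. The locality step, however, fails as written.

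\paragraph{Why the locality argument fails.}
Your ideal $I=A^{>0}+\mathrm{rad}(A^0)$ contains $\can^2\cdot\id_\cF\in A^2$, and its image in $B$ is $1$. So the image of $I$ is the unit ideal, not a nilpotent ideal with quotient $\bk$.

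The filtration fallback cannot be repaired. For an increasing filtration, nilpotence of symbols does not lift. In fact, no argument that uses only ``$A$ is graded, free of finite rank over $R=\bk[\can^2]$, with $A^0$ local'' can work. Take $A=\{(f,g)\in R\times R : f(0)=g(0)\}$. It has the following properties:
\begin{itemize}
\item it is $R$-free with basis $1$ and $e=(\can^2,0)$, where $e^2=\can^2e$;
\item $A^0=\bk$, and $A/\can^2A\cong\bk[e]/(e^2)$ is local;
\item but $A/(\can^2-1)A\cong\bk\times\bk$, because $\bar e$ is a nontrivial idempotent.
\end{itemize}
This $A$ is exactly $\bEnd(\underline{\bk}_{\mathbb{P}^1})$ for $\mathbb{P}^1=\mathbb{A}^1\sqcup\{\infty\}$, with $V=\Ga$ acting by translations and $\Gm$ acting with weight $w$ prime to $p$. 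That is precisely the situation excluded by the standing hypothesis that $\varpi$ acts trivially, and your argument never uses that hypothesis.

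\paragraph{Where the missing input has to enter.}
Restricting to $Y$ shows $\rQ(\cF)\cong\cE^{V,\cX}_\Sm(Y,\bk)\oplus\cG'$, with $\cG'$ supported on $\overline{Y}\smallsetminus Y$. Suppose $\cG'\neq 0$. Choose an orbit $Z$ open in the support of $\cG'$, and an open union of orbits $U$ in which $Z$ is closed.
\begin{itemize}
\item Working on $U$, $\rQ$ of the natural map $c_Z\colon j_Z^!\cF\to j_Z^*\cF$ is nonzero, since it is the identity on the summand coming from $\cG'$.
\item $c_Z$ is a map between even objects on $Z$, given by a matrix of homogeneous elements of $R$. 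Hence $\rQ(c_Z)=0$ if and only if $c_Z=0$.
\item Indecomposability of $\cF$ only gives $c_Z\equiv 0$ modulo $\can^2$. In the $\mathbb{P}^1$ example, $c_Z$ is multiplication by $w\can^2$ up to a unit.
\end{itemize}
What must actually be proved is that $c_Z$ vanishes identically. Equivalently, the equivariant link module $\bHom(\cL_Z, j_Z^*h_*h^*\cF)$, with $h$ the inclusion of $U\smallsetminus Z$, must be free over $R$. This is where the triviality of the $\varpi$-action on $X$ has to be used. Without such an argument, indecomposability of $\rQ(\cF)$, and with it the rest of the corollary, is not established.
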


\subsection{A lifting lemma}
\label{ss:smithlift}

Supp $V$, $\cX$, and $X$ satisfy the assumptions of Section~\ref{ss:smithpar}.  For each $V$-orbit $Y \subset X$, let
\[
j_Y: Y \hookrightarrow X
\]
denote the inclusion map.  If $Y$ falls into case~\eqref{it:tweq-nonzero} of Lemma~\ref{lem:smith-orbit}, let $\cL_Y$ denote the unique irreducible $(\Gm \ltimes V,\cX)$-equivariant local system on $Y$.

\begin{defn}
An object $\cF \in \Db_{\Gm \ltimes V,\cX}(X,\bk)$ is said to be \emph{cohomologically $!$-even} if for every $V$-orbit $Y \subset X$ that admits nonzero $(\Gm \ltimes V,\cX)$-equivariant sheaves, the graded vector space
\[
\bHom(\cL_Y, j_Y^!\cF)
\]
is concentrated in even degrees.
\end{defn}

Any object that is $!$-even in the sense of~\cite{jmw:ps} is also cohomologically $!$-even, but a cohomologically $!$-even object can fail to be $!$-even.

Let $h: U \hookrightarrow X$ be the inclusion of a $V$-stable locally closed subvariety.  It is clear that if $\cF \in \Db_{\Gm\ltimes V,\cX}(X,\bk)$ is cohomologically $!$-even, then $h^!\cF$ is a cohomologically $!$-even object in $\Db_{\Gm \ltimes V,\cX}(U,\bk)$.  With this observation in mind, it is easy to check that several key facts from~\cite{jmw:ps} that involve $!$-even objects actually generalize to cohomologically $!$-even objects.  The following three statements correspond to~\cite[Proposition~2.7, Corollary~2.8, and Corollary~2.9]{jmw:ps}.

\begin{prop}\label{prop:bhom-directsum}
Let $\cF, \cG \in \Db_{\Gm \ltimes V,\cX}(X,\bk)$.  If $\cF$ is $*$-even and $\cG$ is cohomologically $!$-even, then there is a (noncanonical) isomorphism
\[
\bHom(\cF,\cG) \cong \bigoplus_{\text{$Y \subset X$ a $V$-orbit}} \bHom(j_Y^*\cF, j_Y^!\cG).
\]
\end{prop}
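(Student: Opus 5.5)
We follow the proof of~\cite[Proposition~2.7]{jmw:ps}, inducting on the number of $V$-orbits in $X$. If $X$ is a single orbit $Y$ there is nothing to prove. Otherwise choose an open $V$-orbit $U \subset X$, with inclusion $h: U \hookrightarrow X$. Let $i: Z \hookrightarrow X$ be the inclusion of the closed complement; then $Z$ is again $V$-stable and satisfies the assumptions of Section~\ref{ss:smithpar}. Applying $\bHom(\cF,{-})$ to the distinguished triangle
\[
i_*i^!\cG \to \cG \to h_*h^*\cG \to
\]
and using adjunction gives a long exact sequence
\[
\cdots \to \bHom(i^*\cF, i^!\cG) \to \bHom(\cF,\cG) \to \bHom(h^*\cF, h^*\cG) \to \cdots.
\]
Here $h^*\cG = h^!\cG$ since $h$ is open. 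The objects $i^*\cF$ and $h^*\cF$ are again $*$-even. By the observation preceding the proposition, $i^!\cG$ and $h^!\cG$ are again cohomologically $!$-even.

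The key step is to show that both outer terms are concentrated in even degrees. The long exact sequence then splits into short exact sequences of vector spaces, which split noncanonically. For the term $\bHom(h^*\cF,h^!\cG)$ on the single orbit $U$, we use Lemma~\ref{lem:smith-orbit}. If $U$ carries no nonzero equivariant objects, the term vanishes. Otherwise, since $h^*\cF$ is $*$-even, it is a finite direct sum of shifts $\cL_U[2m]$. Hence $\bHom(h^*\cF,h^!\cG)$ is a finite direct sum of shifts of $\bHom(\cL_U, h^!\cG)$ by even amounts. By the definition of cohomological $!$-evenness, this is concentrated in even degrees.

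For $\bHom(i^*\cF,i^!\cG)$, we apply the induction hypothesis on $Z$, which has fewer orbits. This gives $\bHom(i^*\cF,i^!\cG) \cong \bigoplus_{Y \subset Z} \bHom(j_Y^*\cF, j_Y^!\cG)$, where we use $j_Y^* i^* = j_Y^*$ and $j_Y^! i^! = j_Y^!$. Each summand is even by the single-orbit argument above, so the whole term is even. Combining the split short exact sequences yields the claimed decomposition.

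The only point needing care is the single-orbit computation. Specifically, we must check that $*$-even objects on an orbit really are sums of even shifts of $\cL_U$. This follows from the equivalence $\Db_{\Gm\ltimes V,\cX}(U,\bk)\cong\Db_\Gm(\pt,\bk)$ of Lemma~\ref{lem:smith-orbit}, under which even objects are sums of $\underline{\bk}_\pt[2m]$. With that in hand, the argument otherwise goes through verbatim as in~\cite{jmw:ps}.
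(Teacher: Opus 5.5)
Your proof is correct and follows the approach the paper intends. The paper gives no separate proof; it only observes that $!$-restriction to $V$-stable locally closed subvarieties preserves cohomological $!$-evenness and defers to the induction of \cite[Proposition~2.7]{jmw:ps}, which is exactly the induction on open orbits and closed complements that you carry out. Your single-orbit step uses only that $h^*\cF$ is a sum of even shifts of $\cL_U$ and that $\bHom(\cL_U, h^!\cG)$ is even, and that is precisely the input cohomological $!$-evenness supplies.
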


\begin{cor}
If $\cF$ is $*$-even and $\cG$ is cohomologically $!$-even, then the graded vector space $\bHom(\cF,\cG)$ is concentrated in even degrees.
\end{cor}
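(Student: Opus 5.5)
This corollary comes directly out of Proposition~\ref{prop:bhom-directsum}. Given $\cF$ $*$-even and $\cG$ cohomologically $!$-even, that proposition provides a noncanonical isomorphism
\[
\bHom(\cF,\cG) \cong \bigoplus_{Y} \bHom(j_Y^*\cF, j_Y^!\cG),
\]
with $Y$ ranging over the $V$-orbits. Consequently it is enough to check, one orbit $Y$ at a time, that $\bHom(j_Y^*\cF, j_Y^!\cG)$ lives only in even degrees.

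Orbits of the kind in Lemma~\ref{lem:smith-orbit}\eqref{it:tweq-zero} carry no nonzero $(\Gm \ltimes V,\cX)$-equivariant complexes, so the summand for such a $Y$ is $0$. Now let $Y$ be of the kind in Lemma~\ref{lem:smith-orbit}\eqref{it:tweq-nonzero}. Since $\cF$ is $*$-even, $j_Y^*\cF$ is an even object on $Y$, which means a finite direct sum of shifts $\cL_Y[2m]$. Because $\bHom$ takes finite direct sums in the first variable to direct sums and shifts degrees by $-2m$ under $[2m]$, the summand in question is a direct sum of even regradings of $\bHom(\cL_Y, j_Y^!\cG)$.

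By the definition of cohomologically $!$-even, $\bHom(\cL_Y, j_Y^!\cG)$ is concentrated in even degrees. Even regradings keep it there, so every summand, and therefore $\bHom(\cF,\cG)$, is concentrated in even degrees.

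I do not expect any real obstacle. The one thing to confirm is that ``$*$-even'' on the orbit $Y$ really does mean a direct sum of even shifts of $\cL_Y$. That follows from the equivalence $\Db_{\Gm \ltimes V,\cX}(Y,\bk) \cong \Db_\Gm(\pt,\bk)$ of Lemma~\ref{lem:smith-orbit}, under which $\cL_Y$ corresponds to $\underline{\bk}_\pt$, together with the usual notion of even object from~\cite{jmw:ps}.
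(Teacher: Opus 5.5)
Your proof is correct and follows the route the paper intends: the paper gives no separate argument, deferring to \cite[Corollary~2.8]{jmw:ps}, and the corollary is read off from the orbit-by-orbit decomposition of Proposition~\ref{prop:bhom-directsum}, in which each summand $\bHom(j_Y^*\cF, j_Y^!\cG)$ is an even regrading of copies of $\bHom(\cL_Y, j_Y^!\cG)$. The fact you flag, that an even object on an orbit is a finite direct sum of objects $\cL_Y[2m]$, is exactly the remark the paper records in Section~\ref{ss:smith-orbit}.
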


\begin{cor}\label{cor:cse-surj}
Let $U \subset X$ be a $V$-stable open subset.   If $\cF$ is $*$-even and $\cG$ is cohomologically $!$-even, then restriction to $U$ gives a surjection
\[
\bHom(\cF,\cG) \twoheadrightarrow \bHom(\cF|_U, \cG|_U).
\]
\end{cor}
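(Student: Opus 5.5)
The statement to prove is Corollary~\ref{cor:cse-surj}, the analogue of \cite[Corollary~2.9]{jmw:ps}. Let $Z = X \smallsetminus U$, with closed inclusion $i: Z \hookrightarrow X$ and open inclusion $h: U \hookrightarrow X$. Apply $\bHom(\cF,{-})$ to the distinguished triangle
\[
i_*i^!\cG \to \cG \to h_*h^*\cG \xrightarrow{+1}.
\]
By adjunction we have $\bHom(\cF, h_*h^*\cG) \cong \bHom(\cF|_U, \cG|_U)$ and $\bHom(\cF, i_*i^!\cG) \cong \bHom(i^*\cF, i^!\cG)$, which gives a long exact sequence
\[
\cdots \to \bHom(\cF,\cG) \to \bHom(\cF|_U,\cG|_U) \to \bHom(i^*\cF, i^!\cG)[1] \to \cdots.
\]
Thus the restriction map in the statement is surjective as soon as the connecting map vanishes. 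It suffices to show that all three graded spaces are concentrated in even degrees, since then a degree-raising-by-one connecting map is zero.

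First I check that the hypotheses are preserved by restriction. $\cF|_U$ is $*$-even and $i^*\cF$ is $*$-even, because the $V$-orbits in $U$ and in $Z$ are orbits of $X$ and the $*$-restrictions to them agree. Similarly, $\cG|_U = h^!\cG$ and $i^!\cG$ are cohomologically $!$-even, as noted in the text just before Proposition~\ref{prop:bhom-directsum}, because $j_Y^! \circ h^! = (h\circ j_Y)^!$. Also, $U$ and $Z$ still satisfy the standing assumptions of Section~\ref{ss:smithpar}, with finitely many $\Gm$-stable orbits each having a $\Gm$-fixed point.

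Next I apply the preceding corollary, the analogue of \cite[Corollary~2.8]{jmw:ps}, on $X$, on $U$ and on $Z$. It shows that $\bHom(\cF,\cG)$, $\bHom(\cF|_U,\cG|_U)$ and $\bHom(i^*\cF, i^!\cG)$ are concentrated in even degrees. In the long exact sequence, the map $\bHom(\cF|_U,\cG|_U) \to \bHom(i^*\cF,i^!\cG)[1]$ lands in odd degrees of an evenly concentrated space in each homogeneous component, so it is zero. Therefore restriction is surjective in every degree.

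The only potential obstacle is justifying that the preceding corollary applies on $Z$, a closed $V$-stable subvariety. This is routine: $Z$ is again a finite union of $V$-orbits satisfying the hypotheses, so the proposition and corollary, which are stated for any such $X$, apply verbatim. Everything else is formal.
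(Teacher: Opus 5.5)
Your argument is correct and is essentially the one the paper has in mind: the paper gives no separate proof, saying only that the statement carries over from \cite[Corollary~2.9]{jmw:ps} to cohomologically $!$-even objects. That argument is exactly yours: use the triangle $i_*i^!\cG \to \cG \to h_*h^*\cG \to$, note that $*$-evenness and cohomological $!$-evenness pass to $U$ and $Z$, and kill the connecting map because $\bHom(\cF|_U,\cG|_U)$ and $\bHom(i^*\cF,i^!\cG)$ are both concentrated in even degrees. The evenness of $\bHom(\cF,\cG)$ on $X$ itself is not actually needed.
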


\begin{prop}\label{prop:smeven-lift}
Assume that $X$ has enough parity sheaves.  Suppose $\cF \in \Db_{\Gm \ltimes V,\cX}(X,\bk)$ is cohomologically $!$-even, and that $\rQ(\cF)$ is Smith-even.  There exists an even complex $\cE \in \Parity_{\Gm \ltimes V,\cX}(X,\bk)$ and a morphism
\[
\phi: \cE \to \cF
\]
such that $\rQ(\phi): \rQ(\cE) \to \rQ(\cF)$ is an isomorphism.
\end{prop}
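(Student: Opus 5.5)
We argue by induction on the number of $V$-orbits in $X$, using the usual open--closed recollement for parity sheaves and Lemma~\ref{lem:smeven-crit} as the base case. Choose a $V$-orbit $Y \subset X$ that is open in $X$, let $h: Y \hookrightarrow X$ be its inclusion, and let $i: Z = X \smallsetminus Y \hookrightarrow X$ be the complementary closed inclusion. Then $i^!\cF$ is cohomologically $!$-even by the observation preceding Proposition~\ref{prop:bhom-directsum}. Since $i^!$ commutes with $\rQ$ and preserves Smith $!$-evenness, $\rQ(i^!\cF)$ is also Smith-even; here one checks that Smith $*$-evenness of $i^!$ on each orbit follows from the Smith-evenness of $\rQ(\cF)$, in the same way that $i^!$ of an even object is even. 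By induction there is an even parity complex $\cE_Z$ on $Z$ and a map $\phi_Z: \cE_Z \to i^!\cF$ with $\rQ(\phi_Z)$ an isomorphism. On the open orbit, $\cF|_Y$ has Smith-even image, so Lemma~\ref{lem:smeven-crit} gives an even $\cE'_Y = \cL_Y[-2N]\otimes M^{2N}$ and a map $\phi_Y: \cE'_Y \to \cF|_Y$ that becomes an isomorphism under $\rQ$.

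Next we extend $\phi_Y$ to $X$. Since $X$ has enough parity sheaves, let $\cE_1$ be the direct sum of shifted copies of $\cE^{V,\cX}(Y,\bk)$ such that $\cE_1|_Y \cong \cE'_Y$; this $\cE_1$ is even. By Corollary~\ref{cor:cse-surj}, applied to the $*$-even object $\cE_1$ and the cohomologically $!$-even object $\cF$, restriction $\bHom(\cE_1,\cF) \to \bHom(\cE_1|_Y,\cF|_Y)$ is surjective. So $\phi_Y$ lifts to some $\phi_1: \cE_1 \to \cF$. Put $\cE = \cE_1 \oplus i_*\cE_Z$, which is even because $i_*$ preserves evenness. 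Define $\phi = (\phi_1, \psi)$, where $\psi: i_*\cE_Z \to \cF$ is the composition $i_*\cE_Z \xrightarrow{i_*\phi_Z} i_*i^!\cF \to \cF$.

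It remains to check that $\rQ(\phi)$ is an isomorphism. This is the main obstacle, because the naive sum double-counts the contribution of $\cE_1$ on $Z$. On $Y$, the map $\phi$ restricts to $\phi_Y$, which is fine. On $Z$, however, $i^!\phi$ equals $(i^!\phi_1, \phi_Z)$, and the extra summand $i^!\cE_1$ spoils the isomorphism. To repair this, we apply induction not to $i^!\cF$ but to $i^!\operatorname{cone}(\phi_1)$. The hope is that this object is again cohomologically $!$-even with Smith-even image. Applying $i^!$ to the triangle $\cE_1 \to \cF \to \operatorname{cone}(\phi_1)$ gives a triangle whose first two terms are cohomologically $!$-even, so the question reduces to whether the connecting maps in the long exact sequences of $\bHom(\cL_{Y'},-)$ vanish. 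By Proposition~\ref{prop:bhom-directsum}-type parity arguments, these connecting maps would go from even degrees to odd degrees. Then the two long exact sequences split, and $i^!\operatorname{cone}(\phi_1)$ is again cohomologically $!$-even. If this parity argument fails for the cone, we would instead first replace $\cF$ by its $\varpi$-perfect modification and work up to finite-dimensional $\bHom$-discrepancies, using Lemma~\ref{lem:perfect-crit}.

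Granting that, induction gives $\phi_Z': \cE_Z \to i^!\operatorname{cone}(\phi_1)$. Because $\cE_Z$ is $*$-even and $\cE_1$ is $!$-even, the obstruction $\Hom(i_*\cE_Z, \cE_1[1])$ vanishes by the parity vanishing of~\cite{jmw:ps}. So $\phi_Z'$ lifts through the cone to a map $i_*\cE_Z \to \cF$. Combining this lift with $\phi_1$ gives the desired $\phi: \cE_1 \oplus i_*\cE_Z \to \cF$. Finally, $\rQ(\phi)$ is an isomorphism by the five lemma applied to the recollement triangles in the Smith category.
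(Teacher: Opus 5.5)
There is a genuine gap, at exactly the step you flag as ``the main obstacle.'' Your overall architecture matches the paper's: lift on the open orbit via Lemma~\ref{lem:smeven-crit} and Corollary~\ref{cor:cse-surj}, pass to the cone $C$ of $\phi_1$, induct, and glue back using $\Hom(i_*\cE_Z,\cE_1[1])=0$. The final gluing step is fine. But everything hinges on $C$ (equivalently $i^!C$, since $\rQ(C)|_Y=0$) being cohomologically $!$-even with Smith-even image, and you only state this as a hope.

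The parity argument you offer does not establish it. Consider the long exact sequence
\[
\bHom(\cL_{Z'},j_{Z'}^!\cE_1)\xrightarrow{\phi_*}\bHom(\cL_{Z'},j_{Z'}^!\cF)\to\bHom(\cL_{Z'},j_{Z'}^!C)\to
\]
The connecting maps out of even degrees vanish trivially. The ones out of \emph{odd} degrees land in even degrees, where $\bHom(\cL_{Z'},j_{Z'}^!\cE_1)$ is nonzero. In fact the odd part of $\bHom(\cL_{Z'},j_{Z'}^!C)$ is precisely $\ker\phi_*$, and nothing in Proposition~\ref{prop:bhom-directsum} forces it to vanish. Likewise, Smith-evenness of $\rQ(C)$ is not automatic, since a cone of a map between Smith-even objects need not be Smith $*$-even. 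Your earlier remark that ``$i^!$ of an even object is even'' is also false: $i^!$ preserves $!$-evenness, not $*$-evenness. The fallback about a ``$\varpi$-perfect modification'' is not an argument.

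Two ideas are missing, and they must be used in this order.

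First, show that $\rQ(\phi_1)$ is split injective. Since $\rQ(\cE_1)$ and $\rQ(\cF)$ are Smith parity and $Y$ is open, the Smith counterpart of Corollary~\ref{cor:cse-surj} gives $\xi:\rQ(\cF)\to\rQ(\cE_1)$ with $\xi|_Y=\rQ(\phi_Y)^{-1}$. The composite $\xi\circ\rQ(\phi_1)$ restricts to the identity on $Y$, so it is an automorphism of $\rQ(\cE_1)$, which is a sum of indecomposable Smith parity sheaves supported on $\overline Y$. Hence $\rQ(\cF)\cong\rQ(\cE_1)\oplus\rQ(C)$. So $\rQ(C)$ is Smith-even as a summand of a Smith-even object, and so is $\rQ(i^!C)$, because $\rQ(C)$ is supported on $Z$.

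Second, with this in hand, Lemma~\ref{lem:smeven-crit} says $\bHom(\cL_{Z'},j_{Z'}^!C)$ has only finitely many nonzero odd components. Therefore $\ker\phi_*$ is finite-dimensional. But $\ker\phi_*$ is a graded submodule of the finitely generated free module $\bHom(\cL_{Z'},j_{Z'}^!\cE_1)$ over the PID $\coh^\bullet_\Gm(\pt,\bk)$. It is therefore free, and being finite-dimensional, it is zero. This is what makes $C$ cohomologically $!$-even and lets your induction go through.
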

\begin{proof}
We proceed by induction on the number of $V$-orbits in the support of $\rQ(\cF)$.  If $\rQ(\cF) = 0$, there is nothing to prove.  Otherwise, let $Y$ be a $V$-orbit that is open in the support of $\rQ(\cF)$.  Since $\rQ(j_Y^!\cF) \cong j_Y^!\rQ(\cF)$ is even, Lemma~\ref{lem:smeven-crit} gives us a morphism
\[
\phi_0: \cE_0 \to j_Y^!\cF
\]
in $\Db_{\Gm \ltimes V,\cX}(Y,\bk)$, where $\cE_0$ is even, and $\rQ(\phi_0)$ is an isomorphism.  

Since $\cE_0$ is even, it can be written as $\cE_0 \cong \oplus_{i=1}^k \cL_Y[2d_i]$ for various integers $d_1, \ldots, d_k$.  Let
\[
\cE = \bigoplus_{i=1}^k \cE^{V,X}(Y,\bk)[2d_i - \dim Y].
\]
By construction, $\cE$ is an even complex supported on $\overline{Y}$ and satisfying $\cE|_Y \cong \cE_0$.  

Now let $\bar\jmath_Y: \overline{Y} \hookrightarrow X$ be the inclusion of the closure of $Y$.  Since $\bar\jmath_Y^!\cF$ is cohomologically $!$-even,  Corollary~\ref{cor:cse-surj} tells us that the map
\[
\Hom(\cE, \bar\jmath_{Y!}\bar\jmath_Y^!\cF) \to \Hom(\cE|_Y, (\bar\jmath_{Y!}\bar\jmath_Y^!\cF)|_Y) \cong \Hom(\cE_0,j_Y^!\cF)
\]
is surjective, so there exists a map $\phi_1: \cE \to \bar\jmath_{Y!}\bar\jmath_Y^!\cF$ whose restriction to $Y$ is $\phi_0$.  Let $\phi: \cE \to \cF$ denote the composition
\[
\cE \xrightarrow{\phi_1} \bar\jmath_{Y!}\bar\jmath_Y^!\cF \to \cF.
\]
Then $j_Y^!\phi$ is identified with $\phi_0$.

Complete $\phi$ to a distinguished triangle
\begin{equation}\label{eqn:smeven-dt}
\cE \xrightarrow{\phi} \cF \to \cF' \to.
\end{equation}
The support of $\cE$ (namely, $\overline{Y}$) is contained in that of $\rQ(\cF)$, so the support of $\rQ(\cF')$ is also contained in that of $\rQ(\cF)$.  Moreover, $Y$ is open in this support, and $j_Y^*\rQ(\phi) = j_Y^!\rQ(\phi) = \rQ(\phi_0)$ is an isomorphism, so in fact $\rQ(\cF')|_Y = 0$.  That is, $\rQ(\cF')$ has fewer strata in its support than $\cF$.

Next, we claim that $\rQ(\phi): \rQ(\cE) \to \rQ(\cF)$ admits a retraction.  Indeed, since $\rQ(\cE)$ and $\rQ(\cF)$ are both Smith-parity, and since $Y$ is open in their support, the map
\[
\Hom(\rQ(\cF),\rQ(\cE)) \twoheadrightarrow \Hom(\rQ(\cF)|_Y, \rQ(\cE)|_Y)
\]
is surjective, by the Smith counterpart of~\cite[Corollary~2.9]{jmw:ps}.  In particular, there exists a morphism $\xi: \rQ(\cF) \to \rQ(\cE)$ such that $\xi|_Y = \rQ(\phi_0)^{-1}$.  One can show that $\xi \circ \rQ(\phi)$ is an automorphism of $\rQ(\cE)$, and our claim follows.

Since $\rQ(\phi)$ admits a retraction, we see from~\eqref{eqn:smeven-dt} that $\rQ(\cF) \cong \rQ(\cE) \oplus \rQ(\cF')$.  In particular, $\rQ(\cF')$ is Smith-even.

We now claim that $\cF'$ is cohomologically $!$-even.  Let $Z \subset X$ be a $V$-orbit.  The distinguished triangle $j_Z^!\cE \to j_Z^!\cF \to j_Z^!\cF' \to$ gives rise to a long exact sequence that we can informally write as
\[
\bHom(\cL_Z,j_Z^!\cE) \xrightarrow{\phi_*} \bHom(\cL_Z,j_Z^!\cE) \to \bHom(\cL_Z, j_Z^!\cF') \to.
\]
where we write $\phi_*$ for the map induced by $\phi$. The first two terms are concentrated in even degrees (because $\cE$ is parity and $\cF$ is cohomologically $!$-even), so from the long exact sequence, we see that
\begin{align*}
\text{even-degree components of $\bHom(\cL_Z,j_Z^!\cF')$} &\cong \cok \phi_*, \\
\text{odd-degree components of $\bHom(\cL_Z,j_Z^!\cF')$} &\cong \ker \phi_*.
\end{align*}
Now, $\rQ(j_Z^!\cF')$ is Smith-even (because $\rQ(\cF')$ is Smith-even), so by Lemma~\ref{lem:smeven-crit}, $\bHom(\cL_Z,j_Z^!\cF')$ has only finitely many nonzero components in odd degrees.  Thus, the kernel of $\phi_*$ is finite-dimensional.  But $\bHom(\cL_Z,j_Z^!\cE)$ is a finitely generated free module over the PID $\coh^\bullet_\Gm(\pt,\bk)$, so any submodule is also free.  Since $\ker \phi_*$ is finite-dimensional and free, it must be $0$.  We conclude that $\bHom(\cL_Z,j_Z^!\cF')$ is concentrated in even degrees, as desired.

To summarize, $\cF'$ is cohomologically $!$-even; $\rQ(\cF')$ is Smith-even; and its support contains fewer orbits than that of $\cF$.  By induction, there exists an even complex $\cE'$ and a morphism
\[
\phi': \cE' \to \cF'
\]
that becomes an isomorphism after applying $\rQ$.  Since $\cE'$ is even, the composition $\cE' \to \cF' \to \cE[1]$ must vanish, so $\phi'$ factors through a morphism $\tilde\phi': \cE' \to \cF$.  Consider the following commutative diagram whose rows are distinguished triangles:
\[
\begin{tikzcd}
\cE \ar[d, equal] \ar[r] & \cE \oplus \cE' \ar[d, "{(\phi, \tilde \phi')}"] \ar[r] & \cE' \ar[d, "\phi'"] \ar[r, "0"] & {} \\
\cE \ar[r, "\phi"] & \cF \ar[r] & \cF' \ar[r] & {}
\end{tikzcd}
\]
After applying $\rQ$, the first and third vertical maps become isomorphisms, so the middle one does as well.
\end{proof}

\begin{prop}\label{prop:smeven-nat}
Suppose $\cF_1, \cF_2 \in \Db_{\Gm \ltimes V,\cX}(X,\bk)$ are cohomologically $!$-even, and that $\rQ(\cF_1)$ and $\rQ(\cF_2)$ are Smith-even.  Suppose also that (as in Proposition~\ref{prop:smeven-lift}) we have two even complexes $\cE_1, \cE_2 \in \Parity_{\Gm \ltimes V,\cX}(X,\bk)$ and two maps
\[
\phi_1: \cE_1 \to \cF_1,
\qquad
\phi_2: \cE_2 \to \cF_2
\]
that become isomorphisms in the Smith category.  Given any morphism
\[
f: \cF_1 \to \cF_2
\]
in $\Db_{\Gm \ltimes V,\cX}(X,\bk)$, there exists an integer $N \ge 0$ and a morphism $f': \cE_1[-2N] \to \cE_2$ such that the following diagram commutes:
\[
\begin{tikzcd}
\cE_1[-2N] \ar[r, "(\can^2)^N"] \ar[d, "f'"'] & \cE_1 \ar[r, "\phi_1"] & \cF_1 \ar[d, "f"] \\
\cE_2 \ar[rr, "\phi_2"] && \cF_2
\end{tikzcd}
\]
\end{prop}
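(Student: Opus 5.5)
The plan is to reduce the statement to a lifting problem for the composite $f\circ\phi_1$, using the structure of $\bHom$-groups into cohomologically $!$-even objects together with the criterion of Lemma~\ref{lem:perfect-crit}. Let $\cC$ be the cone of $\phi_2$; it is $\varpi$-perfect since $\rQ(\phi_2)$ is an isomorphism. We want a morphism $f'\colon \cE_1[-2N]\to\cE_2$ with $\phi_2\circ f' = f\circ\phi_1\circ(\can^2)^N$. By the distinguished triangle $\cE_2\to\cF_2\to\cC\to$, such an $f'$ exists as soon as the composition
\[
\cE_1[-2N]\xrightarrow{(\can^2)^N}\cE_1\xrightarrow{f\circ\phi_1}\cF_2\to\cC
\]
vanishes. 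So the task is to show that the image $g\in\Hom(\cE_1,\cC)$ of $f\circ\phi_1$ is killed by a power of $\can^2$, where $\bHom(\cE_1,\cC)$ is regarded as a $\coh^\bullet_\Gm(\pt,\bk)$-module via~\eqref{eqn:eqcoh-module}.

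The main step is to show that $\bHom(\cE_1,\cC)$ is finite-dimensional, or at least $\can^2$-torsion. I would argue by induction on the number of $V$-orbits, using the triangles attached to an open orbit and its closed complement. This reduces the claim to the single-orbit statement that $\bHom(\cL_Y[m], j_Y^!\cC)$ is finite-dimensional for every orbit $Y$ admitting nonzero equivariant sheaves. Since $\cE_1$ is even it has a $*$-filtration by shifts of $j_{Y!}\cL_Y$, so this single-orbit statement suffices. It holds by Lemma~\ref{lem:perfect-crit}, because $j_Y^!\cC$ is $\varpi$-perfect by Lemma~\ref{lem:sheaf-perfect}.

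A finite-dimensional graded $\coh^\bullet_\Gm(\pt,\bk)$-module is killed by $(\can^2)^N$ for $N$ large, since multiplication by $\can^2$ raises degree by $2$. Given that, $g\circ(\can^2)^N = (\can^2)^N g = 0$. Here I use the bilinearity in~\eqref{eqn:eqcoh-module}: precomposing with $(\can^2)^N\colon\cE_1[-2N]\to\cE_1$ is the module action. This yields $f'$ and the commutative diagram.

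I expect the main obstacle to be the dévissage for the finite-dimensionality of $\bHom(\cE_1,\cC)$, specifically checking that the orbit-by-orbit long exact sequences preserve finite-dimensionality. They do, since finite-dimensionality is closed under extensions and subquotients in the long exact sequence, and there are finitely many orbits. The parity hypotheses on $\cF_1$ and $\cF_2$ are then not essential beyond providing $\cE_1$, $\cE_2$, $\phi_1$ and $\phi_2$.
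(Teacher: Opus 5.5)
Your proof is correct and takes essentially the same route as the paper. Both arguments take the cone of $\phi_2$ and apply Lemma~\ref{lem:perfect-crit} orbit by orbit to get $\Hom(\cE_1[-2N],\text{cone}) = 0$ for $N \gg 0$, then factor $f\circ\phi_1\circ(\can^2)^N$ through $\phi_2$ using the distinguished triangle. The one difference is that you pass from orbits to $X$ by a direct d\'evissage (finite-dimensionality is preserved by extensions), while the paper invokes Proposition~\ref{prop:bhom-directsum}; your version avoids having to check that the cone is cohomologically $!$-even, which that proposition requires and the paper does not verify.
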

\begin{proof}
Let $\cG_2$ be the cone of $\phi_2$.  This object is $\varpi$-perfect, so by Lemma~\ref{lem:perfect-crit}, for each orbit $Y$, the vector space $\bHom(\cL_Y,j_Y^!\cG_2)$ is finite-dimensional.  In particular, there is an integer $n_Y \ge 0$ such that $\bHom(\cL_Y, j_Y^!\cG_2)$ is concentrated in degrees${}\le 2n_Y$.  

Next, since $j_Y^*\cE_1$ is even, it can be written as a direct sum
\[
j_Y^*\cE_1 \cong \bigoplus_{i=1}^k \cL_Y[2d_i].
\]
Choose an integer $m_Y \ge \max \{0, d_1, \ldots, d_k \}$.  Observe that $\bHom(\cL_Y[2d_i],j_Y^!\cG_2)$ is concentrated in degrees${}\le 2n_Y + 2d_i \le 2n_Y + 2m_Y$, so $\bHom(j_Y^*\cE_1, j_Y^!\cG_2)$ is concentrated in degrees${}\le 2n_Y + 2m_Y$.  

Now choose an integer $N$ such that $-N < -n_Y - m_Y$ for all $Y$ such that $j_Y^*\cE_Y \ne 0$.  Since $n_Y, m_Y \ge 0$, we have $N > 0$.  By the previous paragraph, $\bHom(j_Y^*\cE_1[-2N], j_Y^!\cG_2)$ is concentrated in degrees${}\le 2n_Y + 2m_Y - 2N < 0$, so by Proposition~\ref{prop:bhom-directsum}, $\bHom(\cE_1[-2N], \cG_2)$ is also concentrated in degrees${}< 0$.  In particular, $\Hom(\cE_1[-2N], \cG_2) = 0$.

We see that the composition
\[
\cE_1[-2N] \xrightarrow{f \circ \phi_1 \circ (\can^2)^N} \cF_2 \to \cG_2
\]
vanishes, so $f \circ \phi_1 \circ (\can^2)^N$ factors through $\phi_2: \cE_2 \to \cF_2$.
\end{proof}


\end{document}